\DeclareMathAlphabet\mathbb{U}{msb}{m}{n}
\tikzset{ext/.style={circle, draw,inner sep=1pt},int/.style={circle,draw,fill,inner sep=1pt},nil/.style={inner sep=1pt}}
\tikzset{exte/.style={circle, draw,inner sep=3pt},inte/.style={circle,draw,fill,inner sep=3pt}}
\tikzset{diagram/.style={matrix of math nodes, row sep=3em, column sep=2.5em, text height=1.5ex, text depth=0.25ex}}
\tikzset{diagram2/.style={matrix of math nodes, row sep=0.5em, column sep=0.5em, text height=1.5ex, text depth=0.25ex}}
\theoremstyle{plain}
  \newtheorem{thm}{Theorem}
  \newtheorem{defi}[thm]{Definition}
\newtheorem{conj}[thm]{Conjecture}
  \newtheorem{prop}[thm]{Proposition}
  \newtheorem{defprop}[thm]{Definition/Proposition}
  \newtheorem{cor}[thm]{Corollary}
  \newtheorem{convention}[thm]{Convention}
  \newtheorem{lemma}[thm]{Lemma}
\theoremstyle{definition}
  \newtheorem{rem}[thm]{Remark}
\newcommand{\aor}{\begin{tikzpicture}
\draw[->] (0,0) arc (190:190+340:0.15cm);
\end{tikzpicture}}
\newcommand{\aol}{\begin{tikzpicture}
\draw[->] (0,0) arc (-10:-360:0.15cm);
\end{tikzpicture}}
\newcommand{\cutaol}{\begin{tikzpicture}[scale=0.7]
\draw[->] (0,0) arc (-20:-350:0.15cm);
\draw (-0.4,0.25) -- (0.09,-0.18);
\end{tikzpicture}}
\newcommand{\alg}[1]{\mathfrak{{#1}}}
\newcommand{\ad}{{\text{ad}}}
\newcommand{\p}{\partial}
\newcommand{\C}{{\mathbb{C}}}
\newcommand{\R}{{\mathbb{R}}}
\newcommand{\Graphs}{{\mathsf{Graphs}}}
\newcommand{\pdGraphs}{{{}^*\mathsf{Graphs}}}
\newcommand{\BVGraphs}{{\mathsf{BVGraphs}}}
\newcommand{\BVGra}{{\mathsf{BVGra}}}
\newcommand{\Gra}{{\mathsf{Gra}}}
\newcommand{\pdGra}{{^*\mathsf{Gra}}}
\newcommand{\tgraphs}{{\mathsf{^*graphs}}}
\newcommand{\Pois}{{\mathsf{Pois}}}
\newcommand{\mF}{\mathcal{F}}
\newcommand{\op}{\mathcal}
\newcommand{\Lie}{\mathsf{Lie}}
\newcommand{\Com}{\mathsf{C}}
\newcommand{\La}{\Lambda}
\newcommand{\hoLie}{\mathsf{L}_\infty}
\newcommand{\SO}{\mathrm{SO}(2)}
\newcommand{\coCom}{\mathsf{coComm}}
\newcommand{\FM}{\mathsf{FM}}
\newcommand{\FFM}{\mathsf{FFM}}
\newcommand{\bpm}{\begin{pmatrix}}
\newcommand{\epm}{\end{pmatrix}}
\newcommand{\GC}{\mathrm{GC}}
\newcommand{\fGC}{\mathrm{fGC}}
\newcommand{\BG}{\mathrm{BG}}
\newcommand{\tBG}{{\widetilde {\mathrm{BG}}}}
\newcommand{\sBG}{\mathrm{sBG}}
\DeclareMathOperator{\gr}{gr}
\DeclareMathOperator{\Tw}{Tw}
\DeclareMathOperator{\Sh}{Sh}
\DeclareMathOperator{\coker}{coker}
\DeclareMathOperator{\Hom}{Hom}
\DeclareMathOperator{\Harr}{Harr}
\DeclareMathOperator{\Conf}{Conf}
\newcommand{\cP}{\mathcal{P}}
\newcommand{\stG}{{}^*\Graphs}
\newcommand{\id}{\mathrm{id}}
\newcommand{\notadp}
{{
\begin{tikzpicture}[baseline=-.55ex,scale=.2, every loop/.style={}]
 \node[circle,draw,fill,inner sep=.5pt] (a) at (0,0) {};
 \draw (a) edge[loop] (a);
 \draw (-.2,-.2) -- (.2,.5);
\end{tikzpicture}}}
\renewcommand{\cutaol}{\notadp}
\begin{document}
\title{A model for configuration spaces of points}

\author{Ricardo~Campos}
\address{Ricardo~Campos:  IMAG, Univ. Montpellier, CNRS, Montpellier, France}
\email{ricardo.campos@umontpellier.fr}

\author{Thomas~Willwacher}
\address{Thomas~Willwacher: Department of Mathematics, ETH Zurich, Zurich, Switzerland}
\email{thomas.willwacher@math.ethz.ch}

% \subjclass[2000]{16E45; 53D55; 53C15; 18G55}
% \date{}
\keywords{}

\begin{abstract}
We construct a real combinatorial model for the configuration spaces of points of compact smooth oriented manifolds without boundary. We use these models to show that the real homotopy type of configuration spaces of a simply connected such manifold only depends on the real homotopy type of the manifold. 

Moreover, we show that for framed $D$-dimensional manifolds these models capture a natural right homotopy action of the little $D$-disks operad. 

%The configuration space of points on a $D$-dimensional smooth framed manifold may be compactified so as to admit a right action over the framed little $D$-disks operad. 
%We construct a real combinatorial model for these modules, for compact smooth manifolds without boundary.
\end{abstract}
\maketitle
\setcounter{tocdepth}{1}
\tableofcontents

\section{Introduction}

Given a smooth manifold $M$, we study the configuration space of $n$ non-overlapping points on $M$
$$\Conf_n(M) = \{ (m_1,\dots,m_n)\in M^n \mid  m_i\neq m_j \text{ for } i\neq j \}.$$
These spaces are classical objects in topology, whose homological and homotopical properties have been subject to intensive study over the decades. One of the first important results dates back to 1978 when Cohen and Taylor \cite{CT} constructed a spectral sequence converging to the cohomology $H^\bullet(\Conf_n(M))$. A different spectral sequence was constructed by Bendersky and Gitler \cite{BT} and both spectral sequences have been shown to coincide from the $E^2$ term on by Felix and Thomas \cite{FT}. 
In the particular case of smooth compact projective complex manifolds it was shown by Totaro \cite{To} that the Cohen-Taylor spectral sequence collapses after the second page and Kriz \cite{Kr} showed that for those manifolds the $E^2$ page is actually a model of $\Conf_n(M)$ in the sense of rational homotopy theory.

In this paper, we aim to understand the rational homotopy type of configuration spaces. Classical rational homotopy theory \`a la Sullivan \cite{Su}  states that we can understand topological spaces via algebraic \textit{models} which are differential graded commutative $\mathbb K$-algebras (dgca), where $\mathbb K$ is a field of characteristic zero. This roughly amounts to capturing the non-torsion part of the homotopy groups of such spaces. Usually, the field $\mathbb K$ is taken to be the rational numbers, but due to the transcendental methods we use, we take the base field $\mathbb K=\mathbb R$ to be the real numbers and we will therefore refer to the \emph{real }homotopy type of configuration spaces.

Our first main result is the construction of a differential graded commutative $\mathbb R$-algebra model $\stG_M$ for $\Conf_{\bullet}(M)$, in the case when $M$ is a $D$-dimensional compact smooth oriented manifold without boundary, with $D\geq 2$. Our model depends on $M$ only through the following data:
\begin{itemize}
	\item The cohomology $V=H^\bullet(M)$ as a vector space with a non-degenerate pairing of degree $D=\dim (M)$.
	\item The partition function $Z_M$ of the ``universal'' perturbative AKSZ topological field theory on $M$. This is a Maurer-Cartan element in a certain graph complex only depending on $V$.
\end{itemize}
In particular, this shows that the latter perturbative invariants $Z_M$ (special cases of which have been studied in the literature \cite{BCM}) contain at least as much information as the real homotopy type of $\Conf_\bullet(M)$.
Furthermore, the real homotopy type of $M$ is encoded in the tree-level components of $Z_M$. The higher loop order pieces of $Z_M$ ``indicate'' (in a vague sense) the failure of the homotopy type of $\Conf_\bullet(M)$ to depend only on $M$.
Finally, the real cohomology of $\Conf_\bullet(M)$ can be computed just from the tree level knowledge, see section \ref{sec:Lambrechts and Stanley}. %Again, the higher loop order pieces of $Z_M$ provide potential obstructions for the conjectural model to be quasi-isomorphic to our model (and hence to be a model).

Now suppose that $M$ is furthermore framed, i.e., the frame bundle of $M$ is trivialized.
Then the totality of spaces $\Conf_\bullet(M)$ carries additional algebraic structure, in that it can be endowed with a homotopy right action of the little $D$-disks operad as follows.
First we consider the natural compactification $\FM_M(n)$ of $\Conf_n(M)$ introduced by Axelrod and Singer \cite{AS}, cf. also \cite{Si}.
It is naturally acted upon from the right by the Fulton-MacPherson-Axelrod-Singer variant of the little disks operad $\FM_D$ introduced in \cite{GJ} by ``insertion'' of configurations of points.

The right $E_D$-module structure on configuration spaces has been receiving much interest in the last decade, since it has been realized that the homotopy theory of these right modules captures much of the homotopy theory of the underlying manifolds. For example, by the Goodwillie-Weiss embedding calculus the derived mapping spaces (``Ext's'') of those right $E_D$ modules capture (under good technical conditions) the homotopy type of the embedding spaces of the underlying manifolds \cite{GW,BW, BW2}. Dually, the factorization homology (``Tor's'') of $E_D$-algebras has been widely studied and captures interesting properties of both the manifold and the $E_D$ algebra \cite{AF}.
However, in order to use these tools in concrete situations it is important to have models for $\Conf_\bullet(M)$ (as a right Hopf $E_D$-module) that are computationally accessible, i.e., combinatorial.
In this paper we provide such models.

Concretely, our second main result is that our model $\stG_M$ above combinatorially captures this action of the little $D$-disks operad as well, in the sense that it is a right Hopf operadic comodule over the Kontsevich Hopf cooperad $\stG_D$, modeling the topological little $D$-disks operad, and the combinatorially defined action models the topological action of $E_D$ on $\Conf_\bullet(M)$.

In fact, one can consider the following ``hierarchy" of invariants of a manifold $M$.
\begin{enumerate}
	\item \label{hoi:it1} The real (or rational) homotopy type of $M$.
	\item \label{hoi:it2} The real (or rational) homotopy types of $\FM_M(m)$ for $m=1,2,\dots$
	\item \label{hoi:it3} The real (or rational) homotopy type of $\FM_M$ considered as right $\FM_D$-module, for parallelized $M$. (For non-parallelizable $M$ one may consider similarly the homotopy type of the $\FM$-module of framed configuration spaces of points $\FFM_M$.)
\end{enumerate} 
 
 The relative strength of this invariants has been unknown. In particular it is a long standing open problem if for simply connected $M$ the rational homotopy type of $\Conf_\bullet(M)$ depends only on the rational homotopy type of $M$ \cite[Problem 8, p. 518]{FHT} (cf. also \cite{Le} for a stronger conjecture disproved in \cite{LoS}). %(In the non-simply connected case the lens space provide counterexamples \cite{LoS}.)

In our model the above hierarchy is nicely encoded in the loop order filtration on a certain graph complex $\GC_M$, in which item \ref{hoi:it1} is encoded by the tree level piece of $Z_M$ along with the cohomology of item \ref{hoi:it2}, while the full $Z_M$ encodes item \ref{hoi:it3}.
%\todo{I replaced here $\mu$ by $Z_M$.} R: agree

Our third main result states that for a simply connected smooth closed framed manifold $M$, these invariants are of equal strenght. We show furthermore that without the framed assumption item \ref{hoi:it1} is still equally strong as item \ref{hoi:it2}, thus establishing \cite[Problem 8, p. 518]{FHT} under the assumption of smoothness.

Finally, if we consider a non-parallelized manifold there is still a way to make sense of the insertion of points at the boundary, but the price to pay is that one has to consider configurations of \textit{framed points} in $M$. 
The resulting framed configuration spaces $\Conf^{fr}_\bullet(M)$ then come equipped with a natural right action of the framed little disks operad $E_D^{fr}$. 
In Section \ref{sec:framed case} we present $\BVGraphs_M$, a natural modification of $\Graphs_M$ that encompasses the data of the frames and we show that if we consider $\Sigma$ a two dimensional orientable manifold, $\BVGraphs_\Sigma$ models this additional right action.
In the framed case we restrict ourselves to the 2-dimensional setting.

\subsection*{Outline and statement of the main result}
Let us summarize the construction and state the main result here.
First recall from \cite{K2} the Kontsevich dg cooperad $\stG_D$. Elements of $\stG_D(r)$ consist of linear combinations of graphs with $r$ numbered and an arbitrary number of unidentifiable vertices, like the following 
\[
\begin{tikzpicture}
\node[ext] (v1) at (0,0) {$\scriptstyle 1$};
\node[ext] (v2) at (.5,0) {$\scriptstyle 2$};
\node[ext] (v3) at (1,0) {$\scriptstyle 3$};
\node[ext] (v4) at (1.5,0) {$\scriptstyle 4$};
\node[int] (w1) at (.25,.5) {};
\node[int] (w2) at (1,1) {};
\node[int] (w3) at (1,.5) {};
\draw (v1) edge (v2) edge (w1) (w1) edge (w2) edge (w3) edge (v2) (v3) edge (w3) (v4) edge (w3) edge (w2) (w2) edge (w3);
\end{tikzpicture}
.
\]
The precise definition of $\stG_D$ will be recalled in section \ref{sec:graphs} below.
The graphs contributing to $\stG_D$ may be interpreted as the non-vaccuum Feynman diagrams of the perturbative AKSZ $\sigma$-models on $\R^D$ \cite{AKSZ}.

Kontsevich constructs an explicit map $\stG_D\to \Omega_{PA}(\FM_D)$ to the dgca of PA forms on the compactified configuration spaces $\FM_D$.
This map is compatible with the (co-)operadic compositions, in the sense described in section \ref{sec:graphs} below.

Now fix a smooth compact manifold $M$ of dimension $D$, of which we pick an algebraic realization, so that we can talk about PA forms $\Omega_{PA}(M)$.
Then we consider a collection of dg commutative algebras $\stG_M(r)$. Elements of $\stG_M(r)$ are linear combinations of graphs, but with additional decorations of each vertex in the symmetric algebra $S(\tilde H(M))$ generated by the reduced cohomology $\tilde H(M)$. The following graph is an example, where we fixed some basis $\{\omega_j\}$ of $\tilde H(M)$.
\[
\begin{tikzpicture}
\node[ext] (v1) at (0,0) {$\scriptstyle 1$};
\node[ext] (v2) at (.5,0) {$\scriptstyle 2$};
\node[ext] (v3) at (1,0) {$\scriptstyle 3$};
\node[ext] (v4) at (1.5,0) {$\scriptstyle 4$};
\node[int] (w1) at (.25,.5) {};
\node[int] (w2) at (1,1) {};
\node[int] (w3) at (1,.5) {};
\node[ext] (i1) at (.7,1.3) {$\scriptstyle \omega_1$};
\node[ext] (i2) at (1.3,1.3) {$\scriptstyle \omega_1$};
\node[ext] (i3) at (-.4,.5) {$\scriptstyle \omega_2$};
\node[ext] (i4) at (1.9,.4) {$\scriptstyle \omega_3$};
\draw (v1) edge (v2) edge (w1) (w1) edge (w2) edge (w3) edge (v2) (v3) edge (w3) (v4) edge (w3) edge (w2) (w2) edge (w3);
\draw[dotted] (v1) edge (i3) (w2) edge (i2) edge (i1) (v4) edge (i4);
\end{tikzpicture}
.
\]
These graphs may be interpreted as the non-vaccuum Feynman diagrams of the perturbative AKSZ $\sigma$-model on $M$.
We equip the spaces $\stG_M(r)$ with a non-trivial differential built using the partition function $Z_M$ of those field theories.
This partition function can be considered as a special Maurer-Cartan element of a certain graph complex $\GC_M$.
Algebraically, the spaces $\stG_M(r)$ assemble into a right dg Hopf cooperadic comodule over the Hopf cooperad $\stG_D$.

By mimicking the Kontsevich construction, we construct, for a parallelized manifold $M$, a map of dg Hopf collections\footnote{A (dg) Hopf collection $\op C$ for us is a sequence $\op C(r)$ of dg commutative algebras, with actions of the symmetric groups $S_r$. A (dg) Hopf cooperad is a cooperad in dg commutative algebras.}
\[
\stG_M\to \Omega_{PA}(\FM_M),
\]
compatible with the (co)operadic (co)module structure, where we consider $\FM_M$ as equipped with the right $\FM_D$-action.
If $M$ is not parallelized, we do not have an $\FM_D$-action on $\FM_M$. Nevertheless we may consider a (quasi-isomorphic) dg Hopf collection
\[
 \stG_M^\notadp \subset \stG_M
\]
that still comes with a map of dg Hopf collections
\[
\stG_M^\notadp \to \Omega_{PA}(\FM_M).
\]

Our first main result is the following.
\begin{thm}\label{thm-intro:main}
The map $\stG_M^\notadp \to \Omega_{PA}(\FM_M)$ is a quasi-isomorphism of dg Hopf collections.
In the parallelized case the map $\stG_M\to \Omega_{PA}(\FM_M)$ is a quasi-isomorphism of dg Hopf collections, compatible with the (co)operadic (co)module structures.
\end{thm}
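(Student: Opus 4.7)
The theorem divides into two assertions: that the map is a morphism of dg Hopf collections (with the extra operadic compatibility in the parallelized case), and that it is a quasi-isomorphism. I would treat these in turn.

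For the first assertion, the map sends a graph $\Gamma \in \stG_M(r)$ with $r$ external and $k$ internal vertices to the fiber integral along the forgetful projection $\FM_M(r+k) \to \FM_M(r)$ of the wedge of propagator forms on $\FM_M(2)$ pulled back along the edges of $\Gamma$, in direct analogy with Kontsevich's construction. Compatibility with the product, the symmetric group actions, and the symmetric-algebra decorations is immediate from the definition. Compatibility with the differential is Stokes' theorem for fiber integration of PA forms: the boundary strata of $\FM_M(r+k) \to \FM_M(r)$ decompose into collisions of subsets of vertices; strata touching an external vertex yield the graph-contraction piece of the differential on $\stG_M$, while strata of purely internal collisions contribute the Maurer--Cartan twist by $Z_M$, essentially by the definition of $Z_M$ as the partition function of the universal AKSZ theory. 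Operadic compatibility in the parallelized case follows from a further Fubini-type identification of $\FM_D$-insertion at an external vertex with the ``infinitesimal'' boundary stratum of $\FM_M(r+k)$ corresponding to that vertex colliding with neighboring points, once a global parallelization is used to align the local propagators.

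For the quasi-isomorphism, my plan is a spectral-sequence argument. Filter $\stG_M(r)$ by the number of internal vertices; the $Z_M$-twist strictly lowers this filtration, so the $E^0$ differential is purely the combinatorial graph contraction. On $\Omega_{PA}(\FM_M(r))$, impose a matching filtration via the Leray spectral sequence of the forgetful tower $\FM_M(r) \to \FM_M(r-1) \to \cdots \to \FM_M(1) = M$, whose fibers are open manifolds with the homotopy type of $M$ with finitely many punctures. The $E^1$ pages on both sides are then compared through a Lambrechts--Stanley-type model built from $H^\bullet(M)$, the Poincar\'e duality pairing, and the diagonal class (see Section~\ref{sec:Lambrechts and Stanley}); this model is expected to compute $H^\bullet(\FM_M(r))$ under our hypotheses, and the matching with the $E^1$ page of $\stG_M(r)$ reduces to a direct inspection of propagators versus diagonal classes. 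Convergence is ensured by the boundedness of the filtration in each cohomological degree.

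The main obstacle I anticipate is the $E^1$-page identification: showing that the tree-level part of $Z_M$ exactly encodes the data of the Lambrechts--Stanley model (diagonal class, cup products) while the higher-loop pieces contribute only in strictly positive filtration, so that they do not perturb the $E^1$ page. The non-parallelized statement for $\stG_M^\notadp \to \Omega_{PA}(\FM_M)$ is then handled by a parallel spectral-sequence argument, using that the inclusion $\stG_M^\notadp \subset \stG_M$ is a quasi-isomorphism wherever both sides are defined, which itself reduces to analyzing tadpole sub-graphs and their evaluation against a local Euler form.
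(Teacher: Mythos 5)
Your description of the map $\stG_M \to \Omega_{PA}(\FM_M)$ is essentially the paper's construction (fiber integration of propagator forms, Stokes' theorem for the differential, vanishing lemmas for deep boundary strata), so the first half of your proposal matches the paper.

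The quasi-isomorphism argument is where the gaps lie, and it also diverges from the paper's route. Two concrete problems with your filtration. First, if you filter $\stG_M(r)$ by the number of internal vertices, the piece of the differential that survives on the associated graded is $d_{\mathrm{split}}$ (replacing an edge by a sum of decoration pairs via the diagonal class), \emph{not} the contraction part: contraction strictly lowers the number of internal vertices, as does the $Z_M$-twist. So your $E^0$ page is not what you describe, and its cohomology is not readily accessible. Second, even granting a usable filtration, you are proposing to compare two spectral sequences (one combinatorial, one Leray for the tower $\FM_M(r)\to\cdots\to M$) page by page via a Lambrechts--Stanley-type identification at $E^1$; you correctly flag this identification as ``the main obstacle,'' but that identification \emph{is} the content of the theorem, and your sketch gives no mechanism for it. The Leray tower is also awkward here since the forgetful maps are only SA bundles and the fibers are open punctured manifolds, so the $E^1$ page is not the cohomology of a compact model. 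Convergence is likewise not automatic: the internal-vertex filtration is unbounded, and one needs the trivalence reduction (which in the paper appears only in Section~\ref{sec:Lambrechts and Stanley}, \emph{after} Theorem~\ref{thm:GraphsM}) to gain finite-dimensionality in each degree.

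The paper sidesteps these issues by a more structured comparison. It introduces the Bendersky--Gitler complex $\BG(n)$ computing $H(M^n,\Delta)$, passes to a quasi-isomorphic cofibrant replacement $\tBG = \Omega_{\La^{D-1}L_\infty}(s^{-D}\La^D\coCom \circ \Omega_{PA}(M))$ built as a free right $\La^{D-1}L_\infty$-module, and constructs a single comparison map $\Phi:\tBG \to \Graphs_M$ of $\La^{D-1}L_\infty$-modules (via integration against chains on $\FM_M$). The quasi-isomorphism is then proved by filtering \emph{both sides by the number of connected components} (which on $\tBG$ is the arity in $\coCom$), reducing the associated graded to an explicit recursively-computable space $\sBG_{H(M)}$, and then running two further auxiliary filtrations (edges-minus-vertices, then internal-vertices-minus-degree) on the $\Graphs_M$ side. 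The $L_\infty$-equivariance of $\Phi$ is what makes the recursion tractable. Finally a finite-dimensionality / two-out-of-three argument upgrades the statement to the dgca map $\pdGraphs_M\to\Omega_{PA}(\FM_M)$. Your instinct that the tree-level piece of $Z_M$ carries the Lambrechts--Stanley data and that higher-loop pieces sit in deeper filtration is correct in spirit, but in the paper this observation is a \emph{consequence} of the theorem (Section~\ref{sec:Lambrechts and Stanley}), not an input to its proof, so invoking it here would be circular as written.
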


This result provides us with explicit combinatorial dgca models for configuration spaces of points, compatible with the right $E_D$ action on these configuration spaces in the parallelizable setting. An extension to the non-parallelized case is provided in section \ref{sec:framed case}, albeit only in dimension $D=2$.

We note that our model $\stG_M$ depends on $M$ only through the partition function $Z_M\in \GC_M$. The tree part of this partition function encodes the real homotopy type of $M$. The loop parts encode invariants of $M$.
Now, simple degree counting arguments may be used to severely restrict the possible graphs occuring in $M$.
In particular, one finds that if $H^1(M,\R)$ vanishes, then for $D\geq 4$ there are no contributions to $Z_M$ of positive loop order, and one hence arrives at the following result.

\begin{cor}[Theorem \ref{thm:ht only dep on htM} below]\label{cor-intro}
 Let $M$ be an orientable compact manifold without boundary of dimension $D\geq 4$, such that $H^1(M,\R)=0$. Then the (naive\footnote{We call the naive real homotopy type the quasi-isomorphism type of the dg commutative algebra of (PL or smooth) forms. Note that in the non-simply connected case this definition is not the correct one, one should rather consider the real homotopy type of the universal cover with the action of the fundamental group. We do not consider this better notion here, and in this paper ``real homotopy type'' shall always refer to the naive real homotopy type. }) real homotopy type of $\Conf_\bullet(M)$ depends only on the (naive) real homotopy type of $M$.
\end{cor}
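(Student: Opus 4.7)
The plan is to apply the main theorem to replace $\Omega_{PA}(\FM_M)$ by the combinatorial model $\stG_M^\notadp$ (or by $\stG_M$ in the parallelizable case). This model depends on $M$ solely through the cohomology $H^\bullet(M,\R)$ equipped with its Poincar\'e duality pairing and through the partition function $Z_M \in \GC_M$, taken up to gauge equivalence. Since the first datum is patently a real-homotopy invariant of $M$, the task reduces to showing that $Z_M$ itself is determined, up to gauge, by the real homotopy type of $M$.

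The crux will be a degree count in $\GC_M$. A connected graph with $v$ internal vertices, $e$ edges, $n$ decorations of total cohomological degree $d$ and loop order $L = e-v+1$ has $\GC_M$-degree $d - v + (L-1)(D-1)$, and a Maurer--Cartan contribution forces this to equal $1$. Combining the resulting identity $d = v + 1 - (L-1)(D-1)$ with the valence bound $2e+n \geq 3v$ (equivalently $n \geq v-2L+2$) and with the inequality $d \geq 2n$ forced by the hypothesis $H^1(M,\R) = 0$ (so that every decoration has degree $\geq 2$), one obtains a two-sided constraint on $v$ that is incompatible with $L \geq 1$ for all $D \geq 4$, save for the single remnant case $D = 4$, $L = 2$, $v = 2$, $e = 3$, $d = 0$. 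The only graph realizing this remnant is the undecorated theta, which vanishes in $\GC_4$ by the $\Z/2$ automorphism swapping two of its three parallel edges. The hypothesis $H^1(M,\R) = 0$ is used crucially in the case $L = 1$: without it one would find many decorated cycles, but with it the constraints collapse to a single-vertex tadpole, which is excluded by the no-tadpole convention defining $\stG_M^\notadp$.

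Once $Z_M$ is known to be concentrated on trees, I would invoke the standard fact (summarized in the paper's introduction) that the tree part of such a Maurer--Cartan element is gauge equivalent to the $L_\infty$-structure on $\tilde H(M)$ obtained by homotopy transfer from any commutative model of $M$, and that two transfers from quasi-isomorphic models yield gauge-equivalent Maurer--Cartan elements. Hence $Z_M$, and therefore the quasi-isomorphism type of $\stG_M^\notadp$, depends on $M$ only through the real homotopy type of $M$, establishing the corollary. The principal obstacle is the degree count itself: in particular the borderline case $D = 4$, $L = 2$, which requires the theta-graph symmetry vanishing, and the case $L = 1$, which requires the combined use of $H^1(M,\R) = 0$ and the no-tadpole convention.
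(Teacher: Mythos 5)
Your overall strategy is the same as the paper's: reduce to the question of whether the Maurer--Cartan element $z_M\in\GC_M$ is determined, up to gauge, by the real homotopy type of $M$, show that $z_M$ is concentrated on trees by a degree count in $\GC_M^{\geq 3}$, and then invoke the relationship between tree-level MC elements, cyclic $\mathsf{C}_\infty$ structures and the real homotopy type. However, there are two issues worth flagging.

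\emph{Off-by-one in the degree constraint.} Your formula for the $\GC_M$-degree of a graph, $d - v + (L-1)(D-1)$, is correct (it equals $(D-1)e - Dv + d$), but you then impose that Maurer--Cartan contributions have degree $1$, whereas in the paper's convention the Lie bracket on $\GC_{H^\bullet(M)}$ has degree $1$, so Maurer--Cartan elements have degree $0$. You can check this against the summand $z_0$ of $z_M$: it has $v=1$, $e=0$, $L=0$, $d=D$, so your formula gives $D-1+(-1)(D-1)=0$, not $1$. With the correct constraint $d = v - (L-1)(D-1)$, combined with $d\geq 2n$, $n\geq v-2L+2$ and $d\geq 0$, you obtain $3(L-1)\leq v \leq L-1$ for $D=4$ (forcing $L\leq 1$ and then $v\leq 0$, impossible) and $v\leq(L-1)(5-D)\leq 0$ for $D\geq 5$ (also impossible). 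There are no exceptional cases at all for $D\geq 4$ and $H^1=0$, which is exactly what the paper's lemma asserts. Your theta graph at $D=4, L=2$ and the single-vertex loop at $L=1$ are artefacts of the wrong constraint (they sit in degree $1$ of $^*\GC$, where a degree-$0$ partition function must vanish anyway). Your arguments for why these artefacts vanish --- the $\Z/2$ edge-swap killing the theta for even $D$, the tadpole exclusion --- are not wrong, but they are patches over the initial error and hide how clean the paper's actual count is.

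\emph{Gloss on the final step.} The degree-$0$ trees in $\GC_M^{\geq 3}$ encode a \emph{cyclic} $\mathsf{C}_\infty$-structure on $H^\bullet(M)$ (the Poincar\'e pairing is built into the decorations), not merely an $L_\infty$- or $\mathsf{C}_\infty$-structure on $\tilde H(M)$. The paper is careful to pass through $\GC_M^{\geq 3,\mathrm{tree}}\twoheadrightarrow \GC_M^{\Lie}$, identify MC elements there with cyclic $\mathsf{C}_\infty$-structures, and then prove (following Hamilton--Lazarev \cite{HL}) that the real homotopy type of $M$ determines the cyclic $\mathsf{C}_\infty$-structure up to gauge. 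What you describe as ``the standard fact \dots that two transfers from quasi-isomorphic models yield gauge-equivalent Maurer--Cartan elements'' is precisely this non-trivial cyclic invariance statement, not ordinary homotopy transfer; it needs to be invoked (or reproved) explicitly, as the paper does via the map $\GC_M^{\Lie}\to\Harr(\overline{H^\bullet(M)},H^\bullet(M))$.
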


For $D=2$ the analogous statement is empty, as there is only one connected manifold satisfying the assumption.
%For $D=3$ the statement of the Corollary is false, though we shall not show this here.
If we replace the condition $H^1(M,\R)=0$ by the stronger condition of simple connectivity, the statement is also true in dimension 3, but for the trivial reason that by the Poincar\'e conjecture there is only one simply connected manifold $M$ in dimension 3.
Hence the above result also solves the real version of the long standing question in algebraic topology of whether for simply connected $M$ the rational homotopy type of the configuration space of points on $M$ is determined by the rational homotopy type of $M$, cf. \cite[Problem 8, p.518]{FHT}

\begin{rem}
Our result also shows that the ``perturbative AKSZ''-invariant $Z_M$ is at least as strong as the invariant of $M$ given by the totality of the real homotopy types of the configuration spaces of $M$, considered as right $E_D$-modules. The latter ``invariant'' is the data entering the factorization or ``manifoldic'' homology \cite{AF, MT} and the Goodwillie-Weiss calculus \cite{GW} (over the reals). 
Conversely, from the fact that the models $\stG_M$ encode the real homotopy type of configuration spaces one may see that the expectation values of the perturbative AKSZ theories on $M$ may be expressed through the factorization homology of $M$.
However, we will leave the physical interpretation to forthcoming work and focus here on the algebraic-topological goal of providing models for configuration spaces.
\end{rem}

\begin{rem}
After the first version of this article appeared on the arXiv, Idrissi \cite{I} obtained results very similar to ours by showing that for simply connected closed oriented manifolds the Lambrechts-Stanley dg model \cite{LS} is actually a real model of $\Conf_n(M)$. 
We sketch in Appendix \ref{sec:LS model and ours} how this latter statement can also be obtained as a consequence of our main results.
\end{rem}

\subsection*{Plan of the paper}

In Section \ref{sec:FM_M} we introduce the spaces $\FM_M$, the compactifications of configuration spaces of points on a smooth manifold $M$ ($D= \dim M$)  and its semi-algebraic realizations and adapt results in the literature to construct the propagator.

Starting with the framed case, in Section \ref{sec:graphs} we construct the first graph complex $\pdGra_M$ (a Hopf $\pdGra_D$-comodule) and construct the map into $\Omega_{PA}(\FM_M)$ which is not yet a quasi-isomorphism.
In Section \ref{sec: twisting} we use operadic twisting to obtain the graph complex $\stG_M$ and in Section \ref{sec:main proof} we show that $\stG_M$ is indeed a model for the real homotopy type of $\FM_M$ as a right $\FM_D$-module.

In Section \ref{sec:non-framed} we construct a no-tadpole variant of the graph complex to deal with the case where $M$ is not parallelized and show that it models the real homotopy type of the collection of topological spaces $\FM_M$, concluding the proof of Theorem \ref{thm-intro:main}.

The next goal is to study the dependence of the homotopy type of the configuration spaces on the base manifold. In Sections \ref{sec:Lambrechts and Stanley} and \ref{sec:proof of conjecture} we study the partition function $Z_M$ that gives rise to the differential in $\stG_M$ and we show that it is gauge equivalent to one vanishing on graphs containing $\leq 2$-valent vertices. We conclude that in good conditions the real homotopy type of $M$ can be recovered from the tree piece of the graph complex, thus proving Corollary \ref{cor-intro}.

Finally, in the last section we construct a graphical model of configuration spaces of framed points in $2$-dimensions, together with the action of the framed little disks operad.

\subsection{Notations and conventions.}

Throughout the text all algebraic objects (vector spaces, algebras, operads, etc) are differential graded (or just dg) and are defined over the field $\mathbb R$.

We use cohomological conventions, i.e. all differentials have degree $+1$. We use the language of operads and follow mostly the conventions of the textbook \cite{lodayval}.
One notable exception is that we denote the $k$-fold operadic (de-)suspension of an operad $\op P$ by $\La^k\op P$.

%$\Graphs$ edges have negative degree, $Chains$ have negative degree and forms have positive degree, like $\pdGraphs$.

\subsection{Acknowledgments}
We would like to thank Pascal Lambrechts for useful remarks and references and Najib Idrissi, Nils Prigge and Victor Turchin for valuable discussions and for pointing out some mistakes in the original version.
Both authors have been supported by the Swiss National Science Foundation, grant 200021\_150012, by the NCCR SwissMAP funded by the Swiss National Science Foundation, and the ERC starting grant 678156 (GRAPHCPX).

\section{Compactified configuration spaces}\label{sec:FM_M}

\subsection{Semi-algebraic Manifolds}
%Throughout the paper we will be dealing with smooth manifolds for which we wish to consider certain kinds of fiber integrals that are not guaranteed in general to produce smooth forms. To go around this obstacle we consider semi-algebraic sets.

Given a compact semi-algebraic set $X$ one can consider its dgca of piecewise semi-algebraic (PA) forms, $\Omega_{PA}(X)$, which is quasi-isomorphic to Sullivan's dgca of piecewise polynomial forms \cite{HLTV, KS}.

Dually, one can also consider its complex of semi-algebraic chains, that we denote by $Chains(X)$, which is also quasi-isomorphic to the usual complex of singular chains.

By the Nash--Tognoli Theorem \cite{Tog} (see also \cite[section 14]{BCR}), any smooth compact manifold is diffeomorphic to a component of a non-singular algebraic subset of $\mathbb R^N$ for some $N$. 
In particular, any such manifold can be realized as a smooth semi-algebraic (i.e., Nash-)submanifold of Euclidean space.
Throughout this paper whenever we consider a closed smooth manifold $M$ we will consider implicitly a chosen such realization of $M$ as a Nash submanifold of $\R^N$.

We refer to \cite{BCR} for an introduction to real algebraic geometry. An overview is also contained in the introductory sections of \cite{HLTV}.

	Even though all manifolds considered in this paper will be smooth, it is not sufficient for our purposes to consider the de Rham complex. The main reason for this is that we would like to consider fiber integration over non-smooth fiber bundles $E\to B$. 
	Nonetheless, the relevant bundles will be SA (semi-algebraic) bundles \cite{HLTV} and, for such bundles, there is a pushforward map $\Omega_{min}(E) \to \Omega_{PA}(B)$, where $\Omega_{min}(M) \subset \Omega_{PA}(M)$ is the (non-quasi-isomorphic) subalgebra of minimal forms. 
	
	While this pushforward cannot be naturally extended to the whole space of PA forms $\Omega_{PA}(E)$, as described in Appendix \ref{app:PA}, we can consider a subalgebra of trivial forms $\Omega_{triv}(E)$, sitting between $\Omega_{min}(E)$ and $\Omega_{PA}(E)$ and quasi-isomorphic to $\Omega_{PA}(E)$, such that the pushforward extends naturally to a map $\Omega_{triv}(E) \to \Omega_{PA}(B)$.

\subsection{Configuration spaces of points in $\mathbb R^D$}

Let $D$ be a positive integer. We will use the Fulton-MacPherson topological operad $\FM_D$ that was introduced by Getzler and Jones \cite{GJ}. Its $n$-ary space $\FM_D(n)$ is a suitable compactification of the quotient of the configuration space 
$%\{(x_1,\dots,x_n) \in \mathbb R^D |  i\neq j \Rightarrow x_i \neq  x_j \}
\Conf_n(\R^D)/(\mathbb{R}_{>0} \ltimes \mathbb{R}^D) $, with the Lie group $\mathbb{R}_{>0} \ltimes \mathbb{R}^D$ acting by scaling and translations. For $n>1$ the spaces $\FM_D(n)$ are $Dn-D-1$ dimensional manifolds with corners whose  boundary strata represent sets of points getting infinitely close.

The first few terms are
\begin{itemize}
	\item $\FM_D(0) = \{*\}$,\footnote{We work with the unital version of the Fulton-MacPherson operad.}
	%\footnote{We work with the non-unital version of the Fulton-MacPherson operad. However, see Remark \ref{rem:zeroaryop}.}
	\item $\FM_D(1) = \{*\}$,
	\item $\FM_D(2) = S^{D-1}$.
\end{itemize}
The operadic composition $\circ_i$ is given by inserting a configuration at the boundary stratum at the point labeled by $i$.
A thorough study of these operads can be found in \cite{LV}.

The operad $\FM_D$ can be related to a shifted version of the homotopy Lie operad via the operad morphism

\begin{equation}\label{eq:L_infty to FM_D} 
 \La^{D-1} L_\infty \to Chains(\FM_D),
\end{equation}
given by sending the generator $\mu_n \in \La^{D-1} L_\infty(n)$ to the fundamental chain of $\FM_D(n)$, i.e. the semi-algebraic chain corresponding to $\FM_D(n)$ as a submanifold of itself.\footnote{Recall that due to our cohomological conventions these spaces live in non-positive degree. In particular, the generator $\mu_n\in L_\infty$ has degree $2-n$.}

\subsection{Configuration spaces of points on a manifold}
Let $M$ be a closed smooth oriented manifold of dimension $D$. We denote by $\Conf_n(M)$, the configuration space of $n$ points in $M$. Concretely, $\Conf_n(M) = M^n	 -	\Delta$, where $\Delta$ is the fat (or long) diagonal $\Delta = \{ (m_1,\dots,m_n)\in M^n \mid  \exists i\neq j: m_i= m_j  \}$.

The Fulton-MacPherson-Axelrod-Singer compactification of $\Conf_n(M)$ is a smooth manifold with corners $\FM_M(n)$ whose boundary strata correspond to nested groups of points becoming ``infinitely close", cf. \cite{Si} for more details and a precise definition. Since the inclusion $\Conf_n(M) \hookrightarrow \FM_M(n)$ is a homotopy equivalence we work preferably with $\FM_M(n)$ as these spaces have a richer structure.

\begin{convention}[Semi-algebraicity of $\FM_M(n)$ ]
The choice of semi-algebraic structure on $\FM_M(n)$ is done in a way compatible with the one from $M$ as follows: 
Let us consider the chosen semi-algebraic realization of the manifold $M$ in $\mathbb R^N$ for some $N$. 

For $1\leq i\ne j\leq n$, let $\theta_{i,j} \colon \Conf_n(M) \to S^{N-1}$ sending $(x_1,\dots,x_n)$ to $\frac{x_i-x_j}{\|x_i - x_j\|_{\mathbb R^N}}$.

For $1\leq i\ne j\ne k\leq n$ we define $d_{i,j,k} \colon \Conf_n(M) \to (0,+\infty)$ by $d_{i,j,k}((x_1,\dots,x_n))=\frac{\|x_i - x_j\|}{\|x_i - x_k\|}$.

Considering all possibilities of $i$, $j$ and $k$, we have defined a natural embedding $$\iota\colon \Conf_n(M)\to M^n \times (S^{N-1})^{n(n-1)} \times [0,+\infty]^{n(n-1)(n-2)}.$$  

We define $\FM_M(n)$ as the closure $\iota(\Conf_n(M))$ inheriting thus a semi-algebraic structure.
\end{convention}

\begin{rem}[SA bundles]

For every $m>n$ there are various projection maps $\FM_M(m) \to \FM_M(n)$ corresponding to forgetting $m-n$ of the points. These maps are not smooth fiber bundles, but they are SA (semi-algebraic) bundles \cite{HLTV}, which allows us to consider pushforwards (fiber integration) of forms along these maps. 

The proof of this fact is a straightforward adaptation of the proof of the same fact for $\FM_D$ done in \cite[Section 5.9]{LV}.
 In this case one starts instead by associating to a configuration in $\FM_M(n)$ a configuration of nested disks in $M$.

\end{rem}

\begin{convention}\label{convention}
From here onward, we fix representatives of the cohomology of $M$, i.e., we fix an embedding
\begin{equation}\label{equ:idef}
\iota:
H^\bullet(M) \hookrightarrow \Omega_{triv}^\bullet(M)
\end{equation}
that yields a right inverse of the projection from closed forms to cohomology.
\end{convention}

\subsubsection{The diagonal class}

 Since $M$ is compact and oriented, the pairing $\int \colon H^\bullet(M)\otimes H^\bullet(M)\to \mathbb R$, $(\omega,\nu) \mapsto \int_M\omega\wedge\nu$ given by Poincar\'e duality is non-degenerate. 
 We shall also consider a version of this pairing which is antisymmetric for odd $D$,
 \[
\langle \omega,\nu\rangle=
(-1)^{D\, \deg(\nu)} \int_M\omega\wedge\nu\,.
 \]

The diagonal map $\Delta \colon M\to M\times M$ defines an element in $H_\bullet (M\times M)$ and its dual under Poincar\'e duality is called the diagonal class, which is also denoted by $\Delta \in H^\bullet(M\times M)= H^\bullet(M)\otimes H^\bullet(M)$.

%Alternatively it can be defined to be the identity map in $\End(H^\bullet(M))$ under the identification  $\End(H^\bullet(M)) = (H^\bullet(M))^* \otimes H^\bullet(M) = H^\bullet(M)\otimes H^\bullet(M)$ given by the non-degeneracy of the pairing.
If we pick a homogeneous basis $e_1,\dots, e_k$ of $H^\bullet(M)$, we have $\Delta= \sum_{i,j}g^{ij} e_i \otimes e_j$, where $(g^{ij})$ is the matrix inverse to the Poincar\'e duality pairing $\langle-,-\rangle$. Alternatively, this can also be written as $\Delta= \sum_{i}(-1)^{\deg(e_i)} e_i \otimes e_i^*$, where $\{e_i^*\}$ is the dual basis of $\{e_i\}$ with respect to $(-,-)$. 

In $\FM_M(2)$, if we consider the case in which the two points come infinitely close to one another, we obtain a map $\partial \FM_M(2)\to M \cong \Delta \subset M\times M$ which is a sphere bundle (with $S^{D-1}$ fibers). Notice that $\partial \FM_M(2)$ can be identified with $ST(M)$, the sphere tangent bundle of $M$.

The following proposition can essentially be found in the literature (c.f. for instance \cite[Section 3]{BC}, \cite{CR} and \cite[Lemma 2]{CM}), we only have to apply minor modifications in order to work in the semi-algebraic setting.
\begin{prop}\label{prop:angular form}
Let $p_1\colon \FM_M(2) \to M$ (respectively $p_2 \colon \FM_M(2) \to M$) be the map that forgets the point labeled by $2$ (resp. $1$) from a configuration.
There is a form $\phi_{12}\in \Omega_{triv}^{D-1}(\FM_M(2))$ %\footnote{ The form $\phi_{12}$ can be chosen to belong to the smaller algebra $\Omega_{triv}(\FM_M(2))$, as shown in Appendix \ref{app:PA}. This technical requirement is needed in order to be able to consider fiber integrals of $\phi_{12}$.}
 satisfying the following properties:
\begin{enumerate}[(i)]

\item $d\phi_{12} = p_1^* \wedge p_2^* (\Delta) = \sum_{i,j}g^{ij}p_1^*(e_i) \wedge p_2^*(e_j) \in \Omega_{triv}^D(\FM_M(2))$,

\item The fiber integral of the restriction of $\phi_{12}$ to $\partial \FM_M(2)$ is equal to $(-1)^D$. (We then say that this restriction is a global angular form.)  Additionally, if $D=2$, the restriction of $\phi_{12}$ to every fiber of the circle bundle yields a round volume form of that circle, with respect to some metric.

\item The form $\phi_{12}$ is symmetric with respect to the $\mathbb Z_2$ action induced by swapping points $1$ and $2$ for $D$ even and antisymmetric for $D$ odd.

\item For any $\alpha\in H^\bullet(M)$,
$$
\int_2  \phi_{12} p_2^* \iota(\alpha) =0
$$
where $\iota$ is as in \eqref{equ:idef} and the integral is along the fiber of $p_1$, i.e., one integrates out the second coordinate.

\end{enumerate}

\end{prop}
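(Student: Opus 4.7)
The plan is to construct $\phi_{12}$ in three stages: first ensuring (i) and (ii), then symmetrizing to obtain (iii), and finally modifying by a closed-form correction to achieve (iv).

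For (i) and (ii), the boundary $\partial\FM_M(2)$ is naturally identified with the sphere tangent bundle $ST(M)$. Classical Chern--Weil theory, adapted to the PA-setting via \cite{HLTV}, produces a global angular form $\psi\in\Omega_{PA}^{D-1}(ST(M))$ of unit fiber integral, with $d\psi=-\pi^*\chi$ for some PA representative $\chi$ of the Euler class of $TM$; when $D=2$, $\psi$ may additionally be chosen to restrict to a round volume form on each fiber. Extend $\psi$ to a form $\tilde\phi\in\Omega_{PA}^{D-1}(\FM_M(2))$ via a collar of the boundary. The restriction of the diagonal class under $\Delta\hookrightarrow M\times M$ is the Euler class of $TM$, so the closed $D$-form $p_1^*\wedge p_2^*(\Delta)-d\tilde\phi$ has a boundary value that is exact; using in addition that the diagonal class is exact on $Conf_2(M)\hookrightarrow \FM_M(2)$ (the diagonal of $M\times M$ being disjoint from $Conf_2(M)$), a relative-cohomology argument for the pair $(\FM_M(2),\partial\FM_M(2))$ produces a primitive vanishing on the boundary, and adding it to $\tilde\phi$ yields $\phi^{(1)}_{12}$ satisfying (i) and (ii). Property (iii) is then achieved by symmetrization: the involution swapping the two labels acts on $\partial\FM_M(2)$ by the fibrewise antipodal map, under which a global angular form is $(-1)^D$-equivariant; replacing $\phi^{(1)}_{12}$ by its $(-1)^D$-symmetrization (having chosen $\psi$ with the right parity from the outset) gives $\phi^{(2)}_{12}$ satisfying (i)--(iii).

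For (iv), introduce the fiber-integration operator
\[
T\colon\Omega_{PA}(M)\to\Omega_{PA}(M),\qquad T(\alpha):=\int_2\phi^{(2)}_{12}\cdot p_2^*\alpha,
\]
of degree $-1$. A Stokes-theorem computation on the SA-bundle $p_1$, using (i) in the bulk and the unit fiber integral of $\psi$ from (ii) on the boundary of each fiber, yields (up to signs) the chain-homotopy identity
\[
dT(\alpha)\pm T(d\alpha)=\alpha-\Pi(\alpha),\qquad \Pi(\alpha):=\sum_{i,j}g^{ij}e_i\cdot\int_M e_j\wedge\alpha.
\]
Since $\Pi\iota(\beta)=\iota(\beta)$, the form $T(\iota(\beta))$ is closed for every $\beta\in H^\bullet(M)$. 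A further cohomological argument establishes $[T(\iota(\beta))]=0$ in $H^{\bullet-1}(M)$, so one may choose primitives $\kappa_\beta$ depending linearly on $\beta$ and realize them as $\int_2\eta\cdot p_2^*\iota(\beta)=\kappa_\beta$ for a suitable $\eta\in\Omega_{PA}^{D-2}(\FM_M(2))$ vanishing on $\partial\FM_M(2)$. Setting $\phi_{12}:=\phi^{(2)}_{12}-d\eta$ preserves (i)--(iii) and enforces (iv).

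The main obstacle is this last stage: showing that $T(\iota(\beta))$ is exact (not merely closed), and then constructing the correcting primitive $\eta$ linearly in $\beta$ and compatibly with the symmetry from the previous stage. This is the classical gauge-fixing of the Kontsevich--Cattaneo--Mn\"ev propagator, and uses the residual freedom to alter $\phi^{(2)}_{12}$ by closed forms vanishing on the boundary without disturbing (i)--(iii).
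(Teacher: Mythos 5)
Your construction of the propagator in stages (i)--(ii), then (iii) by symmetrization, then (iv) by a correction term is structurally close to the paper's. The differences are worth discussing, and one of them is a genuine gap.

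For (i)--(iii), your approach stays on the manifold with boundary $\FM_M(2)$ and invokes relative cohomology, whereas the paper observes that $d(\rho\psi)$ is basic on the boundary and hence \emph{descends to a form on the closed manifold $M\times M$}; the comparison with $p_1^*\wedge p_2^*(\Delta)$ then takes place in $H^D(M\times M)$, where a direct Poincar\'e-duality computation ($\int_{M\times M}d(\rho\psi)\,\omega=\int_\Delta\omega$) shows the two forms are cohomologous, and the resulting primitive is pulled back along $\pi:\FM_M(2)\to M\times M$. This descent avoids the manipulations you need with the long exact sequence of the pair $(\FM_M(2),\partial\FM_M(2))$. Your route is not wrong, but the step ``a relative-cohomology argument $\ldots$ produces a primitive vanishing on the boundary'' glosses over real work: vanishing in $H^D(\FM_M(2))$ and exactness of the boundary restriction only tell you the class lifts from $H^{D-1}(\partial)$, so you must further modify $\tilde\phi$ by a boundary-supported form to kill the relative class. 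The paper's choice to symmetrize $\psi$ at the start (rather than $\phi$ at the end) also ensures at once that all the subsequent corrections are basic, hence do not disturb (ii) and (iii).

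For (iv), there is a genuine gap. Your chain-homotopy identity $dT(\alpha)\pm T(d\alpha)=\alpha-\Pi(\alpha)$ and the resulting closedness of $T(\iota(\beta))$ are correct, but the crucial assertion ``a further cohomological argument establishes $[T(\iota(\beta))]=0$'' is not proved, and it is not automatic. Nothing in the preceding construction forces the closed form $T(\iota(\beta))\in\Omega^{|\beta|-1}(M)$ to be exact when $H^{|\beta|-1}(M)\neq 0$. This matters because your only permissible modification to enforce (iv) without breaking (i)--(iii) is $\phi_{12}\mapsto\phi_{12}-d\eta$ with $\eta|_{\partial\FM_M(2)}=0$, and this changes $T(\iota(\beta))$ precisely by the exact form $d(\int_2\eta\, p_2^*\iota(\beta))$; so if $[T(\iota(\beta))]\neq 0$ your strategy cannot succeed. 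The paper's route (following Cattaneo--Mn\"ev) bypasses the issue entirely: it replaces $\phi_{12}$ by
\[
\phi_{12}-\int_3\phi_{13}\,p_{23}^*\Delta-\int_3\phi_{23}\,p_{13}^*\Delta+\int_{3,4}\phi_{34}\,(p_{13}^*\Delta)(p_{14}^*\Delta),
\]
an explicit correction that is \emph{not} visibly of the form $\phi_{12}-d\eta$, and then one verifies (iv) directly by expanding the fiber integrals and using the properties of the diagonal class. You should either prove the exactness of $T(\iota(\beta))$ (which I do not believe holds in general) or switch to this explicit correction.

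A minor omission: the Proposition also asserts that $\phi_{12}$ may be taken in the smaller algebra $\Omega_{triv}(\FM_M(2))$, which is required later for fiber integration; this technical point (handled in Appendix B of the paper) is not addressed in your proposal.
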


Notice that the form $\phi_{12} \in \Omega_{triv}^{D-1}(\FM_M(2))$ is also called the propagator in the literature. More generally, we consider the forms $\phi_{ij}\in \Omega_{triv}^{D-1} (\FM_M(n))$ to be $p_{ij}^*(\phi_{12})$, where $p_{ij} \colon \FM_M(n) \to \FM_M(2)$ is the projection map that remembers only the points labeled by $i$ and $j$.

\begin{proof}
Let $\psi  \in \Omega_{triv}^{D-1}(\partial \FM_M(2))$ be a global angular form of the sphere bundle. Such a form always exists, see for example \cite{BT} where such construction is made in the smooth case, but the argument can be adapted to the semi-algebraic case. It is also shown in \cite{BT} that for a circle bundle the global angular form can be chosen to restrict to the standard volume form on each fiber. Moreover, the differential of such a form is basic (it is the pullback of the Euler class of the sphere bundle). Let $E$ be a collar neighborhood of $\partial \FM_M(2)$ inside $\FM_M(2)$. (See \cite[Lemma VI.1.6]{Shiota} for the existence of a semi-algebraic (even Nash) collar.) Let us extend the form $\psi$ to $E$ by pulling it back along the projection $E \to \partial \FM_M(2)$. 
We can consider a semi-algebraic cutoff function $\rho \colon \FM_M(2) \to \mathbb R$ such that $\rho$ is constant equal to zero outside of $E$ and is constant equal to $1$ in some open set $U$ such that $\partial \FM_M(2) \subset U \subset E$. We can therefore consider the well defined form $\rho \psi \in \Omega_{triv}^{D-1}( \FM_M(2))$.

Since $d(\rho\psi)\big|_{\partial \FM_M(2)}= d\psi$ is basic, the form $d(\rho \psi)\in \Omega_{triv}^{D}( \FM_M(2))$ induces a form in $\Omega_{triv}^D(M\times M)$, still denoted by $d(\rho \psi)$. This form is clearly closed, but not necessarily exact, as $\rho \psi$ itself might not extend to the boundary. 

Let $\omega \in H^\bullet(M\times M) \subset \Omega_{triv}(M\times M) $. Then, we have
\begin{align}
\int_{M\times M} \omega \ d(\rho \psi) 
% = (-1)^D \int_{M\times M} d(\rho \psi) \omega 
= \int_{\FM_M(2)}  \omega\   d(\rho \psi)
= (-1)^D  \int_{\partial \FM_M(2)}  \omega \rho \psi = \int_{\Delta\cong M} \omega .
\end{align}

It follows that the cohomology class of $d(\rho \psi)$ is the Poincar\'e dual of the diagonal $\Delta\cong M \subset M\times M$. Therefore $p_1^* \wedge p_2^*(\Delta)$ and $d(\rho \psi)$ are cohomologous in $\Omega_{triv}^{D}(M\times M)$. It follows that there exists a form $\beta \in \Omega_{triv}^{D-1}(M\times M)$ such that $d\beta = p_1^* \wedge p_2^*(\Delta) - d(\rho \psi)$. We define the form $\phi_{12} \in \Omega_{triv}^{D-1}(\FM_M(2))$ to be $\pi^*\beta + \rho \psi$, where $\pi \colon \FM_M(2)\to M\times M$ is the projection. It is clear that $\phi_{12}$ satisfies property \textit{(i)} and since the restriction of $\pi^*\beta$ to the boundary is a basic form and properties \textit{(ii)
} is preserved.

To ensure \textit{(iv)} one can replace the $\phi_{12}$ constructed so far by
\[
\phi_{12} - \int_3 \phi_{13}p_{23}^*\Delta - \int_3 \phi_{23}p_{13}^*\Delta +
\int_{3,4} \phi_{34}(p_{13}^*\Delta )(p_{24}^*\Delta)
\]
where $p_{ij}$ is the forgetful map, forgetting all but points $i$ and $j$ from a configuration of points. We refer the reader to \cite{CM} where more details can be found. (The reference contains a construction of the propagator in the smooth setting, but the trick to ensure \textit{(iv)} is verbatim identical in our semi-algebraic setup.)

Finally, we can (anti)symmetrise $\phi_{12}$ to ensure it satisfies property \textit{(iii)}, while preserving the other properties.
\end{proof}

\begin{rem}\label{rem:constant angular form}
For $M$ parallelizable, we can (and will) require a stronger version of property \textit{(ii)}.
A parallelization is a choice of a trivialization $\partial \FM_M(2) \simeq M \times S^{D-1}$ and given such parallelization, in the proof of the previous Proposition we can take $\psi = \pi^*(\omega_{S^{D-1}})\in \Omega_{triv}^{D-1}(M \times S^{D-1})$, the pullback of the standard volume form of $S^{D-1}$ via the projection $\pi\colon M \times S^{D-1} \to S^{D-1}$. 
By construction of $\phi_{12}$ the restriction of $\phi_{12}$ to $\partial\FM_M(2)$ has the form
\begin{equation}
 \label{equ:psietadef}
\phi_{12}\mid_{\partial\FM_M(2)} = \psi + p^*\eta
\end{equation}
where $p:\partial\FM_M(2)\to M$ is the projection to the base and $\eta\in \Omega_{triv}(M)$ is some form on the base.
Note in particular that from the closedness of $\psi$ and condition \textit{(i)} above it follows that 
\begin{equation}
 \label{equ:deta}
d\eta = \Delta_M,
\end{equation}
where $\Delta_M\in\Omega_{triv}^{D}(M)$ denotes the pullback of $\Delta$ along the diagonal map (i.e. the wedge product of its components). 
\end{rem}

\section{The Cattaneo-Felder-Mnev graph complex and operad}\label{sec:graphs}

Let $n$, $N$ and $D$ be positive integers and let $V$ be an $N$-dimensional graded vector space with a non-degenerate  pairing of degree $-D$; $\langle\ ,\rangle \colon V\otimes V\to \mathbb R[-D]$.
We require that for all homogeneous $x,y\in V$ of degrees $k$ and $l$ the pairing satisfies the (anti-)symmetry condition $\langle x, y\rangle = (-1)^{kl+D} \langle y,x\rangle$. Moreover, we assume $V$ to be ``augmented" in the sense that we are given also a canonical decomposition $V = \mathbb R \oplus \overline{V}$.
One should keep in mind the example of the Poincar\'e pairing on the cohomology of a connected $N$-manifold.

Let $e_2,e_3\dots, e_N$ be a graded basis of $\overline{V}$ and for convenience of notation we denote $e_1=1\in \mathbb R$. We consider the free graded commutative algebra generated by symbols $s^{ij}$ of degree $D-1$, where $1\leq i, j \leq n$, $s^{ij}=(-1)^D s^{ji}$, and symbols $e_1^j,\dots, e_N^j$, $j=1,\dots n$ of the same degrees as the elements of the basis $e_1,\dots, e_N$. We define a  differential on it by the following rules:
\begin{align*}
d e_\alpha^j &= 0 \\
d s^{ij} &= \sum_{\alpha,\beta} g^{\alpha\beta} e_\alpha^i e_\beta^j
\end{align*}
where $g^{kl}$ is the inverse of the matrix describing the pairing on $V$. (So $\sum_{\alpha,\beta} g^{\alpha\beta} e_\alpha^i  e_\beta^j$ is the ``diagonal class''.)

 We define the dgca $\pdGra_V(n)$ as the quotient of this algebra by the sub-dgca generated by elements of the form $e^j_1 -1$.
Notice that there is a natural right action of the symmetric group $
\mathbb S_n$ on $\pdGra_V(n)$ by permuting the superscript indices (the $i$ and $j$ above) running from $1$ to $n$. 

\begin{rem}
All definitions are independent of the choice of graded basis of $V$ and can be given in a basis-free way.
\end{rem}

\begin{rem}
The notation $\pdGra_V(n)$ stands for ``pre-dual graphs" as one may represent elements of $\pdGra_V(n)$ as linear combinations of decorated directed graphs with $n$ vertices and an ordering of the edges. The decorations are elements of $V$ that may be attached to vertices, see Figure \ref{fig:pdGra}. Each such graph corresponds to monomial in $\pdGra_V(n)$, an edge between vertices $i$ and $j$ corresponds to one occurrence of $s^{ij}$ and a decoration by an 
element $e_\alpha \in V$ at vertex $j$ corresponds to one occurrence of $e_\alpha^j$. Directions of the edges and their ordering might be ignored, keeping in mind that then a graph is only well defined up to a $\pm 1$ pre-factor. %The fact that $\pdGra_V(n)$ is given by a quotient will be interpreted as every vertex being decorated by the unit element.
Notice that while both tadpoles and double edges are allowed, for (anti-)symmetry reasons one has that $s^{ii}=0$ if $D$ is odd and that $s^{ij}s^{ij} =0$ if $D$ is even.
\end{rem}

\begin{figure}[h] 
 \begin{tikzpicture}[scale=1,every loop/.style={}]

\node (1) at (1.4,1) [ext] {$1$};
\node (2) at (2.2,0.6) [ext] {$2$};
\node (3) at (2.8,2) [ext] {$3$};
\node (4) at (0.2,1.4) [ext] {$4$};

\node (w) at (2,1.6) [ext] {$e_2$};
\node (v) at (3,1.4) [ext] {$e_4$};
\node (q) at (1.3,0.2) [ext] {$e_4$};

\draw  (2)--(3);

\draw (4) --  (1);
\path (3) edge[loop] (3);

\draw[dotted]  (3)--(w);
\draw[dotted]  (v)--(3);
\draw[dotted]  (q)--(1);

%\path[->] (4) edge  [loop left] ();
%\draw [->] (4)  to [out=170,in=60] (4);

\node at (4.8,1) {$=\pm s^{41}e_4^1s^{23}s^{33}e_2^3e_4^3$};
\end{tikzpicture}
\caption{An example of a graph describing an element in $\pdGra_V(4)$.}\label{fig:pdGra}
\end{figure}
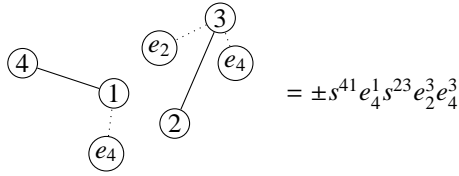

\subsection{Cooperadic comodule structure}

 \begin{defi}
Let $D$ be a positive integer. For $n\geq 2$, the space $\pdGra_D(n)$ is defined to be the free graded commutative algebra generated by symbols $s^{ij}$ in degree $D-1$, for $i\ne j$, quotiented by the relations $s^{ij} = (-1)^D s^{ji}.$
We set $\pdGra_D(0)=\pdGra_D(1)=\mathbb R$.
\end{defi}

As before, the spaces $\pdGra_D(n)$ can be seen as the span of undecorated graphs such that every edge has degree $D-1$.

\begin{prop}\label{prop:graphs cooperad}
The spaces $\pdGra_D(n)$ form a cooperad in dg commutative algebras. The cooperadic structure is given by removal (contraction) of subgraphs, i.e., for $\Gamma \in \pdGra(n)$, the component of $\Delta(\Gamma)$ in $\pdGra_D(k)\otimes \pdGra_D(i_1)\otimes \dots \otimes \pdGra_D(i_k)$ is 

\begin{equation}\label{eq:graphs coaction}
\sum \pm \Gamma'\otimes \Gamma_1\otimes \dots \otimes \Gamma_k,
\end{equation}
where the sum runs over all $k+1$-tuples $(\Gamma',\Gamma_1, \dots ,\Gamma_k)$ such that when each graph $\Gamma_i$ is inserted at the vertex $i$ of $\Gamma'$, there is a way of reconnecting the loose edges such that one obtains $\Gamma$.
\end{prop}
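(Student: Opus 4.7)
The plan is to exploit the fact that each $\pdGra_D(n)$ is freely generated as a graded commutative algebra by the edge symbols $s^{ij}$ (modulo $s^{ij}=(-1)^D s^{ji}$). Thus to define the cocomposition $\Delta\colon \pdGra_D(n) \to \pdGra_D(k) \otimes \pdGra_D(i_1) \otimes \cdots \otimes \pdGra_D(i_k)$ as a map of dg commutative algebras, it suffices to specify its values on the generators and extend multiplicatively. I would define $\Delta(s^{ij})$ as a sum over ordered set partitions $[n] = I_1 \sqcup \cdots \sqcup I_k$ with $|I_a|=i_a$: if $i,j \in I_a$, the summand is $1 \otimes \cdots \otimes s^{\tilde i \tilde j} \otimes \cdots \otimes 1$ (an edge inside block $a$, with $\tilde i, \tilde j$ the relative positions in $I_a$); if $i \in I_a$, $j \in I_b$ with $a\neq b$, the summand is $s^{ab} \otimes 1 \otimes \cdots \otimes 1$ (an edge at the outer level). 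Compatibility with the differential is automatic since $\pdGra_D$ carries trivial differential.

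Next I would check that this multiplicative extension agrees with the combinatorial formula \eqref{eq:graphs coaction}. Writing a graph $\Gamma$ as an ordered product of its edges, $\Delta(\Gamma)$ unfolds into a sum of tensor products of the images of each edge. Collecting terms, this is exactly a sum over ordered partitions of the vertex set together with an assignment of each edge of $\Gamma$ either to the outer graph (if its endpoints lie in different blocks) or to the inner subgraph of the block containing its endpoints. Such an assignment is precisely the data of a tuple $(\Gamma', \Gamma_1, \dots, \Gamma_k)$ together with a reconnection pattern recovering $\Gamma$, as required.

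The cooperad axioms split into three checks. Equivariance under the symmetric group actions is immediate from the naturality of the construction in the indexing sets. The counit axiom reduces to the observation $\pdGra_D(1) = \mathbb{R}$ (since no edges from a vertex to itself are admitted) together with a trivial verification on generators. Coassociativity is the main content: both iterated cocompositions, evaluated on an edge generator $s^{ij}$, produce a sum over two-level nested ordered partitions of $[n]$ whose summands evidently agree, and by multiplicativity this identity propagates to all of $\pdGra_D(n)$.

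The anticipated obstacle is the sign bookkeeping when $D$ is even, so that the edge generators have odd degree and the free commutative algebra is genuinely graded commutative: one must track the Koszul signs incurred by reordering edges when they are distributed across the factors of the tensor product, and verify that these signs agree on the two sides of the coassociativity identity. A clean way to bypass the explicit sign calculation is to observe that this cocomposition is the arity-wise transpose of the standard operadic composition on the graphs operad $\Gra_D$ (insertion of graphs at vertices with reconnection of loose edges, going back to Kontsevich), whose associativity is classical; the cooperad axioms on $\pdGra_D$ then follow by dualizing with respect to the natural pairing of basis graphs.
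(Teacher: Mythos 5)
Your proposal is correct, and it fills in details that the paper merely gestures at. The paper's own treatment of this proposition is a one-sentence remark: it specifies a sign convention via an ordering of oriented edges (labels of $\Gamma_1$ come before those of $\Gamma_2$, etc.) and declares the cooperad axioms a straightforward verification. You instead exploit the algebraic structure: since $\pdGra_D(n)$ is free graded commutative on the $s^{ij}$ (with the $\Z_2$-relation), you define $\Delta$ on generators and extend as an algebra morphism. This is genuinely cleaner, for two reasons. First, well-definedness on the quotient is a one-line check, $\Delta(s^{ji}) = (-1)^D\Delta(s^{ij})$. Second, and more importantly, the Koszul sign rule in the tensor product of graded algebras then produces \emph{automatically} exactly the signs the paper encodes by hand through edge orderings; and coassociativity reduces to a check on generators, where all but one tensor factor is the degree-zero unit so no sign bookkeeping arises at all. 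This is the observation that turns ``straightforward but tedious'' into actually straightforward. Your fallback (dualizing from Kontsevich's $\Gra_D$) is also valid since $\pdGra_D(n)$ is degreewise finite-dimensional; just note that in this paper $\Gra_D$ is \emph{defined} as the dual of $\pdGra_D$ in Section~4, so you'd want to cite the independent classical definition of the graphs operad rather than the paper's, to avoid apparent circularity. One phrasing nitpick: within a single component $\pdGra_D(k)\otimes\pdGra_D(i_1)\otimes\cdots\otimes\pdGra_D(i_k)$ of the cocomposition the vertex partition is already fixed by the shuffle datum of the composition product, so the ``sum over ordered set partitions'' really parametrizes the different components rather than multiple terms within one component; this doesn't affect the argument.
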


To obtain the appropriate signs one has to consider the full data of graphs with an ordering of oriented edges. In this situation the orientation of the edges of $\Gamma$ is preserved and one uses the symmetry relations on $\Gamma$ in such a way that for all $i=1,\dots,k$, the labels of the edges of the subgraph $\Gamma_{i}$ come before the labels of the edges of the subgraph $\Gamma_{i+1}$ and all of those come before the labels of the edges of the subgraphs $\Gamma'$. 
Notice that if one of the $i_j=0$, the cooperadic cocomposition is given by adding a disconnected vertex to $\Gamma'$ \cite[Section 2.2.1]{fressewillwacher}.
The cooperad axioms are a straightforward verification.

\begin{prop}\label{prop:GraV comodule}
The dg commutative algebras $\pdGra_V(n)$ for $n=1,2,\dots$ assemble to form a cooperadic right $\pdGra_D$ comodule $\pdGra_V$ in dg commutative algebras. 
\end{prop}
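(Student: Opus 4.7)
The plan is to define a coaction
\[
\Delta\colon \pdGra_V(n) \to \bigoplus_{k,\,i_1+\dots+i_k=n} \pdGra_V(k) \otimes \pdGra_D(i_1) \otimes \cdots \otimes \pdGra_D(i_k)
\]
mirroring the cooperad coaction of $\pdGra_D$ from Proposition \ref{prop:graphs cooperad}, and then verify three axioms: that $\Delta$ is a morphism of graded commutative algebras on each arity, that it is coassociative with respect to the cooperad structure of $\pdGra_D$, and that it is compatible with the differentials.

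Concretely, for $\Gamma \in \pdGra_V(n)$ I would set $\Delta(\Gamma) = \sum_P \pm\, \Gamma'_P \otimes \Gamma_1^P \otimes \cdots \otimes \Gamma_k^P$, summed over ordered set partitions $P$ of $\{1,\dots,n\}$ into non-empty blocks of sizes $i_1,\dots,i_k$. Here $\Gamma_a^P \in \pdGra_D(i_a)$ collects the edges of $\Gamma$ with both endpoints in block $a$ (relabeled within the block; decorations are dropped since $\pdGra_D$ has none), and $\Gamma'_P \in \pdGra_V(k)$ is the ``contracted'' graph on $k$ vertices retaining those edges of $\Gamma$ that connect distinct blocks, and carrying at each new vertex $a$ the product of the vertex decorations of $\Gamma$ belonging to block $a$. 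Signs are inherited from the edge-ordering convention used in Proposition \ref{prop:graphs cooperad}.

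Multiplicativity of $\Delta$ is essentially formal: the product of two graphs in $\pdGra_V(n)$ is the juxtaposition of their edges and their vertex decorations, and this operation commutes with the block decomposition for each fixed partition $P$. Coassociativity with the cooperad structure of $\pdGra_D$ reduces to matching the two natural ways to refine a partition (partition-then-subpartition of blocks versus a single finer partition), and follows by essentially the same combinatorial argument as the coassociativity of $\pdGra_D$ itself.

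The main step is the chain-map property $\Delta \circ d_{\pdGra_V} = (d_{\pdGra_V} \otimes \id) \circ \Delta$ (with $\pdGra_D$ carrying the trivial differential). The differential acts edge-by-edge by $ds^{ij} = \sum g^{\alpha\beta} e_\alpha^i e_\beta^j$. For an edge $s^{ij}$ with $i, j$ in distinct blocks $a, b$, the check is immediate: the edge lives in $\Gamma'$ as $s^{ab}$, its differential in $\pdGra_V(k)$ produces decorations at vertices $a, b$ of $\Gamma'$, and these are exactly the images of the decorations at vertices $i, j$ of $\Gamma$ under $\Delta$. For an edge within a single block $a$, the edge is placed into $\Gamma_a$ where the differential vanishes, and one must verify that this is consistent with the ``diagonal class $g^{\alpha\beta} e_\alpha^a e_\beta^a$ at vertex $a$'' that $\Delta(ds^{ij})$ produces on the $\Gamma'$ side. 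The hard part will be precisely this bookkeeping for internal edges, together with signs and the quotient by $e_1^j - 1$. I would attack it by direct computation, partition-by-partition; if a naive formula fails one can either introduce a diagonal-class correction term on the $\Gamma'$ side for each internal edge, or cross-check against the geometric origin of the coaction, namely the pull-back along the right $\FM_D$-action on $\FM_M$, whose chain-map property is automatic and pins the algebraic formula down.
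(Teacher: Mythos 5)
Your overall scaffolding (define a block-decomposition coaction, check it is an algebra map, coassociative, and a chain map) is the right shape, and the paper's proof follows the same outline. However, the coaction formula you propose is not the one the paper uses, and it is not a chain map — so there is a genuine gap, concentrated exactly where you suspected. Your rule ``edges within a block go to $\Gamma_a^P \in \pdGra_D(i_a)$, edges between blocks go to $\Gamma'_P \in \pdGra_V(k)$'' omits the terms in which an edge between two vertices collapsing to the same block becomes a \emph{tadpole} $s^{aa}$ in the outer factor $\Gamma'_P$. The cooperadic formula \eqref{eq:graphs coaction} is a sum over \emph{all} ways of reconnecting loose edges when the $\Gamma_a$ are reinserted, and since $\pdGra_V(k)$ (unlike $\pdGra_D(k)$) contains tadpoles, a within-block edge contributes two terms, not one. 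For $s^{12}$ the paper's coaction is precisely
\[
s^{12} \mapsto \underbrace{s^{12}\otimes 1 \otimes 1}_{\pdGra_V(2)\otimes\pdGra_D(1)^{\otimes 2}} \,+\, \underbrace{1\otimes s^{12}}_{\pdGra_V(1)\otimes\pdGra_D(2)} \,+\, \underbrace{s^{11}\otimes 1}_{\pdGra_V(1)\otimes\pdGra_D(2)},
\]
and your formula drops the third summand.

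This missing term is not optional: without it the chain-map property fails. Indeed, with $\pdGra_D$ carrying the zero differential, applying $d$ to your two terms gives only $\bigl(\sum g^{\alpha\beta}e_\alpha^1 e_\beta^2\bigr)\otimes 1\otimes 1$, whereas $\Delta(ds^{12})$ also contains the term $\bigl(\sum g^{\alpha\beta}e_\alpha^1 e_\beta^1\bigr)\otimes 1 \in \pdGra_V(1)\otimes\pdGra_D(2)$ coming from the $k=1$ block. It is exactly $d\bigl(s^{11}\otimes 1\bigr)$ that supplies this. Your fallback suggestion of ``a diagonal-class correction term on the $\Gamma'$ side'' is therefore off by one differential: the diagonal class $\sum g^{\alpha\beta}e_\alpha^a e_\beta^a$ has degree $D$, while the coaction must preserve degree, so the correction must be the degree-$(D-1)$ element $s^{aa}$ whose differential \emph{is} the diagonal class. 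Once the tadpole term is included, the paper's proof reduces to the commutative square you were aiming for, checked on the multiplicative generators $e_\alpha^i$, $s^{ii}$, and $s^{ij}$ with $i\neq j$.
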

\begin{proof}

The cooperadic coactions are defined through formulas similar to \eqref{eq:graphs coaction} and proof of the associativity axiom is formally the same as the proof of the previous Proposition.

To show that the differential respects the comodule structure it suffices to check this on generators of the commutative algebra. This is clear for decorations $e_\alpha^i$ and for tadpoles $s^{ii}$. For edges connecting different vertices let us do the verification for $s^{12}\in \pdGra_V(2)$ for simplicity of notation. 
The only non-trivial commutative diagram to check if the following:
\[
\hspace{-1cm}
\begin{tikzcd}
\begin{tikzpicture}[scale=0.5]
\node (1) at (-3,0) [ext]{$1$};
\node (2) at (-1.5,0) [ext]{$2$};
%\node at (-0.75,0) {$\otimes$};
%\node at (0,0) [ext]{$1$};
%\node at (0.75,0) {$\otimes$};
%\node at (1.5,0) [ext]{$1$};
\draw (1)--(2);
\end{tikzpicture} \arrow[mapsto]{r}{\Delta} \arrow[mapsto]{d}{d}
&
\underbrace{\begin{tikzpicture}[scale=0.5]
\node (1) at (-3,0) [ext]{$1$};
\node (2) at (-1.5,0) [ext]{$2$};
\node at (-0.75,0) {$\otimes$};
\node at (0,0) [ext]{$1$};
\node at (0.75,0) {$\otimes$};
\node at (1.5,0) [ext]{$1$};
\draw (1)--(2);
\end{tikzpicture}}_{\pdGra_V(2) \otimes \pdGra_D(1)\otimes \pdGra_D(1)}
\quad
+
\quad
\underbrace{\begin{tikzpicture}[scale=0.5]
\node at (-4.5,0) [ext]{$1$};
\node at (-3.75,0) {$\otimes$};
\node (1) at (-3,0) [ext]{$1$};
\node (2) at (-1.5,0) [ext]{$2$};
\draw (1)--(2);
\end{tikzpicture}}_{\pdGra_V(1) \otimes \pdGra_D (2)}
\quad 
\arrow[mapsto]{d}{d}
+
\quad \underbrace{\hspace{-.4cm}\begin{tikzpicture}[scale=0.5, every loop/.style={}]
\node (0) at (-4.5,0) [ext]{$1$};
\node at (-3.75,0) {$\otimes$};
\node (1) at (-3,0) [ext]{$1$};
\node (2) at (-1.5,0) [ext]{$2$};
 \path (0) edge[loop] (0);
\end{tikzpicture}}_{\pdGra_V(1) \otimes \pdGra_D (2)}
\\
 \begin{tikzpicture}[scale=1]
\node at (-1,0) {$\displaystyle\sum_{\alpha,\beta} g^{\alpha,\beta}$};
\node (1) at (0,0) [ext] {$1$};
\node (2) at (1.5,0) [ext] {$2$};

\node (w) at (0.6,0.4) [ext] {$e_\alpha$};
\node (v) at (0.9,-0.4) [ext] {$e_\beta$};

\draw[dotted]  (1)--(w);
\draw[dotted]  (v)--(2);
\end{tikzpicture} \arrow[mapsto]{r}{\Delta}
&
\begin{tikzpicture}[scale=0.5]
\node at (-1,0) {$\displaystyle\sum_{\alpha,\beta} g^{\alpha,\beta}$};
\node (1) at (0.5,0) [ext] {$1$};
\node (2) at (2.5,0) [ext] {$2$};
\node (w) at (1.5,0.6) [ext] {$e_\alpha$};
\node (v) at (1.5,-0.6) [ext] {$e_\beta$};
\draw[dotted]  (1)--(w);
\draw[dotted]  (v)--(2);

\node at (3.25,0) {$\otimes$};
\node at (4,0) [ext]{$1$};
\node at (4.75,0) {$\otimes$};
\node at (5.5,0) [ext]{$1$};

\end{tikzpicture}
\quad
 +0
\quad 
+
\begin{tikzpicture}[scale=0.5, every loop/.style={}]
\node at (-6,0) {$\displaystyle\sum_{\alpha,\beta} g^{\alpha,\beta}$};
\node (0) at (-4.5,0) [ext]{$1$};
\node at (-3.75,0) {$\otimes$};
\node (1) at (-3,0) [ext]{$1$};
\node (2) at (-1.5,0) [ext]{$2$};

\node (w) at (-5.1,1.5) [ext] {$e_\alpha$};
\node (v) at (-4,1.5) [ext] {$e_\beta$};

\draw[dotted]  (0)--(w);
\draw[dotted]  (v)--(0);
\end{tikzpicture}
% \begin{tikzpicture}[scale=1]
%
%\node (3) at (2.8,2) [ext] {$3$};
%
%\node (w) at (2,1.6) [ext] {$e_2$};
%\node (v) at (3,1.4) [ext] {$e_4$};
%
%\draw[dotted]  (3)--(w);
%\draw[dotted]  (v)--(3);
%\end{tikzpicture}
\end{tikzcd}\]
where the vertical arrows correspond to the differential and the horizontal ones to the coaction.
\end{proof}

\subsection{Forms on (closed) manifolds}
Let $M$ be a closed smooth framed connected manifold of dimension $D$ and let $\FM_M$ be the Fulton-MacPherson compactification of the spaces of configurations of points of $M$ as described in Section \ref{sec:FM_M}. It is naturally an operadic right  module over the operad $\FM_D$, where the $i$-th composition of $c\in \FM_D(k)$ in a configuration  $\overline c \in \FM_M(n)$ corresponds to the insertion of the configuration $c$ at the $i$-th point of $\overline{c}$. The parallelization of the manifold ensures that this insertion can be made in a consistent way.

It follows that $\Omega_{PA}(\FM_M)$ is naturally equipped with a right cooperadic coaction of the cooperad  (in dg commutative algebras) $\Omega_{PA}(\FM_D)$ (mind Remark \ref{rem:almost cooperad} below). The coaction is obtained from the restriction of forms to boundary strata where multiple points collide.

There is a map of (``almost'') cooperads in dg commutative algebras
\begin{equation}\label{map:pdGra to forms}
\pdGra_D \to \Omega_{PA}(\FM_D),
\end{equation}

given by associating to every edge the angle form relative to the two incident vertices \cite{K1,LV}. More explicitly, one considers the standard volume form $\phi_{12} \in \Omega_{PA}^{D-1}(S^{D-1}) = \Omega_{PA}^{D-1}(\FM_D(2))$, which plays the role of the propagator. The forms $\phi_{ij}\in \Omega_{PA}^{D-1}(\FM_D(2))$ are defined by pulling back $\phi_{12}$ by the appropriate projection map. 
Finally, the map \eqref{map:pdGra to forms} above is obtained by extending the assignment $s^{ij} \mapsto \phi_{ij}$ to a map of dgcas.

\begin{rem}\label{rem:almost cooperad}
The functor $\Omega_{PA}$ is not comonoidal since the canonical map $\Omega_{PA}(A)\otimes \Omega_{PA}(B) \to \Omega_{PA}(A\times B)$ goes ``in the wrong direction", therefore $\Omega_{PA}(\FM_D)$ is not a cooperad. Nevertheless, by abuse of language throughout this paper we will refer to maps such as map \eqref{map:pdGra to forms} as maps of cooperads (or cooperadic modules) if they satisfy a compatibility relation such as commutativity of the following diagram:
$$  \begin{tikzcd}
   ^*\Gra_D(n) \ar{r} \arrow{dd} & \Omega_{PA}(\FM_D(n)) \arrow{d} \\
   & \Omega_{PA}(\FM_D(n-k+1)\times \FM_D(k)) \\
   ^*\Gra_D(n-k+1)\otimes {} ^*\Gra_D(k) \ar{r} & \Omega_{PA}(\FM_D(n-k+1))\otimes \Omega_{PA}(\FM_D(k)). \ar{u}
  \end{tikzcd}$$

\end{rem}

\vspace{1cm}

Since $M$ is connected, its cohomology $H^\bullet(M)$ has a canonical augmentation given by the constant functions on $M$ and since $M$ is closed, Poincar\'e duality gives us a pairing on $H^\bullet
(M)$ of degree $-D$. We define, for any manifold $M$:
\[
\pdGra_M \coloneqq \pdGra_{H^\bullet (M)}.
\]
Let us denote by $\iota\colon H^\bullet(M)\hookrightarrow \Omega_{triv}(M)$ the embedding from Convention \ref{convention}, that is, for every $\omega\in H^\bullet(M) $, $\iota(\omega)$ is a representative of the class $\omega$. Following Cattaneo and Mnev \cite{CM} we can define a map of dg commutative algebras (which a priori depends on various pieces of data)
\begin{equation}\label{eq:Gra to forms}
\pdGra_M \to \Omega_{triv}(\FM_M)\subset \Omega_{PA}(\FM_M)
\end{equation}
 as follows: The map sends the generator $s^{ij}$ for $i\ne j$ to $\phi_{ij}$, where $\phi_{ij}$ is the form constructed in the discussion preceding the proof of Proposition \ref{prop:angular form} with the additional assumption from Remark \ref{rem:constant angular form}. The map sends the decoration by $\omega\in H^\bullet(M)$ on the $j$-th vertex $\omega^j\in \pdGra_D$ to $p_j^*(\iota(\omega))$, where $p_j\colon \FM_M \to M$ is the map that remembers only the point labeled by $j$.
Finally the map sends $s^{jj}$ to $p_j^*\eta$, where $\eta$ is as in \eqref{equ:psietadef}.

\begin{lemma}\label{Lemma:map of comodules}
The map $\pdGra_M \to \Omega_{PA}(\FM_M)$ is a map of dg Hopf collections, compatible with the cooperadic comodule structures along the map $\pdGra_D\to \Omega_{PA}(\FM_D)$, in the sense of Remark \ref{rem:almost cooperad}. 
In other words there is a map of 2-colored dg Hopf collections
\[
\pdGra_M\ \aol\ \pdGra_D \to \Omega_{PA}(\FM_M)\ \aor\ \Omega_{PA}(\FM_D)
\]
compatible with the (2-colored) cooperadic cocompositions. 
\end{lemma}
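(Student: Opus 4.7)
The plan is to verify the stated compatibilities on algebra generators of $\pdGra_M(n)$ and then extend multiplicatively. By its definition on generators, the map is uniquely determined as a morphism of graded commutative $S_n$-modules, and it descends to the quotient by $e_1^j-1$ since $p_j^*\iota(1)=1$. It then remains to check compatibility with the differential and with the cooperadic coaction.

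For the differential I will go through the three generator types. For decorations $e_\alpha^j$, both sides are closed: the source differential vanishes, and the target $p_j^*\iota(e_\alpha)$ is a pullback of a closed form by Convention \ref{convention}. For off-diagonal edges $s^{ij}$ ($i\neq j$), part (i) of Proposition \ref{prop:angular form} gives
\[
d\phi_{ij}=p_{ij}^*(d\phi_{12})=\sum_{\alpha,\beta}g^{\alpha\beta}\,p_i^*\iota(e_\alpha)\wedge p_j^*\iota(e_\beta),
\]
which is exactly the image of $ds^{ij}$. For tadpoles $s^{jj}$ (nonzero only in the parallelized case with $D$ even, where $\eta$ is defined), \eqref{equ:deta} implies $d\eta=\sum_{\alpha,\beta}g^{\alpha\beta}\iota(e_\alpha)\iota(e_\beta)$ as a form on $M$, and therefore $d(p_j^*\eta)$ matches the image of $ds^{jj}$.

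For compatibility with the coaction, the plan is to pull back the image of a decorated graph $\Gamma\in\pdGra_M(n)$ along the inclusion of each boundary stratum of $\FM_M(n)$ corresponding to a subset $S\subset\{1,\dots,n\}$ of size $k\ge 2$ whose points collide in $M$. Such a stratum is canonically diffeomorphic to $\FM_M(n-k+1)\times\FM_D(k)$, and the coaction of Remark \ref{rem:almost cooperad} is precisely this pullback. I analyze the restriction generator by generator: a decoration or tadpole on a vertex $j\notin S$ restricts to $p_j^*(-)\otimes 1$, while on a vertex $j\in S$ it restricts to $p_{j^*}^*(-)\otimes 1$, where $j^*$ is the collapsed vertex in $\FM_M(n-k+1)$. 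An edge $s^{ij}$ ($i\neq j$) with neither endpoint in $S$ restricts to $\phi_{ij}\otimes 1$, with exactly one endpoint (say $j$) in $S$ to $\phi_{ij^*}\otimes 1$, and with both endpoints in $S$ to a sum whose pure $\FM_D(k)$-piece is the Kontsevich angle form and whose mixed piece is a basic correction on $\FM_M(n-k+1)$. Multiplying these contributions over all vertices and edges of $\Gamma$ and grouping edges by whether both, one, or neither endpoint lies in $S$ reproduces exactly the summand of \eqref{eq:graphs coaction} indexed by the subgraph $\Gamma_{\mathrm{sub}}$ spanned by $S$, the complementary contribution going to the $\Gamma'$ factor; summing over $S$ recovers the full coaction.

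The delicate point I expect to be the main obstacle is the last case, namely the restriction of $\phi_{ij}$ to a collision stratum when both $i,j\in S$. Here the parallelization is essential: Remark \ref{rem:constant angular form} provides the trivialization $\partial\FM_M(2)\cong M\times S^{D-1}$ with respect to which $\phi_{12}$ restricts to $\pi^*\omega_{S^{D-1}}+p^*\eta$, the first summand matching Kontsevich's angle form on $\FM_D(2)$ while the basic correction $p^*\eta$ gets absorbed into the $\FM_M$ factor precisely as the tadpole $s^{j^*j^*}$ produced at the collapsed vertex in $\Gamma'$. Iterating through strata of higher codimension and across multiple edges simultaneously collapsing is then a combinatorial bookkeeping exercise, and the sign computation from edge orderings and orientations is formally identical to the one carried out for the cooperad map $\pdGra_D\to\Omega_{PA}(\FM_D)$ in \cite{LV}, so no genuinely new combinatorial input is needed once the boundary identification above is in hand.
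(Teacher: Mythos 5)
Your proposal is correct and follows essentially the same route as the paper's proof: check differential compatibility generator-by-generator (with the tadpole case handled via \eqref{equ:deta}), reduce coaction compatibility to generators since the map is multiplicative, and then compare the boundary restriction of $\phi_{ij}$—split via the parallelization as in \eqref{equ:psietadef} into a Kontsevich angle form on the $\FM_D$-factor plus a basic $\eta$-term landing on the $\FM_M$-factor as a tadpole—with the de-insertion cocomposition in $\pdGra_M$. The paper's proof is terser, only spelling this out for $s^{12}\in\pdGra_M(2)$, but the underlying argument is the same.
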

\begin{proof}The compatibility with the differentials is clear for every generator of $\pdGra_M$ except possibly $s^{jj}$, for which one uses \eqref{equ:deta}.
By definition the map consists of morphisms of commutative algebras, therefore it is enough to check the compatibility of the cocompositions on generators. For elements $e_\alpha^j$ this is clear.
For the other generators we will sketch the verification for the case of $s^{12}\in \pdGra_M(2)$ for simplicity of notation.

The composition map in $\left(\FM_M, \FM_D\right)$ is done by insertion at the boundary stratum. Since the cocomposition map $\Omega_{PA}(\FM_M) \to  \Omega_{PA}(\FM_M)\circ \Omega_{PA}(\FM_D) $ is given by the pullback of the composition map we get, using \eqref{equ:psietadef}\footnote{Notice that on the second summand $\phi_{12}$ refers to the volume form of $S^{D-1}= \FM_D(2)$. We are using Remark \ref{rem:constant angular form} to ensure that this term is indeed of that form.}
$$\phi_{12} \in \Omega_{PA}(\FM_M(2))  \mapsto \underbrace{\phi_{12}\otimes 1 \otimes 1}_{ \Omega_{PA}(\FM_M(2)) \otimes \Omega_{PA}(\FM_D(1)) \otimes \Omega_{PA}(\FM_D(1))}  +\underbrace{1\otimes \phi_{12} + \eta \otimes 1.}_{\Omega_{PA}(\FM_M(1)) \otimes \Omega_{PA}(\FM_D(2))}.$$

On the other hand, the corresponding cocomposition $\pdGra_M \to\pdGra_M \circ \pdGra_D$ given by de-insertion sends $s^{12}$ to 

$$\underbrace{\begin{tikzpicture}[scale=0.5]
\node (1) at (-3,0) [ext]{$1$};
\node (2) at (-1.5,0) [ext]{$2$};
\node at (-0.75,0) {$\otimes$};
\node at (0,0) [ext]{$1$};
\node at (0.75,0) {$\otimes$};
\node at (1.5,0) [ext]{$1$};
\draw (1)--(2);
\end{tikzpicture}}_{\pdGra_M(2) \otimes \pdGra_D(1)\otimes \pdGra_D(1)}
\quad
+
\quad 
\underbrace{\begin{tikzpicture}[scale=0.5]
\node at (-4.5,0) [ext]{$1$};
\node at (-3.75,0) {$\otimes$};
\node (1) at (-3,0) [ext]{$1$};
\node (2) at (-1.5,0) [ext]{$2$};
\draw (1)--(2);
\end{tikzpicture}}_{\pdGra_M(1) \otimes \pdGra_D (2)}
\quad +
\quad 
\underbrace{\hspace{-.4cm}\begin{tikzpicture}[scale=0.5, every loop/.style={}]
\node (0) at (-4.5,0) [ext]{$1$};
\node at (-3.75,0) {$\otimes$};
\node (1) at (-3,0) [ext]{$1$};
\node (2) at (-1.5,0) [ext]{$2$};
 \path (0) edge[loop] (0);
\end{tikzpicture}}_{\pdGra_M(1) \otimes \pdGra_D (2)},$$ 
therefore the cocomposition is respected by the map.

\end{proof}

\section{Twisting $\Gra_M$ and the comodule $^*\Graphs_M$}\label{sec: twisting}

Let $\Gra_D$ and $\Gra_V$ be the duals of $\pdGra_D$ and $\pdGra_V$, respectively. $\Gra_V$ is an operadic right $\Gra_D$ module in dg cocommutative coalgebras.

Recall that there is a map of operads $\La^{D-1} \Lie\to \Gra_D$ given by mapping the generator $\mu \in \La^{D-1} \Lie(2)$ to the single edge graph in $\Gra_D(2)$ \cite{W1}.
This extends to a map from the canonical operadic right module $$\La^{D-1} \Lie\ \aor\ \La^{D-1} \Lie \to \Gra_M\ \aor\ \Gra_D$$ sending the generator $\mu$ to $s^{12} \in \Gra_M(2)$.
One can then apply the right module twisting procedure described in Appendix I of \cite{W1} to $ \Gra_M \aor\ \Gra_D$, thus obtaining the bimodule  $\Tw \Gra_M \aor \Tw\Gra_D$.

$\Tw \Gra_M$ can be described via a different kind of graphs. The space $\Tw \Gra_M(n)$ is spanned by graphs with $n$ vertices labeled from $1$ to $n$, called ``external" vertices and $k$ indistinguishable ``internal" vertices. Both types of vertices can be decorated by elements of $(H^\bullet (M))^*$ (with $\bullet\geq 1$, see Remark \ref{rem:quotiente_1} below), that can be identified with $H^{|D|-\bullet}(M)$ via the canonical pairing. The degree of the internal vertices is $D$, the degree of edges is $1-D$ and the degree of the decorations is the correspondent degree in $(H^\bullet (M))^*$, even if there is an identification with the cohomology.
 The differential in $\Tw \Gra_M$ can be split into $3$ pieces $d= \Delta + d_{ex} + d_{in} = \Delta + d_s$, where $\Delta$ is the differential coming from $\Gra_M$, that connects decorations by making an edge, $d_{ex}$ splits an internal vertex out of every external vertex and reconnecting incident edges in all possible ways and $d_{in}$ splits similarly an internal vertex out of every internal vertex:

\begin{figure}[h]

\begin{tikzpicture}

\node at (-2,0) {$\Delta$};
\node at (0,0) {
\begin{tikzpicture}[scale=0.8]
\node (w1) at (-0.11,-0.7) [ext]{$\omega$};
\node (w2) at (0.1,0.2) [ext]{$\nu$};
\node[fill=gray!20] (a) at (-1,-0.5) [ext] {$a$};

\node[fill=gray!20] (b) at (1,-0.25) [ext] {$b$};

\draw[dotted]  (a)--(w1);
\draw[dotted] (b)--(w2);

\draw (a) -- (-1.5,0.5);
\draw (a) -- (-2,0);
\draw (a) -- (-2,-1);
\draw (a) -- (-1.5,-1.5);
\draw (b) -- (1.5,0.8);
\draw (b) -- (1.8,0.3);
\draw (b) -- (2,-0.5);
\draw (b) -- (1.5,-1);
\end{tikzpicture}};

\node at (2.5,0) {$= \langle \omega, \nu \rangle$};

\node at (4.3,0) {
\begin{tikzpicture}[scale=0.8]
\node[fill=gray!20] (a) at (-1,-0.5) [ext] {$a$};

\node[fill=gray!20] (b) at (0,-0.25) [ext] {$b$};

\draw (a)--(b);

\draw (a) -- (-1.5,0.5);
\draw (a) -- (-2,0);
\draw (a) -- (-2,-1);
\draw (a) -- (-1.5,-1.5);
\draw (b) -- (0.5,0.8);
\draw (b) -- (0.8,0.3);
\draw (b) -- (1,-0.5);
\draw (b) -- (0.5,-1);
\end{tikzpicture}};

\node at (6.5,0) {$, \quad \quad d_s$};
\node at (7.6,0.15) {
\begin{tikzpicture}[scale=0.8]
\node[fill=gray!20] (a) at (-1,-0.5) [ext] {$a$};

\draw (a) -- (-1.3,0.4);
\draw (a) -- (-1.8,-0.1);
\draw (a) -- (-1.7,-1.1);

\draw (a) -- (-0.5,-1);
\draw (a) -- (-0.4,0.3);
\end{tikzpicture}};

\node at (8.5,0) {$=$};

\node at (9.4,0) {\begin{tikzpicture}[scale=1]
\node (ro) at (1,1.7) {};

\node[fill=gray!20] (e1) at (1,0.2)[ext] {$a$};
\node (black) at (1,0.8) [int] {$*$};

\node (e2) at (0,-1) {};
\node (e3) at (.75,-1) {};
\node (ed) at (1.3,-1){} ;
\node (en) at (2,-1) {};
%\draw (black) -- +(0,.7) {};

\draw [dashed] (1,0.45) circle (0.60);
\draw (black) -- (e1);

\draw (0.2,-0.4) -- (0.6,-0);
\draw (1.8,-0.3) -- (1.4 ,0.05);

\draw (0.85,1) -- (0.6,1.5);
\draw (0.5,0.82) -- (0.2,1.1);
\draw (1.3,0.95) -- (1.8,1.5);

\end{tikzpicture}};

\end{tikzpicture}
\caption{Internal vertices are depicted in black. Gray vertices are either internal or external vertices.}
\end{figure}
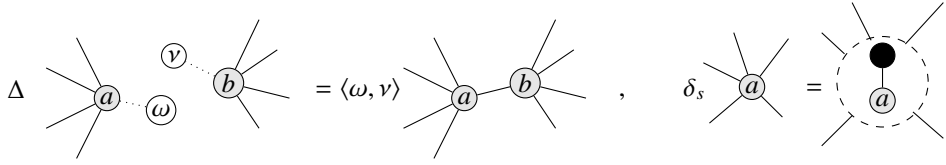

\begin{rem}\label{rem:quotiente_1}
Notice that due to $\pdGra_M$ being given by a quotient by $e_1^j-1$, if a certain vertex $v$ of $\Gamma\in \pdGra_M$ is decorated with the volume form on $M$, then we find as summands of $\Delta(\Gamma)$ all possibilities of connecting $v$ to every other vertex in $\Gamma$.
\end{rem}

The operad $\Tw \Gra_D$ is spanned by similar kinds of graphs, except that there are no decorations.
 We will therefore  also refer to the vertices of $\Tw \Gra_D$ as internal and external.  

The degrees of graphs in $\Tw \Gra_D$ are computed similarly, but the differential of $\Tw \Gra_D$ is different (since $\Gra_M$ is twisted as a $\Lie$-module whereas $\Gra_D$ is twisted as an operad under $\Lie$). Not only there is no $\Delta$ term, but also the splitting piece has an extra term subtracting all possible ways of adding a univalent internal vertex. 

 We are interested in a suboperad of $\Tw \Gra_D$, since $\Tw \Gra_D$ is in homologically ``too big". 
 The following result originates in \cite{K2,LV}.

\begin{defprop}[\cite{W1}] \label{defprop:Graphs_D}
The operad $\Tw \Gra_D$ has a suboperad that we call $\Graphs_D$ spanned by graphs $\Gamma$ such that:

\begin{itemize}
\item All internal vertices of $\Gamma$ are at least trivalent,
\item $\Gamma$ has no connected components consisting only of internal vertices.
\end{itemize}
Moreover there is a cooperadic quasi-isomorphism 
$$ ^*\Graphs_D \to \Omega_{PA}(\FM_D),$$
extending the map \eqref{map:pdGra to forms}.
\end{defprop}

This quasi-isomorphism is defined by integrating over all possible configurations of points corresponding to the internal vertices, a formula similar to the one from Lemma \ref{lemma: graphs to forms}.

We will from now on interpret  $\Tw \Gra_M$ as a right $\Graphs_D$-module.

Let $^*\! 	\Tw \Gra_M$ be the cooperadic right $\pdGraphs_D$ comodule that is (pre)dual to $\Tw \Gra_M$. 

The differential in ${}^*\!\Tw \Gra_M$ decomposes as $d = \delta_{cut} + \delta_{contr}$, where $\delta_{cut}$ is the piece originating from $\pdGra_M$ that splits edges into ``diagonal classes" and $\delta_{contr}$ contracts any edge adjacent to one or two internal vertices.

\begin{lemma}\label{lemma: graphs to forms}
For $M$ a closed compact framed connected manifold as above there is a natural map of right cooperadic comodules
\[
\omega_\bullet\colon {}^*\!\Tw \Gra_M \to \Omega_{PA}(\FM_M).
\]
extending the map $f\colon \pdGra_M\to \Omega_{PA}(\FM_M)$ from equation \eqref{eq:Gra to forms}.
\end{lemma}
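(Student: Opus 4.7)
The plan is to extend the map $f\colon \pdGra_M\to \Omega_{PA}(\FM_M)$ of Lemma~\ref{Lemma:map of comodules} by the standard Kontsevich-type fiber integral formula. Given a graph $\Gamma\in {}^*\Tw\Gra_M(n)$ with $n$ external and $k$ internal vertices, I would consider the forgetful SA bundle $\pi\colon \FM_M(n+k)\to \FM_M(n)$ that forgets the points labeled by the internal vertices. Let $\widetilde\omega_\Gamma\in\Omega_{PA}(\FM_M(n+k))$ be the form obtained from $\Gamma$ by the recipe of \eqref{eq:Gra to forms}: each edge $(i,j)$ contributes $\phi_{ij}$, each decoration $\omega$ at a vertex $j$ contributes $p_j^*(\iota(\omega))$, and each tadpole $s^{jj}$ contributes $p_j^*\eta$ (as in \eqref{equ:psietadef}). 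I would then set
\[
\omega_\bullet(\Gamma) := \int_\pi \widetilde\omega_\Gamma \;\in\; \Omega_{PA}(\FM_M(n)).
\]
This fiber integral is well-defined because $\pi$ is an SA bundle and $\phi_{12}$ can be chosen to lie in the subalgebra $\Omega_{triv}$ for which pushforwards make sense, as already exploited in Proposition~\ref{prop:angular form}.

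Next I would check multiplicativity: the product in ${}^*\Tw\Gra_M(n)$ glues two graphs along their common external vertices and takes the disjoint union of their internal vertices and edges, which on the form side corresponds precisely to wedging two fiber integrals over disjoint factors of the fiber of $\pi$. Compatibility with differentials is the main computation, and proceeds via Stokes' theorem for fiber integration,
\[
d\int_\pi \widetilde\omega_\Gamma \;=\; \int_\pi d\widetilde\omega_\Gamma \;\pm\; \int_{\partial^{\mathit{vert}}\pi} \widetilde\omega_\Gamma,
\]
where the second term sums fiber integrals over the codimension-one strata of $\FM_M(n+k)$ in the fibers of $\pi$. The bulk term $\int_\pi d\widetilde\omega_\Gamma$ reproduces the splitting differential $d_{split}$ via Proposition~\ref{prop:angular form}(i), with Remark~\ref{rem:quotiente_1} handling the quotient $e_1^j-1$, and the identity $d\eta=\Delta$ from \eqref{equ:deta} handling tadpoles. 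Boundary strata where exactly two points collide with at least one of them internal contribute, by the angular-form normalization in Proposition~\ref{prop:angular form}(ii), exactly the edge contractions making up $d_{contr}$.

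The main obstacle is the standard Kontsevich vanishing of \emph{hidden faces}: one must show that all boundary strata where three or more points collide, or where two external (and no internal) points collide, contribute zero. I would handle these via the usual Kontsevich--Vassiliev repertoire: a dimension/degree count kills most strata for elementary reasons; the trivalent hidden-face case is killed by the classical involution argument (swapping two of the colliding points); and the residual ``principal'' hidden faces, in which one internal point escapes to an external one, are killed precisely by the propagator correction ensured by Proposition~\ref{prop:angular form}(iv) — this is the reason that correction was introduced, cf.\ \cite{CM}.

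Finally I would verify compatibility with the cooperadic right $\pdGraphs_D$-comodule structure, in the sense of Remark~\ref{rem:almost cooperad}, generalizing Lemma~\ref{Lemma:map of comodules}. On the form side the coaction is restriction along the operadic inclusion $\FM_M(n-j+1)\times \FM_D(j)\hookrightarrow \partial\FM_M(n)$; on the graph side it is de-insertion of a subgraph at a group of external vertices. Using the explicit description $\phi_{ij}\mid_{\partial\FM_M(2)} = \psi + p^*\eta$ from Remark~\ref{rem:constant angular form}, and splitting the $k$ internal points into those that follow the colliding cluster and those that do not, the integrand factorizes on each boundary stratum into a form on $\FM_M(n-j+1)$ of the shape $\omega_\bullet(\Gamma')$ times a Kontsevich integral on $\FM_D(j)$ of the shape associated to $\Gamma''\in {}^*\Graphs_D(j)$, summed over all ways of cutting $\Gamma$ at the chosen cluster. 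This matches the cocomposition formula \eqref{eq:graphs coaction} extended to the twisted setting, and is formally parallel to the compatibility already established in Definition/Proposition~\ref{defprop:Graphs_D}, completing the proof.
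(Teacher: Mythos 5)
Your proposal takes essentially the same route as the paper: the fiber integral over the forgetful SA-bundle, Stokes' theorem splitting into a bulk term (reproducing $d_{split}$ via Proposition~\ref{prop:angular form}(i) and \eqref{equ:deta}) and a vertical boundary term, with the codimension-one strata where exactly two points collide giving the contraction differential $d_{contr}$ via property (ii), and Kontsevich's vanishing lemmas disposing of the strata where three or more points collide. The reduction to internally connected generators (which the paper states and you reach implicitly via multiplicativity) and the sketch of cocomposition compatibility are also consistent with the paper's treatment, which just calls the latter ``clear''.

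Two points in your boundary analysis are slightly off, though neither breaks the argument. First, you assign property (iv) the job of killing ``residual principal hidden faces in which one internal point escapes to an external one'', but no such face needs killing here: when an internal vertex collides with an external one, the stratum either gives a contraction (if the two vertices are joined by an edge, using property (ii)) or vanishes by the elementary degree argument that the restricted form is basic for the sphere bundle (if they are not joined by an edge). Property (iv) is a normalization of the propagator used elsewhere in the paper --- to show that the partition function $z_M$ has no contributions from graphs with an ``antenna'', i.e.\ a univalent vertex --- and is not used in this Lemma. Also there is no ``escape to infinity'' face on the compact $\FM_M$. Second, the faces where two \emph{external} points collide (with no internal point involved) are not part of the vertical boundary $\partial F$ of the fiber of $\FM_M(n+k)\to\FM_M(n)$; they sit over boundary strata of the base $\FM_M(n)$ and are therefore not boundary terms in the fiberwise Stokes formula, so there is nothing to check for them. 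With these two clarifications your proof matches the paper's.
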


\begin{proof}
Let $\Gamma$ be a graph in $^*\Gra_M(n+k)^{\mathbb S_k}\subset {}^*\!\Tw \Gra_M(n)$ i.e. $\Gamma$  has $n$ external and $k$ internal vertices. Let us consider $f(\Gamma)\in \Omega_{PA}(\FM_M(n+k))$, the image of $\Gamma$ under the map \eqref{eq:Gra to forms}. We define $\omega_\Gamma$ to be the integral of $f(\Gamma)$ over all configurations of the internal vertices. Concretely, if $\FM_M(n+k) \to\FM_M(n)$ denotes the map that forgets the last $k$ points, then  $\omega_\Gamma$ is given by the following fiber\footnote{Notice that here we make use of the fact that $f(\Gamma)$ is actually in $\Omega_{triv}(\FM_M(n+k))$.} integral $$\int\limits_{\FM_M(n+k) \to\FM_M(n)} f(\Gamma).$$  

The commutativity with the right cooperadic cocompositions is formally the same why ${^*}\! \Tw \Gra_D \to \Omega_{PA}(\FM_D)$ is a map of cooperads (see \cite[Section 9.5]{LV}) together with the fact that the propagator on $\FM_M$ on clusters of points is given by the corresponding propagator of $\FM_D$.
 It remains to check the compatibility of the differentials. 

Notice that ${}^*\!\Tw \Gra_M $ is a quasi-free dgca generated by internally connected graphs i.e. graphs that remain connected if we delete all external vertices. Since the map $\omega_\bullet$ is compatible with the products, it suffices to check the compatibility of the differentials on internally connected graphs. Let $\Gamma\in {}^*\!\Tw \Gra_M(n)$ be such a graph with $k$ internal vertices.

If we denote by $F$ the fiber of the map $\FM_M(n+k) \to\FM_M(n)$, we have, by Stokes Theorem

$$d\omega_\Gamma = \int\limits_{F} df(\Gamma)  \pm  \int\limits_{\partial F} f(\Gamma).$$

If we compute $d\Gamma = \delta_{cut}\Gamma + \delta_{contr}\Gamma$, we retrieve $$\omega_{\delta_{cut}\Gamma} = \int\limits_{F} f(d_{cut}\Gamma)= \int\limits_{F} df(\Gamma).$$ 

The boundary of the fiber decomposes into various pieces, namely $$\partial F = \bigcup_{n<i,j\leq n+k} \partial_{i,j}F \   \cup \  \bigcup_{\substack{a\leq n \\ n<i\leq n+k}} \partial_{a,i}F  \  \cup \  \partial_{\geq 3} F,$$ where $\partial_{i,j}F$ is the boundary piece where points $i$ and $j$ (corresponding to internal vertices) collided, $\partial_{a,i}F$ is the boundary piece where point $i$ (corresponding to an internal vertex) collided with point $a$ (corresponding to an external vertex) and $\partial_{\geq 3} F$ is the boundary piece in which at least $3$ points corresponding to internal vertices collided.

If in $\Gamma$ points $i$ and $j$ are not connected by an edge, then  $\int_{\partial_{i,j} F} f(\Gamma)=0$.
To see this, note that this integral has the form $\int_{\partial_{i,j} F} f(\Gamma) = \int_i \left. f(\Gamma)\right|_{i=j} \int_{S^{D-1}} 1 = 0$. Here the integral vanishes by degree reasons since there is no top degree component of the form on the factor $S^{D-1}$.
Here we used that the tangent bundle is trivialized. However, the same argument goes through without using this feature by using trivializations of the tangent bundle.

 If points $i$ and $j$ are connected by an edge, then by property \textit{(ii)} of Proposition \ref{prop:angular form} we have  $\int_{\partial_{i,j} F} f(\Gamma) = \omega_{\Gamma/e}$, where  $\Gamma/e$ is the graph $\Gamma$ with edge $e$ contracted. An analogous argument  for the boundary pieces $\partial_{a,i}F$ allows us to conclude that $\omega_{d\Gamma} = d\omega_{\Gamma}  \pm  \int_{\partial_{\geq 3} F} f(\Gamma)$.

The vanishing of $\int_{\partial_{\geq 3} F} f(\Gamma)$ results from Kontsevich's vanishing Lemmas. Concretely, suppose there are $3\leq l \leq k$ points colliding together. By integrating over the $l$ points first we obtain an integral of the form $\int_{\FM_D(l)}\nu$, where $\nu$ is a product of $\phi_{i,j}$. If the dimension $D$ is at least $3$, this integral vanishes as in \cite[Lemma 2.2]{K3}, using property \textit{(iii)} of Proposition \ref{prop:angular form}.

To factor the integral we used the trivialization of the tangent bundle in this step. For later use we shall however remark that this is not necessary. More precisely, let the full subgraph on the ``colliding'' vertices be $\gamma$. Then by the same argument as in the proof of \cite[Lemma 2.2]{K3}, using property \textit{(iii)} of Proposition \ref{prop:angular form}, we may assume that all vertices of $\gamma$ have valence $\geq 3$. But then the inner integral describes a form of degree $\geq \frac 3 2 l(D-1)-lD + D+1 = \frac 12 l(D-3)+D+1>D$ on $M$, and $M$ is of dimension $D$. Hence this integral must vanish. 

If $D=2$, because of property \textit{(ii)} of Proposition \ref{prop:angular form} we can use the Kontsevich vanishing lemma from \cite[Section 6.6]{K1} to ensure the vanishing of the integral.

\end{proof}

\subsection{The full Graph Complex and $\Graphs_M$}
The map constructed in Lemma \ref{lemma: graphs to forms} is not (in general) a quasi-isomorphism and the fundamental obstruction is the existence of graphs containing connected components of only internal vertices in $^*\!\Tw \Gra_M$. The desired complex $^*\Graphs_M$ will be a quotient of $^*\!\Tw \Gra_M$ through which the map $\omega_\bullet$ factors. A formal construction can be done making use of the full graph complex that we define as follows.

\begin{defi}
The full graph complex of $M$, $^*\!\fGC_{H^\bullet(M)}$ is defined to be the complex $^*\!\Tw \Gra_M(0)$ consisting of graphs with no external vertices. 
This vector space forms a differential graded commutative $\mathbb R$-algebra with product defined to be the disjoint union of graphs.
We reserve the symbol $\fGC_{H^\bullet(M)}=(^*\!\fGC_{H^\bullet(M)})^*$ for the dual complex and the symbol $\GC_{H^\bullet(M)}\subset \fGC_{H^\bullet(M)}$ for the subcomplex of connected graphs.
\end{defi}

%\begin{rem}
%Note that the notation $^*\!\fGC_M$ is slightly suboptimal since the object depends on $M$ only through the graded space $H^\bullet(M)$ and its non-degenerate pairing.
%\end{rem}

The vector space $^*\!\Tw \Gra_M$ can be naturally regarded as a left module over the algebra $^*\!\fGC_{H^\bullet(M)}$, where the action is given by taking the disjoint union of graphs. 
Furthermore, we define the partition function 
\begin{equation}\label{equ:partition ZM}
Z_M \colon  ^*\!\fGC_{H^\bullet(M)} \to \R
\end{equation}
to be the map of dg commutative algebras obtained by restriction of the map $\omega_\bullet$ from Lemma \ref{lemma: graphs to forms}.

There is a commutative diagram of dg commutative algebras and modules

$$
\begin{tikzcd}[column sep=0.5em]				
 ^*\!\fGC_{H^\bullet(M)} \arrow{d}{Z_M} & \aol & ^*\!\Tw \Gra_M  \arrow{d}{\omega_\bullet} \\
\mathbb R &  \aol  &  \Omega_{PA} (\FM_M).     \\
\end{tikzcd}
$$
%where the map $Z_M$ is the ``partition function'', a map of differential graded algebras given by the same formula as $\omega_\bullet$. The same map $Z_M$ defines an $\fGC_M$ module structure on $\mathbb R$.

\begin{defi}
The right $^*\Graphs_D$ cooperadic comodule $^*\Graphs_M$ is defined by
$$^*\Graphs_M = \mathbb R \otimes_{Z_M} {}^*\Tw \Gra_M.$$
\end{defi}

\begin{rem}
We pick as representatives for a basis of  $^*\Graphs_M$ the set of graphs that contain no connected components without external vertices. With this convention it still makes sense to talk about the total number of vertices of a graph in $^*\Graphs_M$. 

Notice that as a consequence, part of the differential of $^*\Graphs_M$ might reduce the number of vertices by more than $1$ by ``cutting away" a part of the graph that contains only internal vertices, which did not happen with $^*\!\Tw \Gra_M$.
\end{rem}

\begin{cor}\label{cor: graphs to forms}
 The map $^*\!\Tw \Gra_M \to \Omega_{PA}(\FM_M)$ defined in Lemma \ref{lemma: graphs to forms} induces a map of cooperadic comodules $\pdGraphs_M \to \Omega_{PA}(\FM_M)$, still denoted by $\omega_\bullet$.
\end{cor}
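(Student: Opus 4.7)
The corollary requires showing two things: that $\omega_\bullet$ factors through the quotient $\pdGraphs_M = \mathbb R \otimes_{Z_M} {}^*\Tw\Gra_M$, and that the resulting map is compatible with the cooperadic comodule structures. The plan is to reduce the factorization to a single module-compatibility identity, and then inherit the cooperadic statement directly from Lemma \ref{lemma: graphs to forms}.

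For the factorization, it suffices to verify that $\omega_\bullet$ intertwines the left $^*\!\fGC_{H^\bullet(M)}$-action on $^*\Tw\Gra_M$ (by disjoint union) with the action of $^*\!\fGC_{H^\bullet(M)}$ on $\mathbb R$ through $Z_M$. Concretely, for any vacuum graph $\Gamma_0 \in {}^*\!\fGC_{H^\bullet(M)}$ with $k_0$ internal vertices and any $\Gamma \in {}^*\Tw\Gra_M(n)$ with $k$ internal vertices, I will verify
\[
\omega_{\Gamma_0 \sqcup \Gamma} = Z_M(\Gamma_0) \cdot \omega_\Gamma \qquad \text{in } \Omega_{PA}(\FM_M(n)).
\]
Unwinding the construction in Lemma \ref{lemma: graphs to forms}, the form $f(\Gamma_0 \sqcup \Gamma) = f(\Gamma_0) \wedge f(\Gamma)$ lives on $\FM_M(n+k+k_0)$, where $f(\Gamma_0)$ depends only on the $k_0$ coordinates attached to $\Gamma_0$ and $f(\Gamma)$ only on the remaining $n+k$ coordinates (there are no propagators or decorations bridging the two pieces since $\Gamma_0$ is vacuum). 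A Fubini-type argument along the tower of SA-bundle projections
\[
\FM_M(n+k+k_0) \longrightarrow \FM_M(n+k) \longrightarrow \FM_M(n)
\]
then factors the fiber integral defining $\omega_{\Gamma_0 \sqcup \Gamma}$ as the product of the constant form $\int_{\FM_M(k_0)} f(\Gamma_0) = Z_M(\Gamma_0)$ with $\omega_\Gamma$, which is exactly the identity sought. Once this holds, $\omega_\bullet$ descends to the tensor product $\pdGraphs_M$.

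For the second half, the action of $^*\!\fGC_{H^\bullet(M)}$ on $^*\Tw\Gra_M$ by disjoint union commutes with both the differential and the cooperadic coaction, because vacuum components carry no external vertices and are therefore inert under (de-)insertion. Hence $\pdGraphs_M$ inherits a well-defined dg Hopf $\pdGraphs_D$-comodule structure from $^*\Tw\Gra_M$, and the induced map $\pdGraphs_M \to \Omega_{PA}(\FM_M)$ is automatically a map of cooperadic comodules because its lift $\omega_\bullet\colon {}^*\Tw\Gra_M \to \Omega_{PA}(\FM_M)$ already is one by Lemma \ref{lemma: graphs to forms}, and both the cooperadic cocompositions and differentials on $\pdGraphs_M$ are simply those obtained by descent. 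The only delicate point is to justify the Fubini splitting in the PA-forms setting; but since $\int_{\FM_M(k_0)} f(\Gamma_0)$ is a well-defined real number by the very definition of $Z_M$ as the arity-zero restriction of $\omega_\bullet$, no new analytic input is needed beyond what was already required in Lemma \ref{lemma: graphs to forms}.
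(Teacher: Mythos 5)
Your proof is correct and follows essentially the same route the paper takes: the corollary is deduced from the commutative square
$^*\!\fGC_{H^\bullet(M)} \to \mathbb R$ versus $^*\!\Tw\Gra_M \to \Omega_{PA}(\FM_M)$
displayed just before the statement, and you have simply made explicit the multiplicativity/Fubini argument that establishes that square's commutativity (namely $\omega_{\Gamma_0\sqcup\Gamma}=Z_M(\Gamma_0)\,\omega_\Gamma$) together with the observation that vacuum components remain in the $\pdGraphs_M$ tensor slot under the coaction, so the whole structure descends.
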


\begin{rem}\label{rem:maurercartan}
%One may also endow $\fGC_{H^\bullet(M)}$ with the product being given by union of graphs. The differential is not a derivation with respect to this product, but splits into a first order and a second order part, say $\delta_1+\delta_2$. Concretely, the first order part $\delta_1$ splits vertices, while the second order part $\delta_2$ replaces a pair of $H^\bullet(M)$-decorations by an edge.
One may also endow $\fGC_{H^\bullet(M)}$ with the (free) product being given by union of graphs.
 The differential is not a derivation with respect to this product, but it is a coderivation and it splits into a first order and a second order part, say $\delta_1+\delta_2$. Concretely, the second order part $\delta_2$ replaces a pair of $H^\bullet(M)$-decorations in different connected components by an edge, while the first order piece splits vertices and joins decorations in the same connected component.
 By Koszul duality, the commutator of the product and the operator $\delta_2$ defines a Lie bracket of degree 1 on $\fGC_{H^\bullet(M)}$, which reduces to a Lie bracket on the connected piece $\GC_{H^\bullet(M)}$.

Now the partition function $Z_M \in \fGC_{H^\bullet(M)}$ is a map from the free graded commutative algebra $^*\!\fGC_{H^\bullet(M)}$ and hence completely characterized by the restriction to the generators, i.e., to the connected graphs, say $z_M\in \GC_{H^\bullet(M)}$.
The closedness of $Z_M$ then translates to the statement that the connected part $z_M$ satisfies the Maurer-Cartan equation. See Section \ref{sec:GC_M} for details.
\end{rem}

%\begin{rem}\label{rem:zeroaryop}
%Recall that we chose to work with the non-unital version of the operad $E_D$, i.e., $\FM_D(0)=\emptyset$.
%One may however add a single operation in arity zero, composition with which ``forgets'' a point from a configuration of points.
%Similarly, one may formally add a nullary operation to $\Graphs_D$ the insertion of which forgets a vertex, mapping the graph to zero if the forgotten vertex has valence $>0$.
%The operadic right action on $\Graphs_M$ may likewise be extended. Similarly a version of the predual $\pdGraphs_D$ with a nullary cooperation may be defined, and the coaction on $\pdGraphs_M$ extended.
%We note that our construction (i.e., the map $\omega_\bullet$ from Corollary \ref{cor: graphs to forms}) is readily compatible with the additional cooperations.
%\end{rem}

To summarize, we constructed a cooperadic right Hopf comodule $^*\Graphs_M$. As a vector space, $^*\Graphs_M(n)$ is spanned by graphs with $n$ labeled external vertices and an unspecified number of indistinguishable internal vertices that can be decorated by (possibly multiple) cohomology classes of degree at least $1$, under the condition that there are no connected components without external vertices. 
\[
\begin{tikzpicture}[scale=1.4]
\node[ext] (v1) at (0,0) {$\scriptstyle 1$};
\node[ext] (v2) at (.5,0) {$\scriptstyle 2$};
\node[ext] (v3) at (1,0) {$\scriptstyle 3$};
\node[ext] (v4) at (1.5,0) {$\scriptstyle 4$};
\node[int] (w1) at (.25,.5) {};
\node[int] (w2) at (1,1) {};
\node[int] (w3) at (1,.5) {};
\node[ext] (i1) at (.7,1.3) {$\scriptstyle \omega_1$};
\node[ext] (i2) at (1.3,1.3) {$\scriptstyle \omega_1$};
\node[ext] (i3) at (-.4,.5) {$\scriptstyle \omega_2$};
\node[ext] (i4) at (1.9,.4) {$\scriptstyle \omega_3$};
\draw (v1) edge (v2) edge (w1) (w1) edge (w2) edge (w3) edge (v2) (v3) edge (w3) (v4) edge (w3) edge (w2) (w2) edge (w3);
\draw[dotted] (v1) edge (i3) (w2) edge (i2) edge (i1) (v4) edge (i4);

\node at (3,.5) {$\in {}^*\Graphs_M(4),$};
\end{tikzpicture}
\]

There is a graded commutative algebra structure given by superposition of external vertices
$$\begin{tikzpicture}

\node at (0,0) {\begin{tikzpicture}[scale=1.4]
	\node (a) at (0,0) [ext] {$1$};
	\node (b) at (1,0) [ext]{$2$};
	
	\node (int) at (0.5,0) [int] {.};
	
	\node[ext] (i1) at (.5,-0.4) {$\omega$};
	
	\draw (int) edge (a) edge (b);
	\draw[dotted] (i1) edge (int);
	\end{tikzpicture}};
\node at (1.5,0) {$\cdot$};
\node at (3,0){\begin{tikzpicture}[scale=1.4]
	\node (c) at (0,0) [ext] {$1$};
	\node (d) at (1,0) [ext]{$2$};
	
	\node (int2) at (0.5,0.5) [int] {.};
	
	\node[ext] (i2) at (1,0.4) {$\nu$};
	
	\draw (int2) edge (c) edge (d);
	\draw[dotted] (i2) edge (d);
	\end{tikzpicture}}; 
\node at (4.5,0) {$=$};
\node at (6,0){\begin{tikzpicture}[scale=1.4]
	\node (a) at (0,0) [ext] {$1$};
	\node (b) at (1,0) [ext]{$2$};
	
	\node (int) at (0.5,0) [int] {.};
	
	\node[ext] (i1) at (.5,-0.4) {$\omega$};
	
	\draw (int) edge (a) edge (b);
	\draw[dotted] (i1) edge (int);

	\node (int2) at (0.5,0.5) [int] {.};
	
	\node[ext] (i2) at (1,0.4) {$\nu$};
	
	\draw (int2) edge (a) edge (b);
	\draw[dotted] (i2) edge (b);
	
	\end{tikzpicture}};
\end{tikzpicture}.
$$

The differential $\delta$ splits as $\delta=\delta_{contr} + \delta_{cut}$, where $\delta_{contr}$ contracts edges adjacent to at least one internal vertex and $\delta_{cut}$ splits any edge into two decorations given by the diagonal class of $M$. Due to the constraint of not allowing connected components without external vertices, $\delta_{cut} = \Delta^* + \delta_{Z_M}$ splits again into two pieces, $\Delta^*$ which does not create \textit{forbidden} graphs and $\delta_{Z_M}$ that when creating such forbidden connected components transforms them into a scalar as prescribed by the partition function $Z_M$.

\begin{center}
\begin{tikzpicture}

\node at (-10.5,0) {$ \delta_{contr}$};

\node at (-9.4,0) {\begin{tikzpicture}[scale=1]
	\node (ro) at (1,1.7) {};
	
	\node[fill=gray!20] (e1) at (1,0.2)[ext] {$a$};
	\node (black) at (1,0.8) [int] {.};
	
	\node (e2) at (0,-1) {};
	\node (e3) at (.75,-1) {};
	\node (ed) at (1.3,-1){} ;
	\node (en) at (2,-1) {};
	\node (dec) at (0.8,1.5) [ext] {$\omega$};

	\draw (black) -- (e1);

	\draw (0.4,-0.3) -- (e1);
	\draw (1.5,-0.2) -- (e1);
	
	%\draw (0.85,1) -- (0.6,1.5);
	\draw (0.4,1.1) -- (black);
	\draw (1.4,1.35) -- (black);
	\draw[dotted] (dec)--(black);	
	\end{tikzpicture}};

\node at (-8.5,0) {$=$};

\node at (-7.6,0.15) {
	\begin{tikzpicture}[scale=0.8]
	\node[fill=gray!20] (a) at (-1,-0.5) [ext] {$a$};
	\node (dec) at  (-1.3,0.4) [ext] {$\omega$};

	\draw[dotted] (a) -- (dec);
	\draw (a) -- (-1.8,-0.1);
	\draw (a) -- (-1.7,-1.1);
	
	\draw (a) -- (-0.5,-1);
	\draw (a) -- (-0.4,0.3);
	\end{tikzpicture}};

\node at (-6,0) {$,\quad \quad \Delta^*$};

\node at (-4,0) {
	\begin{tikzpicture}[scale=0.8]
	\node[fill=gray!20] (a) at (-1,-0.5) [ext] {$a$};
	
	\node[fill=gray!20] (b) at (0,-0.25) [ext] {$b$};
	
	\draw (a)--(b);
	
	\draw (a) -- (-1.5,0.5);
	\draw (a) -- (-2,0);
	\draw (a) -- (-2,-1);
	\draw (a) -- (-1.5,-1.5);
	\draw (b) -- (0.5,0.8);
	\draw (b) -- (0.8,0.3);
	\draw (b) -- (1,-0.5);
	\draw (b) -- (0.5,-1);
	\end{tikzpicture}};

\node at (-1.8,0){$= \displaystyle\sum_{e_i \text{ basis of } H^\bullet(M)}$};

\node at (0.8,0) {
	\begin{tikzpicture}[scale=0.8]
	\node (w1) at (-0.11,-0.7) [ext]{$e_i$};
	\node (w2) at (0.1,0.2) [ext]{$\scriptstyle e_i^*$};
	\node[fill=gray!20] (a) at (-1,-0.5) [ext] {$a$};
	
	\node[fill=gray!20] (b) at (1,-0.25) [ext] {$b$};
	
	\draw[dotted]  (a)--(w1);
	\draw[dotted] (b)--(w2);
	
	\draw (a) -- (-1.5,0.5);
	\draw (a) -- (-2,0);
	\draw (a) -- (-2,-1);
	\draw (a) -- (-1.5,-1.5);
	\draw (b) -- (1.5,0.8);
	\draw (b) -- (1.8,0.3);
	\draw (b) -- (2,-0.5);
	\draw (b) -- (1.5,-1);
	\end{tikzpicture}};

\end{tikzpicture}

\begin{tikzpicture}

\node at (-1.7,0) {$\delta_{Z_M}$};
\node at (0,0) {
	\begin{tikzpicture}[scale=0.8]
	\node[fill=gray!20] (a) at (-1,-0.5) [ext] {\phantom{$a$}};
	
	\node (b) at (0,-0.25) [int] {$.$};
	
	\node (c) at (1,0) [int] {.};
	
	\node (dec b) at (-0.4,0.3) [ext] {$\omega$};
	
	\node (dec c) at (1,-0.7) [ext] {$\nu$};
	
	\draw (b)--(a)--(c);
	
	\draw [dotted] (b)--(dec b) (c)--(dec c);
	
	\draw (a) -- (-1.5,0.5);
	\draw (a) -- (-2,0);
	\draw (a) -- (-2,-1);
	\draw (a) -- (-1.5,-1.5);
	\end{tikzpicture}};
\node at (3,0) {$= \displaystyle\sum_{e_i \text{ basis of } H^\bullet(M)} Z_M($};
%\node at (4,0) {};

\node at (5.4,0){\begin{tikzpicture}[scale=0.8]
	\node (a) at (-0.7,-0.6) [ext] {$e_i$};
	
	\node (b) at (0,-0.25) [int] {$.$};
	
	\node (c) at (1,0) [int] {.};
	
	\node (dec b) at (-0.4,0.3) [ext] {$\omega$};
	
	\node (dec c) at (1,-0.7) [ext] {$\nu$};
	
	\draw (b)--(c);
	
	\draw [dotted] (a)--(b) (b)--(dec b) (c)--(dec c);
	
	\node at (1.5,0) {$)$};
	\end{tikzpicture}
};

\node at (7.5,0) {
\begin{tikzpicture}[scale=0.8]
	\node[fill=gray!20] (a) at (-1,-0.5) [ext] {\phantom{$a$}};
	
	\node (b) at (0,-0.25) [ext] {$\scriptstyle e_i^*$};

	\draw [dotted](b)--(a);

	\draw (a) -- (-1.5,0.5);
	\draw (a) -- (-2,0);
	\draw (a) -- (-2,-1);
	\draw (a) -- (-1.5,-1.5);
	\end{tikzpicture}};

\node at (9,0) {$+ \dots$};

\end{tikzpicture}
\end{center}
The cooperadic right comodule structure is obtained by collapsing subgraphs containing at least one external vertex into a single external vertex. 

\subsection{Historic Remark}
The above graph complexes can be seen as a version of the non-vacuum Feynman diagrams appearing in the perturbative expansion of topological field theories of AKSZ type, in the presence of zero modes. In this setting the field theories have been studied by Cattaneo-Felder \cite{CF} and Cattaneo-Mnev \cite{CM}, whose names we hence attach to the above complexes of diagrams, though the above construction of $\stG_M$ does not appear in these works directly. 
Furthermore, it has been pointed out to us by A. Goncharov that similar complexes have been known by experts before the works of the aforementioned authors.
Finally, in the local case the construction is due to M. Kontsevich \cite{K2}.

\section{Cohomology of the CFM (co)operad}\label{sec:main proof}

The following theorem relates the right $\Graphs_D$-module $\Graphs_M$ with the right $\FM_D$-module $\FM_M$.

\begin{thm}
\label{thm:GraphsM}
The map $\omega_\bullet \colon \pdGraphs_M \to \Omega_{PA}(\FM_M)$ established in Corollary \ref{cor: graphs to forms} is a quasi-isomorphism. Similarly,  the composition map $Chains(\FM_M)\to \Omega_{PA}(\FM_M)^* \stackrel{\omega_\bullet^*}{\to} \Graphs_M$ is a quasi-isomorphism of right modules.
\end{thm}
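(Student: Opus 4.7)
The plan is to establish the first assertion---that each arity map $\omega_n: \pdGraphs_M(n)\to \Omega_{PA}(\FM_M(n))$ is a quasi-isomorphism of cdgas---and then derive the chain-level statement by dualizing, using that both sides are degreewise finite-dimensional cohomology so the transpose of a quasi-isomorphism remains one. Compatibility with the right $\pdGraphs_D$-cooperadic coaction is already in Corollary \ref{cor: graphs to forms}, so the content reduces to the quasi-isomorphism claim in each fixed arity $n$.

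The central technical step is a spectral sequence argument. I would filter $\pdGraphs_M(n)$ by an ``internal complexity'' weight, for instance by the negative of the number of internal vertices, arranged so that on the associated graded only the contraction differential on edges between two internal vertices survives, while the splitting differential $d_{split}$, the contraction absorbing an internal vertex into an external one, and the pieces of the differential involving the partition function $Z_M$ all strictly raise filtration. Dually, the analytic side $\Omega_{PA}(\FM_M(n))$ carries a corresponding filtration by the codimension of boundary strata of the Axelrod-Singer compactification: the stratum where $k$ points collide into an infinitesimal cluster corresponds to a ``bubble'' of $k{-}1$ internal vertices in a graph. With these filtrations in place, Kontsevich's vanishing lemmas (already invoked in the proof of Lemma \ref{lemma: graphs to forms}) ensure that purely internal subgraphs contribute to $E_1$ only through their cohomology, and this cohomology is identifiable via the local formality quasi-isomorphism $\pdGraphs_D \to \Omega_{PA}(\FM_D)$ of Definition/Proposition \ref{defprop:Graphs_D}.

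This reduces the $E_1$-page comparison to a statement about a much smaller model: a Lambrechts-Stanley-type cdga $G_A(n)$ built from the Poincar\'e duality algebra $A = H^\bullet(M)$, the diagonal class, and the tree-level part of $Z_M$, which encodes the real homotopy type of $M$ as mentioned in the introduction. The topological $E_1$-page should be identified with the same object by an induction on $n$ using the SA bundle $\FM_M(n+1)\to\FM_M(n)$ together with the Leray-Hirsch structure of its cohomology coming from the classes $\phi_{ij}$ constructed in Proposition \ref{prop:angular form}. The coincidence of the two $E_1$-pages, together with a bounded-filtration convergence argument, then forces $\omega_n$ to be a quasi-isomorphism.

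The main obstacle is controlling the higher-loop contributions of the partition function $Z_M$. For general $M$ these are non-trivial, they introduce differential components of strictly positive filtration weight, and they must eventually cancel on $E_\infty$ against geometric boundary contributions in order for the two spectral sequences to converge to the same limit. The leverage available is the Maurer-Cartan property of $z_M$ in $\GC_{H^\bullet(M)}$ (Remark \ref{rem:maurercartan}), which ensures $d^2=0$ on the nose, together with the fact that the topological side satisfies the same boundary-integral identities by construction of $\omega_\bullet$. Arranging these cancellations compatibly with the right $\pdGraphs_D$-coaction---so that the resulting quasi-isomorphism remains a map of dg Hopf collections---is the technically most delicate point of the argument.
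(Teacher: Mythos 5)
Your proposal shares some of the underlying intuition with the paper's proof—both reduce the problem to a comparison with a Lambrechts--Stanley-type model for the associated graded, and both rely on Kontsevich vanishing lemmas and the Poincar\'e-duality structure of $H^\bullet(M)$—but the architecture is genuinely different, and there is a gap on the topological side that would have to be repaired.

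The key obstacle in your plan is the proposed filtration on $\Omega_{PA}(\FM_M(n))$ ``by the codimension of boundary strata''. There is no natural decreasing filtration of the cdga $\Omega_{PA}(\FM_M(n))$ indexed by stratum codimension that $\omega_\bullet$ would respect: a graph with $k$ internal vertices maps under $\omega_\bullet$ to a fibre integral over $\FM_M(n+k)\to\FM_M(n)$, which produces a global form on $\FM_M(n)$, not something concentrated near a codimension-$k$ stratum. So you have a filtration on the domain (by internal vertices, which is fine) but no corresponding filtration on the target with which to compare spectral sequences. Without that, the spectral sequence comparison you want to run does not get off the ground.

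The paper sidesteps this by not comparing $\pdGraphs_M$ and $\Omega_{PA}(\FM_M)$ directly. Instead it introduces a purely algebraic auxiliary object $\tBG$, the operadic cobar construction of $(s^{-D}\La^D\coCom\circ\Omega_{PA}(M))$ over $\La^D\coCom$, which is quasi-isomorphic to the Bendersky--Gitler complex $\BG$ and hence has cohomology $H^\bullet(\Conf_\bullet(M))$ by Poincar\'e--Lefschetz duality. A map of right $\La^{D-1}L_\infty$-modules $\Phi:\tBG\to \Graphs_M$ is built factoring through $\Omega_{PA}(\FM_M)^*$. Because $\tBG$ is algebraic, one has the filtration by the arity of the generating $\coCom$ factor (i.e.\ by number of connected components), which matches the connected-component filtration on $\Graphs_M$; the associated graded of $\tBG$ is computed by the K\"unneth formula, and the comparison with $\gr\Graphs_M$ is then done by a Lambrechts--Voli\'c-style induction on $n$ (with subsidiary filtrations by $\#\text{edges}-\#\text{vertices}$ and $\#\text{internal vertices}-\text{degree}$). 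Once $\tBG\to\Omega_{PA}(\FM_M)^*\to\Graphs_M$ is known to be a quasi-isomorphism and the middle term has the correct finite-dimensional cohomology, the two-out-of-three argument forces both factors to be quasi-isomorphisms, and dualizing gives the chain-level statement. Your concern about higher-loop pieces of $Z_M$ is, incidentally, not the bottleneck here: the connected-components filtration already handles those terms since they can only delete purely internal components (and so decrease vertex count) without increasing the number of external connected components.

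So the missing idea in your proposal is the introduction of a purely algebraic stand-in for $\Omega_{PA}(\FM_M)$ (here $\tBG$, coming from Bendersky--Gitler) which (a) admits a filtration compatible with the one on $\Graphs_M$ and (b) is known \emph{a priori} to compute $H^\bullet(\FM_M(n))$. Without such a replacement, the spectral sequence on the topological side of $\omega_\bullet$ that you need simply isn't there.
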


Note that there is in general no known explicit formula for the cohomology of the configuration spaces $\FM_M(n)$ on a manifold. However, two spectral sequences converging to the (co)homology are known, one by Cohen and Taylor \cite{CT} and one by Bendersky and Gitler \cite{BG}.
Both spectral sequences have been shown to be isomorphic (via Poincar\'e duality) from the $E^2$ term on by Felix and Thomas \cite{FT}.
The $E^2$ term is the cohomology of a relatively simple complex described below. It was shown by B. Totaro \cite{To} and I. Kriz \cite{Kr} that the spectral sequence abuts at the $E^2$ term for smooth projective varieties. However, it does not in general abut at the $E^2$ term, a counterexample was given in \cite{FT}.

The strategy to prove Theorem \ref{thm:GraphsM} will be as follows. We will compare the double complex $\BG$ giving rise to the Bendersky-Gitler spectral sequence (its definition will be recalled below) to $\pdGraphs_M$. There is a complex $\tBG$ quasi-isomorphic to $\BG$ that comes with a natural map $f: \tBG\to \Graphs_M$. Our goal is to show that $f$ is a quasi-isomorphism, and for that we set up another spectral sequence. The detailed proof is contained in section \ref{sec: map is a quasi-iso}.

\subsection{The Bendersky-Gitler spectral sequence}
Let us recall the definition of the Bendersky-Gitler spectral sequence. See also the exposition in \cite{FT}.

Recall that the configuration space of $n$ points in $M$ is $\Conf_n(M)\coloneqq M^n \setminus \Delta$, where $\Delta=\{(p_1,\dots, p_n)\mid \exists i\neq j: p_i = p_j \}$. By Poincar\'e - Lefschetz duality 
\[
H_{-d}(\Conf_n(M)) \cong H^{n\dim (M) -d}(M^n, \Delta).
\]
The relative cohomology $H^{\bullet}(M^n, \Delta)$ on the right is the cohomology of the complex
\[
H^\bullet(M^n) \to H^\bullet(\Delta).
\]
The left hand side is the cohomology of $\Omega_{PA}(M)^{\otimes n}$. The right hand side may be computed as the cohomology of the \v Cech-de Rham complex corresponding to any covering of $\Delta$.  
To obtain the Bendersky-Gitler double complex one takes the cover of the diagonal by the sets
\[
U_{i,j} = \{p_i=p_j\} \subset \Delta.
\]
The Bendersky-Gitler complex is the total complex of the double complex obtained using the natural quasi-isomorphism $\Omega_{PA}(M)^{\otimes n} \to \Omega_{PA}(M^{n})$ , i.e.,
\[
\BG(n) \coloneqq Total( \Omega_{PA}(M)^{\otimes n} \to \text{\v Cech-de-Rham}(\Delta)  ).
\]
By the statements above and a simple spectral sequence argument it follows that $H^\bullet(\BG(n))\cong H(M^n, \Delta)$.

For what we will say below it is important to describe $\BG(n)$ in a more concise way. Elements of $\BG(n)$ can be seen as linear combinations of decorated graphs on $n$ vertices, the decoration being one element of $\Omega_{PA}(M)$ for each connected component of the graph. The degree of such a graph is computed as
\[
(\text{degree}) =
\# (\text{edges})
+\# (\text{total degree of decorations})
-n\cdot \dim(M).
\]

The differential is composed of two parts, one of which comes from the de Rham differential and one of which comes from the \v Cech differential:
\[
d_{total} = d_{\text{dR}} + \delta.
\]

Concretely, $\delta$ adds an edge in all possible ways, and multiplies the decorations of the connected components the edge joins.

\begin{rem}
 The original construction of the Bendersky-Gitler spectral sequence uses the de Rham complex of $M$, but since there is only semi-algebraic data involved, namely intersections of sets $U_{i,j} \cong M^{n-1}$, we are allowed to replace differential forms by piecewise algebraic (PA) forms.
\end{rem}

\subsection{A general construction}
Recall that the monoidal product of symmetric sequences $\circ$ is given by 

$$(\op S \circ \op S')(n)  =\bigoplus_{k= k_1 + \dots +k_n} \op S(k) \otimes \op S'(k_1) \otimes \dots \otimes \op S'(k_n)\otimes \mathbb R [\text{Sh}(k_1 , \dots , k_n)],$$ %= \bigoplus_{k= k_1 + \dots k_n} \op S(k) \otimes_{\mathbb S_k} \text{Ind}^{\mathbb S_k}_{\mathbb S_{k_1} \times \dots \times \mathbb S_{k_n}} \op S'(k_1) \otimes \dots \otimes \op S'(k_n)
where $\text{Sh}(k_1 , \dots , k_n)$ are the $k_1,\dots,k_n$ shuffles.
 Let $\op C$ be a cooperad, $\op M$ be a cooperadic right $\op C$-comodule with coaction $\Delta_{\op M}\colon \op M \to \op M \circ \op C$, and let $A$ be some dg commutative algebra, which can be seen as a symmetric sequence concentrated in arity $1$.  Then the spaces
\[
\op M(n) \otimes A^{\otimes n} = (\op M \circ A)(n) 
\] 
assemble into another cooperadic right $\op C$-comodule.

More formally, since $A$ is a dg commutative algebra, for every symmetric sequence $\op S$ there is a morphism 
\[
s\colon \op S \circ A \to A \circ \op S 
\]
given by the multiplication in $A$.

The coaction of $\op C$ on $\op M \circ A$ is given by the composition of the following maps:
\[
\op M \circ A
\stackrel{\Delta_{\op M}\circ \id_A}{\to}
(\op M \circ \op C) \circ A
\cong 
\op M \circ (\op C \circ A)
\stackrel{\id_{\op M} \circ s}{\to}
\op M \circ (A \circ \op C)
\cong 
(\op M \circ A) \circ \op C.
\]

It is a straightforward verification to check that the axioms for cooperadic comodules hold.

\subsection{The definition of $\tBG$}
Let $\op C$ be a coaugmented cooperad and $\op M$ be a right $\op C$ comodule. Applying the cobar construction to the cooperad $\op C$ we obtain an operad
$\Omega(\op C)$. Applying the cobar construction to the comodule $\op M$ we obtain a right $\Omega(\op C)$-module $\Omega_{\Omega(\op C)}(\op M)$, also denoted just by $\Omega(\op M)$. As a symmetric sequence $\Omega(\op M) = \op M \circ \Omega(\op C)$ and the differential splits as $d = d_1 + d_2 + d_3$, where $d_1$ comes from the differential in $\op M$, $d_2$ comes from the differential in $\Omega(\op C)$ and $d_3$ is induced by the comodule structure. Of course, if $A$ is a dg commutative algebra, then replacing $\op M$ by $\op M\circ A$ we obtain a right $\Omega(\op C)$-module $\Omega(\op M\circ A)$.
We can now define $\tBG$.
\[
\tBG := \Omega_{\La^{D-1}L_\infty}( s^{-D}\La^{D} \coCom \circ \Omega_{PA}(M))
\]
where on the right hand side we consider $s^{-D}\La^{D} \coCom$ as a right comodule over $\La^{D} \coCom$ and then we use the construction from the previous section that gives us a $\La^{D} \coCom$-right comodule structure on $s^{-D}\La^{D} \coCom \circ \Omega(\op M)$. Notice that the operadic cobar construction is given by $\Omega(\La^{D} \coCom) = \Omega((\La^{D-1}\Lie)^\vee) = \La^{D-1}L_\infty$.

Up to degrees, one can picture $\tBG$ as multiple (``commuting'') $L_\infty$ words, each labeled by a $PA$-form on $M$. Besides the de Rham and the $L_\infty$ differential, the cobar differential acts by merging two $L_\infty$ words while multiplying the associated forms.

\subsection{Some other general remarks and the definition of $\sBG$} 
Let $\op P$ be a Koszul operad, $\op P^\vee$ the Koszul dual cooperad and $\op P_\infty=\Omega(\op P^\vee)$ the minimal cofibrant model for $\op P$. There are bar and cobar construction functors between the categories 
of right $\op P$ modules and right $\op P^\vee$ comodules
\[
B_{\op P^\vee}  : Mod-\op P \leftrightarrow coMod-\op P^\vee : \Omega_{\op P}.
\]

Given a right $\op P^\vee$ comodule $\op M$ there are two ways to construct a right $\op P_\infty$ module:
\begin{enumerate}
\item Take the right $\op P_\infty$ module $\Omega_{\op P_\infty}(\op M)$.
\item Take $\Omega_{\op P}(\op M)$ and consider it as a right $\op P_\infty$ module via the morphism of operads $p\colon \op P_\infty\to \op P$.
\end{enumerate}

\begin{lemma}\label{lemma:quasi-iso of quasi-free}
Let $\op P$ be a Koszul operad with zero differential such that $\op P(0) = 0$ and $\op P(1) = \mathbb R$ and let $\op M$ be a right $\op P^\vee$ comodule.
There is a canonical (surjective) quasi-isomorphism 
\[
\pi \colon \Omega_{\op P_\infty}(\op M) \to \Omega_{\op P}(\op M).
\]
\end{lemma}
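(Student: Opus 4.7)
The plan is to realize $\pi$ as a morphism of filtered complexes and compare the two via a convergent spectral sequence. Surjectivity is immediate: the canonical projection $p\colon \op P_\infty = \Omega(\op P^\vee) \to \op P$ sends the weight-one generators $s^{-1}\bar{\op P^\vee}_{(1)}$ identically to the generators of $\op P$ (and kills higher weight components of the free operad), hence is surjective; therefore so is $\pi = \op M \circ p$ on the underlying graded modules.

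I would then filter both sides by the arity of the $\op M$-factor. Explicitly, for $\op Q \in \{\op P_\infty, \op P\}$ set
\[
F_p \Omega_{\op Q}(\op M)(n) \coloneqq \bigoplus_{k \geq p} \op M(k) \otimes_{\mathbb S_k} \bigoplus_{\substack{n_1+\cdots+n_k=n \\ n_i \geq 1}} \op Q(n_1)\otimes\cdots\otimes\op Q(n_k) \otimes \R[\mathrm{Sh}].
\]
Since $\op P(0)=0$ (and consequently $\op P_\infty(0)=0$), in each arity $n$ this is a bounded filtration $F_1 \supseteq F_2 \supseteq \cdots \supseteq F_{n+1}=0$. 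The differential on $\Omega_{\op P_\infty}(\op M) = \op M \circ \op P_\infty$ splits into a term $d_{\op M}\otimes 1$ and a term $1 \otimes d_{\op P_\infty}$, both preserving $k$, together with a twisting term coming from the comodule coaction $\op M \to \op M \circ \op P^\vee$ composed with $\op P^\vee \hookrightarrow \op P_\infty$, which strictly \emph{increases} $k$ (replacing an element of $\op M(k)$ by one in $\op M(k+1)$). On $\Omega_{\op P}(\op M)$ the analogous decomposition holds, except the middle piece is absent since $\op P$ has zero differential. All of these terms preserve $F_\bullet$ by inspection.

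On the associated graded $E^0_p = F_p/F_{p+1}$, the twisting parts drop out on both sides, and $\pi$ reduces to
\[
\op M(p) \otimes_{\mathbb S_p} \bigl(\op P_\infty(n_1)\otimes\cdots\otimes \op P_\infty(n_p)\otimes \R[\mathrm{Sh}]\bigr) \longrightarrow \op M(p) \otimes_{\mathbb S_p} \bigl(\op P(n_1)\otimes\cdots\otimes \op P(n_p)\otimes \R[\mathrm{Sh}]\bigr)
\]
(summed over $\vec n$), with differentials $d_{\op M}\otimes 1 + 1\otimes d_{\op P_\infty}$ and $d_{\op M}\otimes 1$ respectively. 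The Künneth formula over $\R$ together with the Koszul duality quasi-isomorphism $\op P_\infty \xrightarrow{\sim} \op P$ (arity by arity) gives $H^\bullet(\op P_\infty(n_1)\otimes\cdots\otimes\op P_\infty(n_p)) \cong \op P(n_1)\otimes\cdots\otimes\op P(n_p)$, and tensoring with the complex $\op M(p)$ and then passing to $\mathbb S_p$-coinvariants (which is exact in characteristic zero) shows that $\pi$ induces an isomorphism on $E^1$. Since the filtration is bounded in each arity, the spectral sequence converges, and $\pi$ is a quasi-isomorphism.

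The main potential obstacle is simply the bookkeeping: verifying that the twisting differential on $\Omega_{\op P_\infty}(\op M)$ really does strictly raise the $\op M$-arity and that the Künneth/coinvariants argument on $E^0$ is compatible with the shuffle decorations. Both are formal consequences of the definition of the relative cobar construction on a right comodule, so once the filtration is set up correctly no serious difficulty remains.
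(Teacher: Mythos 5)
Your overall strategy — filter, pass to the associated graded, and reduce to Koszul duality $\op P_\infty \xrightarrow{\sim} \op P$ via K\"unneth and exactness of $\mathbb S_p$-coinvariants in characteristic zero — is the right one and matches the paper's in spirit (the paper instead filters by the sum of the degree in $\op M$ and the weight in $\op P_\infty$). However, your proof as written has a concrete error in the key bookkeeping step, and as a result the filtration you define is not a filtration by subcomplexes.

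The twisting differential on $\Omega_{\op P_\infty}(\op M) = \op M \circ \op P_\infty$ \emph{strictly decreases} the $\op M$-arity $k$, not increases it. Indeed, the coaction $\op M(n) \to (\op M \circ \op P^\vee)(n)$ lands in pieces $\op M(j) \otimes_{\mathbb S_j}\bigl(\op P^\vee(n_1)\otimes\cdots\otimes\op P^\vee(n_j)\bigr)$ with $n_1+\cdots+n_j=n$ and each $n_i\geq 1$ (since $\op P(0)=0$ forces $\op P^\vee(0)=0$), so $j\leq n$; and since $\op P(1)=\R$ forces $\bar{\op P^\vee}(1)=0$, the reduced part of the coaction, which is what enters the twisting differential, has $j<n$ strictly. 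The twist pushes a factor of $\bar{\op P^\vee}$ out of the $\op M$-slot and down into $\op P_\infty$ as a new cobar generator, so the $\op M$-arity can only go down. Consequently an element of $\op M(k)\otimes\cdots$ with $k\geq p$ can be sent by the twist to a summand with $\op M$-arity $j<p$, i.e.\ the differential does \emph{not} preserve $F_p=\{k\geq p\}$, and the spectral sequence you set up does not exist.

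The fix is to reverse the inequality: filter by $F_p = \{k\leq p\}$ (an \emph{increasing} filtration, bounded in each total arity by $0\leq p\leq n$). Then $d_{\op M}$ and $d_{\op P_\infty}$ preserve $k$, the twist strictly decreases $k$ and hence also preserves the filtration, and on the associated graded the twist drops out — which is exactly what you need. From that point on your computation of $E^1$ via K\"unneth, the Koszul quasi-isomorphism $\op P_\infty(m)\xrightarrow{\sim}\op P(m)$, and exactness of $\mathbb S_p$-coinvariants over $\R$ is correct, as is the convergence argument, so with this single correction the proof goes through.
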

\begin{proof}
As symmetric sequences, $\Omega_{\op P_\infty}(\op M) = \op M \circ \op P_\infty$ and $\Omega_{\op P}(\op M) = \op M \circ \op P$. We define $\pi = \id_{\op M} \circ p$. It is clear that each piece of the differential commutes with $\pi$. The remaining claim that $\pi$ is a quasi-isomorphism follows from a spectral sequence argument.

Concretely, we consider a filtration $\mF^p \Omega_{\op P_\infty}(\op M)$ spanned by elements for which the sum of the degree in $\op M$ with the weight in $\op P_\infty$ (the amount of elements from $\op P^\vee$ used) does not exceed $p$. On the first page of the spectral sequence given by this filtration we recover $\Omega_{\op P}(\op M)$ and thus the result follows.
\end{proof}

Now let us give the definition of $\sBG$:
\[
\sBG = \Omega_{\La^{D-1} \Lie}( s^{-D}\La^{D} \coCom \circ \Omega_{PA}(M))
\]
where on the right we consider $\La^{D} \coCom = (\La^{D-1} \Lie)^\vee$ as a right comodule over itself and the algebra of differential forms $\Omega_{PA} (M)$. Then, by the Lemma above, we see that there is a canonical quasi-isomorphism 
\[
\tBG \to \sBG.
\]

Similar to $\tBG$, one can picture $\sBG$ as connected components of Lie words, each labeled by a $PA$-form on $M$. 
One can consider a basis of $\Lie(n)$ consisting of graphs on $n$ vertices, with $n-1$ edges, such that there no two edges  $(i,r)$ and $(j,r)$ with $r$ bigger than both $i$ and $j$.
Taking the degrees and differentials into account, we see that $\sBG(n)$ is precisely what in \cite{FT} is denoted by $\overline{E}(n,A)$, for $A=\Omega_{PA}(M)$.

Furthermore it was shown in \cite[Proposition 2.4]{FT} that there is a canonical quasi-isomorphism 
\[
\BG \to \sBG.
\]

In particular one obtains:
\begin{cor}The following symmetric sequences are isomorphic:
\[
H_\bullet(\Conf_\bullet(M)) \cong H(\BG) \cong H(\sBG) \cong H(\tBG).
\]
\end{cor}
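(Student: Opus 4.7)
The plan is to obtain the three claimed isomorphisms by chaining together results already established (or recalled from the literature) in the preceding paragraphs, so the proof amounts to little more than assembling these pieces carefully.

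First, for the isomorphism $H^\bullet(\Conf_\bullet(M))\cong H(\BG)$ I would invoke the construction of $\BG$ itself. By definition, $\BG(n)$ is the total complex of the double complex $\Omega_{PA}(M)^{\otimes n} \to \text{\v Cech-de-Rham}(\Delta)$ built from the covering $\{U_{i,j}\}$ of the long diagonal. A routine spectral sequence argument (filtering by Čech degree) shows $H^\bullet(\BG(n))\cong H^\bullet(M^n,\Delta)$, and then Poincar\'e--Lefschetz duality $H^\bullet(M^n,\Delta)\cong H_{n\dim(M)-\bullet}(\Conf_n(M))$ together with Poincar\'e duality on $\Conf_n(M)$ (using the manifold-with-corners structure of $\FM_M(n)$, which is homotopy equivalent to $\Conf_n(M)$) identifies the right-hand side with $H^\bullet(\Conf_n(M))$. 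This is the classical Bendersky--Gitler statement.

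Second, for the isomorphism $H(\BG)\cong H(\sBG)$ I would simply quote the quasi-isomorphism $\BG\to \sBG$ recalled from \cite[Proposition 1]{FT} in the paragraph just above the corollary.

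Third, for $H(\sBG)\cong H(\tBG)$ I would apply Lemma \ref{lemma:quasi-iso of quasi-free} with $\op P = \La^{D-1}\Lie$ and $\op M = s^{-D}\La^{D}\coCom \circ \Omega_{PA}(M)$, viewed as a right comodule over $\op P^\vee = \La^{D}\coCom$ via the general construction recalled above. The hypotheses of the lemma (Koszulness of $\op P$, vanishing differential, $\op P(0)=0$, $\op P(1)=\R$) are clear for the shifted Lie operad, so the lemma produces the canonical quasi-isomorphism $\pi\colon \tBG = \Omega_{\La^{D-1}L_\infty}(\op M) \to \Omega_{\La^{D-1}\Lie}(\op M) = \sBG$. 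Chaining these three isomorphisms yields the corollary.

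I do not anticipate a serious obstacle here, since each step is either stated in the immediately preceding text or is a standard classical input. The only point requiring modest care is verifying that the coaugmented comodule structure on $s^{-D}\La^{D}\coCom \circ \Omega_{PA}(M)$ is the one for which both Lemma \ref{lemma:quasi-iso of quasi-free} and the cited result of F\'elix--Thomas genuinely apply, and that the identifications of $\Omega(\La^{D}\coCom)$ with $\La^{D-1}L_\infty$ and of the cobar comodule constructions match on both sides; this is a formal unwinding of definitions rather than a substantive difficulty.
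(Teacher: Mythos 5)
Your proof is essentially the paper's: the corollary is obtained by simply assembling the three facts stated immediately before it (the Bendersky--Gitler identification $H(\BG(n))\cong H^\bullet(M^n,\Delta)\cong H_\bullet(\Conf_n(M))$ via Poincar\'e--Lefschetz duality, the quasi-isomorphism $\BG\to\sBG$ from \cite[Proposition 1]{FT}, and the quasi-isomorphism $\tBG\to\sBG$ furnished by Lemma~\ref{lemma:quasi-iso of quasi-free} applied with $\op P=\La^{D-1}\Lie$), and the paper's ``proof'' is precisely this assembly. One small inaccuracy in your write-up of the first step: $\Conf_n(M)$ is an open manifold, so ordinary Poincar\'e duality does not directly convert its homology to its cohomology, nor does replacing it by $\FM_M(n)$ help, since Poincar\'e--Lefschetz for a compact manifold with corners pairs $H^\bullet$ with homology \emph{rel boundary}; what is really being used is that over $\R$ the finite-dimensional (co)homology groups of $\Conf_n(M)$, $\BG(n)$, $\sBG(n)$, $\tBG(n)$ are all identified up to linear duality and an overall degree shift by $n\dim M$ --- an informality the paper itself glosses over in stating the corollary.
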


\subsection{The map $\tBG\to \Graphs_M$}\label{sec:BG to graphs}
The goal of this subsection is to construct the map of right $\La^{D-1} L_\infty$ modules $\Phi: \tBG\to \Graphs_M$. 
Since $\tBG:= \Omega_{\La^{D-1}L_\infty}(s^{-D}\La^{D} \coCom \circ \Omega_{PA}(M))$ is quasi-free as right $\La^{D-1} L_\infty$ module, it suffices to define our map $\Phi$ on the module generators and verify that this map is compatible with the differential.
Note that $s^{-D}\La^{D}\coCom(n) = \R[nD]\mu_n$ is one dimensional, generated by the $n$-fold coproduct $\mu_n$. 
We define the map $\Phi$ on generators by setting, for $\alpha_1, \dots, \alpha_n\in \Omega_{PA}(M)$ and $\Gamma\in \stG_M$
\begin{equation}\label{equ:Phidef}
(\Phi(\mu_n\otimes \alpha_1 \otimes \cdots\otimes \alpha_n)) (\Gamma) := \int_{\FM_M(n)} (\pi_1^* \alpha_1) \cdots (\pi_n^* \alpha_n)\omega_\Gamma.
\end{equation}
Here $\pi_j : \FM_M(n)\to \FM_M(1) = M$ is the map that forgets the position of all points in the configuration except for the $j$-th point. Notice that the element $\mu_n\otimes \alpha_1 \otimes \cdots \otimes\alpha_n$ has degree $-nD + |\alpha_1| + \dots + |\alpha_n| = -(\dim (\FM_M(n)) - |\pi_1^*\alpha_1| - \dots - |\pi_n^*\alpha_n|)$, therefore $F$ preserves degrees.

A general element of $\tBG$ is a linear combination of elements obtained by acting with elements of the operad $\ell_j\in \La^{D-1}L_\infty$ on generators,
\[
x := (\mu_n\otimes \alpha_1 \otimes \cdots\otimes \alpha_n)
\circ (\ell_1,\dots,\ell_n).	
\]
For such elements $x$ we have that $\Phi(x)=\Phi(\mu_n\otimes \alpha_1 \otimes \cdots\otimes \alpha_n)\circ (\ell_1,\dots,\ell_n)$, using the right action of $\La^{D-1}L_\infty$ on $\Graphs_M$.
This latter action factors through the right action of $\Graphs_D$ on $\Graphs_M$ via the maps 
\[
	\La^{D-1}L_\infty \xrightarrow{f} \mathit{Chains}(\FM_D) 
	\xrightarrow{\omega^*} \Graphs_{D}. 
\]
Denoting the cooperadic coaction on $\Gamma\in \stG_M$ by $\sum \Gamma' \otimes \gamma_1\otimes \cdots \otimes \gamma_k$ , with $\gamma_j\in\stG_D$, this implies  that 
\begin{equation}\label{equ:Phi explicit}
\begin{aligned}
\Phi(x)(\Gamma)
&=(\Phi(\mu_n\otimes \alpha_1 \otimes \cdots\otimes \alpha_n)\circ (\ell_1,\dots,\ell_n) )(\Gamma)
\\&= 
\sum \pm \Phi(\mu_n\otimes \alpha_1 \otimes \cdots\otimes \alpha_n)(\Gamma') \cdot 
\prod_j \int_{f(\ell_j)}\omega_{\gamma_j}
\\&=
\sum \pm
\int_{\FM_M(n)} (\pi_1^* \alpha_1) \cdots (\pi_n^* \alpha_n)\omega_{\Gamma'}
\prod_j \int_{f(\ell_j)}\omega_{\gamma_j}
\\&=
\int_{\mathrm{Fund}(\FM_M(n))\circ (f(\ell_1),\cdots,f(\ell_n))} (\pi_{i_1}^* \alpha_1) \cdots (\pi_{i_n}^* \alpha_n)\omega_{\Gamma}\, .
\end{aligned}
\end{equation}
In the last line we are integrating over the fundamental chain of a boundary stratum of $\FM_M$ in which groups of points are infinitesimally close together. The indices $i_1,\dots,i_n$ shall be those of one (arbitrary) point in each such group.
Furthermore, we used the compatibility of the map $\omega$ with the operadic $\FM_D$-action on $\FM_M$.
Using the formula above we can show the following result.
\begin{lemma}\label{lem:Phi well defined}
The map $\Phi:\tBG\to \Graphs_M$ defined above is compatible with the differentials and is hence a map of right $\La^{D-1} L_\infty$ modules. 
It furthermore factorizes through the adjoint $\omega^*$ of the map $\omega:\stG_M\to \Omega_{PA}(\FM_M)$ of Corollary \ref{cor: graphs to forms} as 
\[
	\tBG\xrightarrow{F} \Omega_{PA}(\FM_M)^* \xrightarrow{\omega^*}\Graphs_M
\]
with 
\[
F((\mu_n\otimes \alpha_1 \otimes \cdots\otimes \alpha_n)\circ (\ell_1,\dots,\ell_n)  )(\omega)
=
\int_{\mathrm{Fund}(\FM_M(n))\circ (f(\ell_1),\cdots,f(\ell_n))} (\pi_{i_1}^* \alpha_1) \cdots (\pi_{i_n}^* \alpha_n)\omega\, .
\]
\end{lemma}
\begin{proof}
The factorization through $\omega^*$ is clear by \eqref{equ:Phi explicit}.

It remains to check that the differentials are preserved by $\Phi$.
Note that the diffential on $\tBG$ decomposes into three terms, $d=d_{\Omega_{PA}(M)}+d_{\La^{D-1}\L_\infty}+d_{\rm cobar}$, stemming from the internal differentials on $\Omega_{PA}(M)$ and $\La^{D-1}\L_\infty$, and the cobar construction respectively. Note that the second summand is zero on generators.

On the other hand we compute, applying Stokes' Theorem:
\begin{align*}
&(\Phi(\mu_n\otimes \alpha_1 \otimes \cdots \otimes \alpha_n)) (d \Gamma) \\
&= \int_{\FM_M(n)} (\pi_1^*\alpha_1) \cdots (\pi_n^*\alpha_n)\omega_{d\Gamma} \\
&= \int_{\FM_M(n)} (\pi_1^*\alpha_1) \cdots (\pi_n^*\alpha_n)d \omega_{\Gamma} \\
&= \sum_{j=1}^n \pm \int_{\FM_M(n)} (\pi_1^*\alpha_1) \cdots (\pi_j^* d\alpha_j) \cdots (\pi_n^*\alpha_n) \omega_\Gamma
\pm
\int_{\p \FM_M(n)} (\pi_1^*\alpha_1) \cdots (\pi_n^*\alpha_n)\omega_\Gamma.
\end{align*}
The two terms exactly reproduce the differential on $\tBG$. The first term corresponds to the part from the internal differential on $\Omega_{PA}(M)$. 
The second term (the boundary integral) produces the part of the differential from the cobar construction.
More precise it is the sum over codimension 1 boundary strata corresponding to some subset of the $n$ points colliding.
But each such term is, using the computation \eqref{equ:Phi explicit} again, identified with an action of a generator of $\La^{D-1}L_\infty$, so that all these terms together assemble to $\pm d_{\rm cobar}\Phi(\mu_n\otimes \alpha_1 \otimes \cdots \otimes \alpha_n)$.
\end{proof}

\subsection{The map $\tBG\to \Graphs_M$ is a quasi-isomorphism}\label{sec: map is a quasi-iso}
In this section we will show the following proposition.
\begin{prop}
\label{prop:tBGGraphsM}
The map $\Phi \colon \tBG\to \Graphs_M$ constructed above is a quasi-isomorphism.
\end{prop}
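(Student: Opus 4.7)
Since both $\tBG$ and $\Graphs_M$ decompose by arity and $\Phi$ respects this decomposition, I would fix $n$ and treat $\Phi_n : \tBG(n) \to \Graphs_M(n)$ separately. The plan is to introduce a bounded filtration on $\Graphs_M(n)$ that is preserved by $\Phi_n$, so that on the associated graded the differential simplifies and the $E^1$-pages of both induced spectral sequences can be explicitly identified and compared.

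\textbf{Filtration.} First I would filter $\Graphs_M(n)$ by a combinatorial weight -- for instance by the total cohomological degree of $H^\bullet(M)$-decorations, or by an appropriate combination of edge count and internal vertex count -- chosen so that the piece of the differential coming from the Maurer-Cartan twist by $Z_M$ (which may carry contributions of arbitrary loop order) strictly raises the filtration degree, while the ``local'' contributions $\Delta$ and $d_{contr}$ preserve it. This filtration is pulled back to $\tBG(n)$ along $\Phi_n$, where it manifests itself as a filtration by the weight in the cobar construction on the comodule side combined with the cohomological degrees of the $\Omega_{PA}(M)$-decorations.

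\textbf{Analysis of the associated graded.} On the $E^0$-page of $\Graphs_M(n)$ the differential reduces to its local part -- edge contractions $d_{contr}$ together with the pairing-to-edge term $\Delta$ -- whereas the $Z_M$-contribution drops out. An analogous simplification occurs for $\tBG(n)$: the piece of the cobar differential twisted by the comodule coaction drops out, leaving only the ``tree-local'' part. Using Kontsevich-style vanishing lemmas (as recalled in the proof of Lemma \ref{lemma: graphs to forms} and originally from \cite{K2, LV}) one argues that graphs in $\Graphs_M$ carrying internal subcomponents of nontrivial loop structure, or containing internal vertices not forming simple forests attached to external vertices, are acyclic at the $E^1$-level. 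After this reduction, both $E^1$-pages are canonically identified with the Bendersky--Gitler-type complex $\sBG(n)$ introduced in Section \ref{sec:main proof}, and the induced map $\Phi_n$ on $E^1$ coincides with the known quasi-isomorphism $\tBG \to \sBG$ from Lemma \ref{lemma:quasi-iso of quasi-free} (composed with the canonical comparison identifying $\sBG$ with the simplified $\Graphs_M$ model). The classical spectral sequence comparison theorem then yields that $\Phi_n$ is a quasi-isomorphism.

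\textbf{Main obstacle.} The principal technical difficulty lies in the Maurer-Cartan twist by $Z_M$ in the differential of $\Graphs_M$. Since $Z_M$ may couple different loop orders and carry various decorations in $H^\bullet(M)$, the filtration must be engineered delicately so that every non-constant piece of $Z_M$ strictly raises filtration degree, while the filtration remains bounded on each fixed cohomological degree to guarantee convergence of the spectral sequences. A secondary obstacle is the sign and orientation bookkeeping required to identify the $E^1$-pages with $\sBG(n)$ cleanly and to verify that the induced map is precisely the expected quasi-isomorphism, rather than an unrelated map of the right abstract type.
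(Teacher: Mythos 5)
Your high-level strategy (filter both sides, simplify the differential on the associated graded, identify the $E^1$-pages with a Bendersky--Gitler-type complex, then invoke spectral sequence comparison) matches the paper's architecture in spirit, but there are genuine gaps in the details, and they are not cosmetic.

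First, the main filtration is not correctly identified. The paper filters $\Graphs_M$ (and $\tBG$) by the \emph{number of connected components} of graphs, which corresponds on the $\tBG$ side to the arity of the cogenerator from $s^{-D}\La^D\coCom$. This is the choice that makes $\Phi$ filtration-compatible (a point that requires checking, since one must use that $\Phi$ is a morphism of $\La^{D-1}L_\infty$-modules and that the action inserts connected graphs). Your proposed filtrations -- total decoration degree, or edges/internal vertices -- do not have the property you need: the $\Delta$-piece of the differential (joining two decorations into an edge) \emph{decreases} decoration degree and can change the relationship between edges and vertices in a way that does not make the $z_M$-piece strictly raise your filtration degree while $\Delta$ preserves it. In particular, the paper's Maurer--Cartan twist does not ``strictly raise'' the connected-components filtration -- it \emph{preserves} it -- and the quotient it produces on the associated graded is precisely what is analyzed afterwards; your description of what should happen on $E^0$ does not align with this.

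Second, your mechanism for the acyclicity on the associated graded is wrong in kind. The Kontsevich vanishing lemmas are statements about configuration-space \emph{integrals} (weights of graphs); they are used in the paper in Lemma~\ref{lemma: graphs to forms} to show that $\omega_\bullet$ is a well-defined chain map, not in the homological comparison that is the content of Proposition~\ref{prop:tBGGraphsM}. What the paper actually needs at this stage is a purely combinatorial/homological argument: after the connected-components filtration one takes two further filtrations (by edges-minus-vertices, then by internal-vertices-minus-degree) and then runs a Lambrechts--Voli\'c-style induction on arity, splitting $\gr''\gr'\gr\Graphs_M(n)$ according to the valence of the external vertex $1$ and matching the resulting recursion against the recursion $\sBG_{H(M)}(n) = \sBG_{H(M)}(n-1)\otimes H^\bullet(M) \oplus \sBG_{H(M)}(n-1)[D-1]^{\oplus n-1}$. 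You would also need to verify that $\gr\Phi$ is \emph{injective} on $\sBG_{H(M)}$, a nontrivial step that again uses the $\La^{D-1}L_\infty$-module compatibility and a partition-by-connected-components analysis. None of this can be replaced by integral vanishing lemmas, so as written the central step of your argument -- that the $E^1$-pages agree -- is unsupported.
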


%  Recall that $\Graphs_M$ is a right $\Graphs_D$ module, and therefore a $\La^{D-1} L_\infty$ module due to the canonical maps $\La^{D-1} L_\infty \to \La^{D-1} \Lie \to \Graphs_D$. Let us note a property of the map $\Phi$ that will be of use later.
% \begin{lemma}\label{lem: morphism of L infty modules}
% The map $\Phi$ is a morphism of right $\La^{D-1} L_\infty$  modules.
% \end{lemma}
% \begin{proof}
% From Corollary \ref{cor: graphs to forms} it follows that the map $Chains(\FM_M) \to \Graphs_M$ is a morphism of right $\La^{D-1} L_\infty$ modules, therefore $Chains(\FM_M) \circ \Omega_{PA}(M) \to \Graphs_M$ is also a morphism of right $\La^{D-1} L_\infty$ modules from where it follows that $\Phi$ also commutes with the right $\La^{D-1} L_\infty$ module action.
% \end{proof}

There is a filtration on $\Graphs_M$ by the number of connected components in graphs. Concretely, let $\mF^p \Graphs_M$ be the set of elements of $\Graphs_M$ which contain only graphs with $p$ or fewer connected components. There is a similar filtration on $\tBG$ coming from the arity of elements of the generating symmetric sequence $s^{-D}\La^{D} \coCom$. Concretely, elements of $\mF^p \tBG$ are those elements of $\tBG$ that can be built without using any generators $\mu_{p+1}, \mu_{p+2},\dots$ in $\La^{D} \coCom$.
The filtration is arity-wise bounded, since the number of connected components in arity $r$ is necessarily between $1$ and $r$.

\begin{lemma}
The map $\Phi$ from above is compatible with the filtration, i.e., 
\[
\Phi(\mF^p \tBG) \subset \mF^p \Graphs_M.
\]
\end{lemma}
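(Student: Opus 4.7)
The plan is to reduce the inclusion to a check on generators of $\tBG$ and then to observe that the right $\La^{D-1}L_\infty$-action on $\Graphs_M$ preserves the filtration by number of connected components. As a right $\La^{D-1}L_\infty$-module, $\tBG$ is quasi-free, generated by $\op M := s^{-D}\La^{D}\coCom \circ \Omega_{PA}(M)$. A typical element of $\tBG$ therefore has the form $(\mu_n \otimes \alpha_1 \otimes \cdots \otimes \alpha_n) \cdot (\ell_1, \ldots, \ell_n)$, with $\mu_n \in s^{-D}\La^{D}\coCom(n)$, $\alpha_i \in \Omega_{PA}(M)$, and $\ell_i \in \La^{D-1}L_\infty(k_i)$; membership in $\mF^p \tBG$ amounts to requiring $n \leq p$.

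For a generator $\mu_n \otimes \vec\alpha$, the map $\Phi$ produces an element of $\Graphs_M(n)$. Every graph representing a basis element of $\Graphs_M(n)$ has at most $n$ connected components, since every connected component must contain at least one of the $n$ external vertices. Hence $\Phi(\mu_n \otimes \vec\alpha)$ already lies in $\mF^n \Graphs_M$, and in particular in $\mF^p \Graphs_M$ whenever $n \leq p$.

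The key step is to verify that the right $\La^{D-1}L_\infty$-action on $\Graphs_M$ preserves the filtration $\mF^{\bullet}$. This action factors as $\La^{D-1}L_\infty \to \La^{D-1}\Lie \to \Graphs_D$, and the image of $\La^{D-1}\Lie(k)$ in $\Graphs_D(k)$ is spanned by connected tree-like graphs with $k$ external vertices and no internal ones. Inserting such a connected graph $T$ at the $i$-th external vertex of some $\Gamma \in \Graphs_M(K)$ means replacing $i$ by $T$ and summing over all ways of re-attaching to the external vertices of $T$ the edges of $\Gamma$ that were formerly incident to $i$. In every summand, the connected component of $\Gamma$ that contained $i$ is replaced by a single new component, consisting of $T$ together with the remaining vertices of that component (all reconnected through $T$); the other components of $\Gamma$ are untouched. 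Therefore the total number of connected components is preserved.

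Combining these observations with the $\La^{D-1}L_\infty$-equivariance of $\Phi$ established in Lemma~\ref{lem: morphism of L infty modules}, we obtain
\[
\Phi\bigl((\mu_n \otimes \vec\alpha)\cdot(\ell_1,\ldots,\ell_n)\bigr) = \Phi(\mu_n\otimes\vec\alpha)\cdot(\ell_1,\ldots,\ell_n) \in \mF^n \Graphs_M \subseteq \mF^p \Graphs_M
\]
for any $n \leq p$, which yields the claim. The only subtle point is the combinatorial check that insertion of a connected subgraph cannot alter the number of connected components; this becomes transparent once one recognises that the operations coming from $\La^{D-1}L_\infty \to \Graphs_D$ land in connected graphs, and so can neither split an existing component nor glue together two distinct ones.
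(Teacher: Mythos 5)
Your proof is correct and follows essentially the same strategy as the paper's: reduce to generators using quasi-freeness, note that a graph on $n$ external vertices has at most $n$ components, and observe that the $\La^{D-1}L_\infty$-action, being by insertion of connected graphs (or zero), cannot increase the number of connected components. You supply a bit more detail in the combinatorial check than the paper does, but the logical skeleton is identical.
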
 
\begin{proof}
The result is clear for generators of $\tBG$, since graphs with $n$ vertices cannot have more than $n$ connected components. In general $\Phi$ is compatible with the filtration since is a  morphism of $\La^{D-1} L_\infty$ right modules and the right action of $\La^{D-1} L_\infty$ on $\Graphs_M$ is given by the insertion of connected graphs which cannot increase the number of connected components.
\end{proof}

It follows that $\Phi$ induces a morphism of the respective spectral sequences.
We will show the following lemma:
\begin{lemma}
\label{lem:HMCPhi}
The map $\Phi$ induces an isomorphism at the first pages of the associated spectral sequences.
\end{lemma}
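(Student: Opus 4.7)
The plan is to identify the $E^0$-pages of both spectral sequences, compute the resulting $E^1$-pages, and check that $\Phi$ realizes the natural identification. On the $\tBG$ side the differential decomposes as $d_1 + d_2 + d_3$, where $d_1$ is the de Rham differential acting on the $\Omega_{PA}(M)$-factors, $d_2$ is the cobar differential internal to $\La^{D-1}L_\infty$, and $d_3$ encodes the cooperadic coaction along the reduced cooperad $\overline{\La^D\coCom}$. Since the latter is concentrated in arities $\geq 2$, the coaction $d_3$ strictly decreases the arity of the $s^{-D}\La^D\coCom$-generator used, so the $E^0$-differential reduces to $d_1+d_2$. Because these two pieces act on disjoint tensor factors, cohomology may be taken successively: the first yields $H^\bullet(M)^{\otimes p}$ in place of $\Omega_{PA}(M)^{\otimes p}$ by the de Rham theorem, and the second yields the Koszul dual $\La^{D-1}\Lie$ in place of $\La^{D-1}L_\infty$. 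Hence
\[
E^1_{\tBG} \cong \bigl(s^{-D}\La^D\coCom \circ H^\bullet(M)\bigr) \circ \La^{D-1}\Lie.
\]

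On the $\Graphs_M$ side, among the pieces $\Delta$, $d_{ex}$, $d_{in}$ of the differential only $\Delta$ (joining two decorations into an edge) can alter the number of connected components: it either preserves it, when both decorations lie on the same component, or decreases it by one; likewise the contributions coming from the $Z_M$-twisting strictly decrease the component count. Therefore the $E^0$-differential consists of the contractions $d_{ex}+d_{in}$ and the intra-component part of $\Delta$, so $E^0_{\Graphs_M}$ splits as a direct sum over partitions of $\{1,\ldots,n\}$ into connected-component types and as a tensor product over the components. For a single component with external vertex set $S$, the cohomology of the connected $H^\bullet(M)$-decorated graph complex on $S$ is computed by a Kontsevich-style argument, using the vanishing lemmas from Lemma \ref{lemma: graphs to forms} together with the formality of the little disks operad, yielding $\La^{D-1}\Lie(|S|)\otimes H^\bullet(M)$, with a single $H^\bullet(M)$-factor surviving per component. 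Multiplying across components produces exactly the same symmetric sequence as above, so $E^1_{\Graphs_M}\cong E^1_{\tBG}$ abstractly.

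Finally, by Lemma \ref{lem: morphism of L infty modules} the map $\Phi$ is $\La^{D-1}L_\infty$-linear and respects the filtrations, hence induces a morphism on $E^1$. Unwinding the defining formula $F(\mu_k\otimes \omega_1\otimes\cdots\otimes \omega_k)(\alpha)=\int_{\FM_M(k)}\prod_j\pi_j^*\omega_j\cdot\alpha$, the induced map on $E^1$ identifies each surviving $H^\bullet(M)$-decoration with the corresponding factor in the $\tBG$-generator and the $\La^{D-1}\Lie$-pieces via the chain-level map \eqref{eq:L_infty to FM_D}; this forces $\Phi$ to be an isomorphism on $E^1$. I expect the main difficulty to lie in the cohomology computation on the $\Graphs_M$ side: one must adapt Kontsevich's formality argument to the decorated setting, and verify that the $Z_M$-twist contributes trivially on $E^0$ because it strictly drops the filtration degree.
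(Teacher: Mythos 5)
Your choice of filtrations matches the paper's, and the $\tBG$-side computation via the K\"unneth formula exactly reproduces the paper's Lemma \ref{lem:HgrtBG}. The gap is on the $\Graphs_M$-side: the claim that the $E^1$-cohomology of the single-component piece with external vertex set $S$ equals $\La^{D-1}\Lie(|S|)\otimes H^\bullet(M)$ is asserted, not proved, and the proposed justification ("a Kontsevich-style argument, using the vanishing lemmas from Lemma \ref{lemma: graphs to forms} together with formality of the little disks operad") does not actually deliver it. The vanishing lemmas in Lemma \ref{lemma: graphs to forms} are analytic facts showing that $\omega_\bullet$ is a chain map; they compute nothing about the cohomology of the decorated graph complex. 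Formality of $E_D$ concerns $\Graphs_D$, not $\Graphs_M$, and does not transfer automatically to the $H^\bullet(M)$-decorated, twisted complex. This step is precisely the content of Proposition \ref{prop:grGraphsM}, which the paper treats with its own subsection: after establishing injectivity of $\gr\Phi$ on $\sBG_{H(M)}$ and a recursion (Corollary \ref{cor:recursion}), it introduces two further filtrations (by edges-minus-vertices, then by internal-vertex count minus degree) and runs a Lambrechts--Voli\'c style induction on arity (Lemma \ref{lem:[LV] argument}), splitting by the valence of the external vertex $1$. That argument is global (it does not split by connected components), so your route diverges at exactly the crucial step and leaves the heavy lifting undone. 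You yourself flag this as "the main difficulty" --- but identifying the hard part is not the same as resolving it.

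A secondary issue: the statement that "the contributions coming from the $Z_M$-twisting strictly decrease the component count" is false. The twist ($\Delta_1$ in the paper's notation) attaches a new subgraph of internal vertices to an existing component, leaving the number of connected components unchanged. Hence $\Delta_1$ survives to the $E^0$-differential and cannot be dropped from the associated graded; the single-component complex whose cohomology you need to compute must include it. (The decomposition over partitions into components still holds, since $\Delta_1$ acts within a component, but your unproved cohomology computation must account for this extra piece.)
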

The statement of the Lemma is equivalent to saying that the graded version of $\Phi$
\[
\gr \Phi \colon \gr\tBG\to \gr\Graphs_M
\]
is a quasi-isomorphism.

One can compute the cohomology of $\gr\tBG$ explicitly. 
\begin{lemma}
\label{lem:HgrtBG}
$H(\gr\tBG) =  (s^{-D}\La^{D} \coCom \circ H^\bullet(M)) \circ \La^{D-1} \Lie \eqqcolon \sBG_{H(M)}$.
\end{lemma}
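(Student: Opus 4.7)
The plan is to use the filtration $\mF^\bullet \tBG$ to reduce the computation to one where the twisting differential coming from the comodule structure has been killed on the associated graded, leaving a complex whose cohomology factors into two independent, standard computations.

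Recall that, as a symmetric sequence, $\tBG = (s^{-D}\La^D \coCom \circ \Omega_{PA}(M)) \circ \La^{D-1}L_\infty$, with differential $d = d_1 + d_2 + d_3$: here $d_1$ is the de Rham differential on the $\Omega_{PA}(M)$-decorations, $d_2$ is the internal cobar differential of $\La^{D-1}L_\infty = \Omega(\La^D\coCom)$, and $d_3$ is induced by the right $\La^D\coCom$-coaction on $s^{-D}\La^D\coCom \circ \Omega_{PA}(M)$. Every element has a unique ``root'' $\mu_k \in s^{-D}\La^D\coCom(k)$, and $\mF^p\tBG$ consists of those with $k \le p$. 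The pieces $d_1$ and $d_2$ fix $k$. On the other hand, $d_3$ applies the reduced coaction at the root, so each newly exposed $\La^D\coCom$-cogenerator $\mu_{n_i}$ has arity $n_i \ge 2$; if the new root is $\mu_{k'}$ then $k = n_1 + \cdots + n_{k'} \ge 2k'$, forcing $k' < k$. Therefore $d_3$ strictly lowers the filtration and vanishes on $\gr\tBG$, which thus identifies with the complex
\[
\bigl( s^{-D}\La^D\coCom \circ \Omega_{PA}(M) \circ \La^{D-1}L_\infty,\ d_1 + d_2 \bigr).
\]

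The differentials $d_1$ and $d_2$ act on disjoint tensor factors (the $\Omega_{PA}(M)$-decorations and the $\La^{D-1}L_\infty$-trees respectively), so they commute, and a Künneth argument, valid over $\R$, gives
\[
H(\gr\tBG) \;\cong\; s^{-D}\La^D\coCom \circ H(\Omega_{PA}(M), d_{\mathrm{dR}}) \circ H(\La^{D-1}L_\infty, d_{\Omega}).
\]
The first cohomology factor equals $H^\bullet(M)$ by the defining property of PA forms. For the second, $\La^{D-1}L_\infty = \Omega((\La^{D-1}\Lie)^\vee)$ is the minimal cofibrant model of the Koszul operad $\La^{D-1}\Lie$, so its cobar cohomology is $\La^{D-1}\Lie$. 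Combining these two computations yields $(s^{-D}\La^D\coCom \circ H^\bullet(M)) \circ \La^{D-1}\Lie = \sBG_{H(M)}$, as claimed.

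The main point to verify carefully is the first step, i.e.~that $d_3$ strictly lowers the filtration; this is an instance of the general fact that the cobar of a comodule uses the \emph{reduced} part of the coaction, so that identity components ($\mu_1$) are discarded from each output slot. The remaining ingredients, Künneth over a characteristic-zero field together with the commutation of $S_k$-(co)invariants with cohomology, and Koszul duality for $\La^{D-1}\Lie$, are standard and essentially terminological once the filtration has been set up properly.
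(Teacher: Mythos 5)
Your proof is correct and follows the same route as the paper's (one-line) argument: on $\gr\tBG$ only $d_1+d_2$ survive, then apply K\"unneth over $\R$ and Koszul duality $H(\La^{D-1}L_\infty)\cong\La^{D-1}\Lie$. One small imprecision: in the cobar differential $d_3$ the infinitesimal coaction produces exactly one new cogenerator $\mu_n$ with $n\ge 2$ while the remaining slots carry identities, so the correct estimate is $k'=k-n+1\le k-1$ rather than $k\ge 2k'$; the conclusion that $d_3$ strictly lowers the filtration, and hence vanishes on the associated graded, is nonetheless right.
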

\begin{proof}
The differential on $\gr \tBG$ is precisely the one induced by the de Rham differential and the differential on $\La^{D-1}L_\infty$. Therefore, by the K\"unneth formula, $H(\gr \tBG) = H(s^{-D}\La^{D} \coCom) \circ H (\Omega_{PA}(M))\circ H(\La^{D-1}L_\infty) = (s^{-D}\La^{D}\coCom \circ H^\bullet(M))\circ \La^{D-1}\Lie$.
\end{proof}

Having fixed the embedding $H^\bullet(M)\hookrightarrow \Omega_{PA}(M)$  and fixing any arity-wise right inverse (as cochain complexes) of the projection  $\La^{D-1}L_\infty\to \La^{D-1}\Lie$,  from now on we interpret the space $\sBG_{H(M)}$ (with zero differential) as a subcomplex of $\gr \tBG$. 
\begin{prop}
\label{prop:grGraphsM}
The map $\gr \Phi$ restricts to an injective map $\sBG_{H(M)} \to \gr \Graphs_M$ and the inclusion morphism $ \Phi(\sBG_{H(M)}) \hookrightarrow \gr \Graphs_M$ is a quasi-isomorphism.
\end{prop}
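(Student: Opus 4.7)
For this proposition I would first describe the image of $\gr\Phi|_{\sBG_{H(M)}}$ explicitly to establish injectivity, and then compute $H^\bullet(\gr\Graphs_M)$ to see that this image exhausts the cohomology. For the first step, I exploit that $\Phi$ is a morphism of right $\La^{D-1}L_\infty$-modules (Lemma~\ref{lem: morphism of L infty modules}) and that the binary generator of $\La^{D-1}\Lie\subset\La^{D-1}L_\infty$ is sent by the canonical map $\La^{D-1}\Lie\to \Graphs_D$ to insertion of a single edge between external vertices. A basis element of $\sBG_{H(M)}$ is given by a partition $\{1,\dots,n\}=B_1\sqcup\cdots\sqcup B_k$ together with cohomology classes $\omega_j\in H^\bullet(M)$ and Lie trees $T_j\in\La^{D-1}\Lie(B_j)$. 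Applying these Lie-tree operations to the explicit formula for $\gr\Phi$ on the arity-$k$ generator $\mu_k\otimes\omega_1\otimes\cdots\otimes\omega_k$ shows that such a basis element maps to a signed sum of graphs in $\gr\Graphs_M(n)$ with $k$ connected components, each being a tree on the vertices of $B_j$ with the single decoration $\omega_j$ at the root; these images are linearly independent basis elements of $\gr\Graphs_M$, proving injectivity.

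For the cohomology computation, I first observe that the associated graded differential on $\gr\Graphs_M$ preserves the number of connected components, giving a chain-complex decomposition $\gr\Graphs_M\cong\coCom\circ\Graphs_M^{\conn}$ (up to the suspension conventions also present in $\sBG_{H(M)}$), where $\Graphs_M^{\conn}$ is the subspace of graphs with a single connected component, equipped with the full differential $\Delta+\delta_s$. By a K\"unneth argument the statement reduces to
\[
H^\bullet(\Graphs_M^{\conn}(n)) \;\cong\; H^\bullet(M)\otimes\La^{D-1}\Lie(n),
\]
with the tree-with-root-decoration representatives from the first step giving the canonical basis.

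To produce this identification I would run two successive spectral sequences on $\Graphs_M^{\conn}$. The first filters by the number of internal vertices: on its first page only the vertex-count-preserving piece of the differential survives, and a standard Kontsevich-style contracting homotopy, peeling off ``antennae'' ending in low-valence internal vertices, collapses the complex onto the subcomplex with no internal vertices. The second spectral sequence filters this subcomplex by the number of decorations and uses the Poincar\'e duality pairing on $H^\bullet(M)$: the pairing differential $\Delta$ exchanges each excess pair of decorations for an edge, and the surviving classes are trees on external vertices with exactly one decoration, whose cohomology is $H^\bullet(M)\otimes\La^{D-1}\Lie(n)$ by the analog of Kontsevich's calculation for $\Graphs_D$. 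The main obstacle is precisely this second step: the interplay between $\Delta$, the decorations, and non-tree graph topology must be handled delicately, and the non-degeneracy of Poincar\'e duality on $H^\bullet(M)$ is essential since it guarantees that every surplus pair of decorations can be traded for an edge. Matching the resulting representatives with the image from the first step completes the proof.
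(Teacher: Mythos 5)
Your high-level framing is a reasonable alternative to the paper's: you propose to decompose $\gr\Graphs_M$ by connected components (exploiting that the associated-graded differential preserves the number of components) and then compute the cohomology of the connected piece, whereas the paper works with the full $\gr\Graphs_M(n)$ and establishes the recursion of Corollary~\ref{cor:recursion} directly by an induction on $n$ (splitting by the valence of the external vertex~$1$). Your injectivity argument is essentially the paper's, just packaged via the observation that the leading-order image is a tree-with-decoration; the paper fleshes this out carefully by looking at components in the subspaces $V_p$ indexed by partitions and arguing by induction on the coarseness of the partition.

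However, the spectral-sequence computation of $H^\bullet(\Graphs_M^{\conn})$ has a genuine gap, and the issue is the order in which you try to dispose of the pieces of the differential. Filtering by the number of internal vertices puts the piece that \emph{preserves} that count on the $E_0$ page, which is precisely the $\Delta$ part (pair of decorations $\mapsto$ edge); the vertex-splitting piece $\delta_s$ changes the count and is therefore pushed to higher pages. But the ``peel off antennae'' contracting homotopy is a homotopy against $\delta_s$ (or rather its leading-order part), not against $\Delta$ -- it cannot collapse a complex whose only differential is $\Delta$. So as stated, your first spectral sequence does not collapse onto the internal-vertex-free subcomplex. The paper resolves this by using the opposite order: it first filters by $\#(\text{edges}) - \#(\text{vertices})$, which kills $\Delta$ on the associated graded, and only then filters by $\#(\text{internal vertices}) - \text{degree}$, carefully checking that the surviving parts of $\Delta_1$ (graft a component of internal vertices) cancel against corresponding parts of $\delta_s$. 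Your proposal also never addresses $\Delta_1$, which is the most delicate piece precisely because it can attach whole decorated trees and is what causes the naive antenna argument from the $\Graphs_D$ case to fail here. If you want to carry out the connected-component decomposition route, you would still need to replicate the emv filtration and the valence-of-vertex-$1$ induction on $\Graphs_M^{\conn}$, not a straight Kontsevich antenna collapse.
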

The proof is by an argument similar to the one used by P. Lambrechts and I. Volic in \cite[Lemma 8.3]{LV}.
If we believe Proposition \ref{prop:grGraphsM} for now, Lemma \ref{lem:HMCPhi} follows as a Corollary.

\begin{proof}[Proof of Proposition \ref{prop:tBGGraphsM}]
As a consequence of Lemma \ref{lem:HMCPhi}, the map $\Phi$ induces a quasi-isomorphism at the level of the associated graded with respect to a(n arity-wise) bounded filtration, and therefore is a quasi-isomorphism itself.
\end{proof}

\subsection{Proof of Proposition \ref{prop:grGraphsM}}
\label{sec:prop grGraphsM proof}

% \subsubsection{A recursive description of $\sBG_{H(M)}$.}

\begin{prop}
The vector spaces $\sBG_{H(M)}(n)$ satisfy the following recursion
\begin{equation}\label{eq:sBG recursion}
\sBG_{H(M)}(n) = \sBG_{H(M)} (n-1) \otimes H^\bullet(M) \oplus \sBG_{H(M)} (n-1)[D-1]^{\oplus n-1}.
\end{equation}
\end{prop}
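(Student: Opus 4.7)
The plan is to unpack the definition of $\sBG_{H(M)}(n)$ as an explicit direct sum over set partitions, then split according to the block containing the marked point $n$. Since $H^\bullet(M)$ is concentrated in arity $1$ and the monoidal product $\circ$ decomposes as a sum over set partitions, one obtains
$$
\sBG_{H(M)}(n)\;\cong\;\bigoplus_{\pi\,\vdash\,\{1,\dots,n\}}s^{-D}\La^D\coCom(l(\pi))\otimes\bigotimes_{B\in\pi}\bigl(H^\bullet(M)\otimes\La^{D-1}\Lie(B)\bigr),
$$
where $\pi$ runs over (unordered) partitions of $\{1,\dots,n\}$ into nonempty blocks and $l(\pi)$ denotes the number of blocks.

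Let $B_\ast\in\pi$ be the block containing $n$. If $B_\ast=\{n\}$ is a singleton, removing it gives a bijection with partitions $\pi'$ of $\{1,\dots,n-1\}$. Using $\La^{D-1}\Lie(1)=\R$ in degree $0$, together with the consistency of the $s^{-D}\La^D\coCom$-shifts under adjoining a singleton, this part of $\sBG_{H(M)}(n)$ is canonically isomorphic to $\sBG_{H(M)}(n-1)\otimes H^\bullet(M)$, the extra tensor factor being the $H^\bullet(M)$-decoration of $\{n\}$. If instead $|B_\ast|\geq 2$, set $B'_\ast:=B_\ast\setminus\{n\}$ and let $\pi'$ be the partition of $\{1,\dots,n-1\}$ obtained by replacing $B_\ast$ with $B'_\ast$. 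The key input is the classical vector space isomorphism
$$
\Lie(k+1)\;\cong\;\Lie(k)^{\oplus k},
$$
obtained from the right-normed (Dynkin) basis: the brackets $[x_{i_1},[x_{i_2},\dots,[x_{i_k},x_{k+1}]\cdots]]$, indexed by permutations $(i_1,\dots,i_k)$ of $\{1,\dots,k\}$, form a basis of $\Lie(k+1)$, and grouping them according to the outermost letter $i_1$ yields $k$ pieces, each in bijection with a basis of $\Lie(k)$. Applying operadic suspension by $D-1$, which contributes a shift of $(D-1)(m-1)$ in arity $m$, gives
$$
\La^{D-1}\Lie(k+1)\;\cong\;\La^{D-1}\Lie(k)[D-1]^{\oplus k}
$$
at the level of graded vector spaces. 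Summing over which block $B'_j$ of $\pi'$ the point $n$ joins and over the $|B'_j|$ positions within it produces a total of $\sum_j|B'_j|=n-1$ shifted copies of $\sBG_{H(M)}(n-1)$, which is the claimed second summand.

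The main obstacle lies not in the overall strategy but in the careful bookkeeping of degree shifts and signs (the latter coming from the $\sgn_l^D$ factors inside $\La^D\coCom$); in particular, one must verify that in the singleton case the shift from $s^{-D}\La^D\coCom$ between arities $l(\pi')+1$ and $l(\pi')$ combines with the trivial degree of $\La^{D-1}\Lie(1)=\R$ to produce precisely the untwisted tensor $\sBG_{H(M)}(n-1)\otimes H^\bullet(M)$ with no extra shift. The central combinatorial input, best isolated as a lemma, is the Dynkin decomposition $\Lie(k+1)\cong\Lie(k)^{\oplus k}$; everything else is a matter of reshuffling the direct sum and matching indexing sets.
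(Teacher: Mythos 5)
Your proof is correct and follows essentially the same route as the paper's: unpack $\sBG_{H(M)}(n)$ as a sum over set partitions, split on whether the marked point sits alone in its block, and in the non-singleton case invoke the vector-space isomorphism $\Lie(k+1)\cong\Lie(k)^{\oplus k}$ to peel off the marked input. The one place you go further than the paper is in pinning down that last isomorphism via the right-normed (Dynkin) basis; the paper merely asserts that every Lie word containing the marked input arises uniquely from ``insertion of $\mu_2$'' into a word of one lower arity, which is the same fact stated more loosely. Your explicit grouping by the outermost letter is a clean way to justify it, and your count $\sum_j |B'_j| = n-1$ matches the paper's ``$n-1$ such choices.'' One small caution: in the singleton case, adjoining the block $\{n\}$ raises the arity of the $s^{-D}\La^D\coCom$ factor by one, contributing a shift of $[D]$, so the extra tensor factor is really $H^\bullet(M)[D]$ rather than an unshifted $H^\bullet(M)$; the paper's statement of the recursion suppresses this shift as well, so this is a shared notational abuse rather than an error specific to your argument, but it is exactly the kind of bookkeeping you flagged as needing verification, and it does not quite come out ``with no extra shift'' as you conjectured.
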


\begin{proof}
We have $$\sBG_{H(M)}(n) = \bigoplus_{i_1+\dots+i_k =n} H^\bullet(M)^{\otimes k}[kD] \otimes \La^{D-1}\Lie(i_1)\otimes \dots \otimes  \La^{D-1}\Lie(i_k)\otimes \R[\text{Sh}(i_1,\dots,i_k)].$$ 

Let us take an element of $\sBG_{H(M)}(n)$ and consider two different cases. If the input labeled by $1$ corresponds to the unit $\mathsf 1\in \La^{D-1} \Lie(1)$ it is associated to an element of $H^\bullet(M)$ and by ignoring these we are left with a generic element of $\sBG_{H(M)}(n-1)$, thus giving us the first summand of \eqref{eq:sBG recursion}.

If, on the other hand, the vertex labeled by $1$ corresponds to some Lie word in $\La^{D-1} \Lie(i_j)$ with $j>1$, the only possibility is that it came from the insertion of the generator $\mu_2\in \La^{D-1} \Lie(2)$
in some other Lie word. Since there are $n-1$ such choices and $\mu_2$ has degree 
has degree $1-D$, we obtain the summand $\sBG_{H(M)} (n-1)[D-1]^{\oplus n-1}$.
\end{proof}

\begin{lemma}\label{lem:isomorphic image}
The map $\gr \Phi$ restricts to an isomorphism from $\sBG_{H(M)}(n)$ onto its image  $\Phi (\sBG_{H(M)}(n))\subset \gr \Graphs_M(n)$.
\end{lemma}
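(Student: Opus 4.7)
The plan is to prove injectivity by induction on $n$, exploiting the recursive structure \eqref{eq:sBG recursion} together with the fact that $\gr \Phi$ is a morphism of right $\La^{D-1}\Lie$-modules (Lemma \ref{lem: morphism of L infty modules}).

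For the base case $n=1$, we have $\sBG_{H(M)}(1) = H^\bullet(M)$, and $\gr\Phi$ sends a class $\omega \in H^\bullet(M)$ to the graph in $\gr\Graphs_M(1)$ consisting of a single external vertex decorated by $\omega$ (this follows from unpacking the definition of $\Phi$ on the generator $\mu_1 \otimes \iota(\omega)$). Distinct cohomology classes give distinct basis elements, so $\gr\Phi$ is injective here.

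For the inductive step, suppose the claim holds for $n-1$. Given $\xi \in \sBG_{H(M)}(n)$, I decompose it according to the recursion as
\[
\xi \;=\; \xi_0\otimes\omega \;+\; \sum_{i=2}^n \eta_i,
\qquad
\xi_0\otimes\omega \in \sBG_{H(M)}(n-1)\otimes H^\bullet(M),\quad
\eta_i \in \sBG_{H(M)}(n-1)[D-1],
\]
where the first term corresponds to vertex $1$ being a ``free'' leaf decorated by $\omega$, and $\eta_i$ corresponds to vertex $1$ being grafted to vertex $i$ via the operation $\mu_2$. Since $\gr\Phi$ commutes with the right $\La^{D-1}\Lie$ action, its effect on these summands is distinguishable at vertex $1$: the image $\gr\Phi(\xi_0\otimes\omega)$ is a sum of graphs in which external vertex $1$ is isolated from the remaining external vertices (and carries the decoration $\omega$), while each $\gr\Phi(\eta_i)$ is a sum of graphs in which external vertex $1$ is joined by a single edge to external vertex $i$ (this is the image of $\mu_2$ under the Kontsevich map $\La^{D-1}\Lie\to\Graphs_D$, acted on the $(n-1)$-vertex graph). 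Thus if $\gr\Phi(\xi)=0$, inspecting the neighbourhood of vertex $1$ forces $\gr\Phi(\xi_0\otimes\omega)=0$ and each $\gr\Phi(\eta_i)=0$ separately. Applying the inductive hypothesis to the $\sBG_{H(M)}(n-1)$ factor of each term yields $\xi_0 = 0$, $\omega=0$, and $\eta_i=0$, hence $\xi=0$.

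\textbf{The main obstacle} is making rigorous the claim that the ``local structure at vertex $1$'' (either isolation with a decoration, or a single edge to a specific external vertex $i$) genuinely separates the images of the summands inside $\gr\Graphs_M(n)$. One must verify that no cancellation or coincidence can occur between these summands via symmetries, Jacobi-type identities, or rearrangement of decorations acted upon by the $\La^{D-1}\Lie$-module structure. This is analogous to the combinatorial argument of Lambrechts-Volic \cite{LV} for $\Graphs_D$, but requires additional bookkeeping here because of the cohomological decorations of $H^\bullet(M)$, which redistribute under insertion and must be tracked carefully to ensure that the leading graph components of $\gr\Phi(\xi_0\otimes\omega)$ and $\gr\Phi(\eta_i)$ remain linearly independent in $\gr\Graphs_M(n)$.
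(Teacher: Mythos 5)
Your overall strategy — induction and an analysis of how the summands of $\xi$ become distinguishable after applying $\gr\Phi$ — is in the same spirit as the paper's, but your induction is on $n$, whereas the paper's is on the size of a partition of $\{1,\dots,n\}$. More importantly, you have correctly identified the ``main obstacle'' at the end of your proposal, and unfortunately that obstacle is a genuine gap, not just a technicality. The claim that $\gr\Phi(\xi_0\otimes\omega)$ is ``a sum of graphs in which external vertex $1$ is isolated from the remaining external vertices'' is not literally true in $\gr\Graphs_M(n)$: the image of $\Phi$ is a linear combination of many graphs, including graphs with internal vertices, and such internal vertices can form bridges between external vertex $1$ and the other external vertices. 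Likewise, the claim that in $\gr\Phi(\eta_i)$ ``external vertex $1$ is joined by a single edge to external vertex $i$'' fails for a second reason: the right operadic action of $\mu_2$ inserts the two-vertex graph at vertex $i$ and then \emph{sums over all reconnections} of the edges previously incident to $i$, so vertex $1$ generically acquires many incident edges, not just one. As written, your inspection of ``the neighbourhood of vertex $1$'' therefore does not separate the two types of summands.

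The paper resolves exactly this difficulty by first \emph{projecting onto the subspace of graphs with no internal vertices and a prescribed connected-component structure}, and tracking this projection along the filtration. Concretely, it associates to each partition $p$ of $\{1,\dots,n\}$ the subspace $V_p\subset\Graphs_M(n)$ of graphs with no internal vertices and whose components realize $p$, and the matching subspace $W_p\subset\sBG_{H(M)}(n)$; it then proves by induction on the size of $p$ that $\Phi_p\colon\overline{W_p}\to\overline{V_p}$ is injective. After projecting away graphs with internal vertices, ``vertex $1$ is in its own component'' becomes a well-posed and computable condition, and the module action of $\mu_2$ produces a single edge precisely because one looks only at the no-internal-vertex part; this is what makes the separation argument work. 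If you want to salvage an induction on $n$, you would at a minimum need to interpose this same projection onto no-internal-vertex graphs with controlled component structure before any local inspection at vertex $1$ can be made meaningful.
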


\begin{proof}
%We proceed by showing that $\Phi (\sBG_{H(M)}(n))$ satisfies a similar recursion as the one shown for $\sBG_{H(M)}(n)$ in Proposition  \ref{eq:sBG recursion}. If this is shown, since $\gr \Phi$ clearly restricts to an isomorphism for $n=0$, we conclude that $\Phi (\sBG_{H(M)}(n))$ and $\sBG_{H(M)}(n)$ have the same dimension on each degree.
%
%Then, since the spaces are finite dimensional on each degree, the result follows from the fact that a surjective linear map between two finite dimensional spaces is necessarily injective.
%
%Let us now show that 
%$$\Phi (\sBG_{H(M)}(n)) = \Phi (\sBG_{H(M)} (n-1) \otimes H(M)) \oplus \Phi (\sBG_{H(M)} (n-1)[D-1]^{\oplus n-1}).$$
%In fact the recursion  \ref{eq:sBG recursion} and the linearity of $\Phi$ give us almost this equation, with the exception that the sum is not guaranteed to be direct. 

It is enough to show the injectivity of the map $\gr \Phi$ when restricted to $\sBG_{H(M)}(n)$. 

Recall that  $$\sBG_{H(M)}(n) = \bigoplus_{i_1+\dots+i_k =n} H^\bullet(M)^{\otimes k}[kD] \otimes \La^{D-1}\Lie(i_1)\otimes \dots \otimes  \La^{D-1}\Lie(i_k)\otimes \text{Sh}(i_1,\dots,i_k).$$ 

Let us start by considering the case in which the numbers $i_1,\dots, i_n$ are all equal to $1$. Let $\omega_1\otimes \dots \otimes \omega_n \in H^\bullet(M)^{\otimes n}[nD] \otimes \La^{D-1}\Lie(1)\otimes \dots \otimes  \La^{D-1}\Lie(1)$. The element $\Phi(\omega_1\otimes \dots \otimes \omega_n) \in \Graphs_M(n)$ is in principle a sum of many terms, but its projection into the subspace of $\Graphs_M(n)$ made only of graphs with no internal vertices, no more than one decoration per vertex, and precisely $n$ connected components is simply the graph
$$\pm
\begin{tikzpicture}
\node (1) at (0,0) [ext] {$1$};

\node (a) at (-0.3,0.8) [ext] {$\scriptstyle{\omega_1^*}$};

\node (2) at (1,0) [ext] {$2$};

\node (b) at (0.8,0.8) [ext] {$\scriptstyle{\omega_2^*}$};

\draw[dotted]  (a)--(1);
\draw[dotted]  (b)--(2);

\node at (1.6,0) {$\dots$};

\node (3) at (2.2,0) [ext] {$n$};

\node (c) at (2.4,0.8) [ext] {$\scriptstyle{\omega_n^*}$};

\draw[dotted]  (c)--(3);

\end{tikzpicture}
$$
where $\omega_i^*$ is dual to $\omega_i$ under the pairing on $H^\bullet(M)$. This implies in particular that $\Phi$ is injective when restricted to $ H^\bullet(M)^{\otimes n}[nD] \otimes \La^{D-1}\Lie(1)\otimes \dots \otimes  \La^{D-1}\Lie(1)$.

The same idea can be adapted for the case of arbitrary $i_j$. The image of the elements of $\sBG_{H(M)}$ might be very complicated, but to conclude injectivity it is enough to see that the components on a ``disconnected enough" subspace are different and by compatibility with the $L_\infty$ action these components are just given by insertion of graphs representing $L_\infty$ words.

 Let $p\subset  2^{\{1,\dots,n\}}$ denote a partition of the numbers $1,\dots,n$. To every such $p$ we can associate a subspace $V_p \subset \Graphs_M(n)$ spanned by graphs with no internal vertices and such that the vertices labeled by $a$ and $b$ are on the same connected component if and only if $a$ and $b$ are in the same element of the partition $p$.

%We define $\pi\colon \Graphs_M(n)\to \bigoplus_p V_p$, where the sum runs over all partitions of $\{1,\dots,n\}$  to be the natural projection. We will show that the composition $\sBG_{H(M)}(n) \stackrel{\Phi}{\to} \Graphs_M(n) \to  \bigoplus_p V_p$ is injective, from which it follows that $\Phi$ is also injective.
 Every partition $p$ is determined the number of elements of the partition, which is a number $k\leq n$, the sizes of the partitions, $i_1,\dots,i_k$ such that $i_1+\dots +i_k=n$ and an element of $\Sh(i_1,\dots,i_k)$ specifying which numbers are included in each element of the partition. This data defines a subspace $W_p$ of $\sBG_{H(M)}(n)$ and the map $\Phi$ induces maps $\Phi_p \colon \overline{W_p} \to \overline{V_p}$, where $\overline{V_p}= \displaystyle\bigoplus_{p' \text{ coarser than }p } V_{p'}$ and similarly for $W_p$. It can shown by induction on the size of the partition $p$ that the maps $\Phi_p$ are injective for every partition $p$, so in particular for $p$ the discrete partition we obtain the injectivity of full map.

This follows from the fact that a linear map $f \colon A\oplus B \to V$ is injective  if its restriction to both $A$ and $B$ is injective and $f(A) \cap f(B) = 0$ and in our case these two conditions can be verified just by looking at the component of $V_p \subset \overline{V_p}$.
\end{proof}

\begin{cor}\label{cor:recursion}
The family of graded vector spaces $\Phi (\sBG_{H(M)}) \subset \gr \Graphs_M$ satisfies the following recursion:
$$\Phi (\sBG_{H(M)}(0))= \mathbb R,$$
$$\Phi (\sBG_{H(M)}(n)) = \Phi (\sBG_{H(M)} (n-1)) \otimes H^\bullet(M) \oplus \Phi (\sBG_{H(M)} (n-1))[D-1]^{\oplus n-1}.$$

\end{cor}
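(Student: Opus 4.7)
The corollary is essentially a direct consequence of the two immediately preceding results, so my plan is short. First I would handle the base case $n=0$: by construction $\sBG_{H(M)}(0)$ is generated by the arity-zero piece of $s^{-D}\La^D \coCom$, which is $\R$ concentrated in degree $0$, and $\Phi$ sends this to the empty graph in $\gr\Graphs_M(0)=\R$, giving $\Phi(\sBG_{H(M)}(0))=\R$.

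For the recursive step, I would apply the map $\Phi$ to the decomposition
\[
\sBG_{H(M)}(n) \;=\; \sBG_{H(M)}(n-1)\otimes H^\bullet(M)\; \oplus\; \sBG_{H(M)}(n-1)[D-1]^{\oplus n-1}
\]
established in the preceding Proposition. By the Lemma just above, $\gr\Phi$ restricted to $\sBG_{H(M)}(n)$ is an isomorphism onto its image $\Phi(\sBG_{H(M)}(n))$. Hence the image inherits the direct sum decomposition, provided one can identify the images of the two summands with the asserted right-hand side.

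For the identification of summands I would use that $\Phi$ is a morphism of right $\La^{D-1}L_\infty$-modules (Lemma \ref{lem: morphism of L infty modules}). The first summand comes from elements whose vertex labeled $1$ corresponds to the unit $\mathsf 1 \in \La^{D-1}\Lie(1)$, i.e., a pure decoration, so its image is precisely $\Phi(\sBG_{H(M)}(n-1))\otimes H^\bullet(M)$ under the obvious identification of vertex decorations. The second summand arises from insertion of the generator $\mu_2 \in \La^{D-1}\Lie(2)$ into a vertex of an element of $\sBG_{H(M)}(n-1)$; since $\Phi$ commutes with this operadic insertion, its image is exactly $\Phi(\sBG_{H(M)}(n-1))[D-1]^{\oplus n-1}$, with the $n-1$ copies corresponding to the $n-1$ possible insertion positions.

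There is no real obstacle here: all the substantive work was done in proving the injectivity of $\gr \Phi$ and the abstract recursion for $\sBG_{H(M)}(n)$. The only small point worth verifying carefully is that the direct sum of the two images is indeed direct inside $\gr\Graphs_M(n)$, but this follows automatically from the injectivity of $\gr\Phi$ on the whole of $\sBG_{H(M)}(n)$, which forces the images of complementary subspaces to remain complementary.
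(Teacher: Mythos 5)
Your proof is correct and takes essentially the same (implicit) route as the paper, which states the corollary without proof since it follows immediately from the preceding Proposition giving the abstract recursion for $\sBG_{H(M)}(n)$ and the Lemma establishing that $\gr\Phi$ restricts to an isomorphism onto its image. The only point worth stressing — which you do address correctly — is that injectivity of $\gr\Phi$ on all of $\sBG_{H(M)}(n)$ automatically guarantees the images of the two summands remain in direct sum, so nothing beyond the two cited results is needed.
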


Proposition \ref{prop:grGraphsM} will follow from showing that the inclusion $\Phi (\sBG_{H(M)}) \hookrightarrow \gr \Graphs_M$ is a quasi-isomorphism and for this we will use some additional filtrations.

The differential on $\gr\Graphs_M$ splits into the following terms:
\[
\delta = \delta_s + \Delta + \Delta_1
\]
where $\delta_s$ is obtained by splitting vertices, $\Delta$ (the BV part of the differential) removes two decorations and creates an edge instead and $\Delta_1$ connects a connected component of (possibly decorated) internal vertices to the given graph.
Let us call the emv-degree (edges minus vertices) of  a graph the number 
\[
\#(edges)-\#(vertices).
\]
The differential can only increase or leave constant the emv degree. Hence we can put a filtration on $\gr\Graphs_M$ by emv degree. We will denote the associated graded by
\[
\gr'\gr\Graphs_M.
\]

The induced differential on the associated graded ignores the $\Delta$ part of the differential.

\begin{lemma}
\label{lem:grgr}
$H(\gr'\gr\Graphs_M)=\Phi(\sBG_{H(M)})$.
\end{lemma}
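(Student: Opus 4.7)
The plan is to identify $\Phi(\sBG_{H(M)})$ with a natural quotient of $\gr'\gr\Graphs_M$ and to show the complementary subcomplex is acyclic. Let $I\subset \gr'\gr\Graphs_M$ denote the subspace spanned by graphs carrying at least one internal vertex. On the associated graded $\gr'\gr\Graphs_M$, the only surviving pieces of the differential are $\delta_s$ (vertex splitting) and the emv-preserving part of $\Delta_1$ (which attaches a tree of internal vertices via a single edge); both strictly increase the number of internal vertices. Hence $I$ is a subcomplex, the quotient $\gr'\gr\Graphs_M/I$ identifies canonically with $\Phi(\sBG_{H(M)})$ (the image of $\sBG_{H(M)}$ consists, by the description established before Corollary \ref{cor:recursion}, of graphs with no internal vertices) endowed with zero induced differential, and it suffices to show $H(I)=0$.

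Both $\delta_s$ and the tree-attachment part of $\Delta_1$ preserve the partition of the external vertices into connected components, so $I$ decomposes as a direct sum over partitions of $\{1,\dots,n\}$, each summand a tensor product of connected subcomplexes in which at least one factor contains an internal vertex. A standard tensor product/K\"unneth argument then reduces acyclicity of $I$ to acyclicity of the single-component subcomplex $I_c\subset \gr'\gr\Graphs_M^{\conn}(n)$ of connected graphs with at least one internal vertex. On $I_c$ we set up the spectral sequence arising from the decreasing filtration by the number of internal vertices; since every piece of the differential strictly raises this number, the $E_0$-differential vanishes, while $d_1=\delta_s$ (the only piece increasing the internal-vertex count by exactly one) and the tree-attachments of $\Delta_1$ show up as $d_r$ for $r\geq 2$.

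The remaining step is to exhibit a contracting homotopy $h$ for $\delta_s$ on $I_c$. The rule is: for a graph $\Gamma\in I_c$, pick a canonical internal vertex $v_\Gamma$ (for example, the internal vertex of minimal graph distance to the external vertex labeled $1$, with a deterministic tie-breaking rule), and let $h(\Gamma)$ be obtained by contracting a chosen edge incident to $v_\Gamma$, producing a graph with one fewer internal vertex. A case analysis splitting on whether $v_\Gamma$ is adjacent to an internal or to an external neighbor, and on whether the vertex split by $\delta_s$ coincides with $v_\Gamma$, its neighbor, or neither, should yield the relation $h\delta_s+\delta_s h = \mathrm{id}_{I_c}$. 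Consequently $E_2=0$, the higher spectral-sequence differentials vanish automatically, and $H(I_c)=0$ as required. The principal obstacle is the sign bookkeeping in this case analysis: the choice of canonical internal vertex and the conventions for edge orderings in vertex splitting must be synchronized so that $h$ inverts $\delta_s$ term by term, in the spirit of the arguments used in \cite{LV} for analogous graph complexes.
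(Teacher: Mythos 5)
Your identification $\gr'\gr\Graphs_M/I \cong \Phi(\sBG_{H(M)})$ is incorrect, and the error propagates through the whole plan. You justify it by asserting that $\Phi(\sBG_{H(M)})$ "consists of graphs with no internal vertices," but what the paper establishes (in the injectivity lemma preceding Corollary~\ref{cor:recursion}) is different and weaker: the \emph{projection} of $\Phi(\sBG_{H(M)}(n))$ onto the subspace of graphs with no internal vertices and the appropriate number of connected components is injective. The image of a generator under $\Phi$ is explicitly described there as "a sum of many terms," and these terms generically include graphs with internal vertices. On the other hand, the quotient $\gr'\gr\Graphs_M/I$ is the span of \emph{all} graphs with no internal vertices (with arbitrary edges, tadpoles and decorations) carrying zero induced differential, and this is strictly larger than $\Phi(\sBG_{H(M)})$ — indeed it is typically infinite-dimensional in each arity where $\sBG_{H(M)}(n)$ is finite-dimensional. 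As a consequence $H(I)$ cannot vanish: if it did, the long exact sequence of the short exact sequence $0\to I\to \gr'\gr\Graphs_M\to \gr'\gr\Graphs_M/I\to 0$ would force $H(\gr'\gr\Graphs_M)\cong \gr'\gr\Graphs_M/I$, contradicting the statement you are trying to prove. The contracting homotopy on $I_c$ therefore cannot exist — and separately, even if one tried to build it, you only homotope $\delta_s$ and leave the tree-attachment part of $\Delta_1$ untreated.

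The paper's argument is structurally different and does not decompose $\gr'\gr\Graphs_M$ into $\Phi(\sBG_{H(M)})$ plus an acyclic complement. It filters $\gr'\gr\Graphs_M$ once more, by $\#(\text{internal vertices}) - \text{degree}$, so that only the part of the differential raising the internal-vertex count by exactly one survives on the associated graded $\gr''\gr'\gr\Graphs_M$. Lemma~\ref{lem:[LV] argument} then computes $H(\gr''\gr'\gr\Graphs_M)=\Phi(\sBG_{H(M)})$ by induction on $n$: one splits $V(n)=V_0\oplus V_1\oplus V_{\geq 2}$ according to the valence of the external vertex $1$, collapses a two-column double complex, and matches the resulting recursion with that of Corollary~\ref{cor:recursion}. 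Lemma~\ref{lem:grgr} is then deduced by observing that the $E^1$ page of this inner spectral sequence is a quotient complex, so the spectral sequence abuts there. You would need to redo your argument along these inductive lines rather than via a single global homotopy.
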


Since in $\gr'\gr\Graphs_M$ the $\Delta$ part of the differential is zero, all pieces of the differential increase the number of internal vertices by at least one.
To show this Lemma, we will put yet another filtration on $\gr'\gr\Graphs_M$ by  $\#(internal \ vertices) - degree$.  Let us call the associated graded 
\[
\gr''\gr'\gr\Graphs_M
\]

Notice that in $\gr''\gr'\gr\Graphs_M$ we have $\Delta=0$ and the only ``surviving" pieces of $\Delta_1$ replace any decoration by an internal vertex with the same decoration or connect a single internal vertex to another vertex of the graph. These pieces also appear in $\delta_s$ and it can be checked that they appear with opposite signs thus canceling out.

\begin{lemma}\label{lem:[LV] argument}
$H(\gr''\gr'\gr\Graphs_M)= \Phi(\sBG_{H(M)})$.
\end{lemma}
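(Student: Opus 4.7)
We proceed by induction on the arity $n$, mirroring the recursive description of $\Phi(\sBG_{H(M)})$ in Corollary~\ref{cor:recursion}. The base case $n=0$ is immediate: since $\Graphs_M$ admits no components consisting of internal vertices only, one has $\gr''\gr'\gr\Graphs_M(0)=\R=\sBG_{H(M)}(0)$. For $n\geq 1$, we introduce an auxiliary filtration on $\gr''\gr'\gr\Graphs_M(n)$ according to the valence and the type of neighbours of the external vertex labelled $n$. The associated graded splits into two distinguishable pieces, matching the two summands in
\[
\sBG_{H(M)}(n)=\sBG_{H(M)}(n-1)\otimes H^\bullet(M)\oplus \sBG_{H(M)}(n-1)[D-1]^{\oplus n-1}.
\]

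The first summand arises from the subquotient of graphs in which vertex $n$ is isolated, carrying only decorations from $S(\bar H(M))$. The residual differential on these decorations, after the cancellations of $\Delta_1$ with the corresponding pieces of $\delta_s$ established above, is (up to sign) the standard Koszul-type differential on $S(\bar H(M))$ whose cohomology is concentrated in polynomial degrees $0$ and $1$, producing a factor $\R\oplus\bar H(M)=H^\bullet(M)$ tensored with $\gr''\gr'\gr\Graphs_M(n-1)$. The second summand arises from graphs in which $n$ has at least one incident edge. Following the Lambrechts--Voli\'c strategy of \cite{LV} we contract an edge incident to $n$: terms in which $n$ is connected to an internal vertex $w$ get killed by an acyclicity argument for the pendant internal subgraph hanging off $n$ (using the trivalence hypothesis on internal vertices and the absence of purely internal components after quotienting by $Z_M$), while a term in which $n$ is connected to another external vertex $j$ absorbs $n$ into $j$, contributing a copy of $\gr''\gr'\gr\Graphs_M(n-1)[D-1]$, one per choice of $j\in\{1,\dots,n-1\}$.

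The hard part will be transporting the LV-style acyclicity for pendant internal subgraphs from the Euclidean to the present manifold setting: internal vertices here may also carry $H^\bullet(M)$-decorations, and the interplay of the surviving pieces of $\delta_s$ (after cancellations with $\Delta_1$) with those decorations must be tracked carefully so that the contracting homotopy remains well defined and of the correct degree. Once both stages are in place, the resulting splitting matches the recursion of Corollary~\ref{cor:recursion} exactly, and the inductive hypothesis then identifies $H(\gr''\gr'\gr\Graphs_M(n))$ with $\Phi(\sBG_{H(M)}(n))$, as desired.
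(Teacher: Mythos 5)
Your proposal follows the same broad strategy as the paper's proof — induction on arity and a decomposition of $\gr''\gr'\gr\Graphs_M(n)$ according to the neighbourhood of a chosen external vertex — and the two summands you identify do match the recursion of Corollary~\ref{cor:recursion}. However, there are several gaps and one genuine error.

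First, the step you explicitly defer as ``the hard part'' is precisely the substance of the lemma, and it is not a routine transport of the Lambrechts--Voli\'c argument. The paper avoids any pendant-subgraph acyclicity entirely: it decomposes $V(n)$ into $V_0\oplus V_1\oplus V_{\geq 2}$ by the valence (edges plus decorations) of the marked external vertex, puts a two-step filtration so that the leading differential is the map $V_{\geq 2}\to V_1$ that peels everything but one half-edge off the marked vertex onto a freshly created internal vertex, and observes directly that this map is \emph{injective}, with cokernel spanned exactly by (i) the marked vertex carrying a single decoration and no edges and (ii) the marked vertex joined by a single edge to another \emph{external} vertex. The identification of the cokernel is what produces the two summands $V(n-1)\otimes H^\bullet(M)$ and $V(n-1)[D-1]^{\oplus n-1}$ in one stroke, without any separate acyclicity argument.

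Second, your appeal to ``the trivalence hypothesis on internal vertices'' is not available here: the lemma concerns $\Graphs_M$ (built from $\Tw\Gra_M$), not the quotient $\tgraphs_M$ from Section~\ref{sec:Lambrechts and Stanley}, and internal vertices in $\Graphs_M$ are \emph{not} required to be $\geq 3$-valent. Any argument relying on trivalence fails in this setting.

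Third, the description of the first summand as the cohomology of a ``standard Koszul-type differential on $S(\bar H(M))$'' is not what is actually happening. The isolated-vertex graphs with $\geq 2$ decorations on the marked vertex lie in $V_{\geq 2}$ and are eliminated because they inject into $V_1$, not because of a Koszul-style contraction on the symmetric algebra. The surviving $\R\oplus\overline{H^\bullet}(M)$ coincides with the correct answer, but the mechanism is different, and the mischaracterization would mislead anyone trying to fill in the details.

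Finally, even granting the recursion $H(V(n))=H(V(n-1))\otimes H^\bullet(M)\oplus H(V(n-1))[D-1]^{\oplus n-1}$, one still has to check that the specific inclusion $\Phi(\sBG_{H(M)})\hookrightarrow V(n)$ realises this recursion as an isomorphism. The paper does this via a $2\times 2$ block-triangularity argument on the second page of the spectral sequence, using Lemma~\ref{lem: morphism of L infty modules} to show one off-diagonal block vanishes. Your write-up omits this verification entirely.
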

\begin{proof}
Let us write $V(n)=\gr''\gr'\gr\Graphs_M(n)$ for brevity. We will show that 
$H(V(n))\cong \Phi(\sBG_{H(M)}(n))$ by induction on $n$.
We can split
\[
V(n) = 
\begin{tikzpicture} [every edge/.style={-triangle 60, draw}, baseline=-.65ex]
\matrix (m) [matrix of math nodes, column sep=1em] {
V_0 & \oplus & V_1 & \oplus & V_{\geq 2} \\};
\draw (m-1-1) edge[loop above] (m-1-1);
\draw (m-1-3) edge[loop above] (m-1-3);
\draw (m-1-5) edge[loop above] (m-1-5)
              edge[bend right] (m-1-3);
\end{tikzpicture}
\]
according to the valence of the external vertex $1$ (where decorations are considered to increase the valence of the vertices). The arrows indicate how the differential maps the individual parts to each other.
The complex $V_0$ is isomorphic to $V(n-1)$ and we can invoke the induction hypothesis. For the remainder we consider a spectral sequence whose first differential is $V_{\geq 2} \to V_1$. Concretely, we consider $(\mathcal F_k)_{k\in \mathbb Z}$, a descending filtration $V(n)\supset \dots \supset \mathcal F_k\supset \mathcal F_{k+1} \supset \dots\supset 0 $, such that $\mathcal F_k$ is spanned by graphs of degree at least $k$ in which the vertex $1$ is not $1$-valent and by graphs of degree at least $k+1$ in which the vertex $1$ has valence $1$. The map $V_{\geq 2} \to V_1$ is injective and its cokernel is generated by graphs of one of the following types:
\begin{enumerate}
\item Vertex $1$ has a decoration and no incoming edges.
\item Vertex $1$ has no decoration and is connected to some other external vertex.
\end{enumerate}

In the first case we obtain a complex isomorphic to $V(n-1)$ for every choice of decoration, with a degree shift given by the decoration. In the second case, each choice of connecting external vertex yields a complex isomorphic to $V(n-1)$ with a degree shift given by the additional edge. This gives us the following expression of the first page of the spectral sequence:

$$E_1(V(n)) = H(\gr V(n)) = V_0 \oplus V(n-1) \otimes \overline{H^\bullet}(M) \oplus V(n-1) [D-1]^{\oplus n-1}$$
$$= V(n-1) \otimes H^\bullet(M) \oplus V(n-1) [D-1]^{\oplus n-1}.$$

Under this identification, on the this page of the spectral sequence we obtain precisely the differential of $V(n-1)$. Notice that $V_1\oplus V_{\geq 2}$ is a double complex concentrated on a double column and therefore the spectral sequence collapses at the second page $E_2$. From this observation we obtain the following recursion

$$H(V(n))= H(V(n-1)) \otimes H^\bullet(M) \oplus H(V(n-1)) [D-1]^{\oplus n-1}.$$
which is the same as the recursion for $\Phi(\sBG_{H(M)}(n))$, as show in Corollary \ref{cor:recursion}.  To see that the inclusion $\Phi(\sBG_{H(M)}(n)) \to V(n)$ induces a quasi-isomorphism on the second page of the spectral sequence, we start by noticing that the result holds trivially on the $1$-dimensional initial terms $\Phi(\sBG_{H(M)}(0))$ and $H(V((0))$ and therefore $\Phi(\sBG_{H(M)}(n))$ and $H(V((n))$ have the same dimension.

The second page of the inclusion map
$$ \Phi (\sBG_{H(M)} (n-1)) \otimes H^\bullet(M) \oplus \Phi (\sBG_{H(M)} (n-1))[D-1]^{\oplus n-1} \to  H(V(n-1)) \otimes H^\bullet(M) \oplus H(V(n-1)) [D-1]^{\oplus n-1}$$

can be written as 

\[ \left( \begin{array}{cc}
f_{11} & f_{12}\\
f_{21} & f_{22}
\end{array} \right),\]
where $f_{12}\colon \Phi (\sBG_{H(M)} (n-1))[D-1]^{\oplus n-1} \to H(V(n-1)) \otimes H^\bullet(M)$ is actually the $0$ map, since  $\Phi (\sBG_{H(M)} (n-1))[D-1]^{\oplus n-1}$ corresponds to the image of elements in $ H^\bullet(M)^{\otimes k}[kD] \otimes  \La^{D-1}\Lie(i_1)\otimes \dots \otimes  \La^{D-1}\Lie(i_k)$ with $i_1\geq 2$ and the vertex $1$ cannot be the only labeled vertex in its connected component. The maps $f_{11}$ and $f_{22}$ are isomorphisms by induction and therefore the second page of the inclusion map is an isomorphism, from where the result follows.
\end{proof}

\begin{proof}[Proof of Lemma \ref{lem:grgr}]
The $E^1$ term of the spectral sequence is a quotient complex, hence it abuts at that point.
\end{proof}

\begin{proof}[Proof of Theorem \ref{thm:GraphsM}]
We have shown that the composition $\tBG \stackrel{F}{\to} \Omega_{PA}(\FM_M)^* \stackrel{\omega^*}{\to} \Graphs_M$ is a quasi-isomorphism, but since the homology of $\Omega_{PA}(\FM_M)^*$ is also isomorphic to the other two homologies which are finite dimensional in each arity and degree, it follows that $F$ and $\omega^*$ are quasi-isomorphisms themselves.

Consequentially, the map $Chains(\FM_M) \to \Omega_{PA}(\FM_M)^* \to \Graphs_M$ is a composition of quasi-isomorphisms, therefore is a quasi-isomorphism as well.

This concludes the  proof of Theorem \ref{thm:GraphsM}.
\end{proof}

\begin{rem}
For the proof of Theorem \ref{thm:GraphsM} we consider the functor $\Omega_{PA}$ of semi-algebraic forms, but it could equally be used any contravariant functor $\Omega$ landing in dgca's satisfying the following properties:

\begin{itemize}
\item $\Omega$ is quasi-isomorphic to the Sullivan functor $A_{PL}$ of piecewise-linear de Rham forms.

\item $\Omega$ admits pushforwards of the forgetful maps $\FM_M(n) \to \FM_M(n-k)$ satisfying the usual properties of fiber integrals, in particular Stokes Theorem.

\item $\Omega$ is ``almost" comonoidal, as in Remark \ref{rem:almost cooperad}.

\end{itemize}

\end{rem}

\section{The non-parallelizable case}
\label{sec:non-framed}

Let $M$ be a closed oriented connected manifold. In this section we show that even in absence of the parallelizability hypothesis a slight variant of the collection of commutative algebra $\pdGraphs_M$ is still a model of $\FM_M$. 

In this respect it is not natural to consider graphs with tadpoles as the compatibility of the differential of the map from Lemma \ref{Lemma:map of comodules} depended on the vanishing of the Euler characteristic for those graphs. More precisely, the problem is that in the map of Lemma \ref{Lemma:map of comodules} a tadpole edge is sent to a form whose coboundary is the Euler class.

We define $\pdGra_M^{\cutaol}\subset \pdGra_M$ to be the dg Hopf sub-collection spanned by graphs without tadpoles.
%$\pdGraphs_M^{\cutaol}\subset \pdGraphs_M$ to be the collections of graded subalgebras generated by graphs with no tadpoles. 
% \end{defi}

Note that this subcollection is indeed closed under the product and differential.
It furthermore retains a $\La^{D-1}\Lie^*$-comodule structure from $\pdGra_M$, but not the full $\pdGra_D$ comodule structure, as the proof of Proposition \ref{prop:GraV comodule} fails in the absence of tadpoles.
Furthermore, the map \eqref{eq:Gra to forms} naturally restricts to a map of dg Hopf collections
\[
	\pdGra_M^{\cutaol}\to \Omega_{triv}(\FM_M)\subset \Omega_{PA}(\FM_M),
\]
that is well-defined even if $M$ has a non-trivial Euler class.
The twisting construction of section \ref{sec: twisting} and in particular the construction of the map $\omega$ of Corollary \ref{cor: graphs to forms} also naturally yields a map 
\begin{gather*}
 \omega \colon	\pdGraphs_M^{\cutaol} \to \Omega_{PA}(\FM_M)
 \\
 \Gamma\mapsto\omega_\Gamma,
\end{gather*}
where we denote by $\pdGraphs_M^{\cutaol}\subset \pdGraphs_M$ the sub-collection spanned by graphs without tadpoles.

To be clear, if $M$ has non-vanishing Euler class then the map $\omega$ of Corollary \ref{cor: graphs to forms} is not a priori not well-defined on $\Graphs_M$ because we would need to send a tadpole edge to a form whose coboundary is the Euler class. Furthermore, the partition function \eqref{equ:partition ZM} is only well-defined on the tadpole-free part $^*\!\fGC_{H^\bullet(M)}^\cutaol\subset ^*\!\fGC_{H^\bullet(M)}$.
Hence one does not even get a well-defined (square-zero) differential on the graded collection $\pdGraphs_M$ from the partition function, one only has this on the tadpole-free part $\pdGraphs_M^{\cutaol}$. 

In particular, we note that the differential on $\pdGraphs_M^{\cutaol}$ can indeed not produce tadpoles.
The only term in the differential that is able to produce a tadpole is the edge contraction in the presence of a multiple edge.
However, multiple edges are zero by symmetry reasons for even $D$ while tadpoles are not present by symmetry reasons for odd $D$, hence no problem arises.

% Note that the subspace $\pdGraphs_M^{\cutaol}$ is clearly closed under the product.
% It is also closed under the differential: The only term in the differential that is able to produce a tadpole is the edge contraction in the presence of a multiple edge.
% However, multiple edges are zero by symmetry reasons for even $D$ while tadpoles are not present by symmetry reasons for odd $D$, hence no problem arises.

Also, if $M$ is not parallelized, there is no consistent way of defining a right $\FM_D$ action on $\FM_M$. Nonetheless, disregarding the cooperadic coactions, the map  $\pdGraphs_M^{\cutaol} \to \Omega_{PA}(\FM_M)$ is well defined as a map of dgcas since the proof of Lemma \ref{lemma: graphs to forms} uses parallelizability condition only for the tadpoles and the coaction, see the remarks within that proof on using the trivialization of the tangent bundle.

Before proceeding, let us furthermore show that the exclusion of tadpoles has no effect on the homotopy type, provided $\pdGraphs_M$ is well-defined. 
(See \cite[Proposition 3.4]{W1} for simiar results and arguments.)

\begin{prop}
	Suppose that $M$ is parallelizable (or at least has vanishing Euler class), so that the dg Hopf collection $\pdGraphs_M$ is well-defined.
Then the inclusion $\pdGraphs_M^{\cutaol}\to \pdGraphs_M$ is a quasi-isomorphism of collections of dg commutative algebras.
\end{prop}

\begin{proof}[Proof sketch]
We consider a spectral sequence on $\pdGraphs_M$ whose associated graded has a differential contracting internal vertices with only an adjacent edge and a tadpole along the non-tadpole edge

$$\begin{tikzpicture}[scale=0.5, every loop/.style={}]
\node at (-1,-1) {$d_0$};
\node (0) at (0,0) [int]{$.$};
\node[fill=gray!20] (1) at (0,-1) [int]{$\phantom{.}$};

 \path (0) edge[loop] (0);
 \draw (1)--(0);
 
 \draw (0,-2)--(1);
 \draw (0.8,-2)--(1);
 \draw (-0.8,-2)--(1);

 \node at (1,-1){$=$};

\node[fill=gray!20] (3) at (2,-1) [int]{$\phantom{.}$};

 \path (3) edge[loop] (3);
 
 \draw (2,-2)--(3);
 \draw (2.8,-2)--(3);
 \draw (1.2,-2)--(3); 
 
\end{tikzpicture} .$$

Such a spectral sequence can be obtained by filtering first by the number of tadpoles and then by $l+degree$, where $l$ is the sum of lengths of maximal connected subgraphs consisting of $2$-valent internal vertices and one internal vertex with just a tadpole at the end.

We can then set up a homotopy $h$ that splits out an internal vertex with a tadpole
$$\begin{tikzpicture}[scale=0.5, every loop/.style={}]
\node at (-1,-1) {$h$};
\node[fill=gray!20] (1) at (0,-1) [int]{$\phantom{.}$};

 \path (1) edge[loop] (1);

 \draw (0,-2)--(1);
 \draw (0.8,-2)--(1);
 \draw (-0.8,-2)--(1);

 \node at (1,-1){$=$};
 
\node (4) at (2,0) [int]{$\phantom{.}$};

\node[fill=gray!20] (3) at (2,-1) [int]{$\phantom{.}$};

 \path (4) edge[loop] (4);
  \draw (3)--(4);
 \draw (2,-2)--(3);
 \draw (2.8,-2)--(3);
 \draw (1.2,-2)--(3); 
 
\end{tikzpicture} .$$

We have $d_0h+hd_0 = T\ \id$, where $T$ is the number of tadpoles, from where it follows that $H(\pdGraphs_M,d_0) = \pdGraphs_M^{\cutaol}$.\end{proof}

Finally, one has the following version of Theorem \ref{thm:GraphsM} for non-parallelizable $M$.

\begin{thm}\label{thm:Main non-parallelized}
Let $M$ be a closed oriented manifold.
The map $\omega_\bullet \colon \pdGraphs_M^{\cutaol} \to \Omega_{PA}(\FM_M)$ is a quasi-isomorphism of symmetric sequences of dg commutative algebras. Similarly,  the composition map $Chains(\FM_M)\to \Omega_{PA}(\FM_M)^* \stackrel{\omega_\bullet^*}{\to} \Graphs_M^{\cutaol}:=(\pdGraphs_M^{\cutaol})^*$ is a quasi-isomorphism.
\end{thm}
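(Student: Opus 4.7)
The proof follows the strategy of Theorem \ref{thm:GraphsM}, adapted to the tadpole-free, non-parallelized setting. Since the construction of the propagator $\phi_{ij}$ in Proposition \ref{prop:angular form} does not require parallelization, the map $\omega_\bullet \colon \pdGraphs_M^{\cutaol} \to \Omega_{PA}(\FM_M)$ is well-defined, and dually we obtain $\omega_\bullet^* \colon \Omega_{PA}(\FM_M)^* \to \Graphs_M^{\cutaol}$. Although without parallelization we lack a right $\Graphs_D$-comodule structure on $\Graphs_M^{\cutaol}$ arising from topology, the combinatorial right $\Graphs_D$-action on $\Graphs_M$ still restricts to an action on $\Graphs_M^{\cutaol}$, since inserting tadpole-free graphs into tadpole-free graphs yields tadpole-free graphs.

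The plan is to construct a comparison map $\Phi \colon \tBG \to \Graphs_M^{\cutaol}$ as the composition
\[
\tBG \stackrel{F}{\longrightarrow} \Omega_{PA}(\FM_M)^* \stackrel{\omega_\bullet^*}{\longrightarrow} \Graphs_M^{\cutaol},
\]
where $F$ is the map of Section \ref{sec:BG to graphs}. The definition of $F$ uses only the forgetful maps $\FM_M(n) \to M$ and fiber integration, both independent of parallelization. By the same reasoning as Lemma \ref{lem: morphism of L infty modules}, $\Phi$ is a morphism of right $\La^{D-1} L_\infty$ modules via the canonical map $\La^{D-1} L_\infty \to \La^{D-1}\Lie \to \Graphs_D$.

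Next I would show $\Phi$ is a quasi-isomorphism, mirroring Proposition \ref{prop:tBGGraphsM}. One filters both sides by number of connected components (respectively by arity in $s^{-D}\La^{D}\coCom$) and studies the associated graded. As in Proposition \ref{prop:grGraphsM}, one then introduces two further filtrations (first by edges-minus-vertices degree, then by valence of the first external vertex combined with total degree), arriving at a recursion for $H(\gr''\gr'\gr\Graphs_M^{\cutaol}(n))$ matching that of $\Phi(\sBG_{H(M)}(n))$ from Corollary \ref{cor:recursion}. The arguments are purely combinatorial and independent of the presence of tadpoles: the decomposition by valence of external vertex $1$ into $V_0$, $V_1$, $V_{\geq 2}$ concerns only edges from vertex $1$ to \emph{other} vertices (hence never tadpoles), and the homotopies used in the auxiliary spectral sequences never create tadpoles where none existed. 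Thus the computation of Lemma \ref{lem:[LV] argument} carries over verbatim.

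The conclusion then proceeds by a finite-dimensional comparison: by the Bendersky-Gitler chain of quasi-isomorphisms $\tBG \to \sBG \leftarrow \BG$, one has $H(\tBG(n)) \cong H^\bullet(\Conf_n(M)) \cong H^\bullet(\Omega_{PA}(\FM_M(n))^*)$, so both $\tBG(n)$ and $\Omega_{PA}(\FM_M(n))^*$ have isomorphic finite-dimensional cohomology in each arity. Since $\Phi = \omega_\bullet^* \circ F$ is a quasi-isomorphism, both factors are, and dualizing $\omega_\bullet^*$ gives the first statement. The claim about chains follows from the fact that the composition $Chains(\FM_M) \to \Omega_{PA}(\FM_M)^* \to \Graphs_M^{\cutaol}$ is a quasi-isomorphism, together with the dual of the proposition preceding the theorem identifying $\Graphs_M^{\cutaol}$ with $\Graphs_M$ up to quasi-isomorphism. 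The main technical obstacle is verifying that all spectral-sequence and recursion arguments from Proposition \ref{prop:grGraphsM} adapt cleanly to $\gr \Graphs_M^{\cutaol}$; this reduces to checking that no splitting, contraction, or homotopy used anywhere in the argument relies on the presence of tadpoles, which is indeed the case.
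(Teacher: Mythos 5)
Your outline tracks the paper's structure, but you glide over the single new technical point that the non-parallelized case actually requires.

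The parallelized proof of Lemma \ref{lem: morphism of L infty modules} derives the $\La^{D-1}L_\infty$-module property of $\Phi$ directly from Corollary \ref{cor: graphs to forms}, i.e.\ from the fact that $\omega_\bullet$ is a morphism of cooperadic comodules. That comodule structure comes from the right $\FM_D$-action on $\FM_M$ and hence genuinely needs the framing. So writing ``by the same reasoning as Lemma \ref{lem: morphism of L infty modules}, $\Phi$ is a morphism of right $\La^{D-1}L_\infty$ modules'' imports exactly the hypothesis you are trying to drop. The paper instead observes that although there is no $\FM_D$-module structure on $\FM_M$ without a framing, the \emph{fundamental chains} of $\FM_D(n)$ are rotation-invariant, so their insertion into $Chains(\FM_M)$ is well-defined with no choice of framing; this gives $Chains(\FM_M)\circ\Omega_{PA}(M)$ the needed $\La^{D-1}L_\infty$-module structure. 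It then proves a dedicated lemma, by a hands-on Stokes/fiber-integration computation on the two sides of
\[
\sum_{\Gamma}\Gamma\circ_i\mu_2\int_C\omega_\Gamma
\;=\;
\sum_{\Gamma'}\Gamma'\int_{C\circ_i\mu_2}\omega_{\Gamma'},
\]
that $\omega_\bullet^*$ intertwines this action with the combinatorial $\La^{D-1}\Lie$-action on $\Graphs_M^{\cutaol}$ (including the vanishing of the coefficients of graphs $\Gamma'$ without an edge between the colliding vertices). This verification is the heart of the non-parallelized theorem, and your proposal does not supply it or an equivalent.

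By contrast, the bulk of your effort — re-running the three nested filtration/recursion arguments of Proposition \ref{prop:grGraphsM} with tadpole-free graphs — is not really needed: once the $L_\infty$-module lemma is in place, the paper just invokes ``the same argument that shows the parallelized case,'' using the already established quasi-isomorphism $\pdGraphs_M^{\cutaol}\hookrightarrow\pdGraphs_M$ to transport the conclusion. Your side remark that the combinatorial $\Graphs_D$-action restricts to $\Graphs_M^{\cutaol}$ is also more than is used (only the $\La^{D-1}\Lie$-action matters, via the single two-vertex, one-edge graph), and is not obviously true for the full $\Graphs_D$-action since $\Graphs_D$ itself contains tadpoles.

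Fill the gap by explicitly (i) recording that fundamental-chain insertion yields a framing-independent $\La^{D-1}L_\infty$-module structure on $Chains(\FM_M)$, and (ii) proving, via a Stokes-type computation as in the paper's lemma, that the chain-level map to $\Graphs_M^{\cutaol}$ respects it; the rest of your argument then goes through.
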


\begin{proof}
We follow the proof of Theorem \ref{thm:GraphsM}.
First we note that while in general one does not have a right $\FM_D$-module structure on $\FM_M$ if $M$ is not framed, the insertion of fundamental chains of $\FM_D$ at points in $\FM_M$ is independent of the framing so in fact it gives us a well defined operadic action $Chains(\FM_M) \circ\Lambda^{D-1} L_\infty \to Chains(\FM_M)$.
Similarly, as mentioned above $\Graphs_M^{\cutaol}$ inherits a right $\Lambda^{D-1} L_\infty$ module structure from the one on $\Gra_M^{\cutaol}:=(\pdGra_M^{\cutaol})^*$.
These structures suffice to define the map of right $\Lambda^{D-1} L_\infty$ modules
\[
\Phi \colon \tBG\to \Graphs_M^{\cutaol}
\]
as in section \ref{sec:BG to graphs} by formula \eqref{equ:Phidef} (respectively \eqref{equ:Phi explicit}).
Furthermore, Lemma \ref{lem:Phi well defined} does not make use of tadpoles and holds in this case as well.

Furthermore the remaining arguments of sections \ref{sec: map is a quasi-iso} and \ref{sec:prop grGraphsM proof} above leading to Theorem \ref{thm:GraphsM} are agnostic to the presence or absence of parallelizability of $M$ or tadpoles in graphs, and hence also show Theorem \ref{thm:Main non-parallelized}. 
\end{proof}

\section{A simplification of $^*\Graphs_M$ and relations to the literature}\label{sec:Lambrechts and Stanley}

\subsection{An alternative construction of $\Graphs_M$.}\label{sec:GC_M}
Recall that in Section \ref{sec: twisting} the space $\pdGraphs_M$ was constructed by identifying connected components without external vertices with real numbers via a ``partition function'', which is a map of commutative algebras $Z_M\colon ^*\!\fGC_{H^\bullet(M)} \to \mathbb R$.

In this subsection and the next we present an alternative construction of $\Graphs_M$ that will allow us to understand better the relevance of the partition function $Z_M$ in the homotopy type of $\Graphs_M$.

Notice that $^*\!\fGC_{H^\bullet(M)}$ is a quasi-free commutative algebra generated by its subspace of connected graphs $^*\!\GC_{H(M)}$.
 The differential $d$ on $^*\!\fGC_{H^\bullet(M)}$
%$d\in \text{Diff}(^*\!\fGC_{H^\bullet(M)}) = \text{Diff}(\La\Lie\{1\}^{\text{!`}} \circ {}^*\!\GC_{H^\bullet(M)})$ 
defines then a $\La L_\infty$ coalgebra structure on $^*\!\GC_{H^\bullet(M)}$. In fact, since the differential can increase the number of connected components by at most one, this is in fact a strict Lie coalgebra structure. 
 
 The dual Lie algebra structure is denoted by $\GC_{H^\bullet(M)}= ({}^*\!\GC_{H^\bullet(M)})^*$ and is represented by infinite sums of graphs decorated by $H_\bullet(M)$ (or dually by $H^\bullet(M)$, via the Poincar\'e  pairing). The Lie bracket $[\Gamma,\Gamma']$ is given by summing over all possible ways of selecting a decoration in $\Gamma$ and another decoration in $\Gamma'$ and connecting them into an edge, with a factor given by their pairing. The differential acts by vertex splitting and joining decorations.
  
It follows that maps of dg commutative algebras $^*\!\fGC_{H^\bullet(M)}\to \mathbb R$ are identified with maps in the Lie algebra  satisfying the Maurer-Cartan equation.
$$\mathsf{MC}(\GC_{H^\bullet(M)}) = \Hom_{\text{dgca}}( {}^*\fGC_{H^\bullet(M)},\mathbb R).$$ We denote by $z_M\in \GC_{H^\bullet(M)}$ the Maurer-Cartan element corresponding to the partition function $Z_M$. If we consider the subrepresentation $S\subset \Tw \Gra_M$ given by graphs with no connected components consisting only of internal vertices, then $\Graphs_M$ is obtained by twisting $S$ by the Maurer-Cartan element $z_M$, as recalled in the following section.

In analogy we denote by $\GC_M\coloneqq \GC_{H^\bullet(M)}^{z_M}$ the Lie algebra obtained by twisting with the Maurer-Cartan element $z_M$.

For later use let us also split the Maurer-Cartan element
\[
 z_M = \underbrace{
\sum_{i,j=1}^D g^{ij}\,
\begin{tikzpicture}[baseline=-.65ex]
 \node[int] (v) at (0,0) {};
\node[ext] (w1) at (-.4,0) {$\scriptstyle e_i$};
\node[ext] (w2) at (.4,0) {$\scriptstyle e_j$};
\draw[dotted] (v) edge (w1) edge (w2);
\end{tikzpicture}
}_{=:z_0} \,+ z_M'
\]
into a part $z_0$ given by graphs with exactly one vertex and $2$ or $1$ decorations and a remainder $z_M':=z_M-z_0$.
Note in particular that $z_0$ is determined solely by the non-degenerate pairing on $H(M)$.
The element $z_0$ is itself a Maurer-Cartan element, and below we will consider the twisted dg Lie algebra 
\[
 \GC_{H(M)}':= \GC_{H(M)}^{z_0},
\]
and consider $z_M'$ as a Maurer-Cartan element in $\GC_{H(M)}'$.

\newcommand{\Exp}{\mathrm{Exp}}
\newcommand{\grt}{\alg {grt}}

\subsection{Twisting of modules}\label{sec:twisting of modules}

While the differential of $\Graphs_M$ can be very non-explicit, expressing it as twist by a Maurer-Cartan element opens the door to simplifications of the model, as long as we have some control over the gauge equivalence class of the Maurer-Cartan element.

Indeed, let us pause for a moment to consider the following general situation.
Suppose $\alg g$ is a dg Lie algebra, acting on $M$, where $M$ can be just a dg vector space, or a (co)operad or a (co)operadic (co)module, or a pair of a (co)operad and a (co)operadic (co)module.
In any case we require the $\alg g$ action to respect the given algebraic structure, in the sense that the action is by (co)derivations.

Suppose now that $m\in \alg g$ is a Maurer-Cartan element, i.e., $dm+\frac 1 2[m,m]=0$.
Then we can form the twisted Lie algebra $\alg g^m$ with the same Lie bracket, but differential $d_m=d+[m,-]$. We can furthermore form the twisted ($\alg g^m$-)module $M^m$, which is the same space as $M$, carrying the same action and underlying algebraic structure (operad, operadic module etc.), but whose differential becomes
\[
d_m = d + m\cdot
\]
where $m\cdot$ shall denote the action of $m$ and we denote the original differential on $M$ by $d$. 
Next suppose that $m'\in \alg g$ is another Maurer-Cartan element.
We say that $m$ and $m'$ are gauge equivalent if there is a Maurer-Cartan element
$\hat m\in \alg g[t,dt]$ whose restriction to $t=0$ agrees with $m$, and whose restriction to $t=1$ agrees with $m'$.
More concretely, 
\[
\hat m = m_t + dt h_t
\]
where $m_t$ can be understood as a family of Maurer-Cartan elements in $\alg g$, connected by a family of infinitesimal homotopies (gauge transformations) $h_t$.
The Maurer-Cartan equation for $\hat m$ translates into the two equations
\begin{align*}
dm_t+\frac 1 2 [m_t,m_t]&=0
&
\frac{\partial m_t}{\partial t}+ dh_t +[h_t,m_t]&=0.
\end{align*}
Now suppose that $\alg g$ is pro-nilpotent. Then we may form the exponential group $\Exp(\alg g)$, which is identified with the degree $0$ subspace $\alg g_0\subset \alg g$, with group product given by the Baker-Campbell-Hausdorff formula. 
We can integrate the flow of $h_t$ into the element $H_t \in \Exp(\alg g)$, which acts on $x\in \alg{g}$ by

\[
H_t(x ) = \exp(h_t)\cdot x = \alpha + \sum_{n\geq 0}\frac{\ad^n(h_t)}{(n+1)!}\left([h_t,x] -dh_t\right)
\]

%We may integrate the flow generated by the $h_t$ to construct a family of elements given by the path-ordered exponential
%\[
%H_t = P\exp(\int_0^t ds h_s) \in \Exp(\alg g).
%\]
The action of $H_t$ is compatible with the Lie bracket and has the property that, for every $x\in \alg g$
\[
H_t (dx + [m,x]) = (d+[m_t,-])H_t( x).
\]
In particular, the action of $H_1$ induces an isomorphism of dg Lie algebras
\[
H_1 : \alg g^m\to \alg g^{m'}.
\]
Next suppose that also the action of $\alg g$ on $M$ is pro-nilpotent.
Then, by a similar argument, the action of $H_1$ yields an isomorphism
\begin{equation}\label{equ:H1onmodules}
H_1\cdot : M^m \to M^{m'}.
\end{equation}

Now let us relate these general statements to the objects of relevance in this paper.
First consider $\alg g=\GC_D$ to be the graph complex, but as a graded Lie algebra, i.e., considered with zero differential.
The correct differential on the graph complex is then obtained by twisting with the Maurer-Cartan element \cite{W1}
\[
m_0= \begin{tikzpicture}[baseline=-.65ex]
\node[int] (v) at (0,0){};
\node[int] (w) at (.5,0){};
\draw (v) edge (w);
\end{tikzpicture}
\]
Furthermore, consider $M=\pdGraphs_D$, again with zero differential. There is a natural action of $\alg g$ on $M$ \cite{W1,DW}. The differential on $\pdGraphs_D=M^{m_0}$ is then reproduced by twisting with $m_0$.

Secondly, the above picture can be extended to include the (co)operadic right modules.
First, $\GC_D$ acts on $\GC_{H(M)}$.
We take
\[
\alg g = \GC_D\ltimes \GC_{H(M)}
\]
where we consider again the first factor with trivial differential, and the second factor only with the part of the differential joining two decorations to an edge.
The element $m_0$ from above is then a Maurer-Cartan element, and twisting by this Maurer-Cartan element reproduces the differential on the factors of $\alg g$ considered above.
Similarly, we may consider the Maurer-Cartan elements
\[
m' := m_0 + z_0
\]
or 
\[
m_M := m_0 + z_M
\]
where $z_0$, $z_M$ are as above. Twisting then reproduces on the second factor either the differential on $\GC_{H(M)}'$, or that on $\GC_M$.

Next consider for $M$ the pair consisting of a cooperad and a comodule $(\pdGraphs_D, \pdGraphs_{M})$, where the first factor we consider with the zero differential, and in the second we consider only the part that connects two decorations to an edge.
Then twisting with the Maurer-Cartan element $m_M$ reproduces the full differential on the factors.

\begin{rem}
Note that an immediate consequence of the above way of constructing $\pdGraphs_{M}$ is that one has a large class of (co)derivations at hand. Namely, we have an action of $\alg g^{z_M}$ on $M^{z_M}$. In particular, it was shown in \cite{W1} that the 0-th cohomology of $\GC_2$ is the Grothendieck-Teichm\"uller algebra $\grt_1$. Hence, overstretching the analogy a bit, we may consider the dg Lie algebra $\alg g^{z_M}$, consisting of factors $\GC_D$ and $\GC_M$, as a version of the Grothendieck-Teichm\"uller dg Lie algebra associated to the manifold $M$.
Note however that this ``definition" is a little provisional: A more invariant definition would be to define the $M$-Grothendieck-Teichm\"uller Lie algebra as the homotopy derivations of a real model of the pair $(\FM_D,\FM_M)$. It is yet an open question in how far the homotopy derivations in $\alg g^{z_M}$ exhaust all homotopy derivations. For example, $\alg g^{z_M}$ itself does not readily capture the (non-nilpotent) action of the Lie algebra $\alg o(H(M))$ (of linear maps that preserve the pairing) on all objects involved.
\end{rem}

Next, let us note that the right comodule $\pdGraphs_M$ is unaltered (up to isomorphism) if one replaces the Maurer-Cartan element $z_M$ used in its definition by a gauge equivalent Maurer-Cartan element. Indeed, the action of $\GC_{H(M)}$ is nilpotent since the action of any element in $\GC_{H(M)}$ always kills at least on vertex.
Hence given two gauge-equivalent Maurer-Cartan elements an explicit isomorphism between the two version of $\pdGraphs_M$ produced is given by \eqref{equ:H1onmodules}.

Finally, let us note that the above construction works equally well for the tadpole free version $\pdGraphs_M^\notadp$ of $\pdGraphs_M$.
In this case, one needs to work with the tadpole-free version of the graph complex $\GC_M$.
Also, in this case one does not have a right $\pdGraphs_D$ coaction.

%
%The dgca $^*\fGC_M$ is quasi-free but not free, therefore the product on its dual $\fGC_M=\hat{S}(\GC_M)$ (still given by the disjoint union of graphs) is not compatible with the differential. However, the commutator of the induced differential and the product define a Lie bracket of degree $1$ on $\fGC_M$. This Lie bracket restricts to the generators $\GC_M = \Hom_{\mathbb R}({}^*\GC_M,\mathbb R) = \Hom_{\text{gca}}( {}^*\fGC_M,\mathbb R)$.
%
%The dgca action $^*\fGC_M \aol {}^*\Tw \Gra_M$ induces a (shifted) dgla action $\fGC_M \aol \Tw \Gra_M$ by a similar formula. Concretely, the action $\Gamma \cdot \Gamma'$ is obtained by taking every combination of one decoration in $\Gamma$ and other decoration in $\Gamma'$ and replace them by an edge with a multiplicity given by their pairing.
%
%It is easy to check that Maurer-Cartan elements of $\GC_M$ correspond precisely to maps that commute with the differential of ${}^*\fGC_M$
%
%$$MC(\GC_M) = \Hom_{\text{dgca}}( {}^*\fGC_M,\mathbb R).$$
%
%We can consider $\mce\in MC(\GC_M)$ as the Maurer-Cartan element corresponding to $Z_M$.

%If we consider the the subrepresentation $S\subset \Tw \Gra_M$ given by graphs with no connected components consisting only of internal vertices, then $\Graphs_M$ is obtained by the twisting $S$ by the Maurer-Cartan element $\mce$.

\subsection{Valence conditions}

In this section we show that the Hopf comodule $^*\Graphs_M$ is quasi-isomorphic to (essentially) a quotient that can be identified with graphs containing only $\geq3$-valent internal vertices. For this, we would like that the Maurer-Cartan element (partition function) $z_M'$ above vanished on the subspace spanned by graphs containing a $\leq 2$-valent internal vertex. While this might not be the case in general, we show that $z_M$ is gauge equivalent to a partition function satisfying this property.

%
%Recall the result that was proven in \cite{CM} in dimension $3$ but the same calculations hold on any dimension.
%
%\begin{lemma}[\cite{CM}, Lemma 4]\label{lem:2valcondition}
%The propagator $\phi_{12}$ established in Proposition \ref{prop:angular form} can be chosen in such a way that $ \int_{3} \phi_{13}\phi_{32}=0$, where the integration is made along the fiber of the forgetful map $p_3 \colon \FM_M(3)\to \FM_M(2)$.
%\end{lemma}
%
%If we assume that this additional property of the propagator holds then we immediately obtain the following result.
%
%\begin{cor}\label{cor:vanishes in bivalent}
%The partition function $Z_M\colon ^*\!\fGC_{H^\bullet(M)} \to \mathbb R$ defined in section \ref{sec: twisting} vanishes on the subspace spanned by graphs containing at least one bivalent internal vertex.
%\end{cor}% TODO: fGC_M vs GC_M vs *GC_M etc....
%
%While the previous Corollary requires the additional analytical conditions of Lemma \ref{lem:2valcondition}, we note that the above results may alternatively be derived purely algebraically, without additional analytic constraints as follows.

\begin{lemma}\label{lem: trivallent GC_M}
The subspace $\GC_{H^\bullet(M)}^{\geq 3}\subset \GC_{H^\bullet(M)}'$ spanned by graphs having no $1$ or $2$-valent vertex is a dg Lie subalgebra.
\end{lemma}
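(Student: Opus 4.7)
The plan is to verify closure under the Lie bracket and under the differential separately, the bracket being essentially automatic and the differential requiring the main work. The Lie bracket on $\GC_{H^\bullet(M)}$, and hence on the twist $\GC_{H^\bullet(M)}' = \GC_{H^\bullet(M)}^{z_0}$, picks a decoration on each of two graphs and pairs them into a new edge connecting the two carrying vertices. This creates no new vertex and leaves every valence unchanged (a decoration half-edge is replaced by an edge-endpoint half-edge). Consequently $[-,-]$ restricts to $\GC^{\geq 3}_{H^\bullet(M)}$.

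For the differential, the twisting description from the preceding subsection gives
\[
 d \;=\; \delta_2 + d_{\mathrm{split}} + [z_0,-],
\]
where $\delta_2$ internally pairs two decorations of the same graph into an edge, $d_{\mathrm{split}} = [m_0,-]$ is vertex splitting, and $[z_0,-]$ attaches to an existing decoration a new bivalent vertex carrying one edge and one leftover decoration. The part $\delta_2$ preserves all valences (same argument as for the bracket) and so sends $\GC^{\geq 3}$ to itself. It remains to check that the potentially offending $<3$-valent contributions of $d_{\mathrm{split}}\Gamma + [z_0,-]\Gamma$ cancel whenever $\Gamma \in \GC^{\geq 3}_{H^\bullet(M)}$.

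I would classify the splittings of a vertex $v$ of $\Gamma$ producing a new vertex $v_1$ of valence $<3$ according to what $v_1$ inherits from $v$ besides the new connecting edge: (a) no old half-edge, so $v_1$ is a univalent antenna; (b) one old edge-endpoint, so $v_1$ has two edges and no decoration; (c) one old decoration $\alpha$, so $v_1$ has one edge and one decoration. Families (a) and (b) should vanish by antisymmetry: in (a) the leaf edge reverses against the edge-orientation conventions of the graph complex, while in (b) the two edges meeting at $v_1$ may be swapped against the edge ordering, forcing the term to zero. The essential content is the cancellation of family (c) with $[z_0,-]$. Writing $z_0 = \sum_{i,j} g^{ij} e_i e_j$ on a single vertex, the bracket $[z_0,\Gamma]$ pairs a decoration $\alpha$ of $\Gamma$ sitting at a vertex $w$ with one of the decorations of $z_0$ (contributing $\langle \alpha, e_i\rangle$) and leaves the other decoration on a new vertex $v_1$ joined to $w$ by an edge. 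Using $\sum_i g^{ij}\langle \alpha, e_i\rangle e_j = \alpha$, this reduces to a sum over pairs $(w,\alpha)$ of graphs in which $\alpha$ is removed from $w$, $w$ gains an edge to a new vertex $v_1$, and $v_1$ carries $\alpha$. This is exactly the family of graphs produced by case (c) splittings, and a careful sign comparison---using the graded symmetry of $z_0$ under swapping its two decorations together with the edge/vertex ordering conventions---shows that the two families appear with opposite signs and cancel. It follows that $d\Gamma \in \GC^{\geq 3}_{H^\bullet(M)}$.

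The main obstacle is the sign bookkeeping in the cancellation of case (c) against $[z_0,-]$, together with making rigorous the vanishing claims in (a) and (b). Both amount to careful tracking through the twist construction of the orientations and orderings attached to edges and vertices in the graph complex; once the conventions are fixed the individual checks are essentially combinatorial.
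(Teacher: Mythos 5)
Your overall strategy matches the paper's proof: closure under the bracket and under $\delta_2$ is immediate because those operations never decrease valences, so the content is showing that the sub-trivalent contributions of $d_{\mathrm{split}}$ and $[z_0,-]$ cancel. Your case (c)---a bivalent new vertex with one edge and one decoration---is handled correctly, cancelling against $[z_0,-]$, which is exactly the cancellation the paper appeals to for that family.

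However, your argument for case (b) is wrong, and it is the case the paper treats explicitly. A graph in which the new vertex $v_1$ is bivalent with two incident edges going to distinct vertices $v'$ and $w$ does \emph{not} vanish by transposing the two edges at $v_1$: that transposition would have to exchange the neighbours $v'$ and $w$, which is not a graph automorphism in general (think of a bivalent vertex sitting in the middle of an asymmetric chain or cycle). The actual mechanism is pairwise cancellation inside $d_{\mathrm{split}}\Gamma$: the configuration $v'\text{--}v_1\text{--}w$ is produced once by splitting $v'$ (the old $v'w$-edge becomes the $v_1w$-edge and the new edge is $v'v_1$) and once by splitting $w$ (the old edge becomes $v'v_1$ and the new one is $v_1w$), and these two terms carry opposite signs because the roles of ``old'' and ``new'' edge in the orientation data are interchanged. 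Your case (a) has the same defect---reversing the antenna edge is likewise not an automorphism of the graph---and should similarly be handled by a cancellation of the two degenerate splittings (or by noting that the convention for $d_{\mathrm{split}}$ excludes them), rather than by a single-term symmetry argument.
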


\begin{proof}
$\GC_{H^\bullet(M)}^{\geq 3}$ is closed under the Lie bracket since it does not decrease the valence of vertices. It remains to check the stability under the differential.

Recall that the differential has three pieces, a first one that splits an internal vertex, a second one that joins decorations into an edge, and a third one arising from the twist by $z_0$.  Joining decorations into an edge cannot decrease the valency on vertices and therefore preserves $\GC_{H^\bullet(M)}^{\geq 3}$. Univalent or bivalent vertices can a priori be created both by the second and third term in the differential. However, one easily checks that these $\leq 2$-valent contributions cancel due to signs. For example, when computing the differential of the graph\begin{tikzpicture}[scale=0.5]
\node(a) at (-0.75,-0.5) [int] {.};

\node (b) at (0.5,-0.5) [int] {.};

\draw (a)--(b);

\draw (a) -- (-1.25,0.5);
\draw (a) -- (-1.75,0);
\draw (a) -- (-1.75,-1);
\draw (a) -- (-1.25,-1.5);
\draw (b) -- (1,0.8);
\draw (b) -- (1.3,0.3);
\draw (b) -- (1.5,-0.7);
\draw (b) -- (1,-1.2);
\end{tikzpicture} bivalent vertices are created by vertex splitting \begin{tikzpicture}[scale=0.5]
\node (a) at (-1,-0.5) [int] {.};

\node (black) at (0,-0.25) [int] {.};

\node (b) at (1,-0.5) [int] {.};

\draw (a)--(black)--(b);

\draw (a) -- (-1.5,0.5);
\draw (a) -- (-2,0);
\draw (a) -- (-2,-1);
\draw (a) -- (-1.5,-1.5);
\draw (b) -- (1.5,0.8);
\draw (b) -- (1.8,0.3);
\draw (b) -- (2,-0.7);
\draw (b) -- (1.5,-1.2);
\end{tikzpicture}. However,  since there are two contributions corresponding to each of the two vertices and they appear with opposite signs thus canceling out.
For bivalent vertices carrying a decoration, or for a univalent vertex, the argument is similar.
\end{proof}

Let $\GC_{H^\bullet(M)}''$ be the subspace of $\GC_{H^\bullet(M)}'$ spanned by graphs that (i) do not contain any univalent vertices, and (ii) that contain at least one $\geq 3$-valent vertex. 
Notice that $\GC_{H^\bullet(M)}''$ is a sub-Lie algebra of $\GC_{H^\bullet(M)}$ since the Lie bracket can not decrease any valences. 
Furthermore, we have the following easy result.
\begin{lemma}\label{lem:MC is geq 3 valent}
The Maurer-Cartan element $z_M'\in \GC_{H^\bullet(M)}'$ constructed above lives in the subspace $\GC_{H^\bullet(M)}''$.
\end{lemma}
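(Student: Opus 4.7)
The plan is to verify directly, graph by graph, that $Z_M(\Gamma)=0$ whenever a connected graph $\Gamma$ violates either (i) "no univalent vertex" or (ii) "has at least one $\geq 3$-valent vertex", with the sole exception of the graphs making up $z_0$, which are subtracted to form $z_M'$. Throughout, the valence of an internal vertex counts edges and decorations together, and every decoration lies in reduced cohomology of degree strictly less than $D$ (i.e.\ in $(H^{\geq 1}(M))^*$, which under Poincar\'e duality is $H^{\leq D-1}(M)$, as specified in Section~\ref{sec: twisting}).

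For (i), let $v$ be a univalent vertex of $\Gamma$. If $v$ has one incident edge, going to some vertex $v'$, and no decoration, the fiber integration over the position of $v$ produces the factor $\int_v \phi(v,v')$, which is a $(D-1)$-form integrated over a $D$-dimensional fiber and hence vanishes by degree. If instead $v$ has a single decoration $\omega$ and no edge, then connectedness forces $v$ to be the only vertex of $\Gamma$, and the weight $\int_M \iota(\omega)$ vanishes since $|\omega|\leq D-1<\dim M$.

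For (ii), any connected graph satisfying (i) in which every vertex has valence $\leq 2$ must be one of: \emph{(a)} the isolated single vertex carrying exactly two decorations, \emph{(b)} a decorated path whose two endpoints each carry $1$ edge plus $1$ decoration (and whose internal vertices have two edges), or \emph{(c)} a pure cycle of $n\geq 2$ undecorated vertices each of edge-valence $2$. In case \emph{(a)} the weight is $\int_M \iota(\omega_1)\iota(\omega_2)=\langle\omega_1,\omega_2\rangle$, and summing over a dual basis recovers exactly $z_0=\sum_{i,j}g^{ij}[\text{vertex decorated by } e_i,e_j]$. In case \emph{(b)}, integrating out the position of an endpoint $v_1$ gives the factor $\int_{v_1}\phi(v_1,v_2)\,\iota(\omega_1)(v_1)$, which is $0$ by property (iv) of Proposition~\ref{prop:angular form} applied to $\alpha=\omega_1$. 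In case \emph{(c)} the integrand $\phi_{v_1 v_2}\cdots\phi_{v_n v_1}$ has degree $n(D-1)$ while the fiber has dimension $nD$, so the integral vanishes by degree (equivalently, Kontsevich's vanishing for wheel configurations).

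I expect no serious analytic obstacle: property (iv) of Proposition~\ref{prop:angular form} was tailored precisely to kill the univalent and decorated-endpoint integrals, and simple degree counting disposes of the purely combinatorial cases. The one subtle point to emphasize is the exclusion of top-degree decorations: if $\omega\in H^D(M)$ were allowed, then the isolated single-vertex graph with decoration $\omega$ would contribute $\int_M \iota(\omega)\neq 0$ and violate (i); the convention "$\bullet\geq 1$" in Section~\ref{sec: twisting} is precisely what rules this out.
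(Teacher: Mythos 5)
Your overall strategy is the same as the paper's (a case-by-case check that $Z_M$ kills the offending graph types), and your treatments of the antenna, the string with decorated endpoints (using property (iv) of Proposition~\ref{prop:angular form}), and the pure cycle are all correct and match the paper's. However, there is a genuine gap in your treatment of the single-vertex graph with one decoration, and the ``subtle point'' you emphasize at the end is in fact a misreading. The weight $Z_M(\Gamma)$ is computed on the predual side, in $^*\fGC_{H^\bullet(M)}={}^*\Tw\Gra_M(0)$, where the decoration is an element of $\tilde H(M)=H^{\geq 1}(M)$ — which \emph{does} include $H^D(M)$. For $\omega$ the fundamental class in $H^D(M)$, the weight $\int_M\iota(\omega)$ is nonzero (indeed equal to $1$ for a normalized volume form), so the single-vertex graph with one decoration \emph{does} appear in $z_M$ with a nonzero coefficient. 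The condition ``$\bullet\geq 1$'' on the dual side $(H^\bullet(M))^*$ excludes $(H^0)^*$, not $(H^D)^*$; after Poincar\'e duality the decoration labels in $\GC$ run through $H^{\leq D-1}(M)$ \emph{including} $H^0(M)$, which is precisely the image of $(H^D(M))^*$. Your argument conflates the decoration label on the $\GC$-side with the element of $\tilde H(M)$ actually fed into the integral formula, and as a result arrives at a false vanishing.

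The paper closes this case in a simpler and more robust way: it observes that the single-vertex graphs (with one or two decorations) in $z_M$ are, by construction, exactly the ones collected into $z_0$, so $z_M' = z_M - z_0$ contains no one-vertex graphs at all. Thus the univalent-with-decoration case and your case~(a) are handled simultaneously by the definition of $z_0$, with no appeal to vanishing of weights. If you want to keep a weight-based argument you would instead need to verify directly that the coefficient of the single-decoration graph in $z_M$ is cancelled by the $(i,j)=(1,N)$-type terms in $z_0$ coming from $e_1=1$; but the definitional route is cleaner.
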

\begin{proof}
First note that by definition $z_M'$ contains no graphs with a single $\leq 2$-valent vertex, as those graphs have been absorbed into $z_0$ above.
% \todo{Correct this... there may be graphs with one vertex in $z_M'$ and more than 2 decorations.} 
% \todo{R: corrected. I think this mistake has no consequences elsewhere.}
Hence the only instance of a (connected) graph with a univalent vertex is a graph with an ``antenna", i.e., an edge connected to a univalent vertex. However, to such graphs the configuration space integral formula associates weight 0, by property \textit{(iv)} of Proposition \ref{prop:angular form} (or alternatively by a degree argument, since there are not enough form degrees depending on the position of the antenna vertex).
Next, if the graph has no trivalent vertex, it is either a string, with some decorations at the ends, or a loop. In case of a string, the weight is zero again by \textit{(iv)} of Proposition \ref{prop:angular form}. Finally, the loops all have zero weight by degree reasons.
\end{proof}

The following Proposition is essentially proven in \cite[Prop. 3.4]{W1}. One uses essentially the dual argument of Theorem \ref{thm:trivalent}.
\begin{prop}\label{prop:3-vallent is quasi-isomorphic}
The inclusion map $\GC_{H^\bullet(M)}^{\geq 3}\hookrightarrow \GC_{H^\bullet(M)}''$ is a quasi-isomorphism of Lie algebras. Furthermore, endowing both sides with the descending complete filtrations by the number of non-bivalent vertices\footnote{On $\GC_{H^\bullet(M)}^{\geq 3}$ this filtration is albeit quite trivial.}, the map between the associated graded spaces is already a quasi-isomorphism. 
\end{prop}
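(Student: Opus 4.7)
The plan is to deduce the first claim from the second, and to establish the second directly. Since both filtrations are descending, bounded below (non-bivalent counts are non-negative) and complete in every fixed cohomological degree, a quasi-isomorphism at the level of associated gradeds lifts to one on the original complexes by a standard spectral-sequence convergence argument. The substantial step is therefore to show that the induced map
\[
\gr\,\GC_{H^\bullet(M)}^{\geq 3}\;\longrightarrow\; \gr\,\GC_{H^\bullet(M)}''
\]
is a quasi-isomorphism, equivalently that the quotient $\gr\,\GC_{H^\bullet(M)}''/\gr\,\GC_{H^\bullet(M)}^{\geq 3}$ (spanned by graphs containing at least one bivalent vertex and at least one $\geq 3$-valent vertex) is acyclic.

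First I would identify the induced differentials on the associated gradeds. On $\GC^{\geq 3}$ every vertex is non-bivalent so the filtration is essentially trivial, and the associated-graded differential retains only the decoration-pairing operation $\Delta$ (which, as observed in the proof of Lemma \ref{lem: trivallent GC_M}, preserves valence and hence preserves $\GC^{\geq 3}$). On $\gr\,\GC_{H^\bullet(M)}''$, the surviving pieces of the differential are: (a) the $\Delta$ operation; (b) vertex splittings of a non-bivalent vertex into a (non-bivalent, bivalent) pair, which attach a new bivalent chain-end to the skeleton; (c) vertex splittings of a bivalent vertex into two bivalents, which extend an existing chain; and (d) the $[z_0,-]$-twist, which attaches a decorated bivalent antenna. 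Crucially, all these operations preserve the \emph{skeleton} $\Gamma_{\mathrm{core}}$ obtained from a graph in $\GC''$ by contracting every maximal chain of bivalent vertices to a single edge (with any decorations absorbed into the appropriate non-bivalent endpoint).

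The heart of the argument is then to show that the quotient is acyclic, which I would do by exploiting the fact that the induced differential acts \emph{locally} on the bivalent chains. More precisely, the complex decomposes as a direct sum over skeletons, and for each skeleton the corresponding summand is a tensor product, over the edges of the skeleton, of elementary ``bivalent-chain'' sub-complexes — one for each edge, together with a factor for each possible decorated tail chain emanating from a non-bivalent vertex. For each such elementary sub-complex I would produce an explicit chain contraction by ``un-splitting'' the bivalent vertex nearest a distinguished end of the chain, checking that $dh+hd=\mathrm{id}$ on all elements containing at least one bivalent vertex and $=0$ on the empty chain. Assembling these local contractions yields the required global chain homotopy. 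This is a direct dual of the argument of Theorem \ref{thm:trivalent} and a close analogue of the proof of \cite[Proposition~3.4]{W1}; the $H^\bullet(M)$-decorations and the $z_0$-twist enter only passively, as labels on the terminal vertex of a tail chain and as a possible generator for such a tail.

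The main obstacle will not be conceptual but combinatorial: signs arising from vertex splittings, from the decoration-pairing operator $\Delta$, and from the twist by $z_0$ must be tracked carefully so that the locally defined contractions fit together into a globally well-defined homotopy respecting the vertex-relabelling symmetries of graphs. To isolate these signs one at a time I would introduce an auxiliary ascending filtration by the total number of bivalent vertices: on the resulting second associated graded only the simplest ``chain extension'' piece of the differential is active (the other pieces strictly raising the bivalent count), and the verification of the homotopy identity becomes a routine combinatorial check, essentially identical to the undecorated case treated in \cite{W1}.
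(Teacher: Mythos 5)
Your overall strategy — reduce to acyclicity of the associated-graded quotient, identify the surviving pieces of the differential, and then dualize the argument of Theorem~\ref{thm:trivalent} (equivalently, adapt \cite[Prop.~3.4]{W1}) by contracting bivalent chains — is exactly the route the paper intends, and its brief proof simply points to those two sources. However, there are two concrete gaps in the execution.

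First, your claim that ``all these operations preserve the skeleton'' fails for the operation (a), i.e.\ the pairing differential $\Delta$ that joins two $H^\bullet(M)$-decorations into an edge. If those two decorations sit on two distinct $\geq 3$-valent vertices $u$ and $w$ (or on the terminal vertices of two decorated bivalent antennas hanging off $u$ and $w$), then $\Delta$ produces a new edge (or new bivalent chain) connecting $u$ to $w$, which manifestly alters $\Gamma_{\mathrm{core}}$. So the direct-sum decomposition over skeletons, which is the load-bearing step in your contraction argument, is not respected by the full associated-graded differential.

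Second, the auxiliary filtration you propose to control this does not do the job. You suggest an ``ascending filtration by the total number of bivalent vertices'' with the idea that on its associated graded ``only the simplest chain-extension piece of the differential is active.'' But the chain-extension splittings and the twist $[z_0,-]$ all raise the bivalent count by exactly one, while $\Delta$ alone preserves it, so a filtration by bivalent count cannot single out the chain-extension piece; if taken descending (which is the only version compatible with a bivalent-count-raising differential), its associated graded retains \emph{only} $\Delta$ — precisely the term you need to kill — and an ascending filtration by bivalent count is simply not preserved by the differential. The correct further filtration is by the total number of $H^\bullet(M)$-decorations, taken ascending: $\Delta$ strictly lowers the decoration count by two and therefore vanishes on the associated graded, while all vertex-splittings preserve it and $[z_0,-]$ preserves it as well (it removes one decoration from the old vertex and deposits one on the new bivalent vertex). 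This mirrors exactly the first filtration in the proof of Theorem~\ref{thm:trivalent} on the predual side, where it kills the edge-to-diagonal-class piece. After that filtration, only skeleton-preserving operations remain, your decomposition over skeletons becomes legitimate, and the explicit chain homotopy by un-splitting along bivalent chains (as in \cite{W1}) goes through.
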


Due to this Proposition we can apply the Goldman-Millson Theorem \cite{DR} to conclude that any Maurer-Cartan element in $\GC_{H^\bullet(M)}''$ is gauge equivalent to a Maurer-Cartan element in the subspace $\GC_{H^\bullet(M)}^{\geq 3}$. In particular:

%\label{rem:vanishes on bivalent}
\begin{cor}
The Maurer-Cartan element $z_M'$ is gauge equivalent to a Maurer-Cartan element in the subspace $\GC_{H^\bullet(M)}^{\geq 3}$.
\end{cor}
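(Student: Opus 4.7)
The plan is to deduce the corollary directly from the preceding proposition together with the filtered Goldman--Millson theorem of Dolgushev--Rogers \cite{DR}. Recall the latter states that if $\phi\colon \alg g \to \alg h$ is a morphism of complete filtered dg Lie algebras whose associated graded map $\gr \phi$ is a quasi-isomorphism, then $\phi$ induces a bijection on the sets of gauge equivalence classes of Maurer--Cartan elements. Here ``complete filtered'' is understood with respect to descending filtrations for which the dg Lie structure is compatible and with respect to which the Lie algebras are complete.

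First I would verify that the descending filtrations by the number of non-bivalent vertices on $\GC_{H^\bullet(M)}^{\geq 3}$ and on $\GC_{H^\bullet(M)}''$ are indeed complete and compatible with the dg Lie structures. Completeness is straightforward because graphs with many non-bivalent vertices carry a large internal degree, so infinite sums in the completion are well-defined. The Lie bracket, which connects two decorations into an edge, can only increase or leave invariant the number of non-bivalent vertices, so the filtration is compatible with the bracket. Similarly, one checks that each piece of the differential (vertex splitting, joining of decorations, twist by $z_0$) cannot decrease the number of non-bivalent vertices, so the filtration is preserved by $d$.

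Next, by the proposition stated just above, the inclusion $\GC_{H^\bullet(M)}^{\geq 3}\hookrightarrow \GC_{H^\bullet(M)}''$ is already a quasi-isomorphism on the associated graded pieces of these filtrations. Hence all hypotheses of the filtered Goldman--Millson theorem are satisfied. The theorem then furnishes a bijection on gauge equivalence classes, and in particular every Maurer--Cartan element of $\GC_{H^\bullet(M)}''$ is gauge equivalent to one lying in the subspace $\GC_{H^\bullet(M)}^{\geq 3}$. Applying this to the specific Maurer--Cartan element $z_M' \in \GC_{H^\bullet(M)}''$ (which lies in the target by the preceding lemma) yields the desired conclusion.

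The main point to be careful about is the verification that the filtration is genuinely complete and descending on both sides; since the graphs contributing to $z_M'$ and its potential gauges are linear combinations which may involve arbitrarily many vertices, one must allow infinite sums, and the Goldman--Millson machinery genuinely requires completeness so that the exponential gauge action is well-defined. Once these formal aspects are in place, the corollary is an immediate application of the cited theorem, so no further computation is needed.
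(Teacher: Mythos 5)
Your proof is correct and follows essentially the same route as the paper, which also invokes the filtered Goldman--Millson theorem of Dolgushev--Rogers applied to the inclusion $\GC_{H^\bullet(M)}^{\geq 3}\hookrightarrow \GC_{H^\bullet(M)}''$ with the filtration by the number of non-bivalent vertices, as justified by the preceding proposition. One minor remark: completeness of $\GC_{H^\bullet(M)}''$ with respect to this filtration is better justified by noting that the graph complex is by definition a product over isomorphism classes of graphs (``infinite sums of graphs''), rather than by an appeal to internal degree; but this does not affect the validity of your argument.
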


Next, we apply the remark of the previous subsection to conclude that we may use a $\geq$trivalent Maurer-Cartan element (say $z_M^3$) gauge equivalent to $z_M'$ to construct $\pdGraphs_M$.
For the sake of concreteness, let us temporarily (for this subsection) denote the version of $\pdGraphs_M$ constructed as before by $\Graphs_M^{z_M'}$, and the one constructed with $z_3$ instead by $\pdGraphs_M^{z_3}$, though this notation is abusive.

Let us consider a subspace $S$ of $\pdGraphs_M^{z_3}$ spanned by graphs having at least one internal  $1$- or $2$-valent vertex. Recall that decorations count as valence and there are no $0$-valent internal vertices in $^*\Graphs_M$.

\begin{lemma}\label{lem: trivallent}
The space $S$ described above is a subcomplex of $^*\Graphs_M^{z_3}$.
\end{lemma}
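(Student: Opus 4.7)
The plan is to verify $d(S)\subseteq S$ by separately analyzing each piece of the differential on $\pdGraphs_M^{z_3}$. Recall that the differential decomposes as $d=d_{\mathrm{split}}+d_{\mathrm{contr}}$, together with the evaluation of any pure-internal disconnected component produced by $d_{\mathrm{split}}$ against the Maurer--Cartan element $z_3\in\GC_{H^\bullet(M)}^{\geq 3}$. Let $\Gamma\in S$ carry a distinguished internal vertex $v$ of valence at most $2$; the goal is to show that every term of $d\Gamma$ still contains an internal vertex of valence at most $2$.

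First I would treat $d_{\mathrm{split}}$. Replacing an edge $s^{ij}$ by $\sum_{\alpha,\beta} g^{\alpha\beta} e^i_\alpha e^j_\beta$ swaps one unit of valence (the edge) for another (the decoration pair) at each endpoint; hence every vertex valence, and in particular that of $v$, is preserved in each resulting term. If the split disconnects $v$ into a pure-internal connected component, the $z_3$-evaluation of that component vanishes: since $z_3$ is supported on $\geq 3$-valent graphs and the component contains the $\leq 2$-valent vertex $v$, the evaluation is zero. Otherwise $v$ remains an internal $\leq 2$-valent vertex of the surviving graph and the term still lies in $S$.

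Next I would treat $d_{\mathrm{contr}}$. Contractions at edges disjoint from $v$ obviously leave $v$ intact. If $v$ carries only decorations (no incident edges), $d_{\mathrm{contr}}$ does not act at $v$ at all and $v$ survives in every term. Otherwise, contracting an edge $vu$ merges $v$ with $u$ into a new vertex of valence $\mathrm{val}(v)+\mathrm{val}(u)-2$ inheriting the combined decorations. A short case analysis on $\mathrm{val}(v)\in\{1,2\}$ and on the valence and type of $u$ (internal or external) shows that in each case either the merged vertex is itself internal of valence $\leq 2$ (keeping the term in $S$), the merger produces a forbidden $0$-valent internal vertex and hence contributes zero, or $\Gamma$ already carried a second $\leq 2$-valent internal vertex distinct from $v$ which persists through the merger. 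The residual potentially problematic contractions are paired with companion terms arising from the twist by $z_3$, and these cancel thanks to the Maurer--Cartan equation $dz_3+\tfrac{1}{2}[z_3,z_3]=0$ together with the crucial fact that $z_3$ lies in $\GC_{H^\bullet(M)}^{\geq 3}$, so that the compensating terms never carry $\leq 2$-valent vertices of their own.

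The main obstacle is this final step of the $d_{\mathrm{contr}}$ analysis, namely organising the pairing of problematic contractions at $v$ with their twist counterparts so that the cancellations are transparent. This is precisely where the earlier reduction from $z_M'$ to a $\geq 3$-valent gauge representative $z_3$ via the Goldman--Millson theorem becomes decisive: the support condition on $z_3$ is exactly what forces the residual terms to vanish, and without it the analogous statement for $\pdGraphs_M^{z_M'}$ would not hold.
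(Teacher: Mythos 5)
Your proposal goes wrong at the crucial step, and the cancellation mechanism you invoke is not the one that makes the lemma true.

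The problem is in your treatment of $d_{\mathrm{contr}}$. Take the basic case: $v$ is internal, $2$-valent, with two incident edges to neighbors $u_1,u_2$ that are both internal and $\geq 3$-valent, and suppose $v$ is the \emph{only} $\leq 2$-valent internal vertex of $\Gamma$. Contracting the edge $vu_1$ merges $v$ into $u_1$, producing a vertex of valence $\mathrm{val}(u_1)\geq 3$; the resulting graph has no $\leq 2$-valent internal vertex. Your trichotomy --- merged vertex is $\leq 2$-valent, or a forbidden $0$-valent vertex appears, or a second low-valence vertex survives --- covers none of this, so the ``short case analysis'' as stated fails. You then claim the leftover terms are cancelled by companion terms from the twist via the Maurer--Cartan equation for $z_3$. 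That is not the mechanism. The actual point is elementary and internal to $d_{\mathrm{contr}}$: contracting $vu_1$ and contracting $vu_2$ both eliminate $v$ and leave $u_1$ and $u_2$ joined by the surviving edge, so the two contractions produce the \emph{same} graph with opposite signs and cancel outright. No appeal to the Maurer--Cartan equation is needed or available here (and indeed, the compensating term you posit would have to lie outside $S$, which makes ``cancellation inside $S$'' incoherent as stated).

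You also omit the case of a $2$-valent internal vertex carrying one edge and one decoration. There only one edge can be contracted, so the cancellation you would need cannot come from a second contraction. What actually cancels it is the piece of the differential that cuts out the now-isolated decorated vertex and evaluates it against the one-vertex part $z_0$ of the Maurer--Cartan element (the ``diagonal class'' vertex); this returns the Poincar\'e pairing and reproduces the contraction term with the opposite sign. Your argument discards all $\leq 2$-valent cut-out components because $z_3$ is $\geq 3$-valent, but the twist used to build $\pdGraphs_M^{z_3}$ still contains $z_0$, which \emph{is} $2$-valent and must not be thrown away. Finally, the $1$-valent case requires yet another cancellation that your write-up does not address. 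In short: the $d_{\mathrm{split}}$ part of your argument is fine, and the observation that $z_3$ cannot absorb a subgraph containing $v$ is correct, but the $d_{\mathrm{contr}}$ analysis and the role of $z_0$ both need to be redone along the lines above.
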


\begin{proof}
Recall that the differential has two pieces, a first one that contracts an edge connected to an internal vertex and a second one that either cuts an edge into the diagonal class or deletes a subgraph of internal vertices producing a factor given by the image of such subgraph under $Z_M$. Due to the Maurer-Cartan element $z_3$ containing only $\geq 3$-valent diagrams, the differential cannot cut out a subgraph containing a bivalent internal vertex.
Let us consider a graph with a $2$-valent internal vertex that is adjacent to two other vertices. There, the differential acts as follows:

\resizebox{13cm}{!}{
\begin{tikzpicture}

\node at (0,0) {$d$};

\node at (2,0) {\begin{tikzpicture}[scale=0.8]
\node[fill=gray!20] (a) at (-1,-0.5) [ext] {\phantom{a}};

\node (black) at (0,-0.25) [int] {$a$};

\node[fill=gray!20] (b) at (1,-0.5) [ext] {\phantom{a}};

\draw (a)--(black)--(b);

\draw (a) -- (-1.5,0.5);
\draw (a) -- (-2,0);
\draw (a) -- (-2,-1);
\draw (a) -- (-1.5,-1.5);
\draw (b) -- (1.5,0.8);
\draw (b) -- (1.8,0.3);
\draw (b) -- (2,-0.7);
\draw (b) -- (1.5,-1.2);
\end{tikzpicture}};

\node at (4,0) {$= (1-1)$};

\node at (6,0) {\begin{tikzpicture}[scale=0.8]
\node[fill=gray!20] (a) at (-0.75,-0.5) [ext] {\phantom{a}};

\node[fill=gray!20] (b) at (0.5,-0.5) [ext] {\phantom{a}};

\draw (a)--(b);

\draw (a) -- (-1.25,0.5);
\draw (a) -- (-1.75,0);
\draw (a) -- (-1.75,-1);
\draw (a) -- (-1.25,-1.5);
\draw (b) -- (1,0.8);
\draw (b) -- (1.3,0.3);
\draw (b) -- (1.5,-0.7);
\draw (b) -- (1,-1.2);
\end{tikzpicture}};

\node at (7.8,0) {$+\displaystyle\sum_\nu \pm $};

\node at (10,0) {\begin{tikzpicture}[scale=0.8]
\node[fill=gray!20] (a) at (-1,-0.5) [ext] {\phantom{a}};

\node (black) at (0,-0.25) [int] {$a$};

\node (nu) at (0.2,0.6) [ext] {$\nu$};

\node (nustar) at (0.3,-1) [ext] {$\scriptstyle\nu^*$};

\node[fill=gray!20] (b) at (1,-0.5) [ext] {\phantom{a}};

\draw[dotted] (black)--(nu) (nustar)--(b);
\draw (a)--(black);

\draw (a) -- (-1.5,0.5);
\draw (a) -- (-2,0);
\draw (a) -- (-2,-1);
\draw (a) -- (-1.5,-1.5);
\draw (b) -- (1.5,0.8);
\draw (b) -- (1.8,0.3);
\draw (b) -- (2,-0.7);
\draw (b) -- (1.5,-1.2);
\end{tikzpicture}};

\node at (11.8,0) {$+\displaystyle\sum_\nu \pm $};

\node at (14,0) {\begin{tikzpicture}[scale=0.8]
\node[fill=gray!20] (a) at (-1,-0.5) [ext] {\phantom{a}};

\node (black) at (0,-0.25) [int] {$a$};

\node (nu) at (-0.4,0.4) [ext] {$\nu$};

\node (nustar) at (-0.3,-1) [ext] {$\scriptstyle\nu^*$};

\node[fill=gray!20] (b) at (1,-0.5) [ext] {\phantom{a}};

\draw[dotted] (a)--(nu) (nustar)--(black);
\draw (b)--(black);

\draw (a) -- (-1.5,0.5);
\draw (a) -- (-2,0);
\draw (a) -- (-2,-1);
\draw (a) -- (-1.5,-1.5);
\draw (b) -- (1.5,0.8);
\draw (b) -- (1.8,0.3);
\draw (b) -- (2,-0.7);
\draw (b) -- (1.5,-1.2);
\end{tikzpicture}};

\end{tikzpicture}}

The contributions of contracting both edges appear with opposite signs and therefore cancel. Notice that $1$-valent internal vertices are produced on the other summands when the decoration of the internal vertex takes the value $1$.

If there is a $2$-valent internal vertex that is adjacent to only one other vertex and has one decoration, the action of the differential there is:

\begin{tikzpicture}
\node at (-1,0) {$d$};
\node at (0,0) {\begin{tikzpicture}[scale=0.8]
\node[fill=gray!20] (a) at (-1,-0.5) [ext] {\phantom{a}};

\node (black) at (0,-0.25) [int] {$a$};

\node (nu) at (0.2,0.6) [ext] {$\omega$};

\draw[dotted] (black)--(nu);
\draw (a)--(black);

\draw (a) -- (-1.5,0.5);
\draw (a) -- (-2,0);
\draw (a) -- (-2,-1);
\draw (a) -- (-1.5,-1.5);

\end{tikzpicture}};

\node at (1.3,0) {$=$};

\node at (2.1,0) {\begin{tikzpicture}[scale=0.8]
\node[fill=gray!20] (a) at (-1,-0.5) [ext] {\phantom{a}};
\node (nu) at (-0.5,0.2) [ext] {$\omega$};

\draw[dotted] (a)--(nu);

\draw (a) -- (-1.5,0.5);
\draw (a) -- (-2,0);
\draw (a) -- (-2,-1);
\draw (a) -- (-1.5,-1.5);
\end{tikzpicture}};

\node at (3.3,0) {$-\displaystyle \sum_\nu \pm $};

\node at (5,0) {\begin{tikzpicture}[scale=0.8]
\node[fill=gray!20] (a) at (-1,-0.5) [ext] {\phantom{a}};
\node (realnu) at (-0.5,0.3) [ext] {$\nu$};

\node (black) at (0,-0.25) [int] {$a$};

\node (nu) at (0.2,0.6) [ext] {$\omega$};
\node (nustar) at (0,-1) [ext] {$\scriptstyle\nu^*$};

\draw[dotted] (nustar)--(black)--(nu) (a)--(realnu);

\draw (a) -- (-1.5,0.5);
\draw (a) -- (-2,0);
\draw (a) -- (-2,-1);
\draw (a) -- (-1.5,-1.5);

\end{tikzpicture}};

\node at (6.5,0) {$=$};

\node at (7.25,0) {\begin{tikzpicture}[scale=0.8]
\node[fill=gray!20] (a) at (-1,-0.5) [ext] {\phantom{a}};
\node (nu) at (-0.5,0.2) [ext] {$\omega$};

\draw[dotted] (a)--(nu);

\draw (a) -- (-1.5,0.5);
\draw (a) -- (-2,0);
\draw (a) -- (-2,-1);
\draw (a) -- (-1.5,-1.5);
\end{tikzpicture}};

\node at (8,0) {$-$};

\node at (8.75,0)  {\begin{tikzpicture}[scale=0.8]
\node[fill=gray!20] (a) at (-1,-0.5) [ext] {\phantom{a}};
\node (nu) at (-0.5,0.2) [ext] {$\omega$};

\draw[dotted] (a)--(nu);

\draw (a) -- (-1.5,0.5);
\draw (a) -- (-2,0);
\draw (a) -- (-2,-1);
\draw (a) -- (-1.5,-1.5);
\end{tikzpicture}};

\node at (9.5,0) {$=0.$}; 

\end{tikzpicture}

It is easy to see that if there is one $1$-valent internal vertex the two pieces of the differential cancel each other, thus concluding the proof.
\end{proof}

The following proof is an adaptation of \cite[Prop. 3.4]{W1}
\begin{thm}\label{thm:trivalent}
The projection map $^*\Graphs_M^{z_3}\to { \tgraphs_M}\coloneqq  {^*\Graphs_M^{z_3}/S } $ is a quasi-isomorphism of dg Hopf right $\pdGraphs_D$-comodules. %TODO: trivalence in the latter def?
\end{thm}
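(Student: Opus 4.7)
The plan is to deduce the quasi-isomorphism claim from the acyclicity of the subcomplex $S$, following the strategy of \cite[Prop.~3.4]{W1}. As a first step one verifies that $S$ is a dg Hopf $\pdGraphs_D$-sub-comodule: the cooperadic coaction contracts subgraphs spanned by external vertices and in doing so neither touches the internal vertices of the remainder nor alters their valences, so $S$ is preserved. Combined with Lemma~\ref{lem: trivallent}, this shows that $\tgraphs_M$ inherits the structure of a dg Hopf right $\pdGraphs_D$-comodule, and the projection is a morphism of such.

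To handle acyclicity, I would introduce a descending filtration $F^\bullet S$ by the number of $\geq 3$-valent internal vertices in a graph (remembering that decorations contribute to valence). On the associated graded $\gr S$ the part of the differential coming from the twist by $z_3$ vanishes: any such contribution removes a connected subgraph consisting only of $\geq 3$-valent vertices and therefore strictly decreases the filtration degree, so it is zero on the associated graded. What survives in $\gr S$ is the edge-contraction piece, the diagonal-class splitting, and the vertex-splitting part of $\delta_s$, restricted to the $\leq 2$-valent portion of each graph.

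On $\gr S$ I would then exhibit a contracting homotopy. Let $S^{(1)}\subset \gr S$ be the subspace spanned by graphs with at least one univalent internal vertex; a short argument (mimicking the stability verification of Lemma~\ref{lem: trivallent}) shows that $S^{(1)}$ is itself a subcomplex. Define $h_1$ to sum over the ways of deleting a univalent internal vertex together with its unique incident half-edge (whether an actual edge or a decoration), with appropriate signs. A direct computation yields $d h_1 + h_1 d = N\cdot \id$ on $S^{(1)}$, where $N$ counts univalent internal vertices; the sign cancellations are exactly those already displayed in the proof of Lemma~\ref{lem: trivallent}. This proves $H^\bullet(S^{(1)})=0$. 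On the quotient $\gr S/S^{(1)}$, whose elements have only bivalent (but no univalent) internal vertices, one defines an analogous homotopy $h_2$ that contracts along a chosen bivalent internal vertex, splitting into the two subcases of such a vertex carrying two edges versus one edge and one decoration. The same pattern of cancellations gives $dh_2+h_2 d = N'\cdot \id$ for $N'$ counting bivalent internal vertices, so $\gr S/S^{(1)}$ is acyclic as well. The long exact sequence then yields $H^\bullet(\gr S)=0$, and the associated spectral sequence forces $H^\bullet(S)=0$.

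The main obstacle will be sign bookkeeping and the correct treatment of graph automorphisms: the naive ``choose a canonical $\leq 2$-valent internal vertex'' must be replaced by a fully symmetrized formula, and the two subcases of bivalent vertices (two edges vs.\ edge plus decoration) need to be combined in the right proportion so that $d h_2 + h_2 d$ is diagonal. Once these are handled as in \cite[Prop.~3.4]{W1}, compatibility with the commutative product and with the $\pdGraphs_D$-coaction is automatic, since neither structure creates or destroys $\leq 2$-valent internal vertices; hence the projection is a quasi-isomorphism of dg Hopf right $\pdGraphs_D$-comodules.
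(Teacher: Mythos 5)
Your high-level strategy is the same as the paper's — reduce to $H(S)=0$, filter, split into a ``univalent'' piece and a ``bivalent'' piece, and argue each is acyclic — but the two crucial technical steps contain errors that, as written, break the argument.

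First, the description of the associated graded differential is confused. The theorem concerns the \emph{predual} $\pdGraphs_M^{z_3}$, whose differential contracts edges ($d_{\mathrm{contr}}$), splits an edge into a diagonal class ($d_{\mathrm{split}}$), and excises internal-vertex components via the partition function. There is no ``vertex-splitting part of $\delta_s$'' in $\pdGraphs_M$ — that piece lives in the dual $\Graphs_M$. Moreover, your filtration by the number of $\geq 3$-valent internal vertices does not isolate the ``$\leq 2$-valent portion'' of the differential as you claim: contracting an edge between a $2$-valent and a $3$-valent internal vertex produces another $3$-valent internal vertex, so such contractions survive on $\gr S$ even though they involve a $\geq 3$-valent vertex. (The paper instead filters by the total number of decorations, and on the $E_0$ page retains the full contraction differential together with the decoration-preserving part of the $z_3$/$z_0$ excision — this is what makes the cancellations from Lemma~\ref{lem: trivallent} operative on $E_0$.)

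Second, and more seriously, the proposed homotopies go the wrong way. In $\pdGraphs_M^{z_3}$, $d_{\mathrm{contr}}$ strictly decreases the number of internal vertices; your $h_1$ (delete a univalent internal vertex together with its half-edge) and $h_2$ (contract a bivalent internal vertex) \emph{also} decrease the number of internal vertices, so $d$ and $h$ cannot combine to an identity. To see the failure concretely, take the graph consisting of a single external vertex $1$ with one univalent internal vertex $v$ attached by an edge. Then $h_1$ of this graph is just the bare external vertex, and so is the contraction, so both $d h_1$ and $h_1 d$ land in graphs with no internal vertices and vanish after one more step; there is no way to get $N\cdot\mathrm{id}$. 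The paper's homotopy \emph{adds} an internal vertex — it extends the ``antenna'' — and the cancellation is organized not by a one-shot formula $dh+hd=N\,\mathrm{id}$ but by two further spectral sequences (one whose differential decreases antenna length, and another in the $\geq 2$-valent case that replaces paths of bivalent internal vertices by length-labeled edges). If you want to salvage your plan, the homotopy must increase the number of internal vertices, and you will likely need an auxiliary filtration by antenna length rather than a direct identity, exactly because the contraction and splitting pieces of $d$ do not interact with a naive deletion in a diagonal way.

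Your opening paragraph, checking that $S$ is a sub-$\pdGraphs_D$-comodule so the quotient inherits the Hopf comodule structure, is a reasonable addition that the paper leaves implicit, and that part is fine.
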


\begin{proof}
It suffices to show that $H(S) = 0$.  If we set up a filtration on $S$ by the total number of decorations, on the zeroth page of the spectral sequence we recover $d_0$ as the contracting piece and a piece that cuts out a connected component of internal vertices with a factor given by an integral. We claim that the spectral sequence collapses already on the first page.

Notice that $d_0$ cannot produce $1$-valent internal vertices from $2$-valent internal vertices and it follows from the proof of Lemma \ref{lem: trivallent} that a $1$-valent internal vertex cannot be destroyed.

It follows that on the zeroth page $S$ decomposes as a sum of complexes $S=S_1 \oplus S_2$, where $S_1$ is spanned by graphs with at least one $1$-valent internal vertex and $S_2$ is spanned by graphs whose internal vertices are at least $2$-valent. 

To see that $S_1$ is acyclic one can look at ``antennas" of the graphs, i.e. maximal connected subgraphs consisting of one $1$-valent and some $2$-valent internal vertices. By setting a spectral sequence whose differential decreases only the length of antennas one can construct a contracting homotopy that increases this length thus showing  $H(S_1)=0$.

As for $S_2$ the same idea can used by replacing every path on the graph consisting of 2-valent internal vertices by single edges labeled by their length, see Figure \ref{fig:labeled graphs}.

\begin{figure}[h] 

\begin{tikzpicture}

\node at (0,0)   {\begin{tikzpicture}[scale=0.8]
\node[fill=gray!20] (a) at (-1,-0.5) [ext] {\phantom{a}};

\node (black) at (0,-0.3) [int] {$a$};
\node (black2) at (1,-0.25) [int] {$a$};
\node (black3) at (2,-0.3) [int] {$a$};

\node[fill=gray!20] (b) at (3,-0.5) [ext] {\phantom{a}};

\draw (a)--(black)--(black2)--(black3)--(b);

\draw (a) -- (-1.5,0.5);
\draw (a) -- (-2,0);
\draw (a) -- (-2,-1);
\draw (a) -- (-1.5,-1.5);
\draw (b) -- (3.5,0.8);
\draw (b) -- (3.8,0.3);
\draw (b) -- (4,-0.7);
\draw (b) -- (3.5,-1.2);
\end{tikzpicture}};

\node at (3,-0.15) {$=$};

\node at (5,0){\begin{tikzpicture}[scale=0.8]
\node[fill=gray!20] (a) at (-1,-0.5) [ext] {\phantom{a}};

\node[fill=gray!20] (b) at (1,-0.5) [ext] {\phantom{a}};

\draw (b) to [out=160,in=20] node[above] {$\scriptstyle 3$} (a) ;

\draw (a) -- (-1.5,0.5);
\draw (a) -- (-2,0);
\draw (a) -- (-2,-1);
\draw (a) -- (-1.5,-1.5);
\draw (b) -- (1.5,0.8);
\draw (b) -- (1.8,0.3);
\draw (b) -- (2,-0.7);
\draw (b) -- (1.5,-1.2);
\end{tikzpicture}};

\end{tikzpicture}\caption{Replacing bivalent internal vertices by a single labeled edge.\label{fig:labeled graphs}}
\end{figure}
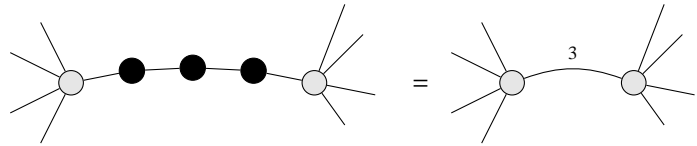

By considering a spectral sequence whose differential on the zeroth page only reduces the numbers on the labels, being careful with the signs one can construct a contracting homotopy which gives $H(S_2)=0$.

\end{proof}

Overall, we conclude that $\tgraphs_M$ is a dgca model for $\FM_M$, by the following explicit zigzag:
\[
\tgraphs_M \xleftarrow{\sim} \pdGraphs_M^{z_3} \xleftarrow{\cong} \pdGraphs_M^{z_M'}\xrightarrow{\sim} \Omega_{PA}(\FM_M).
\]
Moreover, the above maps are morphisms of dg Hopf right comodules. 

If $M$ is not parallelizable, one can construct the space $\tgraphs_M^\cutaol$ as the analogous quotient of $\pdGraphs_M^{z_3}$. The same proof allows us to conclude that $\tgraphs_M^\cutaol$ is a dgca model for the collection of topological spaces $\FM_M$ by a similar zigzag.

\begin{rem}
 The smaller model $\tgraphs_M$ (as well as $\tgraphs_M^\cutaol$) has the advantage that for $D\geq 3$ it is connected in the sense that each dgca $\tgraphs_M(r)$ is concentrated in non-negative cohomological degrees, and one-dimensional in degree 0. 
 This can be shown by a degree counting argument similar to Lemma \ref{lem:degree counting}, using the trivalence condition and the existence of at least one external vertex per connected component.
 Similarly, one sees that if in addition $H^1(M)=0$, then $\tgraphs_M(r)$ is finite dimensional in each cohomological degree.
\end{rem}

\begin{rem}
The propagator $\phi_{12}$ established in Proposition \ref{prop:angular form} can be chosen such that 
$\int_{2} \phi_{12}\alpha=0$, where the integration is conducted along the fiber of the forgetful map $p_2 \colon \FM_M(2)\to M$, and where $\alpha$ is any of the chosen representative forms for the cohomology, see Convention \ref{convention} (see \cite{CM}).
It would be desirable to show that $\phi_{12}$ may be chosen such that in addition $\int_{3} \phi_{13}\phi_{32}=0$, where the integration is performed along the fiber of the forgetful map $p_3 \colon \FM_M(3)\to \FM_M(2)$. In that case the above discussion could be considerably simplified, since the extra condition immediately renders the integral weights of all graphs with bivalent vertices zero. 
A propagator with this desired property has been constructed in the smooth setting in \cite[Lemma 4]{CM}. We expect that the proof carries over to the semi-algebraic setting. However, there is a technical difficulty due to our use of PA instead of smooth forms, whose resolution we leave to future work. Roughly speaking, the technical problem is that for a PA form $\beta\in \Omega(M\times N)$ one has to define a good notion of ``de Rham differential in the first slot'' $d_N \beta$.
\end{rem}

\newcommand{\mce}{z_M}

\subsection{Computing the cohomology and loop orders}

Above we construct real dgca models $\stG_M$ and $\tgraphs_M$ for configuration spaces of points on a manifold $M$, which depend on $M$ only through the Maurer-Cartan element $\mce \in \GC_{H^\bullet(M)}$.
Note that $\GC_{H^\bullet(M)}$ is naturally filtered by the loop order of graphs. We can decompose the Maurer-Cartan element 
\[
\mce = \mce^0 +\mce^1+\cdots
\]
accordingly into pieces of various loop orders.

The differential on $\tgraphs_M(n)$ can only maintain or decrease the number of loops (genus) of the graphs. It follows that the subspace $\tgraphs^{\text{for}}_M(n) \subset \tgraphs_M(n)$ spanned by graphs of genus zero, i.e. forests, is a subcomplex and a dg subalgebra for $n=1$. Notice that however it is not a subalgebra if $n>1$.
In any case the object $\tgraphs^{\text{for}}_M$ depends on $M$ only through the tree-level piece $\mce^0$ of our Maurer-Cartan element $\mce$.

\begin{lemma}
The inclusion of $\tgraphs_M^{\text{for}}$ in $\tgraphs_M$ is a quasi-isomorphism (of symmetric sequences of complexes).
\end{lemma}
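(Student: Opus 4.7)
The plan is a spectral-sequence argument based on the loop-order (first Betti number) filtration of $\tgraphs_M$. Let $\mathcal{F}^p \tgraphs_M \subset \tgraphs_M$ be the span of graphs whose first Betti number is at most $p$. I would first verify that $\mathcal{F}^\bullet$ is a filtration of complexes: edge contraction always preserves the loop order; the $d_{\mathrm{split}}$ piece preserves it on bridges and decreases it by one on non-bridge edges; and the Maurer-Cartan twist by $z_M$ (which cuts a connected internal subgraph off of a graph) decreases the loop order by exactly the loop order of the cut subgraph. In particular $\mathcal{F}^0 \tgraphs_M = \tgraphs_M^{\text{for}}$, and the inclusion in question is identified with the inclusion of the bottom filtration level.

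On the $E_0$ page of the associated spectral sequence, only the loop-preserving parts of the differential survive: edge contraction, $d_{\mathrm{split}}$ along bridges, and cutting tree components of internal vertices coming from the tree-level piece $z_M^0$ of the Maurer-Cartan element. At $p=0$ this recovers precisely $\tgraphs_M^{\text{for}}$ with its natural differential. The heart of the argument is then to show that $E_0^{p,*}$ is acyclic for every $p \geq 1$. I would attack this by introducing a further auxiliary filtration on each $E_0^{p,*}$ (for example by the number of non-bridge edges, or by the number of internal vertices contained in a cycle) and exhibiting a contracting homotopy on the next associated graded. The homotopy would canonically single out an edge lying inside a cycle and act by contracting or splitting it, with decorations merely being carried along; the trivalence condition on internal vertices and the absence of tadpoles in $\tgraphs_M$ ensure that this construction is well-defined.

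Once the acyclicity of the higher-$p$ strips is established, the spectral sequence collapses at $E_1$ and yields $H(\tgraphs_M^{\text{for}}) \xrightarrow{\sim} H(\tgraphs_M)$ with the isomorphism induced by the inclusion, as desired. The main obstacle is the explicit construction of the contracting homotopy---making a canonical choice of distinguished edge, tracking signs in the presence of decorations, and verifying that its commutator with the surviving differential is the identity. The flavor is parallel to that of Theorem \ref{thm:trivalent} above and to Kontsevich-style homotopy arguments for the formality of $E_n$. A possibly shorter alternative route would be to combine Theorem \ref{thm:GraphsM} (which identifies $H(\tgraphs_M)$ with $H^\bullet(\FM_M)$) with a direct Bendersky--Gitler-type computation of $H(\tgraphs_M^{\text{for}})$ from the tree-level data $z_M^0$ alone, and then to verify that the inclusion-induced map on cohomology realizes this identification.
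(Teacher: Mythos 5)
Your proposal takes a genuinely different route from the paper, and as written it has a real gap at the crucial step.

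The paper does \emph{not} filter by loop order. Instead it applies to both $\tgraphs_M^{\text{for}}$ and $\tgraphs_M$ the same three-stage filtration used in Lemma \ref{lem:[LV] argument} (number of connected components, then $\#$edges $-\ \#$vertices, then degree $-\ \#$internal vertices), and shows that both resulting associated graded complexes satisfy the identical recursion
$H(V(n)) = H(V(n-1)) \otimes H^\bullet(M) \oplus H(V(n-1))[1-D]^{\oplus n-1}$,
with the inclusion inducing the identification. The inclusion of the bottom of the loop filtration is never shown to have acyclic cokernel directly; the argument is a comparison, not an acyclicity statement. Your approach, if it worked, would actually prove something finer (that the quotient $\tgraphs_M / \tgraphs_M^{\text{for}}$ is acyclic, filtration stratum by stratum), but it is also harder.

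The gap is precisely in the claimed acyclicity of $E_0^{p,*}$ for $p\geq 1$ and the sketched homotopy. First, note that the splitting piece $d_{\mathrm{split}}$ applied to a cycle edge \emph{decreases} the loop order, so it is killed in the associated graded $\mF^p/\mF^{p-1}$; the only surviving moves on a cycle are edge contractions at internal vertices. Your proposed homotopy "singles out an edge lying inside a cycle and acts by contracting or splitting it" -- but splitting a cycle edge is not available on the $E_0$ page, and the natural homotopy inverse to contraction (inserting an internal vertex into the edge) creates a bivalent internal vertex, which is identically zero in $\tgraphs_M$ by the $\geq 3$-valence constraint. So the homotopy as described is not a map on $\tgraphs_M$ at all. (Relatedly, $\tgraphs_M$ in the parallelized case does carry tadpoles at \emph{external} vertices, so "absence of tadpoles" is not quite right either.) A further auxiliary filtration that makes a homotopy argument go through is not easy to produce here; this is exactly why the paper instead transports the recursion of Lemma \ref{lem:[LV] argument} to both complexes. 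Your final remark -- combining Theorem \ref{thm:GraphsM} with a direct computation of $H(\tgraphs_M^{\text{for}})$ -- is in fact much closer to what the paper does and would be the route to pursue.
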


\begin{proof}
The proof follows essentially from the spectral sequence argument given in Lemma \ref{lem:[LV] argument}.

 The differential in $\tgraphs_M$ cannot decrease the number of connected components of a graph, so by considering a filtration by the number of connected components of the graphs we obtain the respective associated graded complexes $\gr\tgraphs^{\text{for}}$ and $\gr\tgraphs_M$. Then we notice that the number $\#edges - \#vertices $ cannot increase so we take the respective filtration obtaining the associated graded complexes $\gr'\gr\tgraphs^{\text{for}}$ and $\gr'\gr\tgraphs_M$ (notice that this filtration is bounded below since there are no connected components of only internal vertices). 
 After this, the only piece of the differential remaining is the one cutting out a (decorated) tree of internal vertices and evaluating the partition function on it.
 
 At last, filtering by $\#internal \ vertices - degree$, we obtain in the associated graded complexes $\gr''\gr'\gr\tgraphs^{\text{for}}$ and $\gr''\gr'\gr\tgraphs_M$ a the piece of the differential that reduces the number of internal vertices exactly by $1$, i.e., the differential contracts one edge connected to one or two internal vertices or cuts out a tree consisting only of a single decorated internal vertex.
 
 We claim that the induced inclusion map is a quasi-isomorphism at this level. As in Lemma \ref{lem:[LV] argument}, by induction on $n$ one can show that the homology of $V(n) = \gr''\gr'\gr\tgraphs_M (n)$ satisfies 
 $$H(V(n)) =H(V(n-1)) \otimes H^\bullet(M) \oplus H(V(n-1)) [1-D]^{\oplus n-1},$$
but the same proof gives the same result for the homology of $\gr''\gr'\gr\tgraphs_M^{\text{for}}$, so the result follows.\end{proof}

In particular we see the following.
\begin{enumerate}
\item The dgca $\tgraphs_M^{\text{for}}(1)$ is a real model for $M$, so that the tree-level piece of $\mce$ encodes the real homotopy type of $M$.  
\item Knowledge of the tree-level piece of $\mce$ suffices to compute the real cohomology of $\FM_M(n)$, as a graded vector space, for all $n$.
\end{enumerate}

\section{The real homotopy type of $M$ and $\FM_M$}
\label{sec:proof of conjecture}

%\subsection{Reduction to trivalent graphs}
%Let $\GC_M'\subset \GC_M$ be the dg Lie subalgebra spanned by graphs not consisting of loop graphs, as depicted in Figure \ref{fig:}. Notice that for degree reasons, the Maurer-Cartan element $\mce \in \GC_M'$.
%
%\begin{defprop}
%$\GC_M^{\geq 3}$, the dg Lie subalgebra of $\GC_M'$ spanned by graphs whose vertices are all $\geq 3$-valent is quasi-isomorphic to $\GC_M'$.
%\end{defprop}
%
%It follows that $\mce$ is gauge equivalent to a Maurer-Cartan element $\mce^{\geq 3}\in \GC_M^{\geq 3}$ and therefore $S^{\mce^{\geq 3}} = S^{\mce} = \Graphs_M$. We conclude that the twisted piece of the differential of $\Graphs_M$ does not add connected components containing bivalent internal vertices or dually, the differential in $^*\Graphs_M$ does not cut out connected 
%components of internal vertices containing bivalent internal vertices.

%\subsection{The real homotopy type of $M$ determines the real homotopy type of $\FM_M$}

The goal of this section is to compare the information contained in the partition function $z_M$ from above to the real homotopy type of $M$. By the latter, we mean the isomorphism type
 of a homotopy commutative ($\mathsf C_\infty$) algebra structure on the cohomology $H(M)$. The end result will be that the knowledge of the real homotopy type of $M$ suffices to recover $z_M$ (up to gauge equivalence) in the case that $D\geq 4$ and $H^1(M)=0$.

Let us first see how the $\mathsf C_\infty$ algebra structure on $H(M)$ can be obtained from our graphical models.
For every closed oriented connected manifold $M$ we fix the following \textit{homotopy data} of chain complexes

\begin{equation*}
\begin{tikzcd}[cells={nodes={}}]
H^\bullet(M) \arrow[yshift=-.5ex,swap]{r}{i}  
        & \tgraphs_M^{\text{for}}(1) \arrow[yshift=.5ex,swap]{l}{p} 
\arrow[loop right, distance=2em, 
%start anchor={[yshift=1ex]east}, end anchor={[yshift=-1ex]east}
]{}{h} 
    \end{tikzcd}
\end{equation*}
$$    pi= \id , \ \id -ip = dh+hd.$$

Where the map $i$ is defined such that $i(\omega)=$ \begin{tikzpicture}
\node[ext] (v1) at (0,0) {$\scriptstyle 1$};
\node[ext] (i1) at (.5,.3) {$\scriptstyle \omega$};

\draw[dotted] (v1) edge (i1);
\end{tikzpicture},  the map $h$ is defined such that it vanishes on graphs with a $\leq 1$-valent external vertex and
\begin{tikzpicture}
\node at (-0.4,0) {$h$};
\node[ext] (v1) at (0,0) {$\scriptstyle 1$};
\node[ext,dashed] (g1) at (0,0.7) {$\Gamma$};

\node at (0.5,0) {$=$};

\node[ext] (v2) at (1,0) {$\scriptstyle 1$};
\node[ext,dashed] (g2) at (1,1) {$\Gamma$};
\node[int] (int) at (1,0.4) {};

\draw (v2) edge (int);
 
\draw (v1) edge (g1);
\draw[bend right]  (v1) edge (g1);
\draw[bend left]  (v1) edge (g1);

\draw (int) edge (g2);
\draw[bend right]  (int) edge (g2);
\draw[bend left]  (int) edge (g2);

\end{tikzpicture}.

Finally, $p$ is defined such that for every $\Gamma \in \tgraphs_M^{\text{for}}$, $p(\Gamma)=\displaystyle\sum_{i} e_i\int_M e_i^* \wedge f(\Gamma)$, where the $\{e_i\}$ form  a basis of $H^\bullet(M)$ and $\{e_i^*\}$ the respective dual basis and $f\colon \tgraphs_M^{\text{for}}(1)\to \Omega_{PA}(M)$ is the map induced by the one constructed in Section \ref{sec:graphs}.

By the Homotopy Transfer Theorem \cite[Section 10.3]{LV} such homotopy data defines a $\Com_\infty$ structure on $H^\bullet(M)$ and such structure retains the real homotopy type of $M$.

Notice that $\Com_\infty$ structures on $H^\bullet(M)$ are identified with Maurer-Cartan elements in the Harrison complex $$\Harr(H^\bullet(M),H^\bullet(M)) = \Hom_{\mathbb S}(\Lie^c\{1\}[-1]\circ H^\bullet(M), H^\bullet(M)) = \prod_{n\in \mathbb N} \Lie(n) \otimes_{\mathbb S_{n}} H_\bullet(M)^{\otimes n} \otimes H^\bullet(M) [ n].$$

% \begin{rem}
% Two Maurer-Cartan elements are gauge equivalent if they identify quasi-isomorphic $\Com_\infty$ structures. It follows that two manifolds $M$ and $N$ determine the same gauge equivalence class if and only if they are real homotopy equivalent.
% \end{rem}

\begin{prop}[\cite{loday}, Proposition 1.6.5]
The projection map $\Harr(H^\bullet(M),H^\bullet(M))\to \Harr(\overline{H^\bullet(M)},H^\bullet(M))$ is a quasi-isomorphism of Lie algebras.
\end{prop}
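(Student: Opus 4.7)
The plan is to show that the kernel of the projection is acyclic and that the projection respects the Lie algebra structure.

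First I would identify the kernel $K$ of the projection $p \colon \Harr(H^\bullet(M),H^\bullet(M))\to \Harr(\overline{H^\bullet(M)},H^\bullet(M))$ as the subcomplex consisting of Harrison cochains that are supported on tensors in $H^\bullet(M)^{\otimes n}$ containing at least one factor of the unit $1\in H^\bullet(M)$. The augmentation $H^\bullet(M) = \mathbb{R}\cdot 1 \oplus \overline{H^\bullet(M)}$ induces a decomposition
\[
H^\bullet(M)^{\otimes n} = \bigoplus_{S\subseteq \{1,\dots,n\}} \mathbb{R}^{\otimes |S|} \otimes \overline{H^\bullet(M)}^{\otimes (n-|S|)},
\]
under which $K$ corresponds to the summands with $S \neq \emptyset$, and so $p$ is visibly surjective with kernel $K$.

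Next I would verify that $p$ is a morphism of dg Lie algebras. The Lie bracket on $\Harr$ is the convolution bracket determined by the Lie cooperad structure on $\Lie^c\{1\}[-1]$, and it only involves the cooperad structure on the input side together with plain composition on the output side; in particular it does not see the unit of $H^\bullet(M)$. Hence the linear projection $p$ is automatically a bracket map, and one checks directly that $K$ is closed under the differential $d = [\mu_2,-]$ induced by the cup product $\mu_2$, so $K$ is an honest sub-dg-Lie-ideal.

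The main step is to show $H^\bullet(K)=0$. I would filter $K$ by the number $|S|$ of unit factors in the input, with $F^k K$ the cochains supported where $|S|\geq k$. On the associated graded $\gr K$ the pieces of the differential that change $|S|$ are killed, and what survives is controlled by the restriction of $\mu_2$ to pairs one of whose entries is $1$, where the unit axiom $\mu_2(1,x)=x$ takes effect. I would then write down an explicit contracting homotopy $h$ on $\gr K$ that, given a cochain supported on a tensor containing a $1$, uses the cooperadic cobracket of $\Lie^c\{1\}[-1]$ to split that $1$-slot into two adjacent slots (one carrying $1$, the other carrying the old neighbour), thereby increasing the number of inputs by one; the identity $dh+hd=\id$ will then follow from the unit axiom together with the co-Jacobi relation for the Lie cooperad. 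Acyclicity of each $\gr^k K$ gives acyclicity of $K$ by the standard spectral sequence argument.

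The main obstacle will be the bookkeeping in the contracting homotopy: writing $h$ in a way that is well defined on the $\mathbb{S}_n$-coinvariants appearing in $\Lie(n)\otimes_{\mathbb{S}_n}\cdots$, and tracking the signs generated by the degree shifts $[n]$ and by the Koszul rule when permuting a unit past the other inputs. Once the signs are arranged and the identity $dh+hd=\id$ is verified, combining acyclicity of $K$ with the fact that $p$ is a Lie algebra map yields the quasi-isomorphism of dg Lie algebras claimed in the proposition.
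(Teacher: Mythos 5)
The paper does not prove this proposition at all: it simply cites Loday's \emph{Cyclic Homology}, Proposition~1.6.5 (the normalization theorem for Hochschild-type complexes of unital algebras), so there is no ``paper proof'' to compare against. Your proposal amounts to reproving the cited result from scratch, and the overall strategy---identify the kernel $K$ via the augmentation splitting $H^\bullet(M)=\R\cdot 1\oplus \overline{H^\bullet(M)}$, note that $p$ is a surjective dg Lie map, filter $K$ by the number of unit slots, and collapse $\gr K$ by an explicit homotopy exploiting the unit axiom---is the standard normalization argument and is sound in outline.

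The concrete worry is the description of the contracting homotopy. The Harrison differential $[\mu_2,-]$ strictly increases the arity of a cochain (an $n$-ary $\phi$ becomes $(n+1)$-ary under $d$), so a contracting homotopy must decrease arity; the classical candidate is $h(\phi)(a_1,\dots,a_{n-1})=\pm\,\phi(1,a_1,\dots,a_{n-1})$. Your phrase ``thereby increasing the number of inputs by one'' reads as if $h(\phi)$ has one more input than $\phi$, which would give $dh+hd$ arity $n+2$ and cannot equal $\id$; if instead you mean that $h$ is \emph{pre}composition with an operation on input tensors that splits a slot (hence decreasing the cochain arity), that could be salvageable, but as written the direction is ambiguous and the identity $dh+hd=\id$ is asserted rather than verified. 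You also only gesture at the genuine subtlety that distinguishes Harrison from Hochschild here: the homotopy has to descend to the shuffle-indecomposables $\Lie^c\{1\}[-1]\circ H^\bullet(M)$, i.e.\ be compatible with the Lie cooperad structure, which is exactly where the co-Jacobi relation and $\mathbb S_n$-(co)invariance bookkeeping you flag as ``the main obstacle'' live. These are the parts that carry the actual content, and the sketch stops short of them; it would be cleaner either to make the homotopy explicit and check its equivariance, or simply to cite Loday as the paper does.
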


\begin{lemma}\label{lem:degree counting}
If $M$ is a connected manifold of dimension at least $D\geq 4$ such that $H^1(M):=H^1(M,\R)=0$, then all the degree $0$ graphs in $^*\GC^{\geq 3}_M$ are trees.
\end{lemma}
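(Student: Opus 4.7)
The plan is a direct degree count. Let $\Gamma \in {}^*\GC^{\geq 3}_M$ be a connected graph of degree $0$, with $V$ internal vertices, $E$ edges, $D_\omega$ decorations, and $S = \sum_i |\omega_i|$ the total cohomological degree of its decorations; write $g = E - V + 1$ for the loop order.

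Recalling that in the predual complex each internal vertex contributes $-D$, each edge contributes $D-1$, and each decoration by $\omega\in H^\bullet(M)$ contributes $|\omega|$, the degree-zero condition reads
\[
0 \;=\; -DV + (D-1)E + S \;=\; -V + (D-1)(g-1) + S, \tag{$\ast$}
\]
while the $\geq 3$-valence hypothesis (decorations counted as contributing one half-edge each at their vertex) yields $3V \leq 2E + D_\omega$, equivalently $V \leq 2(g-1) + D_\omega$. Finally, since decorations live in $\tilde H^\bullet(M)$ by construction (the relation $e_1^j = 1$ removes $H^0$), and because $H^1(M,\mathbb R) = 0$, every decoration satisfies $|\omega| \geq 2$, so $S \geq 2 D_\omega$.

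Combining $(\ast)$ with $S \geq 2D_\omega$ gives $V \geq (D-1)(g-1) + 2 D_\omega$; pairing this with the trivalence bound rearranges to the single inequality
\[
(D-3)(g-1) + D_\omega \;\leq\; 0.
\]
For $D \geq 4$ and $g \geq 1$ both summands on the left are non-negative; equality forces $g = 1$ and $D_\omega = 0$, but then $S = 0$ and $(\ast)$ forces $V = 0$, contradicting non-emptiness of $\Gamma$. Hence $g = 0$, i.e.\ $\Gamma$ is a tree.

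The computation is mechanical once the conventions are pinned down; the only places where one must tread carefully are the sign conventions (vertex degree $-D$ and edge degree $D-1$ in the \emph{predual} complex, rather than the opposite signs in $\GC_M$ itself) and the interpretation of the trivalence condition as counting decorations together with edge half-edges. Beyond these bookkeeping points there is no substantive obstacle.
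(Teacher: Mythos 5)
Your proof is correct and takes essentially the same degree-counting approach as the paper: both start from $\deg(\Gamma)=(D-1)E-DV+\sum\deg dec(v_i)$, combine it with the $\geq 3$-valence constraint and the bound $|\omega|\geq 2$ on decorations, and show the degree is forced to be strictly positive for non-trees. The only cosmetic difference is that you aggregate the count globally in terms of the loop order $g$ and total decoration data $(D_\omega,S)$, whereas the paper keeps the decomposition per vertex; the underlying arithmetic is identical.
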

\begin{proof}
The proof is a simple combinatorial argument. Let $\Gamma\in {}^*\GC^{\geq 3}_M$ be a non-tree graph with $E$ edges and $V$ vertices. We denote the sum of degrees of the decorations of a vertex $v_i$ by $\deg dec(v_i)$ and the number of incident vertices at $v_i$ by $\text{edges}(v_i)$.

From the relation $\sum_{i=1}^V \text{edges}(v_i) = 2E$, it follows
\begin{align*}
\deg(\Gamma) &= (D-1)E-DV+ \sum_{i=1}^{V} \deg dec(v_i)\\
&=(D-3)(E-V)+\sum^V_{i=1}\left( \deg dec(v_i) + \text{edges}(v_i)-3\right).
\end{align*}

Because of the $\geq 3$-valence condition, each term $\deg dec(v_i) + \text{edges}(v_i)-3$ must be greater than or equal te zero. In fact, since decorations have degree at least $2$ if there is at least one decoration in $\Gamma$, the sum $\sum^V_{i=1}\left( \deg dec(v_i) + \text{edges}(v_i)- 3\right)$ is strictly positive.

Now notice that since $\Gamma$ is a not a tree, we have $E\geq V$ and in case of equality there must be at least one decoration. In any of those cases it follows that $\deg \Gamma >0$.
\end{proof}

\begin{rem}
From the proof we also observe the following:
\begin{itemize}
 \item If $D=3$ and $H^1(M)=0$ the only non-tree graphs of degree $0$ have no decorations and every vertex is exactly trivalent. These graphs are also called simple cubic graphs.
\item For $D\geq 4$ but $H^1(M)\ne 0$ there are non-tree graphs of degree zero but they take on a very simple form: Besides trees, there are only graphs of genus $1$ that are trivalent and decorated only by $1$-forms. Such graphs are given by a ``fundamental loop" such that every vertex has a decorated trivalent tree attached. Here is an example:
\[
\begin{tikzpicture} 
\node[int] (v1) at (0:.5){};
\node[int] (v2) at (120:.5){};
\node[int] (v3) at (-120:.5){};
\node[ext] (x2) at (120:1){$\scriptstyle\alpha_2$};
\node[ext] (x3) at (-120:1){$\scriptstyle\alpha_1$};
\node[int] (w) at (1,0) {};
\node[int] (ww) at (1.5,0){};
\node[ext] (xw) at (1,.5){$\scriptstyle\alpha_3$};
\node[ext] (xww) at (2,.5){$\scriptstyle\alpha_4$};
\node[ext] (xwww) at (2,-.5){$\scriptstyle\alpha_5$};
\draw (v1) edge (v2) edge (v3) edge (w) (w) edge (ww) (v2) edge (v3);
\draw[dotted] (v2) edge (x2) (v3) edge (x3) (w) edge (xw) (ww) edge (xww) edge (xwww);
\end{tikzpicture}
\]

\end{itemize}
\end{rem}

From now on, let us suppose $M$ to be simply connected and of dimension $D\geq 4$. 

\begin{defprop}\label{defprop:GCtree}

The dgla $\GC_M^{\geq 3,tree}$ is the quotient of $\GC_M^{\geq 3}$ by the dg Lie ideal spanned by graphs with at least one loop. 

%The quotient map map $\GC_M^{\geq 3}\to \GC_M^{\geq 3,tree}$ is a quasi-isomorphism.
\end{defprop}

\begin{proof}

First notice that the Lie bracket of two graphs $\Gamma,\Gamma'\in \GC_M^{\geq 3}$ will be a sum of graphs with loop order given by the sum of the loop orders of $\Gamma$ and $\Gamma'$. It follows that the subspace spanned by graphs with at least one loop is a Lie ideal. 

The splitting part of the differential preserves the loop order and the part of the differential that connects decorations increases the loop order by one and the twisted piece of the differential does not reduce loops. It follows that the differential preserves the ideal.

%We consider a filtration on $\GC_M^{\geq 3,tree}$ and $\GC_M^{\geq 3}$ by the number of vertices. 
%The quotient map is a quasi-isomorphism at the associated graded level due to the same argument as the one from Theorem \ref{thm:trivalent}. 
\end{proof}

\begin{defprop}\label{defprop:GC tree = GC Lie}
The dgla $\GC_M^{\Lie}$ is defined as the quotient of $\GC_M^{\geq 3,tree}$ by the ideal generated by trees with vertices $\geq 4$-valent and the IHX (or Jacobi) relations that originate from the splitting differential of a $4$-valent vertex.

The quotient map $\GC_M^{\geq 3,tree}\to \GC_M^{\Lie}$ is a quasi-isomorphism.
\end{defprop}

\begin{proof}
It is clear that the differential preserves the ideal.

To see that the quotient map is a quasi-isomorphism, consider first a filtration by $deg-\#edges$ such that on the associated graded the differential cannot increase the number of vertices by more than one. Then, take a second filtration by the number of decorations and notice that on the associated graded we obtain (the cyclic version of) the quasi-isomorphism $\La^{-D-1}\hoLie\to \La^{-D-1}\Lie$.
\end{proof}

The dgla $\GC_M^{\Lie}$ is a cyclic variant of the Harrison complex of $H^\bullet(M)$.
Indeed, let us consider more generally a graded vector space $A=\overline A \oplus \R$, with a degree $-D$ pairing. 
A $\Com_\infty$ structure on $A$ is given by a Maurer-Cartan element in $\Hom(\Lie^c\{1\}[-1]\circ A, A)$ which, via the pairing, can be identified with the space 
\[
\Hom\left(A^{\bullet-D}\otimes \left(\Lie^c\{1\}[-1]\circ A^\bullet\right), \mathbb R\right).
\]

There is a map $A\otimes \left(\Lie^c\{1\}[-1]\circ A\right)[-D] \to {}^*\GC_{\overline A}^{\Lie}$ determined in the following way: A basis of the cooperad $\Lie^c$ can be identified with rooted planar trivalent trees modulo the Jacobi (co)relations. Forgetting about the position of the root and considering it as any other leave, and replacing every leaf with a decoration by $A$ we obtain an element in $^*\GC_{\overline A}^{\Lie}$.

\begin{defi}\label{def:cyclic structure}
Let $A=\overline A \oplus \R$ be a graded vector space with a non-degenerate pairing of degree $-D$. A cyclic $\mathsf C_\infty$ algebra structure on $A$ is a Maurer-Cartan element in $\GC_{\overline A}^{\Lie}$.
\end{defi}

If such a cyclic $\mathsf C_\infty$ algebra structure $z$ maps into a $\Com_\infty$ structure $\mu$ via the dual of the map described before Definition \ref{def:cyclic structure}, we say that $z$ \emph{extends} $\mu$.

\begin{rem}\label{rem:D cyclic algebra}
	Due to the implicit usage of the degree $-D$ pairing, such structure would be more appropriately called a ``$D$-cyclic $\Com_\infty$-algebra''.
\end{rem}

\begin{prop}\label{prop:cyclic structure}
An orientable closed manifold $M$ determines a cyclic $\mathsf C_\infty$ algebra structure on its cohomology $H^{\bullet}(M)$ extending the one arising from the Homotopy Transfer Theorem.
\end{prop}

\begin{proof}
%First notice that the dgla $\GC_M$ depends only on the cohomology of $M$ and can therefore be obtained from its real homotopy type. 

The $\Com_\infty$ structure on $H^\bullet(M)$ is given by a map in $\Hom(\Lie^c\{1\}[-1]\circ H^\bullet(M), H^\bullet(M))$ which, by the Poincar\'e duality pairing is equivalent to an element  $$f\in \Hom\left(H^\bullet(M)\otimes \left(\Lie^c\{1\}[-1]\circ H^\bullet(M)\right), \mathbb R\right).$$

%There is a map $g\colon H^\bullet(M)\otimes \left(\Lie^c\{1\}[-1]\circ H^\bullet(M)\right) \to {}^*\GC_M^{\Lie}$ determined in the following way: A basis of the cooperad $\Lie^c$ can be identified with rooted planar trivalent trees modulo the Jacobi (co)relations. Forgetting about the position of the root and considering it as any other leave, and replacing every leaf with a decoration by $H^\bullet(M)$ we obtain an element in $^*\GC_M^{\Lie}$.

We claim that there is a factorization of $f$ by

\[
\begin{tikzcd}
H^\bullet(M)\otimes \left(\Lie^c\{1\}[-1]\circ H^\bullet(M)\right) \arrow{r}{f} \arrow{d}[swap]{g}& \mathbb R\\
^*\GC_M^{\Lie}\arrow[dashed]{ur}[swap]{Z}
\end{tikzcd}
\]

and the dashed arrow corresponds to a Maurer-Cartan $Z\in \GC_M^{\Lie}$ which is gauge equivalent to the image of $Z^3_M\in \GC_M^{\geq 3,tree}$. 

To show that $f$ factors through $g$ it is sufficient to show that for every $\mu \in \Lie^c\{1\}[-1](n)$ and $\omega_0,\dots,\omega_n\in H^\bullet(M)$, we have $f(\omega_0 \otimes \mu \otimes \omega_1\otimes \dots \otimes \omega_n) =f(\omega_n \otimes \mu \otimes \omega_0\otimes\dots\otimes\omega_{n-1})$, but this follows from the explicit formula the $\Com_\infty$ action given by the Homotopy Transfer Theorem.
This corresponds to computing the partition function on the trivalent graph given by the $\mathsf C_\infty$ operation $\mu$ where the root is replaced by a decoration by the element $\omega_0$, which is clearly cyclically invariant.

 As an example, suppose that $\mu$ corresponds to $\mu_2\circ_1\mu_2 \in \Lie^c(3)$, then

 $$\mu(\omega_1,\omega_2,\omega_3) = p(h(i(\omega_1)i(\omega_2))i(\omega_3))=p\  \begin{tikzpicture}

\node[ext] (v2) at (1,0) {$1$};
\node[ext] (1) at (1,1) {$\scriptstyle\omega_1$};
\node[ext] (2) at (1.4,0.8) {$\scriptstyle\omega_2$};
\node[int] (int) at (1,0.4) {};

\draw (v2) edge (int);
 
 \node[ext] (3) at (1.8,0) {$\scriptstyle\omega_3$};

\draw[dashed]  (int) edge (1);
\draw[dashed]  (int) edge (2);
\draw[dashed] (v2) edge (3);
\end{tikzpicture}=\displaystyle\sum_i e_i \int_{1,2}\pi^*_1(e_i^*)\pi_1^*(\omega_3)\phi_{1,2}\pi_2^*(\omega_1)\pi_2^*(\omega_2).$$
Therefore, $f(\omega_0,\mu(\omega_1,\omega_2,\omega_3))=\int_{1,2}\pi^*_1(\omega_0)\pi_1^*(\omega_3)\phi_{1,2}\pi_2^*(\omega_1)\pi_2^*(\omega_2)=Z \  \begin{tikzpicture}

\node[int] (v2) at (1,0) {};
\node[ext] (1) at (1,1) {$\scriptstyle\omega_1$};
\node[ext] (2) at (1.4,0.8) {$\scriptstyle\omega_2$};
\node[int] (int) at (1,0.4) {};

\draw (v2) edge (int);
 
 \node[ext] (3) at (1.8,0) {$\scriptstyle\omega_3$};

\draw[dashed]  (int) edge (1);
\draw[dashed]  (int) edge (2);
\draw[dashed] (v2) edge (3);

\node[ext] (1) at (0.4,0) {$\scriptstyle\omega_0$};
\draw[dashed] (v2) edge (1);

\end{tikzpicture}$.

%TODO: Maybe this is not very well explained, but the point is that at the end we're always computing the integral given by replacing the external vertex by an internal vertex and attaching the $\omega_0$ there, so it doesn't matter where the root initially.
\end{proof}

\begin{rem}
For simply connected $\geq 4$-dimensional $M$, the cyclic $\mathsf C_\infty$ structure on $H^\bullet(M)$ determines the spaces $\tgraphs^\cutaol_M(n)$, which encode the real homotopy type of $\FM_M(n)$. Moreover, if $M$ is parallelized, the cyclic $\mathsf C_\infty$ structure determines the Hopf comodule structure of $\tgraphs_M$, that encodes the real homotopy type of $\FM_M$ seen as a right $\FM_D$-module.
\end{rem}

Finally, one can check that the isomorphism type of the (non-cyclic) $\mathsf C_\infty$ algebra structure on $H(M)$ already determines the one of the cyclic $\mathsf C_\infty$ algebra structure. In other words, the cyclicity is not to be seen as extra data on, but rather a property of the real homotopy type, reflecting Poincar\'e duality.
More concretely, the following result has been shown in \cite[Theorems 5.5, 5.8]{HL}. 
We also sketch a short proof here for completeness.

\begin{prop}
The real homotopy type of a closed orientable manifold determines its cyclic homotopy type.
More precisely, given two cyclic $\mathsf C_\infty$ algebra structures on $H(M)$ that are $\mathsf C_\infty$ isomorphic as non-cyclic $\mathsf C_\infty$ structures, they are also isomorphic as cyclic $\mathsf C_\infty$ structures.
\end{prop}

\begin{proof}[Proof sketch]
We are given 2 cyclic $\mathsf C_\infty$ structures on $H(M)$ and a $\mathsf C_\infty$ isomorphism between them. We may assume that the linear part of the $\mathsf C_\infty$ isomorphism is the identity, otherwise we just pull back one cyclic $\mathsf C_\infty$ structure along this linear part.
Note also that the implicit underlying non-degenerate pairing on $H(M)$ is determined by the product up to an unimportant scale factor, so we may assume it is the same for both our cyclic $\mathsf C_\infty$ structures.

We denote by $\mu_1,\mu_2$ the two Maurer-Cartan elements in $\GC_{H(M)}^{\Lie}$ encoding our cyclic $\mathsf C_\infty$ structures. The underlying (non-cyclic) $\mathsf C_\infty$ structure is encoded by the images of $\mu_1,\mu_2$ under the natural inclusion of dg Lie algebras into the reduced Harrison complex%\footnote{}
$$\mathbf{root}\colon \GC_{H(M)}^{\Lie}\to \Harr(\overline{H^\bullet(M)},H^\bullet(M)).$$
Graphically, elements on the left-hand side can be interpreted as linear combinations of non-rooted Lie trees, and elements of the right-hand sides can be seen as rooted Lie trees as above, and the map $\mathbf{root}$ is defined by summing over all possible ways of making one leaf into the root. 

The $\mathsf C_\infty$ morphism between our two $\mathsf C_\infty$ structures (with linear term being the identity) then yields a gauge equivalence between the MC elements $\mathbf{root}(\mu_1)$ and $\mathbf{root}(\mu_2)$ in $\Harr(\overline{H^\bullet(M)},H^\bullet(M))$. We desire to check that this implies that $\mu_1$ and $\mu_2$ are already gauge equivalent in $\GC_{H(M)}^{\Lie}$. To this end we can employ the Goldman-Millson Theorem \cite{DR}.
To check the conditions of this Theorem we consider a filtration such that $\mF^p\Harr(\overline{H^\bullet(M)},H^\bullet(M))$ is spanned by rooted Lie trees with $\geq p$ leafs that are decorated by classes of nonzero degree.

On the associated graded the only piece of the differential that survives replaces the root (say decorated by some $\alpha\in H_k(M)$) by two leafs, with one decorated $\alpha$ and the new root decorated with $1\in H_0(M)$.
$$
\begin{tikzpicture}
\node at (0,0) {
\begin{tikzpicture}[scale=0.8]
\node (a) at (-1,-0.5) [int] {.};

\node (b) at (0,-0.5) [ext, label=0:{\tiny root}] {$\alpha$};

\draw (a) -- (-2,0);
\draw (a) -- (-2,-1);
\draw[dashed] (a)--(b);

\end{tikzpicture}};

\node at (1.5,0.1) {$\stackrel{d_0}{\mapsto}$};

\node at (3.5,0) {
\begin{tikzpicture}[scale=0.8]
\node (a) at (-1,-0.5) [int] {.};
\node (a2) at (0,-0.5) [int] {.};

\node (b2) at (1,-1) [ext, label=0:{\tiny root}] {$\scriptstyle 1$};
\node (b1) at (1,0) [ext] {$\scriptstyle \alpha$};

\draw (a) -- (-2,0);
\draw (a) -- (-2,-1);
\draw (a) -- (a2);
\draw[dashed] (a2)--(b1);
\draw[dashed] (a2)--(b2);

\end{tikzpicture}};

\end{tikzpicture}$$
It is an easy exercise to check that the cohomology of the $p$-th graded piece of the Harrison complex is identified for $p\geq 3$ precisely with non-rooted trees all of whose leaves are decorated by elements of $\bar H_\bullet(M)$. But this is precisely the image of 
$\GC_{H(M)}^{\Lie}$ under the map $\mathbf{root}$.

Hence the Goldman-Millson Theorem is applicable to the inclusion of dg Lie algebras $\mathbf{root}\colon \GC_{H(M)}^{\Lie}\to \mF^2\Harr(\overline{H^\bullet(M)},H^\bullet(M))$. 
To conclude the desired result we then just need to remark that our gauge equivalence between 
$\mathbf{root}(\mu_1)$ and $\mathbf{root}(\mu_2)$ in $\Harr(\overline{H^\bullet(M)},H^\bullet(M))$ may actually be taken in $\mF^2\Harr(\overline{H^\bullet(M)},H^\bullet(M))$. To see this in turn one also computes the $p$-th graded piece of the Harrison complex for $p=2$, and sees that there is no cohomology in the at least quadratic part.
But since the underlying $\mathsf C_\infty$ morphism has trivial linear part, we may always remove the parts in the $2$-graded piece by adding an exact terms, to yield the required gauge equivalence in $\mF^2$.
\end{proof}

The real homotopy type of a manifold determines its cyclic homotopy type by the previous proposition. This in turn determines the (gauge equivalence class of) the Maurer--Cartan element $z_M$ by Propositions \ref{defprop:GC tree = GC Lie}, \ref{prop:3-vallent is quasi-isomorphic} and Lemma \ref{lem:degree counting} which itself determines the quasi-isomorphism type of the graph complex by the discussion in Section \ref{sec:twisting of modules}. 
We obtain thus the following Theorem as a corollary:

\begin{thm}\label{thm:ht only dep on htM} 
 Let $M$ be an orientable compact manifold without boundary of dimension $D\geq 4$, such that $H^1(M,\R)=0$. Then the real homotopy type of $\FM_M$ depends only on the real homotopy type of $M$. 
By this statement we mean that there is a zigzag of quasi-isomorphisms of symmetric sequences of dgcas over $\R$ 
$$
\Omega_{PA}(\FM_M) \to \cdot \leftarrow X
$$
with $X$ being a sequence of dgcas defined using only knowledge of the quasi-isomorphism class of $\Omega_{PA}(M)$ as a real dgca.
\end{thm}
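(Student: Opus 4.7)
The strategy is to chase the Maurer-Cartan element $z_M$ through the various reductions introduced in the previous sections, showing that under the stated hypotheses it is gauge equivalent to an element living in $\GC_M^\Lie$, and that the resulting object (a cyclic $\mathsf C_\infty$ structure on $H^\bullet(M)$) is determined up to gauge equivalence by the real homotopy type of $M$ alone. Concretely, the plan is to set
\[
X := \tgraphs_M^\notadp
\]
and to use the zigzag of quasi-isomorphisms established in Section \ref{sec:Lambrechts and Stanley} (and its tadpole-free analogue),
\[
\Omega_{PA}(\FM_M)\xleftarrow{\sim}\pdGraphs_M^\notadp \xleftarrow{\cong}\pdGraphs_M^{z_3,\notadp}\xrightarrow{\sim} \tgraphs_M^\notadp,
\]
relating $\Omega_{PA}(\FM_M)$ to $X$. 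Thus the task reduces to showing that the quasi-isomorphism type of $X$ is determined by the real homotopy type of $M$.

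The first step is to note that the construction of $\tgraphs_M^\notadp$ depends on $M$ only through the Maurer-Cartan element $z_M \in \GC_M^{\geq 3}$ (after replacing $z_M$ by a gauge-equivalent trivalent representative, as in the corollary following Lemma~\ref{lem: trivallent GC_M}). By the degree-counting lemma preceding Definition/Proposition~\ref{defprop:Graphs_D}, under the assumptions $D\geq 4$ and $H^1(M,\R)=0$ every degree-zero graph in $\GC_M^{\geq 3}$ is a tree, so the projection $\GC_M^{\geq 3}\to \GC_M^{\geq 3,\mathrm{tree}}$ sends $z_M$ to an honest Maurer-Cartan element; since the loop part of $z_M$ plays no role in the twisted differential on $\tgraphs_M^\notadp$ (the gauge action of those loop-order pieces kills at most finitely many vertices and is compatible with the quotient), we may replace $z_M$ by its image in $\GC_M^{\geq 3,\mathrm{tree}}$, and further by its image in $\GC_M^\Lie$ via the quasi-isomorphism $\GC_M^{\geq 3,\mathrm{tree}}\to\GC_M^\Lie$ combined with the Goldman-Millson theorem. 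This produces a Maurer-Cartan element $\zeta_M\in \GC_M^\Lie$, i.e., a cyclic $\mathsf C_\infty$ structure on $H^\bullet(M)$, together with an isomorphism (up to quasi-isomorphism) between $\tgraphs_M^\notadp$ built from $z_M$ and the analogous construction built from $\zeta_M$.

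The second step is to argue that the gauge-equivalence class of $\zeta_M$ in $\GC_M^\Lie$ is a real-homotopy invariant of $M$. This is precisely the content of the last proposition in Section~\ref{sec:Lambrechts and Stanley}: the quasi-isomorphism $\GC_M^\Lie \to \Harr(\overline{H^\bullet(M)},H^\bullet(M))$ of Lie algebras identifies $\zeta_M$ with the cyclically invariant $\mathsf C_\infty$ structure on $H^\bullet(M)$ transferred from $\Omega_{PA}(M)$ via the homotopy data of Section~\ref{sec:Lambrechts and Stanley}. By the standard uniqueness of transferred $\mathsf C_\infty$ structures, two real-homotopy-equivalent manifolds $M,N$ yield gauge-equivalent Maurer-Cartan elements, hence isomorphic dg Lie algebras $\GC_M^\Lie$ and $\GC_N^\Lie$ (with the induced twisted differentials) and in turn quasi-isomorphic $\tgraphs_M^\notadp$ and $\tgraphs_N^\notadp$. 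Spelling out the resulting zigzag
\[
\Omega_{PA}(\FM_M)\xrightarrow{\sim} \tgraphs_M^\notadp \xleftarrow{\sim} X
\]
with $X$ the dgca defined from the cyclic $\mathsf C_\infty$ structure transferred to $H^\bullet(M)$ completes the statement.

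The main obstacle is the second step: one must verify that the twisting procedure for the right $\pdGraphs_D$-comodule $\pdGraphs_M$ is a genuine invariant of the gauge class of the Maurer-Cartan element in $\GC_M^\Lie$, not merely in $\GC_M^{\geq 3}$. This requires combining two inputs: (i) the general twisting-of-modules lemma (formula \eqref{equ:H1onmodules}), which produces an isomorphism of twisted comodules from a gauge homotopy, provided the action of the gauge Lie algebra is pro-nilpotent, and (ii) the pro-nilpotency of the action of $\GC_M$ on $\pdGraphs_M^\notadp$, which follows because each generator in $\GC_M$ strictly decreases the number of available $H^\bullet(M)$-decorations on any fixed graph. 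Once this is in place, the Goldman-Millson theorem guarantees that gauge equivalence in $\GC_M^\Lie$ lifts back to gauge equivalence in $\GC_M^{\geq 3}$, closing the argument.
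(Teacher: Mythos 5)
Your proposal follows essentially the same route as the paper: reduce the comodule $\tgraphs_M^\notadp$ to the Maurer--Cartan element $z_M$, use the degree-counting lemma (which is the unlabeled Lemma immediately preceding the definition of $\GC_M^{\geq 3,tree}$, not Definition/Proposition~\ref{defprop:Graphs_D}) to deduce that under $D\geq 4$ and $H^1(M,\R)=0$ the trivalent gauge representative is automatically tree-level, pass to $\GC_M^\Lie$ by the quasi-isomorphism $\GC_M^{\geq 3,tree}\to\GC_M^\Lie$ and Goldman--Millson, and then invoke the quasi-isomorphism $\GC_M^\Lie\to\Harr(\overline{H^\bullet(M)},H^\bullet(M))$ to see that the resulting cyclic $\mathsf C_\infty$ class is a real-homotopy invariant. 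One small conceptual imprecision: the parenthetical remark that ``the gauge action of those loop-order pieces kills at most finitely many vertices and is compatible with the quotient'' is beside the point---the correct justification for passing to $\GC_M^{\geq 3,tree}$ is not a compatibility of the gauge action, but that under the stated hypotheses the degree-zero part of $\GC_M^{\geq 3}$ contains \emph{no} non-tree graphs at all, so the trivalent representative $z_3$ already has zero loop part and the projection to $\GC_M^{\geq 3,tree}$ is injective in degree $\leq 0$ (which also covers the gauge transformations). Once this is fixed the rest of the argument agrees with the paper's chain of propositions in Section~\ref{sec:Lambrechts and Stanley} and the final section.
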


\begin{rem}
We remark that we generally work with unbounded cochain complexes, and a priori in the zigzag as constructed above there will occur dgcas which have unbounded degrees.
However, the concrete $X$ we use is (cf. above) $X=\pdGraphs_M^{\geq 3}$, which is concentrated in non-negative degrees.
Furthermore, $X$ is cofibrant in the category of sequences of (unbounded) dgcas, and by homotopy lifting of the zigzag we may in fact construct a quasi-isomorphism of dgcas $X\to \Omega(\FM_M)$.
For the statement above it is hence inessential whether we work over non-negatively graded cochain complexes or cochain complexes of unbounded degrees.
\end{rem}

Moreover, if we suppose $M$ to be parallelized, the action of the Lie algebra $\GC_M$ on $\Graphs_M$ is compatible with the right $\Graphs_D$ module structure. 
In this case, the (real homotopy type) of $\Graphs_M$ as a  right $\Graphs_D$ module is determined by (the gauge equivalence class of) the Maurer-Cartan element $z_M$. 
In that case, by the same argument we obtain a stronger version of the previous Theorem.

\begin{thm}
 Let $M$ be a parallelizable compact manifold without boundary of dimension $D\geq 4$, such that $H^1(M,\R)=0$. Then the real homotopy type of the operadic right module $\FM_M \aor \FM_D$ depends only on the real homotopy type of $M$, in the sense that there is a zigzag of quasi-isomorphisms of right dg Hopf comodules connecting $\Omega_{PA}(\FM_M)$ and some $X$, with $X$ depending only on the quasi-isomorphism type of the dgca $\Omega_{PA}(M)$.
\end{thm}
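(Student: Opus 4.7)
The approach follows the strategy used for Theorem \ref{thm:ht only dep on htM}, but upgraded to keep track of the right $\pdGraphs_D$-coaction throughout. The key additional ingredient is the observation, noted at the end of the previous section, that in the parallelizable case the action of $\GC_M$ on $\pdGraphs_M$ by coderivations is compatible with the right $\pdGraphs_D$-coaction. More precisely, from the subsection on twisting of modules, the pair $(\pdGraphs_D,\pdGraphs_M)$ is obtained by twisting the ``bare'' pair (carrying only the edge-joining piece of the differential and the trivial cooperad differential) with the Maurer-Cartan element $m_0+z_M$ inside $\alg{g}=\GC_D\ltimes \GC_{H^\bullet(M)}$. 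Because $\GC_{H^\bullet(M)}$ acts trivially on $\pdGraphs_D$, any gauge equivalence whose gauge transformation lies in the $\GC_{H^\bullet(M)}$ summand induces an isomorphism of the entire pair of a dg Hopf cooperad together with its dg Hopf right comodule.

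The first step is to reduce $z_M$ to a tree-level Maurer-Cartan element. Under the hypotheses $D\geq 4$ and $H^1(M,\R)=0$, the degree-counting lemma established above shows that every degree-$0$ element of $\GC^{\geq 3}_{H^\bullet(M)}$ is a tree. Together with the quasi-isomorphism $\GC^{\geq 3}_{H^\bullet(M)}\hookrightarrow \GC_{H^\bullet(M)}''$ and Goldman-Millson, this allows one to replace $z_M'$ by a gauge-equivalent Maurer-Cartan element $\tilde z$ sitting inside $\GC_M^{\Lie}$, where the gauge transformation stays inside $\GC_{H^\bullet(M)}$. By the key observation above, twisting by $\tilde z$ yields a pair $(\pdGraphs_D,\pdGraphs_M^{\tilde z})$ isomorphic to $(\pdGraphs_D,\pdGraphs_M)$ as a dg Hopf cooperad together with a dg Hopf right comodule.

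The second step is to identify the gauge equivalence class of $\tilde z$ with real homotopy data of $M$. Using the quasi-isomorphism of dg Lie algebras $\GC_M^{\Lie}\to \Harr(\overline{H^\bullet(M)},H^\bullet(M))$ constructed at the end of the previous section, together with the quasi-isomorphism $\Harr(H^\bullet(M),H^\bullet(M))\to \Harr(\overline{H^\bullet(M)},H^\bullet(M))$, gauge-equivalence classes of Maurer-Cartan elements of $\GC_M^{\Lie}$ correspond bijectively to quasi-isomorphism classes of (cyclic) $\Com_\infty$-structures on $H^\bullet(M)$, which are precisely the real homotopy type of $M$ (as seen in the preceding proposition attributed to \cite{HL}). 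Thus $\tilde z$ may be chosen so as to depend only on the quasi-isomorphism class of $\Omega_{PA}(M)$ as a real dgca; setting $X=\pdGraphs_M^{\tilde z}$ and composing with the zigzag $X\cong \pdGraphs_M\xrightarrow{\sim}\Omega_{PA}(\FM_M)$ (coming from isomorphism of twisted comodules and from Theorem \ref{thm:GraphsM}), we obtain a zigzag of quasi-isomorphisms of dg Hopf right $\pdGraphs_D$-comodules, as desired.

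The main obstacle is bookkeeping: one must verify that all reduction steps (from $\GC_M$ to $\GC_M^{\geq 3}$, then to $\GC_M^{\geq 3,\text{tree}}$, then to $\GC_M^{\Lie}$, and finally to a canonical representative built from a minimal $\Com_\infty$-model of $\Omega_{PA}(M)$) can be realized through gauge transformations inside the $\GC_{H^\bullet(M)}$-factor of $\alg{g}$, so that at each stage the full comodule structure, and not merely the underlying dgca structure in each arity, is transported. Once the functoriality of twisting by $\alg{g}$-gauges on the pair $(\pdGraphs_D,\pdGraphs_M)$ has been established, none of these reductions perturbs the cooperad side, and the argument concludes uniformly.
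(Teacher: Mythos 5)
Your proposal is correct and follows essentially the same line of reasoning the paper intends: the paper leaves the proof to a one-line remark (that in the parallelized case the $\GC_M$-action on $\pdGraphs_M$ is compatible with the right $\pdGraphs_D$-comodule structure, so the argument for Theorem \ref{thm:ht only dep on htM} upgrades), and your reconstruction correctly identifies that the gauge transformations reducing $z_M$ to a tree-level Maurer--Cartan element can be taken inside the $\GC_{H^\bullet(M)}$-factor of $\alg g=\GC_D\ltimes\GC_{H^\bullet(M)}$, leaving the cooperad factor $m_0\in\GC_D$ fixed and hence transporting the full pair $(\pdGraphs_D,\pdGraphs_M)$ of a Hopf cooperad with its comodule. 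The ``bookkeeping'' you flag at the end is genuine but resolves exactly as you suggest via the Goldman--Millson argument already invoked for the quasi-isomorphisms $\GC_M^{\geq 3}\hookrightarrow\GC_M''$ and $\GC_M^{\geq 3,\mathrm{tree}}\to\GC_M^\Lie$.
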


We note again that we abuse slightly the notation since $\Omega_{PA}(\FM_D)$ is not (strictly speaking) a dg Hopf cooperad and $\Omega_{PA}(\FM_M)$ is not a right comodule, see Remark \ref{rem:almost cooperad}.
The cleaner variant of stating the above Theorem is to work in a category of homotopy cooperads and homotopy comodules, whose construction we however leave to future work, cf. \cite[section 3]{LV}.

\section{The framed case in dimension $D=2$}\label{sec:framed case}

In Section \ref{sec:graphs} we considered parallelized manifolds since a trivialization of the tangent bundle is needed to define the right operadic $\FM_D$-module structure. Informally, to define the action one needs to know in which direction to insert, and the parallelization provides us the direction of the insertion.

 In this section we wish to focus on the 2-dimensional case where unfortunately the only parallelizable (connected closed) manifold is the torus. 

 To go around the problem of not having a consistent choice of direction of insertion, instead of working with configuration spaces of points, we consider the framed configuration spaces. In other words, at every point of the configuration there is the additional datum of a direction, i.e. an element of the Lie group $\SO = S^1$. %We will therefore work with configuration spaces of framed points.

In this section $\Sigma$ shall denote a connected oriented closed surface with a smooth and semi-algebraic manifold structure. Most results will be an adaptation of the arguments in the previous sections to the framed case.

\subsection{Definitions}
In this section we introduce the compactification of the configuration space of framed points on $\Sigma$. A more detailed introduction to the subject can be found in \cite{MSS}.

\subsubsection{The operad of configurations of framed points}

The construction of the operad of the framed version of $\FM_2$ is a special case of the notion of the semi-direct product of an operad and a group, as described below.

\begin{defi}
Let $\cP$ be a topological operad such that there is an action of a topological group $G$ on every space $\cP(n)$ and the operadic compositions are $G$-equivariant. The semi-direct product $ \cP\rtimes G$ is a topological operad with $n$-spaces

 $$( \cP\rtimes G)(n) = G^n \times \cP(n),$$ 
and composition given by $$(\overline{g},p) \circ_i (\overline{g'},p') = \left(g_1,\dots, g_{i-1}, g_ig_1',\dots,g_ig_m',g_{i+1}, \dots,g_n , p\circ_i (g_i \cdot p')\right),$$ where $\overline g = (g_1,\dots,g_n)$ and $\overline {g'}=(g'_1,\dots,g'_m) $.

\end{defi}

The group $\SO$ has a well defined action on $\FM_2$ given by rotation.

\begin{defi}
The Framed Fulton-MacPherson topological operad $\FFM_2$ to be the semi-direct product $\FM_2\rtimes \SO$.
\end{defi}

 When the operadic composition is performed, the configuration inserted rotates according to the frame on the point of insertion as depicted in Figure \ref{fig:composition FFM}, where at every point we draw a small line indicating the associated element of $\SO$.

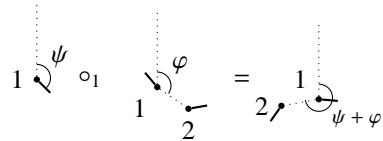
\begin{figure}[h]\label{fig:composition FFM}

\begin{tikzpicture}[scale=1]

\node[label=left:$1$][fill,circle,scale=0.25] (1) at (1.8,1) {};

\draw[thick] (1) --  +(-45:0.25cm);
\draw [dotted] (1)-- +(90:1cm);
\draw (1)+(0,0.2) arc (90:-45:0.18cm);

\node  at (2.1,1.3)  {$\psi$};

\node at (2.5,1) {$\circ_1$};

\node at (3.5,1) {  \begin{tikzpicture}[scale=1]

\node[label=-115:$1$][fill,circle,scale=0.25] (1) at (1.8,1)  {};
\path (1) -- +(-35:0.5cm) node[label=below:$2$][fill,circle,scale=0.25] (2) {};

\draw[thick] (1) --  +(130:0.25cm);
\draw[thick] (2) --  +(10:0.25cm);
\draw [dotted] (1)-- +(90:1cm);
\draw (1)+(0,0.2) arc (90:-35:0.18cm);
\draw [dotted] (1)--(2);

\node  at (2.1,1.3)  {$\varphi$};
\end{tikzpicture}

};

\node at (4.5,1) {$=$};

\node at (5.5,1) {\begin{tikzpicture}[scale=1]

\node[label=above left:$\small{1}$][fill,circle,scale=0.25] (1) at (1.8,1)  {};
\draw [dotted] (1)--+(-170:0.5cm) node[label=left:$\small{2}$][fill,circle,scale=0.25] (2)  {};

\draw [dotted] (1)-- +(90:1cm);
\draw (1)+(0,0.2) arc (90:-170:0.18cm);

\draw[thick] (1) --  +(-5:0.25cm);
\draw[thick] (2) --  +(-125:0.25cm);

\node  at (2.3,0.75)  {\footnotesize{$\psi+\varphi$}};

\end{tikzpicture}};

\end{tikzpicture}

\caption{Operadic composition in $\FFM_2$. }
\end{figure}

\subsubsection{Configurations of framed points on a surface}

\begin{defi}
The Fulton-MacPherson compactification of the configuration spaces of points on the surface $\Sigma$, $\FFM_\Sigma$ is a symmetric sequence in semi-algebraic smooth manifolds which is given as the pullback of the following diagram

\begin{center}
	
\begin{tikzcd}
 \phantom{.}& \mathrm{SO}(\Sigma)^{\times n} \arrow{d}{\pi^n} \\
\FM_\Sigma(n) \arrow{r} & \Sigma^{\times n} 
\end{tikzcd} 

\end{center}
where $\pi\colon \mathrm{SO}(\Sigma) \to \Sigma$ is the frame bundle over $\Sigma$ (assuming some Riemannian metric).
\end{defi}

As in the non-framed case, the space $\FFM_\Sigma(n)$ is a manifold with corners. The interior of this manifold is the framed configuration space of points and is denoted by $\mathsf F \Conf_n(\Sigma)$.

\begin{prop}
The insertion of points at the boundary of $\FFM_\Sigma$ according to the direction of the frame defines a right $\FFM_2$ operadic module structure on  $\FFM_\Sigma$.
\end{prop}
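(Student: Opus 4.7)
The plan is to define the insertion maps explicitly using the pullback description of $\FFM_\Sigma$, then verify the operadic module axioms by tracking how the $\mathrm{SO}(2)$-frames compose. First I would describe the insertion on the interior $FConf_n(\Sigma)$: given a framed configuration $c=(x_1,\ldots,x_n;f_1,\ldots,f_n)\in FConf_n(\Sigma)$ with $f_i\in\mathrm{SO}(T_{x_i}\Sigma)$, and an element $p=((g_1,\ldots,g_k);q)\in \mathrm{SO}(2)^k\times FConf_k(\mathbb R^2)/(\mathbb R_{>0}\ltimes\mathbb R^2)$, the insertion $c\circ_i p$ is obtained by using $f_i$ to identify $T_{x_i}\Sigma$ with $\mathbb R^2$ as oriented inner product spaces, attaching an infinitesimally scaled copy of $q$ at $x_i$ via this identification, and assigning to the $k$ new points the frames $f_i g_1,\ldots,f_i g_k$ (while the frames $f_j$ for $j\neq i$ are unchanged).

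Next I would check that this formula extends continuously to the compactifications. Since $\FFM_\Sigma(n)$ is the pullback $\FM_\Sigma(n)\times_{\Sigma^n}\mathrm{SO}(\Sigma)^n$ and $\FM_\Sigma$ is already the Axelrod-Singer compactification whose boundary strata correspond to nested collisions, the insertion simply amounts to gluing a nested bubble of points (from $\FFM_2$) into an existing configuration. This is well-defined on all boundary strata because the underlying unframed insertion is the standard gluing of bubbles in Fulton-MacPherson compactifications, and the frame data propagates via the group multiplication described above, so the map respects the nested collision structure.

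The main verification is the associativity axiom, which splits into two cases. In the nested case, for $i\le j\le i+k-1$, both sides of the identity $(c\circ_i p)\circ_j q=c\circ_i(p\circ_{j-i+1}q)$ insert the underlying unframed configuration of $q$ into $c$ at position $i$ inside the slot originally occupied by the $(j-i+1)$-th point of $p$; one then checks that both sides produce frames $f_i g_{j-i+1}h_m$ at the newly inserted points, using the semi-direct product composition rule of $\FFM_2$ which multiplies $g_{j-i+1}$ by each $h_m$ before inserting. In the parallel case (disjoint insertions into two different slots of $c$), the frames at distinct points act independently and the axiom reduces to associativity of the unframed bubble gluing. The unit axiom for $(e,*)\in\mathrm{SO}(2)\times\FM_2(1)$ and symmetric group equivariance are immediate from the definition.

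The main obstacle is bookkeeping: verifying that the $\mathrm{SO}(2)$-factors multiply in exactly the order that mirrors the semi-direct product structure of $\FFM_2$, especially in the nested associativity case where one must confirm the cancellation that produces the same composite frame $f_i g_{j-i+1} h_m$ on both sides. Once this combinatorial identity is set up correctly, the geometric content is simply that a frame at a point of $\Sigma$ provides precisely the oriented linear identification of the tangent plane with $\mathbb R^2$ needed to canonically insert a planar bubble, so the obstruction to an $\FM_2$-module structure encountered in the unparallelized non-framed case is resolved by the framing data.
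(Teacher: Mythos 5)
Your proposal is correct and takes the same approach the paper has in mind; the paper simply asserts that ``the associativity of the operadic module structure is clear'' and does not spell out the insertion map or the frame bookkeeping, so what you have written is precisely the expansion of that one-line remark. The key observation you identify---that the frame at $x_i$ supplies the oriented linear identification of $T_{x_i}\Sigma$ with $\mathbb R^2$ needed to glue in a planar bubble, with new frames given by left-multiplication $f_i g_j$---is exactly the intended construction, and your nested/parallel case analysis for associativity is the standard verification.
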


The associativity of the operadic module structure is clear.

\subsection{Graphs}
In this subsection we work with the operadic module $\BVGraphs_\Sigma \aor \BVGraphs_2$ which is the version of $\Graphs_\Sigma \aor \Graphs_{2}$ adapted to the framed case.

Informally, the difference between $\Graphs_\Sigma$ (resp. $\Graphs_2$) and $\BVGraphs_\Sigma$ (resp $\BVGraphs$) is that we now allow tadpoles (edges connecting a vertex to itself) at external vertices but graphs with tadpoles at internal vertices are considered to be $0$. 

This can be done by considering the subalgebra  $^*\BVGraphs_\Sigma\subset ^*\Graphs_\Sigma$ of graphs with no tadpoles on internal vertices or dually defining $\BVGraphs_\Sigma$ as a quotient of $\Graphs_\Sigma$. A precise definition of $\BVGraphs_2$ can be found in \cite{Ca}. 

The non-twisted analog of $^*\BVGraphs(n)$ is $^*\BVGra(n)$, the symmetric algebra on symbols $s^{ij}=s^{ji}$, $1\leq i,j\leq n$. 
One can also consider the non-twisted analog $^*\BVGra_\Sigma$, but notice that this is just the same space as $^*\Gra_\Sigma$ as tadpoles are not forbidden in $^*\Gra_\Sigma$ and there are no internal vertices upon which we can impose any condition. \\

Let $\phi \in \Omega_{triv}^1(\FFM_\Sigma(1))$ be a global angular form of the $S^1$-bundle  $\pi \colon \FFM_\Sigma(1) = \mathrm{SO}(\Sigma) \to \Sigma$. Such form satisfies $d\phi = \pi^*(e)$, where $e\in \Omega_{triv}^2(\Sigma)$ is the Euler class of the circle bundle.

Let $1\leq i \leq n$. We denote by $\phi_{ii}\in \Omega_{triv}^1(\FFM_\Sigma(n))$ the form $\pi_i^*(\phi)$, where $\pi_i \colon \FFM_\Sigma(n) \to \FFM_\Sigma(1)$ is the map that remembers only the point labeled by $1$

We define a map $^*\BVGra_\Sigma(n) \to \Omega_{triv}(\FFM_\Sigma(n))$ as a morphism of algebras sending $s^{ij}$  to  $\phi_{ij}$, where if $i\ne j$, $\phi_{ij}$ is the form constructed in section \ref{sec:FM_M} and sends $[\omega]^j\in ^*\BVGra_\Sigma$ to $p_j^*(\iota([\omega]))$, where $p_j\colon \FFM(n)  \to M$ is the map that remembers only the point labeled by $j$.

Similarly one defines a map  $^*\BVGra_2(n) \to \Omega_{triv}(\FFM_2(n)) = \Omega_{triv}(\FM_2(n) \times \SO^{\times n})$ as a morphism of algebras sending a tadpole at the vertex $i$ to the volume form of the $i$-th $\SO$.

\begin{lemma}\label{lem:map of cooperadic comodules}
This defines a morphism of  cooperadic comodules $^*\BVGra_\Sigma \aor {}^*\BVGra_2 \to \Omega_{triv}(\FFM_\Sigma) \aor \Omega_{triv}(\FFM_2)$.
\end{lemma}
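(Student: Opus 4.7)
The argument will parallel that of Lemma \ref{Lemma:map of comodules}. The underlying map is by construction a morphism of dg commutative algebras in each arity, and compatibility with the differentials follows as there (using the Euler class in place of the diagonal class for the tadpole summand). What remains is to verify compatibility with the cooperadic coactions. Since both maps are algebra morphisms and the cocompositions are algebra morphisms as well, it suffices to check compatibility on a set of algebra generators.

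The generators of ${}^*\BVGra_\Sigma(n)$ are the edges $s^{ij}$ (with $i\ne j$), the decorations $e_\alpha^j$, and the tadpoles $s^{jj}$; those of ${}^*\BVGra(n)$ are edges $s^{ij}$ (with $i\ne j$) and tadpoles $s^{jj}$. For edges $s^{ij}$ and decorations $e_\alpha^j$, I would copy the verification from Lemma \ref{Lemma:map of comodules} essentially verbatim: the relevant composition maps factor through the unframed composition after projecting to the base, and the associated forms $\phi_{ij}$ and $p_j^*\iota(\omega)$ are pulled back from the frame-free configuration space, so the analysis reduces to the case already treated.

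The substantial new case is the tadpole $s^{jj}$. The key geometric input is that, by definition of the semi-direct product $\FFM_2 = \SO\ltimes \FM_2$ and of the $\FFM_2$-action on $\FFM_\Sigma$ (cf.\ Figure \ref{fig:composition FFM}), the composition maps operate on frames by multiplication in $\SO$. Since $\phi$ is a global angular form of the principal $\SO$-bundle $\mathrm{SO}(\Sigma)\to\Sigma$, restricting on fibers to the normalized angular form, and since pullback along multiplication in $\SO$ sends the fiber $d\theta$ to $d\theta_1+d\theta_2$, the pullback of $\phi_{ll}$ (for a vertex $l$ produced by the inner $k$-ary configuration inserted at the outer vertex $j$) along the composition map decomposes as the outer angular form $\phi_{jj}$, plus the $\SO$-volume form on the $l$-th inner $\SO$-factor, plus a basic correction pulled back from $\Sigma$. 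On the combinatorial side, the coaction on $s^{jj}$ is defined to produce exactly the matching sum: the outer tadpole $s^{jj}$ tensored with units, plus, for each inserted vertex $l$, units tensored with a tadpole $s^{ll}$ in the corresponding ${}^*\BVGra$ slot. The analogous verification for ${}^*\BVGra \to \Omega_{PA}(\FFM_2)$ is even simpler, as $\FFM_2(n) = \FM_2(n)\times \SO^n$ is a trivial $\SO$-bundle and no base correction arises.

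The main obstacle I anticipate is accounting for the basic correction term described above: one must verify that it is absorbed into the non-tadpole contributions of the cocomposition, in analogy with the role played by $\eta$ in \eqref{equ:psietadef}. This amounts to a careful but routine bookkeeping exercise using the choice of propagator constructed in Section \ref{sec:FM_M}, together with the vanishing conditions in Proposition \ref{prop:angular form}. Once this is done, the cooperad compatibility on all three types of generators is established, completing the proof.
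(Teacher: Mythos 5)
Your overall strategy—check on algebra generators, reduce edges and decorations to the unframed case, isolate the tadpole as the new phenomenon—is the right one and matches the paper. However, your treatment of the tadpole contains a conceptual error that points in exactly the wrong direction. You claim that the pullback of $\phi_{ll}$ along the composition map decomposes as $\phi_{jj}$ plus the $\SO$-volume form \emph{plus a basic correction pulled back from $\Sigma$}, and you identify absorbing this correction (in analogy with $\eta$ from \eqref{equ:psietadef}) as the main obstacle. No such correction exists, and the reason is structural: for the tadpole one is pulling back the global angular form $\phi$ of the principal circle bundle $\mathrm{SO}(\Sigma)\to\Sigma$ along the $\SO$-action map $\mu:\mathrm{SO}(\Sigma)\times S^1\to\mathrm{SO}(\Sigma)$, and since $\pi\circ\mu=\pi\circ\mathrm{pr}_1$ any basic contribution to $\phi$ pulls back to itself on the first factor, giving $\mu^*\phi=\mathrm{pr}_1^*\phi+\mathrm{pr}_2^*\mathrm{vol}_{S^1}$ on the nose. (This is just equivariance for connection forms, extended to arbitrary global angular forms by the observation that two global angular forms differ by a basic form.) So $\circ_j^*\phi_{ll}=\phi_{jj}\otimes 1 + 1\otimes\mathrm{vol}_{S^1,l}$, matching $\Delta(s^{jj})$ without any residue.

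The $\eta$ correction in Lemma \ref{Lemma:map of comodules} arises because the propagator $\phi_{12}$ restricted to the boundary sphere bundle is \emph{not} a pullback of a fiberwise volume form; the tadpole form $\phi_{jj}=\pi_j^*\phi$, by contrast, is pulled back from $\mathrm{SO}(\Sigma)$ and the correction term simply cannot appear. You have carried the analogy too far: the vanishing conditions of Proposition \ref{prop:angular form} and the propagator from Section \ref{sec:FM_M} play no role in the tadpole verification, which is a one-line computation once one uses the factorization through the $\SO$-action. The paper's proof is correspondingly short; it records the coaction $\phi_{11}\mapsto\phi_{11}\otimes 1+1\otimes\mathrm{vol}_{S^1}$ and the coaction $s^{11}\mapsto s^{11}\otimes 1+1\otimes s^{11}$ and notes they agree. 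You should delete the anticipated ``bookkeeping'' step; attempting to carry it out would only create confusion, since the tools you invoke are not applicable here.
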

\begin{proof}
Regarding the compatibility with the differentials, the only case not covered in Lemma \ref{Lemma:map of comodules} is $\phi_{ii}$, but this follows from the fact that the Euler form can be expressed as $\sum_{i,j} g^{ij}e_i \wedge e_j$.

For the compatibility with the cooperadic comodule structure it remains to check it for the elements $s^{ii}\in \BVGraphs_\Sigma(n)$. For simplicity of notation we consider the element $s^{11}\in \BVGraphs_\Sigma(1)$ which is sent to $\phi_{11}\in \Omega_{PA}^1(\FFM_\Sigma(1))$ whose coaction gives $\phi_{11}\otimes 1 + 1 \otimes \text{vol}_{S^1} \in  \Omega_{PA}(\FFM_\Sigma(1))\otimes \Omega_{PA}(\FFM_2(1))$.

On the other hand, the coaction on $s^{11}\in \BVGraphs_\Sigma(1)$ gives us $s^{11} \otimes 1 + 1 \otimes s^{11} \in \BVGraphs_\Sigma(1) \otimes \BVGraphs_2(1)$, from where the compatibility follows.
\end{proof}

Similarly to what was done in Section \ref{sec: twisting}, one can prove the following Proposition

\begin{prop}
There is a morphism of cooperadic modules $^*\BVGraphs_\Sigma \aor ^*\BVGraphs_2 \to \Omega_{PA}(\FFM_\Sigma) \aor \Omega_{PA}(\FFM_2)$ extending the morphism from Lemma \ref{lem:map of cooperadic comodules}.
\end{prop}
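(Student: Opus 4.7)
The plan is to mimic the twisting construction of Section \ref{sec: twisting}. Starting from the operadic right module $\BVGra_\Sigma \aor \BVGra$, one applies the (right module) twisting procedure of \cite{W1} to the pair. The generator $\mu \in \La \Lie(2)$ maps to the element $s^{12}$, just as in the non-framed case; the only novelty is that now tadpoles $s^{ii}$ are allowed at external vertices, while tadpoles at internal vertices are declared to be zero in the quotient $\Tw \BVGra_\Sigma \to {}^*\BVGraphs_\Sigma$ and likewise for $\BVGra$. One then cuts off the ``vacuum bubbles'' (connected components consisting only of internal vertices) by quotienting by the partition function exactly as before, producing ${}^*\BVGraphs_\Sigma \aor {}^*\BVGraphs$.

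Next I would extend the map of Lemma \ref{lem:map of cooperadic comodules} to a map on the twisted modules by fiber integration over the positions of internal vertices. Concretely, for $\Gamma \in {}^*\BVGraphs_\Sigma(n)$ with $k$ internal vertices, set
\[
 \omega_\Gamma := \int_{\FFM_\Sigma(n+k)\to \FFM_\Sigma(n)} f(\Gamma),
\]
where $f(\Gamma) \in \Omega_{PA}(\FFM_\Sigma(n+k))$ is the form attached to $\Gamma$ by the morphism from Lemma \ref{lem:map of cooperadic comodules}, interpreting internal vertices as extra framed configuration points. Because the framing form $\phi_{ii}$ pulls back under the forgetful map $\FFM_\Sigma(n+k)\to \FFM_\Sigma(1)$ remembering the $i$-th point only, the product over edges and framed decorations is well defined, and compatibility with products is automatic. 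The corresponding construction for ${}^*\BVGraphs$ in terms of $\FFM_2$ is formally identical.

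Compatibility with the differentials is checked via Stokes' theorem, exactly as in Lemma \ref{lemma: graphs to forms}. The de Rham differential of $f(\Gamma)$ matches the piece of the differential that splits edges into diagonal classes or joins framed decorations into Euler forms (the latter using $d\phi=\pi^*e$ as in the proof of Lemma \ref{lem:map of cooperadic comodules}). The boundary terms of the fiber integral decompose into strata where an internal vertex collides with one other (internal or external) vertex, reproducing the edge-contraction piece of the differential, and strata where at least three points collide. Compatibility with the cooperadic comodule structure follows by unpacking that the strata where external points collide with internal points reproduce the coaction followed by the map on ${}^*\BVGra$, using that the rotation group enters both sides in matching ways (this is where the framed nature of the insertion really gets tested).

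The main obstacle is the vanishing of the higher-collision boundary strata, since we are working in the delicate dimension $D=2$ where Kontsevich's vanishing lemma (\cite[Section 6.6]{K1}) is used rather than the cleaner $D\geq 3$ argument. The potentially new ingredient is that the forms $\phi_{ii}$ attached to framing data are present in $f(\Gamma)$, but they pull back from the single-point space and are therefore insensitive to the relative positions of colliding internal points; they can hence be factored out of the fiber integral, leaving an integral of a product of propagators $\phi_{ij}$ over $\FM_2(\ell)$. The requirement that internal vertices carry no tadpoles is precisely what guarantees that no $\phi_{ii}$ attached to an internal vertex survives in the residual integrand, so that Kontsevich's vanishing applies verbatim, using that the propagator $\phi_{12}$ restricts to the round volume form on each fiber of $\partial \FM_2(2)$ (property (ii) of Proposition \ref{prop:angular form}). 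Once this is checked, the extension factors through the quotient by the partition function, and the cooperadic comodule compatibility passes to the quotient.
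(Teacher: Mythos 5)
Your general strategy (twist, fiber-integrate over internal vertex positions, check Stokes, invoke Kontsevich vanishing) is the paper's strategy, but there is a genuine error in the choice of integration domain that breaks the argument.

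You define $\omega_\Gamma$ by integrating over the fiber of $\FFM_\Sigma(n+k)\to\FFM_\Sigma(n)$ and ``interpret internal vertices as extra \emph{framed} configuration points.'' The paper instead integrates over the fiber of $\FFM_\Sigma(n,k)\to\FFM_\Sigma(n)$, where $\FFM_\Sigma(n,k)$ is the compactification of the space of $n$ framed and $k$ \emph{unframed} points; the internal vertices get no framing. This is not a cosmetic difference. Since internal vertices of graphs in ${}^*\BVGraphs_\Sigma$ carry no tadpoles, the integrand $f(\Gamma)$ has no $\phi_{ii}$ factor for any internal $i$, i.e.\ it is basic along the $S^1$-directions corresponding to internal framings. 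Consequently the pushforward along a fiber that contains those $S^1$-directions is identically zero (the integrand has no $d\varphi$-component in those directions), and for the same reason the degrees cannot match: the fiber of $\FFM_\Sigma(n+k)\to\FFM_\Sigma(n)$ has dimension $3k$, not $2k$, so $\omega_\Gamma$ would land in cohomological degree off by $k$ from the degree of $\Gamma$ in ${}^*\BVGraphs_\Sigma$. You half-observe this when you note that the framing forms ``pull back from the single-point space and \dots\ can hence be factored out,'' but that observation is precisely why your formula gives zero rather than the desired form.

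Once the domain is corrected to $\FFM_\Sigma(n,k)$, the remainder of your sketch (Stokes for the differentials, the boundary stratification, the vanishing for $\geq 3$-fold collisions using property (ii) of Proposition~\ref{prop:angular form} and the absence of internal tadpoles, and the cooperadic compatibility) coincides with the paper's argument and is fine.
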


The only difference relatively to the non-framed case is that the map $ ^*\BVGraphs_\Sigma(n) \to \Omega_{PA}(\FFM_\Sigma(n))$ evaluated at a graph $\Gamma\in\BVGraphs_\Sigma$ with $k$ internal vertices is given by an integral over the fiber of $\FFM_\Sigma(n,k)\to \FFM_\Sigma(n)$, where the space $\FFM_\Sigma(n,k)$ is the (compactification of the) configuration space of $n$ framed points and $k$ unframed points corresponding respectively to the external vertices and the internal vertices of $\Gamma$. 

A similar procedure is done for the map $^*\BVGraphs_2(n) \to \Omega_{PA}(\FFM_2(n))$.

The goal of this section is to prove the following Theorem.

\begin{thm}\label{thm:main framed}
The map $^*\BVGraphs_\Sigma \aor ^*\BVGraphs_2 \to \Omega_{PA}(\FFM_\Sigma) \aor \Omega_{PA}(\FFM_2)$ is a quasi-isomorphism of Hopf cooperadic comodules.
\end{thm}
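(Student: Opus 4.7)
The plan is to reduce Theorem \ref{thm:main framed} to the non-parallelized case already established in Theorem \ref{thm:Main non-parallelized}, by exploiting that $\pi_n\colon \FFM_\Sigma(n)\to \FM_\Sigma(n)$ is a principal $(S^1)^n$-bundle, namely the pullback of the frame bundle $\mathrm{SO}(\Sigma)^n \to \Sigma^n$. On the algebraic side, $\pdBVGraphs_\Sigma$ differs from $\pdGraphs_\Sigma^\notadp$ precisely by the adjunction of the tadpole generators $s^{ii}$ ($i=1,\dots,n$) at external vertices in degree $1$, with differential $ds^{ii}=\sum_{\alpha,\beta}g^{\alpha\beta}e_\alpha^i e_\beta^i$ representing the Euler class decoration at the $i$-th vertex. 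This mirrors the standard cdga model of a principal $S^1$-bundle, in which a fiber generator $\theta_i$ satisfies $d\theta_i=e_i$.

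\textbf{Spectral sequence comparison at fixed arity.} For each $n$, I would filter both $\pdBVGraphs_\Sigma(n)$ and $\Omega_{PA}(\FFM_\Sigma(n))$ by powers of the external-tadpole ideal (respectively the Serre filtration of $\pi_n$), and observe that the map is filtration-preserving. The associated graded of the graph side is
\[
\gr \pdBVGraphs_\Sigma(n) \;\cong\; \pdGraphs_\Sigma^\notadp(n) \otimes \La(t_1,\dots,t_n)
\]
with zero differential on the exterior factor. On the form side, property (ii) of Proposition \ref{prop:angular form} guarantees that each $\phi_{ii}$ restricts to a normalized volume form on the corresponding $S^1$-fiber of $\pi_n$, so a Leray--Hirsch argument identifies the associated graded with $\Omega_{PA}(\FM_\Sigma(n))\otimes \La([\phi_{ii}])$. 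The induced map on $E_0$-pages is the tensor product of the unframed quasi-isomorphism of Theorem \ref{thm:Main non-parallelized} with the identity on the exterior factor, hence a quasi-isomorphism by K\"unneth. A standard filtered-complex comparison then yields the quasi-isomorphism of the full map at arity $n$.

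\textbf{Cooperadic comodule structure and main obstacle.} The compatibility with the Hopf cooperadic comodule structure on non-tadpole generators is inherited directly from Corollary \ref{cor: graphs to forms} for the unframed case; the tadpole case was already settled in Lemma \ref{lem:map of cooperadic comodules}. In particular the coaction $\Delta(s^{ii})=s^{ii}\otimes 1 + 1\otimes s^{ii}$ precisely matches the semi-direct product decomposition $\FFM_2=\SO\ltimes \FM_2$ under the map sending $s^{ii}$ to the volume form on the $i$-th $\SO$ factor, so the arity-wise quasi-isomorphisms automatically assemble into a morphism of (homotopy) cooperadic comodules in the sense of Remark \ref{rem:almost cooperad}. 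The main technical obstacle I anticipate is verifying precisely that the $d_1$-differential on the graph-side spectral sequence (which converts a tadpole into the pair of $H^\bullet(\Sigma)$ decorations representing the Euler class) corresponds to the Serre $d_2$ on the form side (multiplication by the pulled-back Euler class of the circle bundle); this should follow from the identity $d\phi_{ii}=\pi_i^*(e)$, but fitting it cleanly into the filtered comparison requires careful bookkeeping of signs and degrees. A secondary check is that the configuration space integrals over $\FFM_\Sigma(n,k)$ that extend the construction to internal vertices satisfy the same Kontsevich-type vanishing lemmas as in the unframed case, which is ensured by the exclusion of tadpoles at internal vertices in $\BVGraphs_\Sigma$.
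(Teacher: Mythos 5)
The paper proves Theorem \ref{thm:main framed} by a quite different route from the one you propose: it introduces auxiliary complexes $G(n,k)\subset\pdBVGraphs_\Sigma(n+k)$ modeling configurations of $n$ framed and $k$ unframed points, then inducts on $n$ (base case being Theorem \ref{thm:Main non-parallelized}). Each inductive step treats a \emph{single} circle bundle $\FFM_\Sigma(n+1,k-1)\to\FFM_\Sigma(n,k)$: it constructs a two-step filtration on $G(n+1,k-1)$ giving a long exact sequence algebraically mirroring the Gysin sequence of that circle bundle, and closes by the Five Lemma. You instead propose a single Serre-type spectral sequence for the whole $(S^1)^n$-bundle $\FFM_\Sigma(n)\to\FM_\Sigma(n)$. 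That strategy can be made to work, and if done carefully it is arguably more direct; but as written there is a genuine gap in the filtration setup.

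The gap is the one you yourself flag as the ``main obstacle.'' The decreasing $I$-adic filtration by powers of the tadpole ideal is \emph{not} preserved by the differential: $d s^{ii}=\sum g^{\alpha\beta}e^i_\alpha e^i_\beta$ strictly decreases the number of tadpoles, so $d\colon I^p\to I^{p-1}$ goes the wrong way and there is no spectral sequence attached to this filtration. Moreover, even the (fixed) increasing filtration by tadpole count does not directly correspond, under the map $s^{ii}\mapsto\phi_{ii}$, to the Serre filtration on $\Omega_{PA}(\FFM_\Sigma(n))$: indeed you note that the tadpole-killing term would appear as $d_1$ on the graph side while the Euler class appears as $d_2$ on the form side, which already signals that the two filtrations are mismatched, so no comparison theorem applies. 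The fix is to filter the graph side by the \emph{degree of the tadpole-free part} of the graph, i.e.\ $F^p$ = span of graphs $\Gamma\cdot s^{i_1i_1}\cdots s^{i_ji_j}$ with $\Gamma$ tadpole-free of degree $\geq p$. This is a decreasing filtration preserved by the differential (both $d_{\mathrm{split}}$ on tadpoles, which raises the degree of the tadpole-free part by $2$, and the non-tadpole differential, which raises it by $1$, preserve $F^p$), and it is taken into the Serre filtration by the map, since $\phi_{ii}$ has Serre filtration degree $0$ and pulled-back forms of base degree $m$ have Serre degree $m$. With this choice both sides have $E_2\cong H(\FM_\Sigma(n))\otimes H((S^1)^n)$, the Euler-class transgression is $d_2$ on both sides, and the map on $E_2$ is the unframed quasi-isomorphism tensored with the identity; the spectral sequence comparison theorem (the filtration is bounded in each total degree, so convergence is automatic) then gives the result. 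Note also that the $E_0$-pages are \emph{not} isomorphic under the map — the isomorphism only appears at $E_2$ after taking $d_1$-cohomology on both sides, so ``a quasi-isomorphism by K\"unneth'' at the level of the associated graded is the wrong place to invoke the unframed theorem. The paper's one-circle-at-a-time induction avoids all of this bookkeeping because a single circle bundle admits a clean two-step filtration whose long exact sequence is the Gysin sequence; the trade-off is the extra apparatus of the mixed spaces $\FFM_\Sigma(n,k)$ and $G(n,k)$.
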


\begin{prop}
The map $^*\BVGraphs_2\to \Omega_{PA}(\FFM_2)$ is a quasi-isomorphism. 
\end{prop}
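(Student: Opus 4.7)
The plan is to leverage the fact that, as a semi-algebraic manifold (ignoring the operadic structure), $\FFM_2(n) = \FM_2(n) \times SO(2)^n$, so that by Künneth we already know the cohomology of the target, and the tadpoles at external vertices in $\pdBVGraphs$ should play exactly the role of the volume forms on the $n$ copies of $SO(2)$. Concretely, I would use that the generators $s^{ii}$ of $\pdBVGraphs_2(n)$ (of degree $D-1=1$) are sent by the map of Lemma \ref{lem:map of cooperadic comodules} to the pullback of the volume form of $SO(2)$ along the $i$-th projection, exactly matching what Künneth provides.

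To prove the quasi-isomorphism I would set up compatible filtrations on source and target. On $\pdBVGraphs_2(n)$ consider the decreasing filtration by the number of tadpoles at external vertices. The contracting piece of the differential contracts only edges adjacent to at least one internal vertex and hence never touches external tadpoles, while the vertex-splitting piece applied to an external vertex carrying a tadpole has three summands (both tadpole half-edges stay, one half-edge goes to the new internal vertex, both go to the internal vertex): the last summand vanishes because internal tadpoles are zero in $\BVGraphs$, and the middle one strictly decreases the tadpole count. Hence the differential preserves or lowers the tadpole count, and on the associated graded one is left with
\[
\pdGraphs_2(n) \otimes \Lambda[s^{11},\ldots,s^{nn}]
\]
equipped with the differential of $\pdGraphs_2(n)$ on the first factor and zero on the second. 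On the target side I would filter $\Omega_{PA}(\FFM_2(n))$ by form degree along $SO(2)^n$; by Künneth for PA forms, the $E^1$ page is $\Omega_{PA}(\FM_2(n)) \otimes H^\bullet(SO(2))^{\otimes n}$.

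The map $\pdBVGraphs_2(n) \to \Omega_{PA}(\FFM_2(n))$ respects both filtrations (since tadpoles map to forms of top degree on the corresponding $SO(2)$ factor), and on the $E^1$ pages it becomes the tensor product of the Kontsevich quasi-isomorphism $\pdGraphs_2 \to \Omega_{PA}(\FM_2)$ from Proposition \ref{defprop:Graphs_D} with the evident quasi-isomorphism $\Lambda[s^{ii}] \to H^\bullet(SO(2))^{\otimes n}$. Both factors are quasi-isomorphisms, so the map on $E^1$ is too, and a standard convergence argument (both filtrations are bounded in each cohomological degree by elementary degree-counting on graphs and by the finite dimension of $SO(2)^n$) yields that the original map is a quasi-isomorphism. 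The cooperadic compatibility then extends this to the Hopf cooperad level since the map is already known to respect the coaction by Lemma \ref{lem:map of cooperadic comodules}.

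The main obstacle I anticipate is the careful bookkeeping of the vertex-splitting contribution on external tadpoles in verifying that the associated graded is exactly the clean tensor product; a secondary worry is the convergence of the tadpole-count filtration in case of unbounded valences, but this is controlled by the fact that in each fixed total degree the relevant graphs have bounded numbers of half-edges at each external vertex.
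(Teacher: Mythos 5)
Your strategy agrees with the paper's in spirit: K\"unneth on the target side, identification of tadpoles at external vertices with the $H^\bullet(\SO)$ factors, and comparison of the induced cohomology map with $f_*\otimes\id$ where $f$ is the Kontsevich quasi-isomorphism. The paper, however, takes a shortcut on the source that you could also take: by the definition of $\BVGraphs$ from \cite{Ca} one already has an isomorphism of \emph{dg} symmetric sequences ${}^*\BVGraphs(n)\cong{}^*\Graphs_2(n)\otimes(\R\oplus\R[-1])^{\otimes n}$ --- the tadpole at an external vertex is spectator data never touched by the differential --- so K\"unneth computes the source cohomology on the nose and there is nothing to filter or to converge. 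Your tadpole-count filtration (and your verification that the associated graded factors cleanly) is a more cautious route and is sound, but it turns a one-line comparison into a spectral sequence argument for no gain.

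The step that genuinely needs repair is the target-side filtration. PA forms on $\FM_2(n)\times\SO^{\times n}$ do \emph{not} carry a bigrading by ``form degree along $\SO^{\times n}$'': the canonical map $\Omega_{PA}(X)\otimes\Omega_{PA}(Y)\to\Omega_{PA}(X\times Y)$ is a quasi-isomorphism, not an isomorphism, and a general PA form on a product is not a finite sum of decomposables, so the $E^1$-page you write down for $\Omega_{PA}(\FFM_2(n))$ is simply not defined. Two fixes: (i) drop the target filtration entirely, compute $H^\bullet$ of both sides by K\"unneth, and check as the paper does that the induced map on $H^\bullet$ is $f_*\otimes\id$ on the tensor factors; or (ii) observe that the image of the graph map lands inside the (bigraded) subalgebra generated by pullbacks from the two factors --- the propagators depend only on the $\FM_2$ coordinates and tadpoles map to pullbacks of the $\SO$ volume forms --- and that this subalgebra is quasi-isomorphic to all of $\Omega_{PA}(\FFM_2(n))$ by PA K\"unneth, and run your filtration argument there.
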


\begin{proof}
On the one hand we have $$H^\bullet(\FFM_2(n)) = H^\bullet(\FM_2(n) \times \SO^{\times n}) = H^\bullet(\FM_2(n)) \otimes H^\bullet(\SO)^{\otimes n} = H^\bullet(\FM_2(n)) \otimes (\mathbb R \oplus \mathbb R[- 1])^{\otimes n}$$ by the K\"unneth formula.
On the other hand, notice that as dg symmetric sequences $\BVGraphs_2= \Graphs_2\circ (\mathbb R[-1] \oplus \mathbb R)$, therefore $$H(^*\BVGraphs_2(n)) = H\left(^*\Graphs_2(n) \otimes (\mathbb R \oplus \mathbb R[- 1])^{\otimes n}\right) =   H(^*\Graphs_2(n)) \otimes (\mathbb R \oplus \mathbb R[-1])^{\otimes n}.$$

Since a tadpole at the vertex labeled by $i$ is sent to the volume form of $i$-th $\SO$, which is the generator of $H^1(\SO)$, we have that at the cohomology level the map $$H(^*\BVGraphs_2) = H(^*\Graphs_2(n)) \otimes (\mathbb R \oplus \mathbb R[ -1])^{\otimes n} \to H^\bullet(\FFM_2(n)) =H^\bullet(\FM_2(n)) \otimes (\mathbb R \oplus \mathbb R[- 1])^{\otimes n} $$
is just the map $f_* \otimes \id$, where $f \colon ^*\Graphs_2 \to \Omega_{PA}(\FM_2)$ is the quasi-isomorphism from Definition/Proposition \ref{defprop:Graphs_D}, from where the result follows.
 \end{proof}

\subsection{Proof of Theorem \ref{thm:main framed}}
Let $n,k\geq 0$ and let us consider an auxiliary differential graded vector space $G(n,k)$ that is the subcomplex of $^*\BVGraphs_\Sigma(n+k)$ in which the points labeled $n+1,\dots n+k$ cannot have tadpoles. 
 This should be seen as the algebraic analog of the space $\FFM_\Sigma(n,k)$, the compactification of the configuration space of $n$ framed points and $k$ unframed points in $\Sigma$. 

The map $^*\BVGraphs_\Sigma(n+k) \to \Omega_{PA}(\FFM_\Sigma(n+k))$ restricts naturally to a map $G(n,k) \to \Omega_{PA}(\FFM_\Sigma(n,k))$. We will show that this map is a quasi-isomorphism, thus proving Theorem \ref{thm:main framed} which corresponds to the cases with $k=0$. The proof will be done by induction on $n$. The case $n=0$ was already proven in Theorem \ref{thm:Main non-parallelized}.

\subsubsection{A long exact sequence of graphs}
Let us prove the following auxiliary result.

\begin{prop}
There is a long exact sequence of graded vector spaces

$$\dots\longrightarrow H^d(G(n+1,k-1))\stackrel{f}{\longrightarrow}H^{d-1}(G(n,k))\stackrel{ \wedge e }{\longrightarrow}H^{d+1}(G(n,k))\stackrel{i_*}{\longrightarrow}H^{d+1}(G(n+1,k-1))\longrightarrow \dots,$$
where the map $i_*$ induced by the inclusion of $G(n,k)$ in $G(n+1,k-1)$.
\end{prop}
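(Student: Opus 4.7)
The plan is to deduce the long exact sequence from a short exact sequence of complexes
\[
0 \to G(n,k) \xrightarrow{i} G(n+1,k-1) \xrightarrow{\pi} Q \to 0,
\]
where the inclusion is the one in the statement and $Q$ is the quotient. First, I would verify that $G(n,k)$ really is a subcomplex: the additional graphs in $G(n+1,k-1)$ are exactly those carrying a tadpole at vertex $n+1$, so I need to confirm that no component of the differential on $^*\BVGraphs_\Sigma$ creates tadpoles. Indeed, $d_{split}$ only replaces edges by pairs of decorations, contracting an edge adjacent to an internal vertex cannot produce a tadpole because for $D=2$ no two vertices can be joined by a multi-edge (since $s^{ij}\cdot s^{ij}=0$ by graded commutativity), and the summands coming from the partition function only excise connected subgraphs of internal vertices.

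Second, I would describe $Q$ explicitly. Since $s^{(n+1)(n+1)}$ has odd degree $D-1=1$, graded commutativity forces $\bigl(s^{(n+1)(n+1)}\bigr)^2=0$, so any graph in $G(n+1,k-1)$ carries at most one tadpole at vertex $n+1$. Hence $Q$ is linearly spanned by graphs of the form $s^{(n+1)(n+1)}\cdot\Gamma$ with $\Gamma\in G(n,k)$, and there is an isomorphism of graded vector spaces $Q^d \cong G(n,k)^{d-1}$ sending $s^{(n+1)(n+1)}\Gamma$ to $\Gamma$. By the Leibniz rule,
\[
d\bigl(s^{(n+1)(n+1)}\Gamma\bigr)
= \Bigl(\sum_{\alpha,\beta} g^{\alpha\beta} e_\alpha^{n+1} e_\beta^{n+1}\Bigr)\Gamma - s^{(n+1)(n+1)}\, d\Gamma.
\]
The first summand lies in $G(n,k)$ and so vanishes in $Q$, while the second summand expresses that the induced differential on $Q$ corresponds (up to the sign dictated by the degree shift) to the original differential on $G(n,k)$.

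Finally, the associated long exact sequence in cohomology yields, after identifying $H^d(Q)\cong H^{d-1}(G(n,k))$ and recognising the induced map $\pi_*$ as $f$, exactly the stated sequence. The connecting homomorphism is $\wedge e$: given a closed representative $\Gamma\in G(n,k)$ of degree $d-1$, its lift $s^{(n+1)(n+1)}\Gamma\in G(n+1,k-1)$ satisfies, by the displayed Leibniz computation,
\[
d\bigl(s^{(n+1)(n+1)}\Gamma\bigr) = \Bigl(\sum_{\alpha,\beta} g^{\alpha\beta} e_\alpha^{n+1} e_\beta^{n+1}\Bigr)\Gamma,
\]
which is precisely the Euler-class decoration at vertex $n+1$ wedged with $\Gamma$, and this decoration represents the Euler class $e\in H^2(\Sigma)$ of the sphere bundle $\partial\FM_\Sigma(2)\to\Sigma$ under the map from Section~\ref{sec:graphs}. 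The only real difficulty in the above is tracking sign conventions across the degree shift; no conceptual obstruction arises, as the argument is an instance of standard homological algebra.
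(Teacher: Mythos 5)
Your proposal is correct and takes essentially the same approach as the paper's proof. The paper organizes the argument via the spectral sequence of the two-step filtration by the number of tadpoles at vertex $n+1$, whereas you work directly with the equivalent short exact sequence of complexes $0\to G(n,k)\to G(n+1,k-1)\to Q\to 0$ (with $Q\cong G(n,k)[-1]$) and its long exact sequence in cohomology; these are two standard packagings of the same decomposition, and your identification of the connecting map with $\wedge e$ via the Leibniz computation on $s^{(n+1)(n+1)}\Gamma$ matches the paper's identification of the $d_1$ differential.
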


\begin{proof}
Let us clarify the undescribed maps. The map $f$ removes a tadpole on the vertex labeled by $n+1$ if there exists one, otherwise it sends a graph to zero. The map $\wedge e$ decorates the vertex $n+1$ with the ``Euler form".

\begin{figure}[h]
\begin{tikzpicture}

\node (1) at (0,0) [ext] {$\scriptstyle{n+1}$};

\draw (-0.6,-0.8)--(1);
\draw (-0.2,-1)--(1);
\draw (0.2,-1)--(1);
\draw (0.6,-0.8)--(1);

\node at (1.2,-0.1) {$\stackrel{\wedge e}{\mapsto} \displaystyle\sum_j \pm$};

\node (2) at (2.4,0) [ext] {$\scriptstyle{n+1}$};

\draw (-0.6,-0.8)+(2.4,0)--(2);
\draw (-0.2,-1)+(2.4,0)--(2);
\draw (0.2,-1)+(2.4,0)--(2);
\draw (0.6,-0.8)+(2.4,0)--(2);

\node (e1) at (2,0.8) [ext] {$\scriptstyle{e_j}$};
\node (e2) at (2.8,0.8) [ext] {$\scriptstyle{e_j^*}$};

\draw[dotted] (e1)--(2);
\draw[dotted] (e2)--(2);
\end{tikzpicture}

\end{figure}

 It is not clear that these maps are well defined at the cohomology level, but this will become clear by the construction of the sequence.

Let us consider the following decomposition of $G(n+1,k-1)$:

$$
\begin{tikzpicture}
\node at (0,0) {$G(n+1,k-1)=$};
\node (a) at (2.1,0) {$G(n,k)[-1]$};
\node at (3.23,0) {$\oplus$};
\node (b) at (4,0) {$G(n,k)$};

\draw (a) edge[loop above] (a);
\draw (b) edge[loop above] (b);

\draw[->] (a) edge[bend left] (b);

\node at (2.1,0.8) {$\scriptstyle{d_0}$};
\node at (4,0.8) {$\scriptstyle{d_0}$};

\node at (3.05,0.53) {$\scriptstyle{d_1}$};
\end{tikzpicture},
$$
where the first summand corresponds to graphs in which the vertex labeled by $n+1$ has a tadpole and the second summand corresponds to graphs in which the vertex labeled by $n+1$ does not have a tadpole. The differential splits into two terms $d_0$ and $d_1$, as in the picture. Let us consider a two-level filtration on the number of tadpoles at the vertex $n+1$. On the zeroth page of the spectral sequence the differential is $d_0$, which acts as the ordinary differential of $G(n,k)$. 

The differential on the second page is induced by $d_1$ and is the map that was denoted by $\wedge e$,   $$\wedge e \colon H^\bullet(G(n,k)[-1]) = H^{\bullet-1}(G(n,k)) \to H^{\bullet +1}( G(n,k)).$$

The spectral sequence converges at the second page since we considered a two-level filtration, therefore 
$$H^\bullet(G(n+1,k-1))= \ker(\wedge e) \oplus \coker(\wedge e).$$

The map $f$ is defined to be the composition 
$H^\bullet(G(n+1,k-1)) \twoheadrightarrow \ker(\wedge e) \hookrightarrow H^{\bullet-1}(G(n,k)).$
It is then clear that $\text{Im} (f) = \ker(\wedge e)$, which gives us exactness at $H^{d-1}(G(n,k))$. 

The map $i_*$ is given by the composition $H^\bullet(G(n,k)) \twoheadrightarrow \coker(\wedge e) \hookrightarrow H^{\bullet-1}(G(n+1,k-1))$, therefore its image coincides with the kernel of $f$, which shows exactness at $H^{d+1}(G(n+1,k-1))$.

Since $i_*$ is the projection to the cokernel of $\wedge e$, its kernel is precisely the image of $\wedge e$, which shows the remaining exactness.
\end{proof}

\subsubsection{The Gysin sequence}

The map $\pi \colon \FFM_\Sigma(n+1,k-1) \to \FFM_\Sigma(n,k)$ that forgets the frame at the point $n+1$ is a circle bundle. We denote by $e\in \Omega_{PA}(\FFM_\Sigma(n,k)$ the Euler form of the circle bundle. The Gysin sequence of this circle bundle is the following long exact sequence:

\begin{equation}\resizebox{13cm}{!}{$
 H^d(\FFM_\Sigma(n+1,k-1))\stackrel{\int_\pi}{\longrightarrow}H^{d-1}(\FFM_\Sigma(n,k))\stackrel{ \wedge e }{\longrightarrow}H^{d+1}(\FFM_\Sigma(n,k))\stackrel{\pi^*}{\longrightarrow}H^{d+1}(\FFM_\Sigma(n+1,k-1))\longrightarrow \dots$}
\end{equation}

Using the maps $G(a,b)\to \Omega_{PA}(\FFM_\Sigma(a,b))$ we obtain the following morphism of exact sequences:

\begin{tikzpicture}[baseline= (a).base]
\node[scale=.85] (a) at (0,0) {\begin{tikzcd}
H^d(\FFM_\Sigma(n+1,k-1)) \arrow{r}{\int_\pi}& H^{d-1}(\FFM_\Sigma(n,k))\arrow{r}{\wedge e}&H^{d+1}(\FFM_\Sigma(n,k))\arrow{r}{\pi^*}&H^{d+1}(\FFM_\Sigma(n+1,k-1))\\
H^d(G(n+1,k-1))\arrow{u}\arrow{r}{f}&H^{d-1}(G(n,k))\arrow{u}\arrow{r}{\wedge e}&H^{d+1}(G(n,k))\arrow{u}\arrow{r}{i_*}&H^{d+1}(G(n+1,k-1))\arrow{u}.\\
\end{tikzcd}};
\end{tikzpicture}

Since by induction $G(n,k)\to \Omega_{PA}(\FFM_\Sigma(n,k))$ is a quasi-isomorphism, the Five Lemma implies that $G(n+1,k-1)\to \Omega_{PA}(\FFM_\Sigma(n+1,k-1))$ is a quasi-isomorphism as well, thus concluding the proof of Theorem \ref{thm:main framed}.

\appendix

\section{Comparison to the Lambrechts--Stanley model through cyclic $\Com_\infty$ algebras}
\label{sec:LS model and ours}
\newcommand{\K}{\R}
\newcommand{\stGra}{{}^*\Gra}
\newcommand{\beq}[1]{\begin{equation}\label{#1}}
\newcommand{\eeq}{\end{equation}}

In this appendix we show how to obtain from the $\pdGraphs_{M}$ model a proof that the Lambrechts--Stanley algebra is a dgca model for the $\FM_M$ (Conjecture \ref{conj:LS}). 

\begin{defi}[\cite{LS2}]
 A Poincar\'e duality algebra of dimension $D$ is a non-negatively graded connected dgca $A$ together with a linear map
\[
 \epsilon : A^D \to \K
\]
such that $\epsilon\circ d=0$ and such that the bilinear maps
\begin{gather*}
A\otimes A\to \K[-D] \\
a\otimes b\mapsto \epsilon(a,b) 
\end{gather*}
are non-degenerate.
\end{defi}

Note that by the connectivity assumption necessarily $A^D=\K$ and hence $\epsilon$ is unique up to scale, if it exists.
Note that a Poincar\'e duality algebra is a particular case of a cyclic $\Com_\infty$-algebra.

A Poincar\'e duality model for a manifold $M$ is a Poincar\'e duality algebra weakly equivalent (as a dgca) to $\Omega(M)$.
It is shown in \cite{LS2} that such a Poincar\'e duality model always exists for simply connected compact orientable manifolds.

Lambrechts and Stanley furthermore define the following family of dgcas from a Poincar\'e duality algebra $A$, generalizing earlier work by Kriz \cite{Kr} and Totaro \cite{To}.
Consider the algebra
\[
A^{\otimes n}[\omega_{ij}; 1\leq i\neq j\leq n]. %/\sim
\]
For $a \in A$ let $p_j^*(a)$ be the element $1\otimes \cdots \otimes a \otimes \cdots \otimes 1$, with $a$ in the $j$-th slot.
Then one imposes on the above algebra the following relations
\begin{enumerate}
 \item $\omega_{ij}=(-1)^D\omega_{ji}$
\item $\omega_{ij}^2=0$
\item $\omega_{ij}\omega_{ik}+\omega_{jk}\omega_{ji}+\omega_{ki}\omega_{kj}=0$ for distinct $i,j,k$
\item $(p_i^*(a) - p_j^*(a))\omega_{ij}=0$.
\end{enumerate}

Let us define for $A$ a Poincar\'e duality algebra as above the diagonal $\Delta\in A\otimes A$ to be the inverse of the non-degenerate bilinear pairing.
Let us further denote by $\Delta_{ij}$ the corresponding element in $A^{\otimes n}$, the two ``non-trivial'' factors of $A$ situated in positions $i$ and $j$.
Then one defines 
\[
 (A^{\otimes n}[\omega_{ij}; 1\leq i\neq j\leq n]/\sim, d_A+\nabla)
\]
where the differential $d_A$ is that induced by the differential on $A$ and $\nabla$ is defined as 
\[
 \nabla \omega_{ij} = \Delta_{ij}.
\]
One readily checks that the ideal generated by the relation is closed under this differential.
Furthermore, if the Euler class of $A$, i.e., the image $\Delta$ under the multiplication, vanishes, then the $F(A,-)$ naturally assemble into a right $\Pois_D^*$ cooperadic comodule.

Lambrechts and Stanley \cite{LS} show that for $A$ a Poincar\'e duality model for $M$ we have that $H(F(A,n))=H(\FM_M(n))$, and furthermore raise the following conjecture.

\begin{conj}[\cite{LS}]
\label{conj:LS}
If $A$ is a Poincar\'e duality model for the simply connected compact orientable manifold $M$ then $F(A,n)$ is a dgca model for $\Conf(M,n)$.
\end{conj}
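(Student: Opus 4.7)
The plan is to identify $F(A,\bullet)$ with a small sub-quotient of an enhanced graph complex $\pdGraphs_A$ built directly from the Poincaré duality model $A$, and then to invoke the invariance results of Section \ref{sec:Lambrechts and Stanley} to transport the equivalence to $\pdGraphs_M$.

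First, I would generalize the construction of $\pdGraphs_M$ to allow vertex decorations by arbitrary elements of $A$ rather than only by cohomology classes, obtaining a dg Hopf comodule $\pdGraphs_A$. The defining Maurer-Cartan element is specified by the strict cyclic dgca structure on $A$ together with its Poincaré pairing. Using a zigzag of dgca quasi-isomorphisms connecting $A$ with $\Omega_{PA}(M)$, together with the transfer of Maurer-Cartan elements along such zigzags (as in the discussion of twisting of modules in Section \ref{sec:GC_M}), one obtains a zigzag of quasi-isomorphisms of dg Hopf comodules between $\pdGraphs_A$ and $\pdGraphs_M$. Combined with Theorem \ref{thm:GraphsM}, this reduces the conjecture to exhibiting a quasi-isomorphism between $F(A,\bullet)$ and $\pdGraphs_A$.

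Next, I would construct a natural morphism of symmetric sequences of dgcas $\Psi : F(A,\bullet) \to \pdGraphs_A$ sending $\omega_{ij}$ to the single-edge graph between external vertices $i$ and $j$, and sending $p_j^*(a)$ to the graph with decoration $a$ at vertex $j$. The double-edge and Arnold relations hold strictly by the defining symmetries of $\pdGraphs_A$. The relation $(p_i^*(a)-p_j^*(a))\omega_{ij}=0$, however, does not hold strictly in $\pdGraphs_A$, but only up to coboundary: the difference equals the differential of a graph with one additional internal vertex decorated by $a$ and connected by edges to both $i$ and $j$. Hence $\Psi$ is well-defined only after passing to a suitable quotient of $\pdGraphs_A$ in which such coboundaries are killed, and one must establish that this quotient remains quasi-isomorphic to $\pdGraphs_A$, following the template of the proof that $\pdGraphs_M \to \tgraphs_M$ is a quasi-isomorphism.

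Finally, to show that $\Psi$ induces a quasi-isomorphism onto the chosen quotient, I would filter both sides by the number of edges between external vertices and compare the induced maps on the associated gradeds, proceeding by induction on $n$ in the style of the proof of Proposition \ref{prop:grGraphsM}. The main obstacle will be the middle step: isolating the correct quotient of $\pdGraphs_A$ that implements the Lambrechts-Stanley relations and verifying that the quotient map is a quasi-isomorphism. The Poincaré duality of $A$ enters decisively there, ensuring that the coboundaries arising from the relevant single-internal-vertex graphs exhaust precisely the Lambrechts-Stanley relations, with no extraneous contributions; the required vanishings likely follow from variants of property (iv) of Proposition \ref{prop:angular form}, combined with careful sign and weight computations analogous to those in Lemma \ref{lem: trivallent}.
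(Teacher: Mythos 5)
Your outline shares the paper's Appendix~A sketch --- introduce a graph comodule with decorations in $A$, relate it to the geometric $\pdGraphs_M$, and compare with $F(A,\bullet)$ --- but two of its steps would not go through as stated.

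The direction of your map $\Psi:F(A,\bullet)\to\pdGraphs_A$ is reversed relative to the paper's, and this reversal is exactly what produces the well-definedness problem you notice. You cannot simply pass to ``a suitable quotient of $\pdGraphs_A$ in which such coboundaries are killed'': quotienting out coboundaries destroys cohomology, and specifying a dg ideal that does the job is the actual content. The paper instead constructs a quotient map $\stG_V\to F(V,-)$ that sends every graph with an internal vertex to zero; this is a chain map precisely because the Lambrechts--Stanley relation $(p_i^*(a)-p_j^*(a))\omega_{ij}=0$ absorbs the term of the differential of a one-internal-vertex graph (decorated by $a$, joined to $i$ and $j$) that lands on graphs without internal vertices. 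With that orientation the LS relations are automatically respected rather than being obstructions. The quasi-isomorphism is then a purely algebraic spectral-sequence argument, so your appeal to Proposition~\ref{prop:angular form}(iv) --- a geometric propagator identity used when computing the partition function from configuration-space integrals --- is misplaced in this algebraic comparison.

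The more serious gap is that you never verify that the Maurer--Cartan element defining $\pdGraphs_A$ (supported on trees, because $A$ is a strict dgca) is gauge/transfer-equivalent to the geometric partition function $z_M$ used to define $\pdGraphs_M$. Their tree parts agree, since both encode the real homotopy type of $M$, but $z_M$ may a priori have loop-order $\geq 1$ contributions. The paper invokes the degree count from Section~7: for $M$ of dimension $D\geq 4$ with $H^1(M;\R)=0$ (in particular for simply connected $M$), every degree-$0$ element of $\GC_M^{\geq 3}$ is a tree, so $z_M$ is gauge-equivalent to a tree-supported element; dimensions $2$ and $3$ are handled by the classification of simply connected surfaces and $3$-manifolds. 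Without this step your zigzag relates $\pdGraphs_A$ only to a graph model built from the transferred cyclic $\Com_\infty$ structure, not to $\pdGraphs_M$, and the reduction to Theorem~\ref{thm:GraphsM} does not close.
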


A proof of (a slightly weaker form of) this statement is given in \cite{I}, using methods similar to ours.
While in this paper we work with cyclic $\Com_{\infty}$ structures on $H(M)$, rather than Poincar\'e duality models to capture the real homotopy type ``with Poincar\'e duality'' for $M$, one can still deduce the conjecture of Lambrechts and Stanley from our methods, at least in the case that the dimension of $M$ is at least 4. (The case $M=S^2$ also follows from the computation in Appendix \ref{app:sphere}, leaving only the case $M=S^3$.)
Let us sketch this reduction.

First let $V$ be a finite dimensional differential non-negatively graded vector space with the subspace of degree 0 elements $V_0=\K$ and a non-degenerate symmetric bilinear pairing of degree $D$
\[
 V\otimes V\to \K[-D].
\]
We denote by $\Delta\in V\otimes V$ the corresponding dual degree $D$ element (the "diagonal") as above.
Then we may define a graph complex (and dg Lie algebra) $\GC_V$ akin to $\GC_M$ above, just replacing each occurrence of $H^*(M)$ by $V$ and with an additional piece of the differential coming from $d_V$. Concretely, this means that vertices in graphs of $\GC_V$ may be decorated by copies of $\bar V^*$.
Furthermore, suppose a cyclic $\Com_\infty$ structure is given on $V$, for the above bilinear form. We may see this structure as a Maurer-Cartan element $Z\in \GC_V$, all of whose coefficient in front of non-tree graphs vanish. We may furthermore use it to define a Graph complex $\stG_V$ analogously to $\stG_M$ above, replacing each occurrence of $H(M)$ by $V$, and using the given $Z$ in place of the partition function.

Next, fix representatives of the cohomology of $V$ by providing a map
\beq{equ:HVtoV}
 H(V)\hookrightarrow V.
\eeq
The pairing on $V$ induces a pairing on $H(V)$, independent of the representatives chosen. We denote the corresponding diagonal by $\Delta_{H}\in H(V)\otimes H(V)$.
Via the chosen embedding we may as well consider $\Delta_{H}$ as an element in $V\otimes V$, in which case it becomes cohomologous to $\Delta$. We may hence choose $\eta\in A\otimes A$ (of the same symmetry under exchange of the two $A$'s as $\Delta$) such that 
\beq{equ:Deltaeta}
 \Delta_H = \Delta - d_V \eta.
\eeq

We may then define a natural map of dg cooperadic comodules
\beq{equ:stGraVH}
 \stGra_{H(V)} \to \stGra_V 
\eeq
by sending the decorations in $H(V)$ to $V$ using our map \eqref{equ:HVtoV}, and by sending an edge between vertices $i$ and $j$ to the same edge, minus the element $\eta$, considered as decoration at vertices $i$ and $j$.
In pictures
\[
 \begin{tikzpicture}[baseline=-.65ex]
  \node[ext] (v) at (0,0) {};
\node[ext] (w) at (0.5,0) {};
\draw (v) edge (w);
 \end{tikzpicture}
\mapsto
\begin{tikzpicture}[baseline=-.65ex]
  \node[ext] (v) at (0,0) {};
\node[ext] (w) at (0.5,0) {};
\draw (v) edge (w);
 \end{tikzpicture}
-
\begin{tikzpicture}[baseline=-.65ex]
  \node[ext] (v) at (0,0) {};
\node[ext] (w) at (0.5,0) {};
\node (x) at (.25,.5) {$\eta$};
\draw[dotted] (x) edge (v) edge (w);
 \end{tikzpicture}
\]
Equation \eqref{equ:Deltaeta} implies that the map \eqref{equ:stGraVH} is indeed compatible with the differentials.

Following the construction of $\GC_V$, this map \eqref{equ:stGraVH} induces an $L_\infty$-morphism of dg Lie algebras
\[
\GC_V\to \GC_{H(V)},
\]
and we can hence transfer the Maurer-Cartan element $Z\in \GC_V$ inducing the cyclic $\Com_\infty$-structure on $V$ to a Maurer-Cartan element $Z_H\in \GC_H$.
(The MC element $Z_H$ is still supported on trees, and encodes the cyclic $\Com_\infty$ structure on $H(V)$ induced by homotopy transfer.)
Furthermore, we obtain from \eqref{equ:stGraVH} a map 
\[
 \stG_{H(V)} \to \stG_V,
\]
that one can check to be a quasi-isomorphism by an easy spectral sequence argument.

In particular, let us take for $V$ a Poincar\'e duality model for the simply connected manifold $M$. Then if the dimension $D$ of $M$ is at least $4$ the Maurer-Cartan element $Z_H$ is gauge equivalent to the partition function $Z_M$ constructed above. This is because by degree reasons there cannot be loop order $\geq 1$ contributions to this partition function, and the tree part of $Z_M$ encodes the real homotopy type of $M$ (in the form of a cyclic $\Com_\infty$ structure on $H(V)=H(M)$), and hence must be gauge equivalent to $Z_H$, whch also encodes the real homotopy type by construction.
Hence we can conclude that $\stG_V$ is quasi-isomorphic to $\stG_M$ and is hence a dgca model for $\FM_M$, with the partition function concentrated on trees with one vertex.
Furthermore, in this case we have a direct map 
\beq{equ:stgVF}
 \stG_V \to F(V,-)
\eeq
to the Lambrechts-Stanley algebra, by sending all graphs with internal vertices to zero, and imposing the defining relations.
Again, by a spectral sequence sequence argument the map \eqref{equ:stgVF} can be seen to be a quasi-isomorphism.
Furthermore, it is evidently compatible with the right $\Pois_D^*$ cooperadic comodule structures, in the case the Euler class vanishes.
This shows that $F(V,-)$ is quasi-isomorphic to $\stG_M$, i.e., to a dgca model for $\FM_n$.
Hence the conjecture \ref{conj:LS} follows, in dimension $D\geq 4$.

\section{Example computation: The partition function of the 2-sphere}\label{app:sphere}
As an illustration, let us show that the partition function of the two-sphere is essentially trivial.
We cover $S^2$ by two coordinate charts $\C$ via stereographic projection as usual.
The coordinate transformation relating the two charts is then 
\begin{gather*}
\Phi\colon  \C\setminus \{0\} \to \C \\
\quad  z\mapsto \frac 1 z
\end{gather*}

We take a basis $1\in H^0(S^2)$, $\omega\in H^2(S^2)$ of the cohomology, with $\int\omega =1$.
Take as a representative for $\omega$ any compactly supported top form of volume $1$, which we also denote by $\omega$. In fact, to abuse the notation further, denote by $\omega\in \Omega^2(\C)$ also the coordinate expression in one of our charts. To achieve somewhat nicer formulas later, let us also assume that this $\omega$ is supported away from the origin and that 
\beq{equ:Phiomega}
\Phi^*\omega = \omega.
\eeq

Let $\phi_0$ be the propagator on $\C$, i.e., 
\[
\phi_0(z,w) = \frac 1 {2\pi} \Im d\log(z-w).
\]
Note that 
\beq{equ:Phiphi}
\phi_0(\frac 1 z,\frac 1 w) = \frac 1 {2\pi} \Im d\log(\frac{w-z}{wz}) = \phi_0(z,w)-\phi_0(z,0)-\phi_0(w,0) .
\eeq
Then we will take as propagator of the sphere\footnote{In Proposition \ref{prop:angular form} the propagator has been denoted $\phi_{12}$. Here we choose to drop the subscript $12$ for brevity.}
\[
\phi(z,w) = \phi_0(z,w) - \int_u \phi_0(z,u) \omega(u) -  \int_u \phi_0(w,u) \omega(u).
\]
Let us first verify that this two form extends from our coordinate chart to $\FM_2(S^2)$. To this end, apply the coordinate transformation $\Phi$ and compute:
\begin{align*}
\phi(\frac 1 z,\frac 1 w) &= 
\phi_0(\frac 1 z,\frac 1 w)
 - \int_u \phi_0(\frac 1 z,u) \omega(u) -  \int_u \phi_0(\frac 1 w,u) \omega(u).
\end{align*}
Changing the integration variable from $u$ to $\frac 1 u$, using \eqref{equ:Phiomega} and applying \eqref{equ:Phiphi} three times we obtain:
\begin{align*}
\phi(\frac 1 z,\frac 1 w) &= 
\phi_0(z,w)-\phi_0(z,0)-\phi_0(w,0)
 - \int_u (\phi_0(z,u)-\phi_0(z,0)-\phi_0(u,0)) \omega(u) 
 \\&\quad
 -  \int_u (\phi_0(w,u)-\phi_0(w,0)-\phi_0(w,0))) \omega(u)
 \\
 &=
\phi(z,w)
-\phi_0(z,0)-\phi_0(w,0)
 + \phi_0(z,0) \int_u \omega(u) + \phi_0(w,0)\int_u \omega(u)
\\
&=\phi(z,w).
\end{align*}
Hence the propagator has the same form in the other coordinate chart, and in particular it has no singularity at the coordinate origin, and hence readily extends to $\FM_2(S^2)$.

Furthermore one checks the following properties:
\begin{itemize}
\item Clearly $\phi(z,w)=\phi(w,z)$.
\item By Stokes' Theorem
\[
d\phi(z,w)= \omega(z) + \omega(w)
\]
as required.
\item By degree reasons 
\[
\int_v\phi(z,v)=0.
\]
Furthermore
\begin{align*}
\int_v\phi(z,v)\omega(v)&=
\int_v\phi_0(z,v)\omega(v)
-
\int_v\int_u \phi_0(z,u)\omega(u)\omega(v)
-
\int_v\int_u \phi_0(v,u)\omega(u)\omega(v)
\\
&=
\int_v\phi_0(z,v)\omega(v)
-
\int_u \phi_0(z,u)\omega(u)
-0
\\&
=0.
\end{align*}
Here the third term on the right-hand side vanishes by degree reasons. (One integrates a 5-form over a 4-dimensional space.)
\item We have 
\begin{align*}
\int_v\phi(z,v)\phi(u,w) &= 
\int_v\phi_0(z,v)\phi_0(v,w)
-
\int_v \int_{u_1}\phi_0(z,u_1)\omega(u_1)\phi_0(v,w)
-
\int_v \int_{u_2}\phi_0(v,w)\phi_0(w,u_2)\omega(u_2)
\\
&
-
\int_v \int_{u_1}\phi_0(v,u_1)\omega(u_1)\phi_0(v,w)
-
\int_v \int_{u_2}\phi_0(v,w)\phi_0(v,u_2)\omega(u_2)
\\
&
+
\int_v \int_{u_1} \int_{u_2}\phi_0(z,u_1)\omega(u_1)\phi_0(w,u_2)\omega(u_2)
+
\int_v \int_{u_1} \int_{u_2}\phi_0(v,u_1)\omega(u_1)\phi_0(w,u_2)\omega(u_2)
\\
&
+
\int_v \int_{u_1} \int_{u_2}\phi_0(z,u_1)\omega(u_1)\phi_0(v,u_2)\omega(u_2)
+
\int_v \int_{u_1} \int_{u_2}\phi_0(v,u_1)\omega(u_1)\phi_0(v,u_2)\omega(u_2).
\end{align*}
The first term on the right-hand side vanishes by a standard vanishing Lemma of Kontsevich.
For the same reason vanish the fourth, fifth, and last terms.
The remaining terms terms vanish by degree reasons: There forms with $v$-dependence are of degree $\leq 1$.
Hence we conclude that the whole expression is zero, and graph weights computed using our propagator will be zero for graphs with bivalent vertices.   
\item Identify the pullback of $\partial\FM_2(S^2)$ to our coordinate chart with $\C\times S^1$, and fix the standard coordinate $\varphi$ on the $S^1$-factor.
Then restricting $\phi$ to the boundary $\partial\FM_2(S^2)$, (i.e., we take the limit $w\to z$ in our coordinate chart) we obtain the form
\[
\frac 1 {2\pi} d\varphi + \eta(z),
\]
where 
\[
\eta = -2\int_u\phi_0(z,u)\omega(u) 
\] 
depends only on $z$ but not on $\varphi$ as desired.
\end{itemize}

\subsection{Vanishing of integrals}
\begin{prop}
Using the propagator $\phi$ and the top form $\omega$ as above, the partition function becomes 
\beq{equ:Z}
z_{S^2} = 
\begin{tikzpicture}
\node[int,label={$\omega$}] (v) {};
\end{tikzpicture}.
\eeq
In other words, the weights of all graphs with more than one vertex vanish.
\end{prop}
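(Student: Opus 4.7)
The plan is to show that every connected graph $\Gamma$ with $V\geq 2$ internal vertices contributes zero weight with respect to the above choice of $\phi$ and $\omega$, so that only the single-vertex graph in \eqref{equ:Z} survives.

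First I will exploit the three vanishing identities listed just above the proposition, namely $\int_v \phi(z,v)=0$, $\int_v \phi(z,v)\omega(v)=0$, and $\int_v \phi(z,v)\phi(v,w)=0$, together with the trivial identity $\omega\wedge\omega=0$ (since $\omega$ is a top form on $S^2$). For a connected graph with $V\geq 2$, any vertex $v$ of total valence (edges plus decorations) at most two can be integrated out first, and the inner integral falls into one of these vanishing cases; a vertex carrying two $\omega$-decorations is likewise killed by $\omega^2=0$. This reduces the problem to connected graphs in which every vertex is at least $3$-valent and carries at most one $\omega$-decoration.

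Next I will narrow the remaining graphs by a degree count. Writing $V$ for vertices, $E$ for edges, and $k$ for the number of $\omega$-decorations, the integrand is a form of degree $E+2k$ and must match $\dim \FM_{S^2}(V)=2V-3$ for a nonzero integral. Combining with the valence bound $2E+k\geq 3V$ yields $V\geq 3k+6$. Only rather large, essentially cubic graphs remain, the minimal case being a $3$-regular graph on six undecorated vertices.

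The hard step will be killing these last surviving graph weights. My plan is to decompose $\phi(z,w)=\phi_0(z,w)-\beta(z)-\beta(w)$ with $\beta(z):=\int_u \phi_0(z,u)\omega(u)$, and to expand each graph weight as a sum over $3^E$ terms, each a product of $\phi_0$'s and $\beta$'s. Each $\beta$-factor effectively turns an edge endpoint into an $\omega$-weighted pendant vertex, allowing iterated application of the bullet-point identities and of $\omega^2=0$ to collapse such summands. The purely $\phi_0$ summands are classical Kontsevich planar weights on $\FM_2(V)$; for $V\geq 6$ one can always choose a subset of at least three points to collapse, triggering the two-dimensional Kontsevich vanishing lemma of \cite[Section~6.6]{K1}. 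The main obstacle will be keeping track of signs and orientations in this combinatorial expansion and verifying uniformly that every term falls under one of the two vanishing mechanisms; but each elementary building block of the argument is already available in the paper.
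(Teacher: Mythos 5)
Your opening moves match the paper: you use the displayed vanishing identities plus $\omega\wedge\omega=0$ to reduce to graphs in which every vertex is at least trivalent and carries at most one $\omega$-decoration, and then you expand $\phi(z,w)=\phi_0(z,w)-\beta(z)-\beta(w)$. The paper does exactly this, recasting the expansion as replacing each edge of $\Gamma$ by the linear combination of an edge, minus two graphs with a new pendant $\omega$-decorated vertex, to obtain a graph $\Gamma_0$ whose weight is computed with $\phi_0$ only.

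Two issues then arise. The minor one: your degree count is off. You use $\dim\FM_{S^2}(V)=2V-3$, but that is the dimension of $\FM_2(V)$ (the translation-and-scaling quotient of configurations in $\R^2$); the partition-function integral is over $\FM_{S^2}(V)$, which has dimension $2V$. With the correct dimension the bound becomes $V\geq 3k$, not $V\geq 3k+6$, and the minimal undecorated case is $4$-regular, not $3$-regular. This is not load-bearing, but it's worth fixing.

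The serious gap is the final step. After the expansion you are left with integrals whose integrands are products of $\phi_0$'s and $\omega$'s, and you try to dispatch them by (a) re-applying the bullet-point identities to the $\beta$-summands, and (b) invoking the boundary-collapse vanishing lemma of \cite[Section~6.6]{K1} for the ``purely $\phi_0$'' summands. Neither works as stated. The bullet-point identities are statements about $\phi$, not $\phi_0$; in fact they were themselves \emph{proved} by expanding $\phi$ into $\phi_0$ and $\beta$, so you cannot circularly reapply them to the expanded integrand. Moreover a vertex carrying a $\beta$-factor together with $\phi_0$-edges is not covered by any of those identities or by $\omega^2=0$. As for (b), the Section~6.6 lemma controls integrals over the \emph{boundary strata} $\partial_{\geq 3}F$ of $\FM_2$; here you face a bulk integral over $\FM_{S^2}(V)$, and ``collapsing three points'' is not an available move. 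What the paper actually does at this point is a single uniform argument: in $\Gamma_0$ every edge is a $\phi_0$-edge, and the vertices split into those decorated by $\omega$ (the original decorated vertices plus the new pendant ones) and the undecorated ones. One integrates out the undecorated vertices first; the result is a weight $f(x_1,\dots,x_k)$ of the decorated positions which must be a \emph{function} (a $0$-form), because the remaining factor $\omega(x_1)\cdots\omega(x_k)$ already saturates the form degree on $\FM_{S^2}(k)$. Kontsevich's vanishing lemma \cite[Lemma~6.4]{K1} then gives $f\equiv 0$. This factorization step, and the identification of $f$ as a $0$-form, is the essential idea your plan is missing.
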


\begin{proof}
By the properties above, all graphs vanish if either some vertex has valence 2 or some vertex has more than one decoration by $\omega$ or some vertex has valence one, and there is one incident edge.
The only connected graph with a vertex of valence one is the one appearing in \eqref{equ:Z}.
All other graphs with potentially non-vanishing weight must hence be of the following kind: 
\begin{enumerate}
\item There are $\geq 2$ edges incident to any vertex, and at most one decoration $\omega$.
\item If there are exactly 2 edges incident on some vertex, it must come with a decoration $\omega$.
\end{enumerate}

From an admissible graph $\Gamma$, we can build another linear combination of admissible graphs $\Gamma_0$ by formally replacing each edge by the linear combination 
\[
\begin{tikzpicture}
\node[ext](v) at(0,0){};
\node[ext](w) at(.6,0){};
\draw (v) edge (w);
\end{tikzpicture}
\mapsto 
\begin{tikzpicture}
\node[ext](v) at(0,0){};
\node[ext](w) at(.6,0){};
\draw (v) edge (w);
\end{tikzpicture}
\,-\,
\begin{tikzpicture}
\node[ext](v) at(0,0){};
\node[ext](w) at(.6,0){};
\node[int,label={$\omega$}] (x) at (0.3,0){};
\draw (v) edge (x);
\end{tikzpicture}
\,-\,
\begin{tikzpicture}
\node[ext](v) at(0,0){};
\node[ext](w) at(.6,0){};
\node[int,label={$\omega$}] (x) at (0.3,0){};
\draw (w) edge (x);
\end{tikzpicture}
\] 
Clearly, we have that 
\[
\int_{\FM_{d}(|V\Gamma|)} \omega_\Gamma 
= 
\int_{\FM_{d}(|V\Gamma_0|)} \omega^0_{\Gamma_0}
\]
where now the weight form $\omega^0_{...}$ is defined just like $\omega_{...}$ above, but using the Euclidean propagator $\phi_0$ instead of $\phi$.

It hence suffices to show that for each admissible graph $\Gamma$ with more than one vertex we have 
\[
\int_{\FM_{d}(|V\Gamma|)} \omega^0_\Gamma \stackrel{?}{=}0.
\] 
We may assume that the vertices are numbered such that the vertices decorated by $\omega$ have indices $1,\dots,k$, for some $k\geq 0$.
Then the above integral factorises as
\[
\int_{\FM_{d}(|V\Gamma|)} \omega^0_\Gamma
= 
\int_{\FM_{d}(k)} \omega(x_1)\omega(x_2)\cdots \omega(x_k)
\underbrace{ \int_{\FM_{d}(|V\Gamma|-k)}  \omega^0_\Gamma }_{=:f(x_1,\dots,x_k)}.
\]
Note that here $f(x_1,\dots,x_k)$ is a function associated to a graph with decorations $\omega$.
(There can be no form piece in $f(\dots)$, because the remainder of the integrand is already a top form.)
Hence by the Kontsevich Vanishing Lemma \cite[Lemma 6.4]{K1} $f(x_1,\dots,x_k)\equiv 0$.
Hence the desired vanishing result follows.
\end{proof}

\section{Pushforward of PA forms}\label{app:PA}

Given an SA bundle $p\colon M\to N$ of rank $l$, the pushforward map of ``integration along the fiber'' defined in  \cite{HLTV} is a map $p_*\colon \Omega^\bullet_{min}(M)\to \Omega_{\text{PA}}^{\bullet-l}(N)$. This map is only defined on minimal forms as the natural extension to the full algebra of PA forms is not well defined due to the failure of the relevant semi-algebraic chain to be continuous (see the discussion on \cite[Section 9]{HLTV}).\footnote{We note that in the original sketch of the construction of PA forms by Kontsevich and Soibelman \cite{KS}, the pushforward was (claimed to be) defined for all PA forms, for a slightly laxer definition of PA forms compared to \cite{HLTV}.}

For our purposes we need to consider pushforwards of the propagator $\phi_{12}\in \Omega_{\text{PA}}(\FM_M(2))$ constructed in Proposition \ref{prop:angular form}. Since we cannot construct the propagator in such a way that $\phi_{12}\in \Omega_{min}(\FM_M(2))$, in this section we consider a different space of forms, $\Omega_{triv}$, such that $\Omega_{\text{PA}}\supset \Omega_{triv}\supset\Omega_{min}$ to which the pushforward map can be extended and still satisfies the Stokes theorem.

Recall that for $F$ a compact oriented semi-algebraic manifold 
and $M$ a semi-algebraic manifold, the \textit{constant continuous chain} $\hat F \in C^{str}(M\times F\to M)$ is defined by $\hat F(x)=\llbracket\{x\}\times F\rrbracket$.
\begin{defi}
	Let $M$ be a semi-algebraic manifold. The space $\Omega_{triv}(M)$ of \textit{trivial forms} is the subvector space of $\Omega_{\text{PA}}(M)$ spanned by forms of the type $\fint_{\hat F}\mu$, where $\mu\in \Omega_{min}(M\times F)$ and $\hat F$ is a constant continuous chain.
\end{defi}

\begin{lemma}
The subspace $\Omega_{triv}(M)\subset \Omega_{\text{PA}}(M)$ is a dg commutative subalgebra.
\end{lemma}
\begin{proof}
	$\Omega_{triv}(M)\subset \Omega_{\text{PA}}(M)$ is closed under the differential by the fiberwise Stokes' Theorem \cite[Proposition 8.12]{HLTV} and since the fiberwise boundary of a trivial bundle is again a trivial bundle.
	Furthermore, the subspace $\Omega_{triv}(M)$ is closed under addition and the commutative product on $\Omega_{\text{PA}}(M)$ because the union and product of trivial bundles is again trivial, see the construction of these operations in \cite[section 5]{HLTV}.
\end{proof}

Let us consider a strongly continuous chain $\Phi\in C_l^{str}(E\stackrel{f}{\to} B)$ 
along a semi-algebraic map $f\colon E\to B$. Let $E\times F$ be the trivial bundle over $E$ with fiber $F$, a compact oriented semi-algebraic $k$ manifold.

\begin{prop}
	Under the previous conditions, there is a strongly continuous chain $$\Phi\ltimes \hat F \in C_{k+l}^{str}(E\times F \stackrel{f\circ pr_2}{\longrightarrow}B)$$ defined by $(\Phi\ltimes \hat F)(b) \coloneqq \Phi(b) \times F$.
\end{prop}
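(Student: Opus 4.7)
The plan is to verify directly the three defining features of a strongly continuous chain (cf.~\cite{HLTV}) for the assignment $b\mapsto \Phi(b)\times F$. Throughout, I interpret the map $E\times F\to B$ as $f\circ\pi_E$, where $\pi_E$ is the projection to the first factor.

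First I would check the pointwise statement: for each $b\in B$, the chain $\Phi(b)$ is a semi-algebraic $l$-chain in $f^{-1}(b)$, and $F$ is a compact oriented semi-algebraic $k$-manifold, so the cartesian product $\Phi(b)\times F$ is a well-defined semi-algebraic $(k+l)$-chain in $f^{-1}(b)\times F$, with orientation induced by that of $\Phi(b)$ and of $F$. This already gives the map $b\mapsto (\Phi\ltimes\hat F)(b)$ at the level of chains of the correct dimension.

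Next I would verify the semi-algebraicity of the total trace. Recall that the strong continuity of $\Phi$ amounts to the existence of a semi-algebraic subset (the ``total space'' of $\Phi$) of $E$ whose fiber over each $b\in B$ realises the support of $\Phi(b)$, with multiplicities encoded semi-algebraically and the projection to $B$ being proper on the closure. For $\Phi\ltimes\hat F$ the total space is simply the product of this total space with $F$ inside $E\times F$; its semi-algebraicity is immediate, and properness over $B$ is preserved because $F$ is compact. The multiplicity data carries over without change since we are multiplying a varying chain by a fixed constant chain.

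Finally I would verify the continuity condition: for any convergent sequence $b_n\to b$ in $B$, the chains $\Phi(b_n)$ converge (in the sense used in \cite{HLTV}) to $\Phi(b)$, and hence, by taking the product with the fixed compact $F$ term by term, the chains $\Phi(b_n)\times F$ converge to $\Phi(b)\times F$. The key ingredient is that ``product with a fixed compact semi-algebraic chain'' is a continuous operation on the space of strongly continuous chains, a fact that reduces immediately to compactness of $F$ and the analogous property of $\Phi$.

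The main (and essentially the only) obstacle is spelling out the continuity-of-product operation in the precise framework of \cite{HLTV}; everything else is a direct product construction. Since both the semi-algebraic and the limit structure on $\Phi\ltimes \hat F$ come from $\Phi$ (with $F$ contributing only a fixed compact factor), no nontrivial new estimate is needed, and the three verifications above assemble into the desired strongly continuous chain of dimension $k+l$.
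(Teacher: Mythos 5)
Your approach and the paper's are genuinely different, and the paper's is more efficient because it engages directly with the actual definition of a strongly continuous chain in \cite{HLTV}. There, a strongly continuous chain $\Phi\in C^{str}_l(E\stackrel{f}{\to}B)$ is \emph{defined} by the data of a finite semi-algebraic partition $\{S_\alpha\}$ of $B$, compact oriented semi-algebraic manifolds $F_\alpha$, and semi-algebraic maps $g_\alpha:\bar S_\alpha\times F_\alpha\to E$ compatible with $f$, with $\Phi(b)=g_\alpha(b,\cdot)_*\llbracket F_\alpha\rrbracket$ on $S_\alpha$. The paper's proof is then a one-liner: take the trivializing family $\{(S_\alpha,F_\alpha,g_\alpha)\}$ for $\Phi$ and observe that $\{(S_\alpha,F_\alpha\times F,g_\alpha\times\id_F)\}$ trivializes $\Phi\ltimes\hat F$, since the required commutativity of the relevant squares is automatic. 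You instead propose to verify three ``defining features'' (pointwise well-definedness, semi-algebraicity/properness of the total trace, and a limit-continuity property), but these are not the definition in \cite{HLTV}; they are at best consequences of it, and no equivalence with the trivialization definition is established there. As a result, the verification you would actually need — that the trivializing data for $\Phi$ induces trivializing data for $\Phi\ltimes\hat F$ — is precisely the step you defer to the reader as ``spelling out the continuity-of-product operation,'' which is where the content of the statement lies. Your third bullet in particular (limit-convergence of chains) is not a criterion one can invoke to conclude strong continuity in the sense of \cite{HLTV}. The intuition is correct, but to close the argument you should replace the three informal checks by the direct construction of the product trivialization, which is exactly what the paper does.
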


\begin{proof}
	If we consider the family $\{(S_\alpha,F_\alpha,g_\alpha)_{\alpha\in I}\}$ that trivializes the continuous chain $\Phi$, it is easy to see that $\{(S_\alpha,F_\alpha\times F,g_\alpha\times \id_F)_{\alpha\in I}\}$ trivializes $\Phi\ltimes \hat F$ since by hypothesis the following two squares commute.
	$$
	\begin{tikzcd}
	\bar{S}_\alpha \times F_\alpha \times F \arrow{r}{g_\alpha\times \id_F} \arrow{d}&  E\times F\arrow{d}{pr_2}\\		
	\bar{S}_\alpha \times F_\alpha \arrow{r}{g_\alpha} \arrow{d}{f}&  E \arrow{d}\\
	\bar{S}_\alpha \arrow{r} &B		
	\end{tikzcd}$$
\end{proof}

\begin{cor}
	Let $p\colon Y\to X$ be an oriented SA bundle and $\Phi\in C^{str}_l(Y\to X)$ the associated strongly continuous chain.
	There is a well defined map $p_*\colon \Omega_{triv}^\bullet (Y)\to \Omega^{\bullet-l}_{\text{PA}}(X)$ extending the one on minimal forms, given by $p_*(\omega) = \fint_{\Phi\ltimes \hat F}\omega$.
\end{cor}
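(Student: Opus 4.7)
The plan is the natural one: on a generator $\omega = \fint_{\hat F} \mu$ of $\Omega_{triv}(Y)$ with $\mu \in \Omega_{min}(Y \times F)$, define
\[
p_*(\omega) := \fint_{\Phi \ltimes \hat F} \mu.
\]
The right-hand side makes sense as an honest PA form on $X$: the preceding Proposition supplies $\Phi \ltimes \hat F$ as a strongly continuous chain of rank $l + \dim F$ along the composite $Y \times F \xrightarrow{pr_1} Y \xrightarrow{p} X$, and the Hardt--Lambrechts--Turchin--Voli\'c pushforward is defined on \emph{minimal} forms along such chains. A degree count ($|\mu| = |\omega| + \dim F$, chain dimension $l + \dim F$) gives the correct output degree $|\omega| - l$, and when $F$ is a point one has $\Phi \ltimes \widehat{\mathrm{pt}} = \Phi$, so the construction indeed extends the original pushforward on $\Omega_{min}$.

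The only genuine content is well-definedness — the value $p_*\omega$ must be independent of the presentation $\omega = \fint_{\hat F}\mu$. The key lemma I would establish first is a Fubini-type identity at the level of minimal forms: for any strongly continuous chain $\Psi \in C^{str}(E \to B)$ and constant chain $\hat F$ along $E \times F \to E$,
\[
\fint_{\Psi \ltimes \hat F} \mu \;=\; \fint_{\Psi}\! \Bigl( \fint_{\hat F} \mu \Bigr),
\]
whenever the outer integral on the right is defined (e.g.\ when $\fint_{\hat F}\mu$ is itself minimal, or — more usefully — when one interprets both sides through the common trivializing family). One proves this by taking the trivializing family $\{(S_\alpha, F_\alpha, g_\alpha)\}$ of $\Psi$ and observing that $\{(S_\alpha, F_\alpha \times F, g_\alpha \times \id_F)\}$ trivializes $\Psi \ltimes \hat F$ (this is exactly what was shown in the preceding Proposition); on each piece one is pushing forward a minimal form along a smooth projection $\bar S_\alpha \times F_\alpha \times F \to \bar S_\alpha$, where the classical Fubini theorem applies. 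Granted this identity, if $\omega = \fint_{\hat F_1} \mu_1 = \fint_{\hat F_2} \mu_2$, both candidate values $\fint_{\Phi \ltimes \hat F_i} \mu_i$ equal the ``formal'' pushforward $\fint_\Phi \omega$ and hence coincide with each other.

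The main obstacle will be the Fubini step. Strictly speaking $\fint_\Phi\omega$ is not a priori defined for a general PA form $\omega$ — this is precisely the obstruction that originally forced \cite{HLTV} to restrict to $\Omega_{min}$ — so ``both sides equal $\fint_\Phi\omega$'' has to be interpreted carefully. The cleanest route is to compare the two representations on a common refinement $\hat F_1 \times \hat F_2$: by lifting $\mu_1, \mu_2$ trivially to minimal forms on $Y \times F_1 \times F_2$ and using the relation $\fint_{\hat F_1}\mu_1 - \fint_{\hat F_2}\mu_2 = 0$ in $\Omega_{\text{PA}}(Y)$, one reduces to the single statement $\fint_{\hat F}\mu = 0 \Longrightarrow \fint_{\Phi \ltimes \hat F}\mu = 0$, which follows from the above trivialization argument piece by piece. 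Once well-definedness is in place, compatibility with $d$ is automatic: $\fint_{\hat F}$ commutes with $d$ (so $d\omega \in \Omega_{triv}$), and Stokes for the strongly continuous chain $\Phi \ltimes \hat F$ applied to the minimal form $\mu$ recovers the expected Stokes formula for the extended $p_*$.
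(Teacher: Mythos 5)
The paper offers no proof of this statement: it is declared as a ``Corollary'' of the preceding Proposition, which establishes only that $\Phi\ltimes\hat F$ is a strongly continuous chain of the appropriate rank (so that $\fint_{\Phi\ltimes\hat F}$ makes sense on minimal forms of the product). The independence-of-presentation issue you flag is genuine and is passed over silently in the paper, so your proposal is supplying an argument the paper leaves implicit rather than paralleling a written-out proof. Note also that the formula in the statement must be read as $p_*(\omega)=\fint_{\Phi\ltimes\hat F}\mu$ for a chosen presentation $\omega=\fint_{\hat F}\mu$ with $\mu\in\Omega_{min}(Y\times F)$, exactly as you interpret it.

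Your strategy is the right one, but the central step is stated too loosely to stand as written. The Fubini identity $\fint_{\Psi\ltimes\hat F}\mu=\fint_\Psi\bigl(\fint_{\hat F}\mu\bigr)$ is, as you acknowledge, not a literal statement since the outer pushforward is not defined on the PA form $\fint_{\hat F}\mu$. What you actually need, and correctly isolate, is the implication $\fint_{\hat F}\mu=0\Rightarrow\fint_{\Phi\ltimes\hat F}\mu=0$; but asserting that this ``follows from the trivialization argument piece by piece'' hides the content. The HLTV pushforward is not a pointwise operation, so pointwise vanishing of the inner fiber integral on $Y$ does not immediately give vanishing of $\fint_{\Phi\ltimes\hat F}\mu$ on $X$. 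What saves you is that over each trivializing triple $(S_\alpha,F_\alpha,g_\alpha)$ of $\Phi$, the chain $\Phi\ltimes\hat F$ is trivialized by $(S_\alpha,F_\alpha\times F,g_\alpha\times\id_F)$ and the pushforward over $S_\alpha$ reduces to classical fiber integration of $(g_\alpha\times\id_F)^*\mu$ along $\bar S_\alpha\times F_\alpha\times F\to\bar S_\alpha$; this factors as an iterated integral whose inner integral over $F$ is precisely the pullback along $g_\alpha$ of $\fint_{\hat F}\mu=0$. That local identification, not the heuristic Fubini, is the load-bearing step and should be spelled out. Two smaller gaps: the common-refinement trick requires a \emph{minimal} top form of total volume one on each $F_i$, whose existence for a compact oriented semi-algebraic manifold you should justify; and since $\Omega_{triv}$ is the \emph{span} of the $\fint_{\hat F}\mu$, well-definedness means handling arbitrary finite relations $\sum_i\fint_{\hat F_i}\mu_i=0$, which you do by passing to the single product $F_1\times\cdots\times F_n$ all at once rather than pairwise.
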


\begin{rem}
	Recall that the proof of the fiberwise Stokes theorem relies essentially on the fact that for $\gamma\in C_k(X)$ and $\Psi\in C^{str}_l(Y\to X)$, we have $\partial (\gamma \ltimes \Psi) = \partial \gamma \ltimes \Psi + (-1)^{\deg \gamma} \gamma \ltimes \partial \Psi$.	With the same proof of \cite[Proposition 5.17]{HLTV} we see that this formula is still valid if we take $\Psi$ and $\gamma$ to be $\Phi$ and $\hat F$ as above and therefore Stokes theorem is also valid for pushforwards of trivial forms.
\end{rem}

We prove now the Poincar\'e lemma for the sheaf of complexes $\Omega_{triv}$.

\begin{prop}\label{prop:Poincare}
	If $U$ is a contractible semi-algebraic set, then $H(\Omega_{triv}(U))$ is one dimensional and concentrated in degree zero.
\end{prop}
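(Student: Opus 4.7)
The plan is to construct a chain homotopy on $\Omega_{triv}(U)$ adapted from the classical Poincar\'e homotopy, using the fiber integration and Stokes machinery available for trivial forms.

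Since $U$ is (semi-algebraically) contractible, fix a semi-algebraic homotopy $H\colon U\times[0,1]\to U$ with $H_0=\id_U$ and $H_1$ the constant map to some basepoint $u_0\in U$, and let $p\colon U\times[0,1]\to U$ denote the projection. Define the operator
\[
K\colon \Omega_{triv}^\bullet(U)\to \Omega_{triv}^{\bullet-1}(U),\qquad K(\omega) := p_*\bigl(H^*\omega\bigr),
\]
where $p_*$ is the fiber integration. Provided $K$ is well-defined (the main technical point, see below), fiberwise Stokes (whose validity on $\Omega_{triv}$ is the remark preceding the statement) yields the Cartan-style identity
\[
dK(\omega)+K(d\omega) = H_0^*\omega - H_1^*\omega = \omega - H_1^*\omega.
\]
For $\omega$ of positive degree $H_1^*\omega=0$ because $H_1$ factors through a point, so a closed such $\omega$ is $dK(\omega)$, hence exact in $\Omega_{triv}(U)$. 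In degree $0$, a closed trivial form is a locally constant PA function, which is constant by connectedness of $U$. This will give $H^0(\Omega_{triv}(U))\cong \mathbb R$ and $H^{>0}(\Omega_{triv}(U))=0$.

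The main obstacle is verifying that $K$ actually lands in $\Omega_{triv}(U)$, not merely in $\Omega_{PA}(U)$. I will check this on a generator $\omega=p_*^{U\times F \to U}(\mu)$ where $\mu\in\Omega_{min}(U\times F)$ and $F$ is the compact oriented SA manifold involved. The pullback of a pushforward along the Cartesian square
\[
\begin{tikzcd}
U\times[0,1]\times F \arrow{r}{H\times \id_F}\arrow{d}{\pi_{12}} & U\times F\arrow{d}{\pi_1}\\
U\times[0,1] \arrow{r}{H} & U
\end{tikzcd}
\]
gives $H^*\omega = (\pi_{12})_*\bigl((H\times\id_F)^*\mu\bigr)$. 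Since pullback along an SA map preserves minimality, $(H\times\id_F)^*\mu\in\Omega_{min}(U\times[0,1]\times F)$, so $H^*\omega\in\Omega_{triv}(U\times[0,1])$ and $K$ is at least defined on the generator. Composing the two pushforwards and invoking the Fubini identity for iterated fiber integrations along trivial bundles, I obtain
\[
K(\omega) = p_*(\pi_{12})_*(H\times\id_F)^*\mu = q_*\bigl((H\times\id_F)^*\mu\bigr),
\]
where $q\colon U\times[0,1]\times F\to U$ is the projection, which is a trivial SA bundle with fiber the compact oriented SA manifold (with boundary) $[0,1]\times F$. By the very definition of $\Omega_{triv}$ this last expression is a trivial form on $U$, completing the argument.

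The two subsidiary facts to check carefully along the way are: (i) minimality is preserved under SA pullback (immediate from the description of $\Omega_{min}$ as the SA de Rham algebra generated by SA functions); and (ii) iterated fiber integration along constant continuous chains factors through the product constant continuous chain. The latter is precisely the construction $\Phi\ltimes\hat F$ recalled in the preceding discussion, applied here to a product of two trivial constant chains. With these in place the homotopy formula and the resulting vanishing of higher cohomology go through without further complication.
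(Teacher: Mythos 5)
Your proof is correct and takes essentially the same approach as the paper: pull back along a semi-algebraic contraction, fiber-integrate over $[0,1]$, and apply the fiberwise Stokes formula to obtain the chain homotopy. The paper's proof is terser and does not verify that the primitive $\int_I h^*\omega$ actually lands in $\Omega_{triv}(U)$ (rather than merely in $\Omega_{PA}(U)$); your base-change/Fubini argument closing this gap is a worthwhile addition.
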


\begin{proof}
	Let $h\colon [0,1]\times U \to U$ be a contraction of $U$, such that $h(1,x)=x$ and $h(0,x) = x_0$ for some fixed $x_0\in U$.
	Suppose $\omega\in \Omega_{triv}(U)$ is a closed form of degree at least $1$
	 %and suppose without loss of generality that $\omega_{x_0} = 0$ R: This was irrelevant for forms of degree >0, I don't know why it was here.
	 . From the Stokes formula, we have 
	
	$$d\int_I h^*\omega = \int_I h^*d\omega \pm (\omega - \omega_{x_0}) = \pm \omega,$$
	from where it follows that $\omega$ is exact.
\end{proof}

%Notice that the existence of semi-algebraic partitions of unity ensure that $\Omega_{triv}$ is a fine sheaf. Therefore, due to the Poincar\'e Lemma the inclusion of the constant functions
%$$ 0\to \mathbb R \to \Omega^0_{triv} \to \Omega^1_{triv}  \to \dots \to \Omega^n_{triv} $$
%is a fine resolution of the constant sheaf. The standard \v{C}ech-de Rham argument \cite[Example 14.16]{BT} gives us the following Corollary.

We can now conclude more generally that the cohomology of a semi-algebraic manifold $M$ agrees with the homology of $\Omega_{triv}(M)$.

\begin{cor}\label{cor:omegatriv omegaPA}
	Let $M$ be a compact semi-algebraic manifold, possibly with corners. The inclusion $\Omega_{triv}(M) \to \Omega_{\text{PA}}(M)$ is a quasi-isomorphism of commutative algebras.
\end{cor}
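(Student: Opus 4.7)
The plan is to deduce the corollary from the standard \v Cech--de Rham double complex comparison, exactly as indicated in the text. The two ingredients needed for this comparison, established earlier in the appendix, are: (a) the Poincar\'e lemma for $\Omega_{triv}$, i.e.\ $H^{\bullet}(\Omega_{triv}(U)) = \mathbb{R}$ concentrated in degree zero for every contractible semi-algebraic $U$; and (b) the fact that $\Omega_{triv}$ is a fine sheaf on $M$ (with respect to the semi-algebraic topology), which the text obtains from the existence of semi-algebraic partitions of unity. Exactly the same two properties hold for $\Omega_{PA}$ (the Poincar\'e lemma is contained in \cite{HLTV}, and the partition-of-unity argument is identical).

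First I would verify carefully that $\Omega_{triv}$ is genuinely a subsheaf of commutative algebras of $\Omega_{PA}$. Closure under restriction to semi-algebraic open subsets follows because the fiberwise integration $\fint_{\hat F}\mu$ of a minimal form along a \emph{constant} continuous chain $\hat F$ restricts along an open inclusion to a form of the same shape. Closure under the wedge product is the only nontrivial point: the product $\left(\fint_{\hat F}\mu\right)\wedge\left(\fint_{\hat G}\nu\right)$ equals $\fint_{\hat F\ltimes \hat G}(\mathrm{pr}_1^*\mu\wedge \mathrm{pr}_2^*\nu)$ (by a Fubini-type argument, using that the chain $\hat F \ltimes \hat G$ is again strongly continuous as in the preceding proposition), and the integrand is minimal. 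Compatibility with the algebra structure of the inclusion is then immediate.

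Next, using a \emph{good} cover $\{U_\alpha\}$ of $M$ by semi-algebraic contractible open subsets with contractible finite intersections (which exists for semi-algebraic manifolds by standard triangulation arguments), I would form the two \v Cech--de Rham double complexes
\[
\check C^\bullet(\{U_\alpha\}, \Omega_{triv}^\bullet) \hookrightarrow \check C^\bullet(\{U_\alpha\}, \Omega_{PA}^\bullet)
\]
together with the natural augmentations $\Omega_{triv}(M)\to \check C^0(\{U_\alpha\},\Omega_{triv}^\bullet)$ and similarly for $\Omega_{PA}$. Filtering by the \v Cech degree, the fineness of both sheaves makes the \v Cech rows exact in positive degrees and identifies the total cohomology with $\Omega_{triv}(M)$, respectively $\Omega_{PA}(M)$. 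Filtering by the de Rham degree instead, the Poincar\'e lemma (for both $\Omega_{triv}$ and $\Omega_{PA}$) identifies the rows $\Omega_\bullet(U_{\alpha_0\cdots\alpha_p})$ with the constants in degree zero, so both total complexes compute $\check H^\bullet(\{U_\alpha\},\mathbb R)=H^\bullet(M,\mathbb R)$. Chasing the inclusion through this diagram yields the desired quasi-isomorphism.

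The only potentially delicate point is the existence of a good semi-algebraic cover and the compatibility of the sheaf-theoretic formalism with the somewhat exotic semi-algebraic setting. I would handle this by restricting to a semi-algebraic triangulation of $M$ and taking open stars of vertices as our cover, which are semi-algebraically contractible with contractible semi-algebraic intersections. Once this is in place, the rest of the argument is formal; multiplicativity of the quasi-isomorphism is automatic because the inclusion is, by construction, a morphism of sheaves of commutative algebras, so it commutes with the cup product on the respective \v Cech complexes.
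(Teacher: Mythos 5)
Your proof takes essentially the same route as the paper: the paper also deduces the corollary from the Poincar\'e lemma for $\Omega_{triv}$ together with fineness via semi-algebraic partitions of unity, citing the standard \v Cech--de Rham comparison from \cite{BT}. Your sketch fills in some details the paper leaves implicit (closure of $\Omega_{triv}$ under wedge via a Fubini argument, existence of good semi-algebraic covers from triangulations), but the underlying argument is the same.
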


\begin{proof}
Every compact semi-algebraic manifold admits a good cover: Indeed, every compact semi-algebraic set has a finite semi-algebraic triangulation \cite[Theorem 9.2.1]{BCR}, and can hence be indentified with a finite simplicial complex, see also the discussion in \cite[Section 2]{HLTV}. 
Given a semi-algebraic triangulation, one can construct a semi-algebraic good cover $\{U_\alpha\}$ by taking the open stars of the vertices of a refinement of the triangulation\footnote{The star of a vertex $v$ is the union of the interiors of faces that contain $v$.}. 

We also choose a subordinate semi-algebraic partition of unity $\{\rho_\alpha\}$. For convenience we shall also pick cutoff functions $\sigma_\alpha$ with support in $U_\alpha$, such that $\sigma_\alpha(x)=1$ on the support of $\rho_\alpha$. (We may slightly enlarge the $U_\alpha$ to this purpose or alter the partition of unity, see also the proof of \cite[Proposition 6.7]{HLTV}.)

This allows us to run the standard \v{C}ech-de Rham argument with respect to such a good cover to conclude by the Poincar\'e lemma that the homology of $\Omega_{triv}(M)$ coincides with the (\v{C}ech) cohomology of $M$, see for instance \cite[Example 14.16]{BT}.

To be concrete, we consider the \v Cech-de Rham complex 
\[
  C := \left(\prod \Omega_{triv}(U_{\alpha_0\dots \alpha_p})[-p], d+\delta\right)
\]
with $U_{\alpha_0\dots \alpha_p}=U_{\alpha_0}\cap\cdots \cap U_{\alpha_p}$, $d$ induced from the differential on the factors $\Omega_{triv}(U_{\alpha_0\dots \alpha_p})$, and $\delta$ the \v Cech part of the differential, defined on a cochain $\omega = (\omega_{\alpha_0\dots \alpha_p})$ with $\omega_{\alpha_0\dots \alpha_p}\in U_{\alpha_0\dots \alpha_p}$ by
\[
(\delta \omega)_{\alpha_0\dots \alpha_p} = 
\sum_{i=0}^p (-1)^i \omega_{\alpha_0\dots \hat\alpha_i \dots \alpha_p}.
\]

The \v Cech-de Rham complex $C$ is a first quadrant double complex, and one compares the two convergent spectral sequences associated to this complex.

The first (``column-wise'') spectral sequence has the complex $(C,d)$ as its $E^0$-page. By the Poincar\'e Lemma (Proposition \ref{prop:Poincare}) the $E^1$-page is then identified with the \v Cech complex associated to the constant sheaf $\R$. The $E^2$-page is hence the cohomology $H(M)$, and the spectral sequence abuts at this point by degree reasons.

The other (``row-wise'') spectral sequence has first page $(C, \delta)$.
We claim that the cohomology of this page is identified with $\Omega_{triv}(M)$.
This can in fact be shown identically to \cite[Proposition 8.5]{BT}.
Concretely, one may naturally extend $(C,\delta)$ to a complex 
\[
\tilde C := (\Omega_{triv}(M)\xrightarrow{\delta}
C),	
\]
with the map 
\[
	\delta: \Omega_{triv}(M) \to \prod_\alpha \Omega_{triv}(U_\alpha) \subset C
\]
given by the natural restriction.
One then checks that $(\tilde C,\delta)$ is acyclic by providing an explicit homotopy.
Concretely, for a $p$-cocycle $\omega = (\omega_{\alpha_0\dots \alpha_p}) \in \tilde C$ 
one defines the $p-1$-cochain $\tau$ such that
\[
  \tau_{\alpha_0\dots \alpha_{p-1}}
  = \sum_\alpha \rho_\alpha \omega_{\alpha\alpha_0\dots\alpha_{p-1}}.
\]
Note that here we extend $\rho_\alpha\omega_{\alpha\alpha_0\dots\alpha_{p-1}}\in \Omega_{triv}(U_{\alpha\alpha_0\dots\alpha_{p-1}})$ by zero to an element (abusively also denoted by) $\rho_\alpha \omega_{\alpha\alpha_0\dots\alpha_{p-1}}$ of $\Omega_{triv}(U_{\alpha_0\dots\alpha_{p-1}})$. To be precise, this extension by zero may be defined as follows.
Suppose $$\omega_{\alpha\alpha_0\dots\alpha_{p-1}}=\int_Y \beta$$ is given by a fiber integral associated to the trivial bundle $Y\times U_{\alpha\alpha_0\dots\alpha_{p-1}}\to U_{\alpha\alpha_0\dots\alpha_{p-1}}$, with $\beta\in \Omega_{min}(Y\times U_{\alpha\alpha_0\dots\alpha_{p-1}})$. 
Then we extend $\rho_\alpha\beta$ (by zero) to a minimal form on $U_{\alpha_0\dots \alpha_{p-1}}$, which we (abusively) also denote by $\rho_\alpha\beta$. For example, if 
$\beta=(f_0,\dots,f_k)$ in the notation of \cite[section 5.2]{HLTV} we may take
$\rho_\alpha\beta:=(\rho_\alpha f_0,\sigma_\alpha f_1,\dots,\sigma_\alpha f_k)$, with all appearing semi-algebraic functions extended by zero, using our cutoff functions $\sigma_\alpha$.
Then one sets 
\[
	\rho_\alpha\omega_{\alpha\alpha_0\dots\alpha_{p-1}}
	=
	\int_Y \rho_\alpha\beta,
\]
with the fiber integral now being the one associated to the trivial semi-algebraic bundle $Y\times U_{\alpha_0\dots\alpha_{p-1}}\to U_{\alpha_0\dots\alpha_{p-1}}$. 

Having defined the cochain $\tau$ above one then checks as in the proof \cite[Proposition 8.5]{BT} that $\delta\tau=\omega$, using that $\delta\omega=0$.
Overall, we have then shown that the second ($E^1$-)page of the ``row-wise'' spectral sequence is identified with $(\Omega_{triv}(M), d)$.

We also note that this step of the proof is closely analogous to that of \cite[Lemma 6.7]{HLTV}, but slightly simpler since trivial bundles can be extended trivially.

The next page of the ``row-wise'' spectral sequence is then $H(\Omega_{triv}(M), d)$, and the spectral sequence converges at this point by degree reasons.
Hence $H(\Omega_{triv}(M), d)\cong H^\bullet(M)$.
It is shown in \cite{HLTV} that $H(\Omega_{PA}(M), d)\cong H(M)$. 
To see that the inclusion $\Omega_{triv}(M)\subset \Omega_{\text{PA}}(M)$ induces the isomorphism on cohomology one may consider the PA-\v Cech-de Rham complex $C_{\text{PA}}$, defined by replacing $\Omega_{triv}$ by $\Omega_{\text{PA}}$ in the definition of $C$ above. Using the PA-Poincar\'e Lemma (\cite[Lemma 6.3]{HLTV}) it is then clear that the natural inclusion $C\to C_{\text{PA}}$ induces an isomorphism on the $E^2$-page of the ``column-wise'' spectral sequences on both sides, and hence is a quasi-isomorphism.
\end{proof}

We note that in fact in the definition of $\Omega_{triv}$ we do not need globally trivial bundles, local triviality suffices.

\begin{prop}\label{prop:integration of triv forms}
Let $M$ be a compact semi-algebraic manifold and let $p: E\to M$ be an oriented SA bundle (see \cite[Definition 8.1]{HLTV}).
Let $\omega\in \Omega_{triv}(E)$. Then the corresponding fiber integral $\int_{E\to M}\omega\in \Omega_{\rm PA}(M)$ is an element of $\Omega_{triv}(M)\subset \Omega_{\rm PA}(M)$.
\end{prop}
\begin{proof}
We may assume that $\omega\in \Omega_{min}(M)$ by replacing $E$ with a product of $E$ with some trivial bundle if needed.
We pick a finite trivializing cover $\{U_i\}$, a semi-algebraic partition of unity $\rho_i$, and cutoff functions $\sigma_i$ as in the proof of Corollary \ref{cor:omegatriv omegaPA}.
% To define the trivialization for the strongly continuous chain $\Phi$ corresponding to the bundle we also pick a stratification $\{S_\alpha\}$ such that the closure of each stratum is contained in one of the $U_i$ as in the proof of \cite[Proposition 8.2]{HLTV}.

We then rewrite
$$
\int_{E\to M}\omega
=
\sum_i \rho_i \int_{E\to M}\omega
=
\sum_i \int_{E\to M}\rho_i \omega.
$$
For the last equality we abused notation and defined $\rho_i:=p^*\rho_i$, and we implicitly used \cite[Proposition 8.9]{HLTV}.
Let the local traivialization of the bundle on $U_i$ be denoted by $h_i: U_i\times F\xrightarrow{\cong} p^{-1}(U_i)$.
As in the previous proof we extend the minimal form $\rho_i h_i^*\omega\in \Omega_{min}(U_i\times F)$ to a minimal form 
$\rho_i h_i^*\omega\in \Omega_{min}(M\times F)$,
which we abusively denote by the same symbols.
We then claim that 
\begin{equation}\label{equ:prop sa triv}
\int_{E\to M}\rho_i \omega
=
\int_{M\times F\to M}\rho_i h_i^*\omega.
\end{equation}
Since the right-hand side is a fiber integral over a trivial bundle the Proposition then follows.

To check \eqref{equ:prop sa triv} we need to consider a trivializing stratification $\{S_\alpha\}$ for the strongly continuous chain $\Phi$ corresponding to the bundle $E\to M$.
The stratification can be taken such that the closure of each stratum is contained in one of the $U_j$ as in the proof of \cite[Proposition 8.2]{HLTV}. We can furthermore refine it so that each $\bar S_\alpha$ is either contained in $U_i$ or disjoint from the support of $\rho_i$. (For example, refine the stratification by intersecting the strata with $\{x\mid \sigma_i(x)\geq 0.9\}$ and the closure of its complement.)

Now consider some stratum $S_\alpha$, and the restriction of \eqref{equ:prop sa triv} to its closure. If $\bar S_\alpha$ is disjoint from the support of $\rho_i$ then trivially both sides of \eqref{equ:prop sa triv} vanish on it. Otherwise we may assume that $\bar S_\alpha\subset U_i$.
But the bundle isomorphism $h_i$ transforms one side of \eqref{equ:prop sa triv} into the other, cf. \cite[Proposition 8.10]{HLTV}. 

\end{proof}
%In particular, we obtain the following:
%
%\begin{prop}\label{prop:propagator is in Omega triv}
%	The propagator $\phi_{12}\in \Omega_{\text{PA}}(\FM_M(2))$ from Proposition \ref{prop:angular form} can be chosen in such a way that it lives in $\Omega_{triv}(\FM_M(2))$. 
%\end{prop}

% \subsection{A book-keeping extension}
% We emphasize that for our applications it is not possible to replace PA forms by the subspace of trivial forms altogether, because the sem-algebraic bundles we consider will in general not be trivial. 
% Concretely, all fiber integrals we need to compute are of the form 
% \[
% \int_{\FM_M(r+k)\to \FM_M(r)} f,
% \]
% associated to the semi-algebraic bundle $\FM_M(r+k)\to \FM_M(r)$ given by the forgetful map, and with $f\in \Omega_{triv}(\FM_M(r+k))$.

% =======
% \subsection{A book-keeping extension}
% We emphasize that for our applications it is not possible to replace PA forms by the subspace of trivial forms altogether, because the semi-algebraic bundles we consider will in general not be trivial. 
% Concretely, all fiber integrals we need to compute are of the form 
% \[
% \int_{\FM_M(r+k)\to \FM_M(r)} f,
% \]
% associated to the semi-algebraic bundle $\FM_M(r+k)\to \FM_M(r)$ given by the forgetful map, and with $f\in \Omega_{triv}(\FM_M(r+k))$.

\end{document}